\documentclass[11pt]{amsart}

\usepackage{amssymb, amsmath, textcomp, amsthm, amsfonts, bm}
\usepackage{bbm,dsfont}
\usepackage{color}
\usepackage{hyperref}
\usepackage[left=1.03in, right=1.03in, top=1.05in, bottom=1.05in]{geometry}  

\newcommand{\ta}{\texttt{a}}
\newcommand{\bta}{\mbox{\small$\ta$}}
\newcommand{\tc}{\texttt{c}}
\newcommand{\btc}{\mbox{\small$\tc$}}
\newcommand{\stc}{\mbox{\scriptsize$\tc$}}
\newcommand{\sta}{\mbox{\scriptsize$\ta$}}

\usepackage{lipsum}
\usepackage{comment}

\usepackage{mathabx}

\title[Bochner-Riesz problems]{The Bochner-Riesz problem: An old approach revisited}
\author{Shaoming Guo}
\address[SG]{Department of Mathematics\\ University of Wisconsin Madison\\ USA}
\email{shaomingguo@math.wisc.edu}

\author{Changkeun Oh}
\address[CO]{Department of Mathematics\\ University of Wisconsin Madison\\ USA}
\email{coh28@wisc.edu}

\author{Hong Wang}
\address[HW]{School of Mathematics\\ Institute for Advanced Study\\ USA}
\email{Hong.Wang1991@gmail.com}

\author{Shukun Wu}
\address[SW]{Department of Mathematics\\
University of Illinois at Urbana-Champaign\\
Urbana, IL, 61801, USA}
\email{shukunw2@illinois.edu}

\author{Ruixiang Zhang}
\address[RZ]{Department of Mathematics, University of Wisconsin Madison\\and
School of Mathematics\\ Institute for Advanced Study\\ USA}
\email{rzhang@ias.edu}

\date{\today}

\makeatletter

\newcommand*\barredprod{%
  \DOTSB\mathop{%
      \@rodriguez@mathpalette \@rodriguez@overprint@bar \prod
    }\slimits@
}

\newcommand*\@rodriguez@mathpalette[2]{%
  \mathchoice
    {#1\displaystyle      \textfont         {#2}}%
    {#1\textstyle         \textfont         {#2}}%
    {#1\scriptstyle       \scriptfont       {#2}}%
    {#1\scriptscriptstyle \scriptscriptfont {#2}}%
}

\newcommand*\@rodriguez@overprint@bar[3]{%
  \sbox\z@{$#1#3$}%
  \dimen@   = \ht\z@   \advance \dimen@   \p@
  \dimen@ii = \dp\z@   \advance \dimen@ii \p@
  \dimen4 = 1.25\fontdimen 8 #2\thr@@ \relax
  \ooalign{
    \@rodriguez@bar \dimen@ \z@ \cr   
    $\m@th #1#3$\cr
    \@rodriguez@bar \z@ \dimen@ii \cr 
  }%
}
\newcommand*\@rodriguez@bar[2]{%
  \hidewidth \vrule \@width \dimen4 \@height #1\@depth #2\hidewidth
}

\makeatother

\makeatletter
\newcommand\avsuminner[2]{%
  {\sbox0{$\m@th#1\sum$}%
   \vphantom{\usebox0}%
   \ooalign{%
     \hidewidth
     \smash{\vrule height\dimexpr\ht0+1pt\relax depth\dimexpr\dp0+1pt\relax}%
     \hidewidth\cr
     $\m@th#1\sum$\cr
   }%
  }%
}
\makeatother

\def\R{\mathbb{R}}

\def\N{\mathbb{N}}
\def\C{\mathbb{C}}
\def\nint{\mathop{\diagup\kern-13.0pt\int}}
\def\Z{\mathbb{Z}}
\def\T{\mathbb{T}}

\def\W{\mathbb{W}}

\def\beq{\begin{equation}}
\def\eeq{\end{equation}}
\def\bg{\begin{gathered}}
\def\eg{\end{gathered}}

\def\mc{\mathcal}
\def\lesim{\lesssim}

\def\dist{\text{dist}}



\newcommand{\BLka}{\mathrm{BL}_{k, A}}



\newcommand{\bx}{{\bf x}}
\newcommand{\by}{{\bf y}}
\newcommand{\bz}{{\bf z}}
\newcommand{\bfn}{{\bf w}}

\newcommand{\operat}{H^{\lambda}}
\newcommand{\ang}{\measuredangle}
\newcommand{\supp}{\mathrm{supp}}
\newcommand{\bars}{\bar{S}}

\newcommand{\rapid}{\mathrm{RapDec}}
\newcommand{\pknown}{p_{\mathrm{restr}}}

\setcounter{secnumdepth}{5}
\numberwithin{equation}{section}
\theoremstyle{plain}
\newtheorem{thm}{Theorem}[section]
\newtheorem{prop}[thm]{Proposition}

\newtheorem{lem}[thm]{Lemma}
\newtheorem{cor}[thm]{Corollary}
\newtheorem{defi}[thm]{Definition}

\newtheorem*{conj*}{Conjecture}
\newtheorem{conj}[thm]{Conjecture}
\newtheorem*{openproblem*}{Open Problem}
\newtheorem{remark}[thm]{Remark}

\newcommand{\al}{\alpha}
\newcommand{\be}{\beta}

\newcommand{\de}{\delta}

\newcommand{\e}{\varepsilon}

\newcommand{\ka}{\kappa}
\newcommand{\la}{\lambda}

\newcommand{\om}{\omega}



\newcommand{\wt}{\widetilde}
\newcommand{\wh}{\widehat}

\newcommand{\ZR}{\mathbb{R}}
\newcommand{\ZT}{\mathbb{T}}
\newcommand{\ZZ}{\mathbb{Z}}

\newcommand{\ti}{\tilde}
\newcommand{\Id}{{\bf 1}}

\newcommand{\cC}{{\mathcal C}}

\def\l{\ell}

\begin{document}
\maketitle

\begin{abstract}
We show that the recent techniques developed to study the Fourier restriction problem apply equally well to the Bochner-Riesz problem. This is achieved via applying a pseudo-conformal transformation and a two-parameter induction-on-scales argument. As a consequence, we improve the Bochner-Riesz problem to the best known range of the Fourier restriction problem in all high dimensions. 
\end{abstract}




\section{Introduction}
For $\alpha \geq 0$, the Bochner-Riesz multiplier of order $\alpha$ is  defined by 
\begin{equation}
    m^{\alpha}(\xi):=(1-|\xi|^2)^{\alpha}_{+},
\end{equation}
where $(1-|\xi|^2)_{+}$ is defined to be $1-|\xi|^2$ whenever $|\xi|\le 1$, and $0$ otherwise. 
We define the Bochner-Riesz operator $m^{\alpha}(D)(f)$ to be
\begin{equation}
    m^{\alpha}(D)f(x):=\int_{\R^n}e^{2\pi i \langle x,\xi \rangle }m^{\alpha}(\xi)\hat{f}(\xi)\,d\xi.
\end{equation}
The Bochner-Riesz conjecture is as follows.
\begin{conj}[Bochner-Riesz conjecture]\label{201204conj2_1}
For every $n\ge 2$ and $p \geq 2n/(n-1)$, it holds that 
\begin{equation}\label{210316e1_3}
\|m^{\alpha}(D)f\|_{L^p(\R^n)} \lesssim_{n, \alpha, p}\|f\|_{L^p(\R^n)} 
\end{equation}
whenever $\alpha>n(\frac12-\frac1p)-\frac12$.
\end{conj}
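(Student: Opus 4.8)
The plan is to place the Bochner--Riesz operator on the same footing as the Fourier extension operator of a curved hypersurface by means of a pseudo-conformal change of variables, and then to prove the resulting extension estimate with the full modern restriction machinery --- polynomial partitioning, the broad--narrow dichotomy, and multilinear Kakeya --- organised as a two-parameter induction on scales. First I would carry out the standard reductions. Interpolating \eqref{210316e1_3} against the trivial $L^2(\R^n)$ bound and invoking an $\varepsilon$-removal argument, it suffices to treat the critical index $\alpha=n(\tfrac12-\tfrac1p)-\tfrac12$ and, moreover, to replace $m^\alpha$ by a single dyadic annular piece $m_\delta$ of amplitude comparable to $\delta^{\alpha}$ with Fourier support in $\{\,\big|\,|\xi|-1\,\big|\sim\delta\,\}$; the reassembly over dyadic $\delta$ then costs only a harmless $\varepsilon$-loss, thanks to the factor $\delta^{\alpha}$. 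For such a piece one is led, after a parabolic rescaling and duality, to a local extension estimate at physical scale $R=\delta^{-1}$: an $L^p(B_R)$ bound with an $R^{\varepsilon}$ loss for the extension operator associated with a $\delta$-neighbourhood of a unit spherical cap.

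The second ingredient is the pseudo-conformal (lens) transformation, which is what makes this ``old approach'' effective. It intertwines the free Schr\"odinger propagator with itself and converts the thin-annulus localization of $m_\delta$ in frequency into a ball localization at the dual spatial scale, thereby replacing the annular geometry by the ordinary wave-packet geometry at scale $R$: plates of dimensions comparable to $R^{1/2}\times\cdots\times R^{1/2}\times R$ tangent to a hypersurface of non-vanishing Gaussian curvature. After this change of variables one obtains, modulo rapidly decaying tails, an essentially exact identification of the rescaled $m_\delta(D)f\big|_{B_R}$ with one piece of the extension operator for the truncated paraboloid at scale $R$, so that restriction technology applies directly.

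With this identification in hand I would run a broad--narrow decomposition, or polynomial partitioning, on $B_R$. In the broad case the participating wave packets point in transverse directions, and one invokes multilinear restriction (Bennett--Carbery--Tao), sharpened by the best available $L^2$, decoupling, and multilinear-Kakeya inputs, to recover the full exponent $p\ge 2n/(n-1)$ with an admissible power of $R$. In the narrow case the packets concentrate inside a cap of radius $\rho$; parabolic rescaling of that cap reproduces an estimate of the same type, but it simultaneously enlarges the physical scale from $R$ to $R/\rho$ \emph{and} sharpens the effective annulus from width $\delta$ to width $\delta/\rho$. This is precisely why the induction hypothesis must be quantified jointly in the pair $(R,\delta)$ rather than in $R$ alone; one then checks that the broad-case gain dominates the narrow-case loss uniformly over all admissible pairs, so that the $\varepsilon$-errors sum geometrically and the induction closes.

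The hard part is the broad/narrow balance across the entire range $p\ge 2n/(n-1)$. The broad-case input required there is a restriction-type estimate for the paraboloid at the sharp exponent in every dimension --- in effect the Fourier restriction conjecture itself --- which is presently out of reach. Thus this scheme delivers \eqref{210316e1_3} in full precisely once the restriction conjecture becomes available, while unconditionally it already yields \eqref{210316e1_3} throughout the best currently known restriction range; carrying the two-parameter induction through the remaining regime, for which no sharp multilinear or Kakeya-type substitute is yet known, is the genuine obstacle.
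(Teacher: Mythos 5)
The statement you are addressing is Conjecture \ref{201204conj2_1}, which the paper does not prove and does not claim to prove: it is stated as an open conjecture, and the paper's actual result (Theorem \ref{201204thm2_3}) establishes it only for $p\ge \pknown$, the best currently known restriction range, via the Carleson--Sj\"olin reduction, the pseudo-conformal change of variables \eqref{pseudo_conformal}, and a two-parameter induction in $(\lambda,R)$ feeding into broad-norm estimates, polynomial partitioning, and the nested polynomial Wolff axioms. Your proposal follows essentially this blueprint, but it is not a proof of the stated conjecture, and you concede as much in your final paragraph: the broad-case input you would need to close the broad--narrow/two-parameter induction at every $p\ge 2n/(n-1)$ is sharp restriction-type information that does not exist. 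A sketch that terminates in ``this works once the restriction conjecture becomes available'' leaves the decisive step unsupplied, so there is a genuine gap between what you outline and the statement \eqref{210316e1_3} for the full range of $\alpha$ and $p$.

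Two further points deserve care. First, your claim that the scheme ``delivers \eqref{210316e1_3} in full precisely once the restriction conjecture becomes available'' overstates the known logical relationship: Tao proved that Bochner--Riesz implies restriction, and Carbery's implication in the other direction is for the paraboloid; a black-box restriction estimate for the sphere is not known to imply the Bochner--Riesz conjecture. What the paper actually shows is a transfer of \emph{methods}, not of theorems --- after the pseudo-conformal transformation the wave packets become straight tubes with an $\bx$-independent Gauss map (Lemma \ref{210111lem3_1}), so the restriction machinery (including Katz--Rogers-type polynomial Wolff inputs, which are unavailable for general curved H\"ormander-type wave packets, cf.\ the Guth--Hickman--Iliopoulou sharpness examples) can be re-run on the Bochner--Riesz phase, and only the exponents reachable by that machinery, i.e.\ $p\ge\pknown$, follow. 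Second, your ``essentially exact identification'' of the rescaled annular piece with a paraboloid extension operator is too strong as stated: the paper must still handle a genuine oscillatory integral with phase \eqref{210131e2_15}, whose behavior degenerates as $t$ becomes small relative to $\lambda$ (Remark \ref{rem:3_1}), and this is precisely why the induction is run jointly in $\lambda$ and $R$ with the careful bookkeeping of $\lambda_{K,R}$, the decoupling step of Lemma \ref{201108lemma5_1} without weight tails, and the parabolic rescaling Lemma \ref{201229lem5_3}. None of this affects the main verdict, which is simply that the conjecture remains open and your outline, like the paper, reaches only the restriction-known range.
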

Let $S^{n-1}$ denote the unit sphere in $\R^n$. Let $d\sigma$ denote the surface measure of $S^{n-1}$. Take $f\in L^{\infty}(S^{n-1}, d\sigma)$. Define 
\begin{equation}
    \widehat{f d\sigma}(x):=\int_{S^{n-1}} e^{2\pi i\langle x, \omega\rangle} f(\omega) d\sigma(\omega), \ x\in \R^n.
\end{equation}
The (dual form of) Fourier restriction conjecture is as follows. \begin{conj}[Restriction conjecture]\label{201204conj2_2}
    Let $n\ge 2$. It holds that
    \begin{equation}
        \|\widehat{f d\sigma}\|_{L^p(\R^n)}\lesim_{n, p} \|f\|_{L^{\infty}(S^{n-1}, d\sigma)},
    \end{equation}
    whenever $p>2n/(n-1)$. 
\end{conj}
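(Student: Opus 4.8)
The final statement is the Fourier restriction conjecture, which is a theorem for $n=2$ (Fefferman, C\'ordoba, Zygmund) but remains open for $n\ge 3$; accordingly I can only describe the scheme that yields the best partial results toward it and indicate where it falls short of the full range $p>2n/(n-1)$.

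\textbf{Step 1 (Reductions).} I dualize to the extension operator $Ef:=\wh{f\,d\sigma}$, so the goal becomes $\|Ef\|_{L^p(\R^n)}\lesssim\|f\|_{L^\infty(S^{n-1})}$. By Tao's $\varepsilon$-removal lemma it suffices to prove the localized bound $\|Ef\|_{L^p(B_R)}\lesssim_\varepsilon R^\varepsilon\|f\|_{L^\infty}$ over a ball $B_R$ of radius $R$, for every $R\ge 1$ and every $\varepsilon>0$. Partitioning $S^{n-1}$ into $R^{-1/2}$-caps and applying parabolic rescaling reduces matters to the extension operator over a small neighborhood of the paraboloid, which sets up an induction on the scale $R$ (one assumes the desired bound at all scales smaller than $R$).

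\textbf{Step 2 (Broad--narrow analysis).} Following Bourgain--Guth, subdivide the cap into $K^{-1}$-caps $\tau$; at each $x\in B_R$ either one term $|Ef_\tau(x)|$ dominates $|Ef(x)|$ up to $K^{O(1)}$ (the \emph{narrow} case, handled by parabolic rescaling plus the inductive hypothesis), or the contribution is spread over $\gtrsim K^{n-1}$ transverse caps (the \emph{broad} case). The broad term is the essential point, and there are two routes to it: (i) invoke the multilinear restriction estimate of Bennett--Carbery--Tao and convert it to a linear bound; or (ii) run Guth's polynomial partitioning --- decompose $B_R$ along the zero set of a polynomial of controlled degree, close the induction on the cells, and treat the neighborhood of the variety using lower-dimensional restriction estimates on the variety together with the polynomial Wolff axioms. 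Either way one iterates, balancing the broad gain against the narrow loss.

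\textbf{Main obstacle.} Both routes leave a genuine gap: the multilinear-to-linear conversion loses a power of $K$, and in the polynomial-partitioning iteration the tubes that concentrate in a neighborhood of a low-degree variety are controlled only by the polynomial Wolff axioms, which is strictly weaker than the sharp transversality input the argument would need. Removing this loss would require Kakeya/transversality information essentially as strong as the restriction estimate itself, and this apparent circularity is precisely why the conjecture is still open for $n\ge 3$; only partial ranges of $p$, larger than $2n/(n-1)$, are currently attainable by the above scheme.
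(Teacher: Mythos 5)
The statement you were asked about is a \emph{conjecture}: the paper states the restriction conjecture only as background (it is known for $n=2$, open for $n\ge 3$), and offers no proof of it, so there is nothing in the paper to compare your argument against. You correctly recognize this and, rather than claiming a proof, give an accurate sketch of the standard partial-progress scheme (dualization and $\varepsilon$-removal, broad--narrow analysis of Bourgain--Guth, polynomial partitioning and the polynomial Wolff axioms) together with the reason it falls short of the full range $p>2n/(n-1)$; this is consistent with the framework and the literature the paper itself relies on, and is the appropriate answer here.
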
 
Let $\mathfrak{R}f=\widehat{f}|_{S^{n-1}}$ be the sphere restriction operator. By duality and the factorization theory
of Maurey, Nikishin and Pisier (see Bourgain \cite{MR1097257}), the restriction conjecture is equivalent to that 
\begin{equation}\label{201204e2_6}
    \|\mathfrak{R} f\|_{L^p(S^{n-1})}\lesim_{n, p} \|f\|_{L^p(\R^n)},
\end{equation}
for every $1\le p< 2n/(n+1)$. Indeed, \eqref{201204e2_6} is the original Fourier restriction conjecture of Stein (see for instance page 345 of \cite{MR2827930}). Here we state the equivalent version as in Conjecture \ref{201204conj2_2} since it is closer to the operator under investigation in the current paper. \\

Tao \cite{MR1666558} proved that the Bochner-Riesz conjecture implies the restriction conjecture. Moreover, he mentioned in his paper that these two conjectures ``are widely believed to be at least heuristically equivalent". 
The information we would like to convey in the current paper is that, after applying the pseudo-conformal transformation (see \eqref{pseudo_conformal} below), the recently developed techniques in the Fourier restriction literature apply equally well to the Bochner-Riesz problem. These techniques include, but are not limited to, the broad-narrow analysis of Bourgain and Guth \cite{MR2860188}, the polynomial method of Guth in \cite{MR3454378, guth2018}, and the polynomial Wolff axioms obtained by Guth \cite{MR3454378}, Zahl \cite{MR3820441}, and Katz and Rogers \cite{MR3881832} that was applied in the Fourier restriction setting in Guth \cite{MR3454378}, Hickman and Rogers \cite{HR2019} and Hickman and Zahl \cite{hickman2020note}. To be slightly more precise, we will see that after the pseudo-conformal transformation, the above mentioned techniques do not see the differences between the Bochner-Riesz problem and the Fourier restriction problem. As a consequence, we show that the Bochner-Riesz conjecture holds for every $p$ for which the restriction conjecture has been verified in the above mentioned papers.\\

To state our result, let us recall what is known about the restriction conjecture. For $n=3$, Guth \cite{MR3454378} proved the restriction conjecture for $p>3.25$; moreover, for $n\ge 4$, he proved in \cite{guth2018} that the restriction conjecture holds if 
\begin{equation}\label{210316e1_7}
\begin{split}
    & p>2\cdot \frac{3n+1}{3n-3} \text{ for } n \text{ odd}.\\
    & p>2\cdot \frac{3n+2}{3n-2} \text{ for } n \text{ even}. 
\end{split}
\end{equation}
These results improved prior ones due to Tao \cite{MR2033842} and Bourgain and Guth \cite{MR2860188}. More recently, for certain dimensions $n$, in particular all ``large" $n$, Hickman and Rogers \cite{HR2019} and Hickman and Zahl \cite{hickman2020note} applied the polynomial Wolff axioms established by Katz and Rogers \cite{MR3881832} and further improved the results of Guth. The new ranges of $p$ in \cite{HR2019} and \cite{hickman2020note} are a bit technical to state, and we refer the interested readers to Figure 2 in \cite{hickman2020note}.\\

Let $\pknown$ denote the minimum of the exponent $p$ for which the restriction conjecture has been verified in \cite{MR3454378}, \cite{guth2018}, \cite{HR2019} and \cite{hickman2020note}.  
\begin{thm}\label{201204thm2_3}
For every $n\ge 3$, the Bochner-Riesz conjecture holds for every $p\ge \pknown$. 
\end{thm}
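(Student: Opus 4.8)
The plan is to transplant the modern restriction machinery --- wave packet decomposition, the Bourgain--Guth broad/narrow dichotomy, Guth's polynomial partitioning, and the polynomial Wolff axioms --- directly onto the Bochner--Riesz operator, using the pseudo-conformal transformation \eqref{pseudo_conformal} as the bridge. First I would carry out the classical reduction. Decomposing $m^{\alpha}=\sum_{j\ge 0}2^{-j\alpha}m_{j}$ with $m_{j}$ adapted to the region where $1-|\xi|^{2}\sim 2^{-j}$, the bound \eqref{210316e1_3} for every $\alpha>n(\tfrac12-\tfrac1p)-\tfrac12$ follows once one has, for $\delta=2^{-j}$ and every $\epsilon>0$, the single-annulus estimate
\[
\|m_{\delta}(D)f\|_{L^{p}(\R^{n})}\ \lesssim_{\epsilon}\ \delta^{\,\frac12-n(\frac12-\frac1p)-\epsilon}\,\|f\|_{L^{p}(\R^{n})},
\]
since summing the geometric series recovers \eqref{210316e1_3} and the $\delta^{-\epsilon}$ loss is harmless. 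The kernel of $m_{\delta}(D)$ is, up to rapidly decaying tails, supported in a ball of radius $\delta^{-1}$ and built from the same curved wave fronts as the extension operator $E$, so after localizing to balls $B_{R}$ with $R\sim\delta^{-1}$ the estimate above has exactly the analytic shape of the restriction estimate: an $L^{p}$ bound for a sum of wave packets attached to $\delta^{1/2}$-caps of a convex hypersurface. It is convenient to prove this in a two-parameter form, with $1\le R\le\delta^{-1}$ and $\delta$ varying independently; the case $R\sim\delta^{-1}$ recovers the Bochner--Riesz estimate above, while smaller $R$ give intermediate statements that feed the induction.

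Next I would apply the pseudo-conformal transformation. Writing $(1-|\xi|^{2})_{+}^{\alpha}$ as a weighted superposition $\int w_{\alpha}(\tau)\,e^{2\pi i\tau(1-|\xi|^{2})}\,d\tau$ with $|w_{\alpha}(\tau)|\sim|\tau|^{-\alpha-1}$ for large $|\tau|$ realizes $m^{\alpha}(D)$ as a weighted average over the ``time'' $\tau$ of the free Schr\"odinger propagator, and the single-annulus operator $m_{\delta}(D)$ is governed by the regime $|\tau|\sim\delta^{-1}$. In that regime the pseudo-conformal symmetry of the flow converts $m_{\delta}(D)$ --- modulo acceptable errors --- into a truncated extension operator for a compact cap of the paraboloid, straightening the spherical wave fronts into the planar wave packets of $E$. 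The upshot is a dictionary under which the tube geometry, the bush and broom structures, the transversality and nesting notions, and the broad points are literally the same objects used in \cite{MR3454378,guth2018,HR2019,hickman2020note}; moreover the transformation is what decouples the spatial scale from the frequency scale, turning $\delta$ into a genuine second induction parameter and making the rescaled problem self-similar.

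With this dictionary in hand I would run a two-parameter induction on scales, on the pair $(R,\delta)$. At each stage: (i) apply the Bourgain--Guth broad/narrow dichotomy, so the narrow contribution is bounded by lower-dimensional Bochner--Riesz estimates, making the induction also one over the dimension $n$; (ii) on the broad part run Guth's polynomial partitioning, which on the cellular part permits a parabolic rescaling --- here the pseudo-conformal picture is essential, since it maps an $(R,\delta)$-problem on a cell to a genuine $(R',\delta')$-problem of the same form with $R'<R$ --- and on the neighborhood of the partitioning variety leaves only tubes tangent to a low-degree variety; (iii) estimate that tangential contribution with the polynomial Wolff axioms (Guth \cite{MR3454378}, Zahl \cite{MR3820441}, Katz--Rogers \cite{MR3881832}), exactly as in Hickman--Rogers \cite{HR2019} and Hickman--Zahl \cite{hickman2020note}. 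Since every step has a verbatim Bochner--Riesz analogue and the recursion is self-similar in $(R,\delta)$, it closes at the exponent $\pknown$; the base case $R=O(1)$ is trivial, and Theorem \ref{201204thm2_3} follows.

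The main obstacle is certifying the faithfulness of the pseudo-conformal dictionary. One must check that the errors generated --- from the physical and frequency localizations, from truncating the Bochner--Riesz kernel to $B_{\delta^{-1}}$, and from applying the transformation to the symbol $m_{\delta}$ rather than to an exact solution --- are negligible and, crucially, do not accumulate over the $\sim\log(1/\delta)$ scales of the induction; that the second parameter $\delta$ is tracked correctly through polynomial partitioning so the effective annulus width on each cell does not degrade; and that the narrow lower-dimensional inputs really are Bochner--Riesz estimates in lower dimensions matching the known restriction ranges there, so that the induction on $(n,R,\delta)$ actually closes. A secondary, organizational point is to phrase the induction hypothesis so that it is simultaneously strong enough to survive polynomial partitioning together with the broad/narrow split and weak enough to be implied by --- or re-proven exactly as in --- the cited restriction papers, with uniform control of all $\epsilon$-losses. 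Once these are in place, every lemma of \cite{MR3454378,guth2018,HR2019,hickman2020note} transcribes verbatim.
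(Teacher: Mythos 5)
Your overall strategy---pseudo-conformal transformation plus a transplantation of the Bourgain--Guth broad/narrow analysis, polynomial partitioning and the polynomial Wolff axioms, organized by a two-parameter induction on scales---is the same as the paper's, although your reduction differs in detail: the paper passes through the Carleson--Sj\"olin operator $S^{\lambda}$, freezes the variable $y_n$, and applies the change of variables \eqref{pseudo_conformal} directly to the phase $\lambda|x-y|$, arriving at the single fixed oscillatory integral operator $H^{\lambda}$ with phase $\phi^{\lambda}(x,t;\omega)=\frac{\lambda}{t}\sqrt{\lambda^2+|x-t\omega|^2}$, whereas you propose an annulus decomposition together with subordination to the Schr\"odinger propagator, which is closer in spirit to Carbery \cite{MR1151328}.

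The genuine gap is your central claim that, modulo negligible errors, the transformed operator \emph{is} a truncated paraboloid extension operator, so that the tube geometry is ``literally the same'' and ``every lemma transcribes verbatim.'' It is not: what the transformation produces is a variable-coefficient oscillatory integral operator of positive-definite H\"ormander type whose wave packets are straight but whose phase remains genuinely nonlinear in $(x,t)$; if the identification with the extension operator were exact, Theorem \ref{201204thm2_3} would follow at once from the cited restriction theorems, which is exactly the ``heuristic equivalence'' one cannot take for granted (and the sharpness analysis of \cite{MR4047925} shows that one must exploit structure beyond the general H\"ormander framework). Everything you defer to ``certifying the faithfulness of the dictionary'' is precisely the content of the paper and does not come for free: one needs (i) a wave packet theory and essential-support estimates for the specific phase $\phi^{\lambda}$; (ii) a decoupling inequality on $B_{K^2}$ with no weight tails and with coefficients independent of the ball center, which is what forces the constraint $R\le\lambda^{1-\epsilon}$ (Lemma \ref{201108lemma5_1}); (iii) a parabolic rescaling compatible with a two-parameter induction whose hypothesis must be formulated on the precise domains $[-\lambda_R,\lambda_R]\times[R/C_n,C_n\lambda]$ with the scales $\lambda_{K,R}$ chosen so that rescaling maps the class into itself (Lemma \ref{201229lem5_3})---note also that in the paper the narrow part is closed by this decoupling-plus-rescaling induction on $(\lambda,R)$, not by lower-dimensional Bochner--Riesz inputs; and (iv) a transverse equidistribution estimate in which the relevant frequency support lies near the nonlinear image $\Phi(Z_0)$ of the partitioning variety rather than near $Z_0$ itself (Section \ref{section_transverse} and the appendix). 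Without carrying out these steps, or at least explaining why the errors in your subordination and truncation scheme do not destroy the exact rescaling self-similarity the induction requires, the proposal records the program but not a proof.
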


Before we comment on the proof of Theorem \ref{201204thm2_3}, let us briefly review some known results about the Bochner-Riesz conjecture in the literature. When $n=2$, the Bochner-Riesz conjecture was resolved by Carleson and Sj\"olin in \cite{MR361607}; see also H\"ormander \cite{MR340924} and Fefferman \cite{MR320624} for alternative proofs. When $n\ge 3$, this conjecture remains open; Fefferman \cite{MR257819}, Bourgain \cite{MR1097257, MR1132294}, Lee \cite{MR2046812} and Bourgain and Guth \cite{MR2860188} made significant partial progress towards this conjecture; see also Christ \cite{MR951506}, Seeger \cite{MR1405600}, Tao \cite{MR1665753} and Lee \cite{MR2264247, MR3803718} for more related results and some endpoints results.
The most recent progress was made by Guth, Hickman and Iliopoulou \cite{MR4047925}.  The approach in the current paper has closer relation with those in Bourgain and Guth \cite{MR2860188} and Guth, Hickman and Iliopoulou \cite{MR4047925}, and therefore we expand a discussion on these two papers.

In these two papers, the authors there viewed the Bochner-Riesz operator as an oscillatory integral operator of the H\"ormander type with positive-definite phase. To be more precise, they followed Carleson and Sj\"olin \cite{MR361607}\footnote{See also Theorem \ref{201204thm3_1} and Theorem \ref{201204thm3_2} below where the same reduction is used.} and reduced the $L^p$ bounds of the Bochner-Riesz operator to the $L^p$ bounds of oscillatory integral operators of the form 
\begin{equation}
    T^{\lambda}f(x):=\int_{\R^{n-1}} e^{2\pi i\phi^{\lambda}(x; \omega)}a^{\lambda}(x; \omega)f(\omega)d\omega, \ \ x\in \R^{n},
\end{equation}
where the phase function $\phi^{\lambda}$ satisfies the H\"ormander condition (see for instance (H1) and (H2) in \cite[page 252]{MR4047925}) and a positive-definite condition (see (H2$^+$) in \cite[page 254]{MR4047925}). For these oscillatory integral operators, it was proven in \cite{MR4047925} that 
\begin{equation}
    \|T^{\lambda}f\|_{L^p(\R^n)}\lesim_{\epsilon, p, \phi} \lambda^{\epsilon} \|f\|_{L^p(\R^{n-1})},
\end{equation}
for every $\lambda\ge 1$, $\epsilon>0$ and every $p$ satisfying \eqref{210316e1_7}. As a consequence, the authors there obtained the $L^p$ bounds \eqref{210316e1_3} of the Bochner-Riesz operator for the same range of $p$. It is worth mentioning that the authors of \cite{MR4047925} also proved that their result is sharp, that is, the range of $p$ in \eqref{210316e1_7} is sharp for $T^{\lambda}$ with a phase function satisfying the above-mentioned positive-definite H\"ormander condition. This also means that if one plans to prove \eqref{210316e1_3} by viewing the Bochner-Riesz operator as an oscillatory integral operator of the H\"ormander type, then the range \eqref{210316e1_7} is the best that one can hope for. 

\medskip

The way that Guth, Hickman and Iliopoulou \cite{MR4047925} proved the sharpness of their result is built on the work of Bourgain \cite{MR1132294}. The sharp examples in these two papers rely crucially on the fact that for general operators $T^{\lambda}$ satisfying the H\"ormander condition, wave packets may be curved. One key observation of the current paper is that, after applying the pseudo-conformal transformation (see \eqref{pseudo_conformal} below) to the Bochner-Riesz operator, all the new wave packets admit similar behavior as those in the Fourier restriction problem, say in \cite{MR3454378}, \cite{guth2018}, \cite{HR2019} and \cite{hickman2020note}. Roughly speaking, this is what allows us to apply the recent techniques developed in the Fourier restriction theory to the Bochner-Riesz problem. Here we would like to emphasize that the use of the pseudo-conformal transformation in the context of the Bochner-Riesz problem is not new. Indeed, Carbery \cite{MR1151328} already used it to prove that the Fourier restriction conjecture for paraboloids implies the Bochner-Riesz conjecture for paraboloids.  \\



In the end, we would like to briefly mention some other interesting features of the proof of our main theorem, and compare them with those in the literature aforementioned, in particular, in \cite{MR4047925} where the latest progress on the Bochner-Riesz conjecture were made. Firstly, in \cite{MR4047925}, the authors there always first reduce phase functions to normal forms (see \cite[page 328]{MR1132294} for the definition of normal forms, which are also referred as ``reduced forms" in \cite{MR4047925}) and then only work with normal forms. In our case, we work with the phase function of the Bochner-Riesz operator directly; indeed, our proof relies on a special parabolic rescaling structure of the phase function (see the proof of Lemma \ref{201229lem5_3}), which makes our induction argument perhaps different from that of \cite{MR4047925} (see also Remark \ref{rem:3_1} for more discussions). Whether this is just a technical point or not remains to be understood. Secondly, it is perhaps worth mentioning that the way we prove the transverse equidistribution estimate (the content of Section \ref{section_transverse}) is slightly different from that in \cite{guth2018} and \cite{MR4047925}; we observe that instead of considering output functions (for instance $H^{\lambda} g$ as in \eqref{201230e3_20}) as a medium, one can directly prove the transverse equidistribution estimate by essentially only working with the input function $g$, which gives us a slightly simpler proof. More details are included in Section 6.

\begin{remark}
\rm

We make a remark on the Bochner-Riesz problem and the Fourier restriction problem in the case $n=3$. In this case, so far the best result for the restriction conjecture is due to Wang \cite{Wang2018ARE}, where she proved that this conjecture holds for $p>3+3/13$. It is not implausible that if one combines the argument of \cite{Wang2018ARE} with that of the current paper, then one may be able to improve Theorem \ref{201204thm2_3} to the same range of $p$ when $n=3$. We do not pursue it here. Regarding the Bochner-Riesz problem in $n=3$, recently Wu \cite{ShukunWu} proved that the Bochner-Riesz conjecture holds for $p \geq \pknown=3.25$ when $n=3$. His proof partially relies on some ideas from Wang \cite{Wang2018ARE}. Our Theorem \ref{201204thm2_3} recovers the result in \cite{ShukunWu} via a quite different and a slightly simpler approach.

\end{remark}

\noindent {\bf Organization of the paper.} In Section \ref{f_section2}, we make several reductions to the Bochner-Riesz problem, including the well-known Carleson-Sj\"olin reduction and a reduction via the pseudo-conformal transformation. In Section \ref{210325section3}, we introduce the induction hypothesis and will further reduce the desired estimate to a broad norm estimate; the structure of our reduction argument is similar to that of Guth, Hickman and Iliopoulou \cite{MR4047925} (see Section 11 there). In Section \ref{f_section4}, we introduce wave packets and prove some of their properties that will be useful in future sections. In Section \ref{210325section5}, we compare wave packets at different scales; this is to prepare for the use of the multi-scale argument as in Guth \cite{guth2018} and Guth, Hickman and Iliopoulou \cite{MR4047925}. In Section \ref{section_transverse}, we prove a transverse equidistribution property of wave packets. After developing the relevant tools in the previous sections, one can already almost identify the Bochner-Riesz problem with the Fourier restriction problem. This allows us to follow Hickman and Rogers \cite{HR2019} and  Hickman and Zahl \cite{hickman2020note}, as is done in Section \ref{f_section7}, Section  \ref{f_section8} and Section \ref{f_section9}, to finish the proof of the desired broad norm estimate.

\medskip

\noindent {\bf Notations.}

\noindent$\bullet$ We write $A(R) \leq \mathrm{RapDec}(R)B$ to mean that for any power $\beta$, there is a constant $C_{\beta}$ such that
\begin{equation}
    A(R) \leq C_{\beta}R^{-\beta}B \;\; \text{for all $R \geq 1$}.
\end{equation}

\noindent$\bullet$ The quantities $p,n$ and $\epsilon$ will be called the \textit{admissible parameters} as the estimates in the paper may be allowed to depend on these parameters. 

\noindent$\bullet$ We introduce  a few other admissible parameters
\begin{equation}\label{constants_z}
    \epsilon^C \leq \delta \ll_{\epsilon} \delta_{n} \ll_{\epsilon} \delta_{n-1} \ll_{\epsilon} \cdots \ll_{\epsilon} \delta_1 \ll_{\epsilon} \epsilon_{\circ} \ll_{\epsilon} \epsilon.
\end{equation}
Here $C$ is some dimensional constant and the notation $A \ll_{\epsilon} B$ indicates that $A \leq C^{-1}_{n,\epsilon}B$ for some large admissible constant $C_{n,\epsilon} \geq 1$.

\noindent$\bullet$ For every number $R>0$ and set $S$, we denote by $N_{R}(S)$ the $R$-neighborhood of the set $S$.

\noindent$\bullet$ We use $B(\bx, r)$ to represent the open ball centered at $\bx$, of radius $r$, in $\ZR^n$. To avoid confusion, we also use $B^{n-1}(x,r)$ to denote the $n-1$ dimensional ball centered at $x$, of radius $r$.
\medskip

\noindent {\bf Acknowledgement.} The authors would like to thank Xiaochun Li, Zane Li, Andreas Seeger, Rajula Srivastava and Terence Tao for valuable discussions. S.G. was
supported in part by the NSF grant DMS-1800274. H.W. was supported by the National Science Foundation under Grant No. DMS-1926686.  R.Z. was supported by the NSF grant DMS-1856541, DMS-1926686 and by the Ky Fan and Yu-Fen Fan Endowment Fund at the Institute
for Advanced Study.

\section{Several reductions}\label{f_section2}
For $\lambda\ge 1$, we define the Carleson-Sj\"{o}lin operator
\begin{equation}\label{101021}
    S^{\lambda}f(x):=\int_{\R^n}e^{2\pi i \lambda|x-y|}a(x-y)f(y)\,dy,
\end{equation}
where $a \in C^{\infty}(\R^n)$ has compact support away from the origin. To prove Theorem \ref{201204thm2_3}, it is standard to reduce it to the following theorem. For the reduction, we refer to Stein \cite[Chapter IX]{MR1232192}. 
\begin{thm}\label{201204thm3_1}
For every $n\ge 3$, it holds that 
\begin{equation}\label{201114e2_2}
    \|S^{\lambda}f\|_{L^p(\R^n)} \lesssim_{ \epsilon} \lambda^{-n/p+\epsilon}\|f\|_{L^p(\R^n)}
\end{equation}
for every $p \geq \pknown$, $\lambda\ge 1$ and $\epsilon>0$.
\end{thm}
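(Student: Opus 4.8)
The plan is to reduce Theorem \ref{201204thm3_1} to the oscillatory integral estimate for operators $T^\lambda$ of H\"ormander type, and then to set up the framework in which the recent restriction-theoretic machinery applies. The first step is the classical Carleson--Sj\"olin reduction: by a partition of unity and a rotation one localizes the kernel $a(x-y)$ of $S^\lambda$ so that the direction $\frac{x-y}{|x-y|}$ lies in a small cap, and one writes the remaining variable $y$ in graph coordinates $y=(y', \psi(y'))$ adapted to the sphere $S^{n-1}$. In these coordinates $|x-y|$ becomes, after rescaling $x\mapsto \lambda x$, a phase $\phi^\lambda(x;\omega)=\lambda\phi(x;\omega)$ with $\phi$ smooth, satisfying H\"ormander's rank and non-degeneracy hypotheses (H1), (H2), together with the positive-definiteness hypothesis (H2$^+$) because the sphere has everywhere positive-definite second fundamental form. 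This exhibits $S^\lambda$ (after the trivial $L^p$-normalization producing the factor $\lambda^{-n/p}$) as a sum of boundedly many operators $T^\lambda$ of the form displayed in the excerpt.

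Next, rather than invoking the bound of Guth--Hickman--Iliopoulou \cite{MR4047925} directly — which would only give the range \eqref{210316e1_7} and is in any case sharp for general positive-definite H\"ormander operators — the key point is to apply the pseudo-conformal transformation \eqref{pseudo_conformal} to the phase coming from the sphere. The plan is to show, as carried out in Section \ref{f_section2} and used throughout, that after this change of variables the phase function becomes one whose wave packets behave like the flat (paraboloid) wave packets of the Fourier restriction problem: they are genuinely straight tubes, and the phase enjoys an exact parabolic rescaling symmetry. Concretely, one verifies that the transformed operator $H^\lambda$ satisfies the same structural properties (wave packet decomposition, locally constant property, parabolic rescaling, transverse equidistribution) that underpin the proofs of the restriction conjecture in \cite{MR3454378}, \cite{guth2018}, \cite{HR2019}, \cite{hickman2020note}. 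Then one runs the two-parameter induction-on-scales / broad--narrow argument of those papers essentially verbatim, with the degree-of-transversality parameter $\dt$ and degeneracy parameter $\dd$, to obtain the broad-norm estimate, and finally sums up the broad and narrow contributions to conclude $\|H^\lambda g\|_{L^p}\lesssim_\epsilon \lambda^\epsilon\|g\|_{L^p}$ for every $p\ge\pknown$. Undoing the pseudo-conformal transformation and the Carleson--Sj\"olin reduction then yields \eqref{201114e2_2}.

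The main obstacle — and the reason this is not a one-line deduction — is verifying that the pseudo-conformal transformation really does erase all the differences between the Bochner--Riesz phase and the paraboloid phase at the level of the fine structures needed by the polynomial method: one must check that the transformed wave packets are (up to negligible errors) straight tubes, that they satisfy the polynomial Wolff axioms, and that the transverse equidistribution estimate holds with the correct quantitative dependence on the scales $R$ and $\rho$. A subtler point is that after rescaling the phase of the Bochner--Riesz operator does not reduce to a normal form in the sense of \cite{MR1132294, MR4047925}; instead one must work with the phase directly and exploit its special parabolic rescaling structure (this is the content of Lemma \ref{201229lem5_3}). Getting the induction scheme to close with this non-normalized phase, while keeping track of the hierarchy of small parameters in \eqref{constants_z}, is where the real work lies; the remaining steps (the broad--narrow decomposition, the polynomial partitioning, the summation of the algorithm) then follow the template of \cite{HR2019} and \cite{hickman2020note} without essential change.
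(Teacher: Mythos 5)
Your plan follows the paper's own route: the paper likewise reduces $S^{\lambda}$ to an operator acting on functions of $n-1$ variables (by freezing the variable $y_n$ and using H\"older, rather than graph coordinates on a sphere), applies the pseudo-conformal change of variables \eqref{pseudo_conformal}, rescales to produce the factor $\lambda^{-n/p}$, and then proves the $\lambda^{\epsilon}$-bound for the resulting operator via the two-parameter induction on scales, the broad--narrow reduction with decoupling, the parabolic rescaling structure of the phase (Lemma \ref{201229lem5_3}), straight wave packets, transverse equidistribution, and the Hickman--Rogers/Hickman--Zahl polynomial algorithm. Apart from the cosmetic difference in how the dimensional reduction is phrased, your proposal is essentially the same argument as the paper's.
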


Next, we will reduce the $L^p$-boundedness of the Carleson-Sj\"{o}lin operator to that of some operator $\bars^{\lambda}: g \in L^{p}(\R^{n-1}) \mapsto L^{p}(\R^n)$ by freezing one variable. We define the operator $\bars^{\lambda}g$ by
\begin{equation}\label{1010210}
    \bars^{\lambda}g(u,t):=\int_{\R^{n-1}}e^{2\pi i \lambda t^{-1}\sqrt{\lambda^2+|u-t\omega|^2} }a^{\lambda}(u, t; \omega)g(\omega)\,d\omega,
\end{equation}
where  $u=(u_1, \dots, u_{n-1})\in \R^{n-1}$, $\omega=(\omega_1, \dots, \omega_{n-1}) \in \R^{n-1}$, and 
\begin{equation}
    a^{\lambda}(u, t; \omega):=a\big(\frac{u}{\lambda}-\frac{t}{\lambda}\omega,\frac{t}{\lambda}, \omega\big),
\end{equation}
with $a(\cdot, \cdot, \cdot)$ being a smooth function compactly supported in all variables and away from zero in its second variable. 
\begin{thm}\label{201204thm3_2}
Under the above notation, it holds that 
\begin{equation}
\|\bars^{\lambda}g\|_{L^p(\R^n)} \lesssim_{\epsilon}\lambda^{\epsilon}
\|g\|_{L^p(\R^{n-1})}
\end{equation}
for every $n\ge 3$, $p \geq \pknown$, $\lambda \geq 1$ and $\epsilon>0$.
\end{thm}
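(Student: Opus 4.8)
\emph{Proof strategy for Theorem \ref{201204thm3_2}.}
The plan is to run the full machinery of the recent Fourier restriction literature directly on $\bars^{\lambda}$, exploiting the fact that the phase $\phi^{\lambda}(u,t;\omega)=\lambda t^{-1}\sqrt{\lambda^2+|u-t\omega|^2}$ already produces essentially \emph{straight} wave packets after the pseudo-conformal normalization, so that no reduction to normal (``reduced'') forms is needed. Since the amplitude $a^{\lambda}$ confines $(u,t)$ to a ball of radius $O(\lambda)$, it suffices to bound $\|\bars^{\lambda}g\|_{L^{p}(B_R)}$ for every $R\le\lambda$, and we set up a two-parameter induction on scales in which both the oscillation parameter $\lambda$ and the spatial localization radius $R$ are allowed to strictly decrease at each step. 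First I would fix the induction hypothesis (Section \ref{210325section3}): a suitably uniform bound of the shape $\|\bars^{\lambda}g\|_{L^p(B_R)}\lesim_{\e} R^{\e}(\cdots)$ together with its $k$-broad analogue, and — following the scheme of \cite[Section 11]{MR4047925} — reduce the theorem to establishing the $k$-broad norm estimate $\|\bars^{\lambda}g\|_{\mathrm{BL}^{p}_{k,A}(B_R)}\lesim_{\e} R^{\e}\|g\|_{L^p(\R^{n-1})}$ for all admissible $k,A$ and all $R\le\lambda$.

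Next I would carry out the Bourgain--Guth broad--narrow dichotomy at a fixed large scale $K$. On the narrow part, $\bars^{\lambda}g$ is controlled by contributions in which $\omega$ is restricted to a cap of radius $K^{-1}$; here the special parabolic rescaling structure of $\phi^{\lambda}$ (Lemma \ref{201229lem5_3}) is used to rescale each such cap, localized to the appropriate sub-ball, back into an operator of the same form $\bars^{\lambda'}$ with $\lambda'\sim K^{-2}\lambda<\lambda$. Applying the induction hypothesis at the strictly smaller scale $\lambda'$ and summing over the $\sim K^{n-1}$ caps closes the narrow estimate with room to spare, once $K$ is chosen appropriately against the hierarchy \eqref{constants_z}. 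It is precisely this step — rather than a passage to reduced forms — that distinguishes the argument here from that of \cite{MR4047925}; see Remark \ref{rem:3_1}. This leaves only the $k$-broad norm estimate.

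The heart of the matter is that $k$-broad estimate, proved by the polynomial method following Guth \cite{MR3454378,guth2018} and the nested polynomial-partitioning scheme of Hickman--Rogers \cite{HR2019} and Hickman--Zahl \cite{hickman2020note}. After the wave packet decomposition of Section \ref{f_section4} ($\lambda^{-1/2}$-caps $\theta$ with dual $\lambda^{1/2}\times\cdots\times\lambda^{1/2}\times\lambda$ tubes, which by Sections \ref{f_section4}--\ref{210325section5} behave like their restriction-theory counterparts), I would apply polynomial partitioning at scale $R$: in the cellular case $B_R$ splits into boundedly many cells on which the induction hypothesis applies at a strictly smaller radius, and in the algebraic case the mass localizes to the neighborhood of a variety of bounded degree. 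In the algebraic case one iterates into the tangential/transverse substructure — here the transverse equidistribution property of Section \ref{section_transverse} (obtained by the slightly simpler route of working directly with the input $g$) combines with the polynomial Wolff axioms of Guth \cite{MR3454378}, Zahl \cite{MR3820441} and Katz--Rogers \cite{MR3881832} to bound the number of tangent tubes, while the broad parameter $k$ drops, so the recursion terminates. Summing the cellular, transverse and tangential contributions and unwinding the multi-scale bookkeeping (Sections \ref{f_section7}, \ref{f_section8}, \ref{f_section9}) yields the $k$-broad estimate for exactly the range of $p$ in which the restriction conjecture is known in \cite{MR3454378,guth2018,HR2019,hickman2020note}, i.e.\ $p\ge\pknown$.

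The main obstacle is twofold. First, one must verify that after the pseudo-conformal form the wave packets of $\bars^{\lambda}$ genuinely do not curve — equivalently, that the tube geometry, the transverse equidistribution estimate, and the relevant polynomial Wolff axioms transfer \emph{verbatim} from the paraboloid extension operator; this requires a careful analysis of the second-order behavior of $\phi^{\lambda}$ and is the content of Sections \ref{f_section4}--\ref{section_transverse}. Second, the two-parameter induction must be arranged so that the $R$-recursion from polynomial partitioning and the $\lambda$-recursion from parabolic rescaling in the narrow case close together without circularity, which amounts to tracking the $\e$-losses against the chain of admissible parameters in \eqref{constants_z}. Once these points are in place, the remainder is a faithful adaptation of the cited restriction arguments.
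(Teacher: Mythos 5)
Your architecture matches the paper's: a two-parameter induction in $(\lambda,R)$, reduction to a $k$-broad estimate \`a la \cite[Section 11]{MR4047925}, a Bourgain--Guth dichotomy whose narrow part is closed by parabolic rescaling (Lemma \ref{201229lem5_3}) and whose broad part is handled by the polynomial/multigrain machinery of \cite{guth2018,HR2019,hickman2020note}, with the transverse equidistribution and polynomial Wolff inputs transferring because the wave packets are straight. (You omit the intermediate reduction to product amplitudes $a_1^{\lambda}(x,t)a_2(\omega)$ via Fourier expansion, but that is a routine technical step.)

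There is, however, one genuine gap: your treatment of the narrow part. You propose to rescale each $K^{-1}$-cap and then ``sum over the $\sim K^{n-1}$ caps,'' but naive summation does not close the induction. Rescaling a single cap gains $(K^{-1})^{(n-1)-2n/p}$, while triangle inequality plus H\"older over the $\sim K^{k-2}$ caps clustering near a $(k-1)$-plane costs $K^{(k-2)(1-1/p)}$, and the net exponent is positive in the relevant range of $p$. The paper's narrow argument hinges on Lemma \ref{201108lemma5_1}, a lower-dimensional Bourgain--Demeter decoupling on each $B_{K^2}$, which replaces that cost by $K^{(k-2)(1/2-1/p)+\varepsilon}$; it is exactly this exponent that produces the threshold $p\ge 2+4/(2n-k)$ in \eqref{210112e3_18}. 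Moreover, proving this decoupling for the variable-coefficient phase $\phi^{\lambda}$ is one of the delicate points of the whole argument: one must approximate $\phi^{\lambda}$ by a constant-coefficient elliptic phase on $B_{K^2}$, remove the Schwartz tails (which are not permitted in the Bochner--Riesz problem), and arrange that the resulting modulation coefficients $b_{\bfn,\beta'}$ are independent of the location of $B_{K^2}$ so that one can sum over all $B_{K^2}\subset B_R$ --- this is precisely where the hypothesis $R\le\lambda^{1-\epsilon}$ is used. A smaller inaccuracy: with $K^{-1}$-caps the rescaled operator has $\lambda'=\lambda/K$ (and $R''=R/K^2$), not $\lambda'\sim K^{-2}\lambda$; the careful bookkeeping of the new spatial domain $\widetilde{D}_R$ against the enlarged boxes $[-\lambda_{K,R},\lambda_{K,R}]\times[R/C_n,C_n\lambda]$ is what makes the induction hypothesis re-applicable, and you should not treat it as automatic.
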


Let us prove Theorem \ref{201204thm3_1} by assuming Theorem \ref{201204thm3_2}. 
 We fix $a$ in the definition of the operator \eqref{101021}.
Since $a$ has a compact support, by a partition of unity and rotation, we may assume that 
\begin{equation}
    \mathrm{supp}(a) \subset \{(x',x_n,y',y_n): |x_n-y_n| \simeq 1  \},
\end{equation}
where $x'=(x_1, \dots, x_{n-1})$ and $y'=(y_1, \dots, y_{n-1})$.  We write the Carleson-Sj\"{o}lin operator as \begin{equation}
    S^{\lambda}f(x)=\int_{\R}\bars_{y_n}^{\lambda}f_{y_n}(x)\,dy_n,
\end{equation}
where
\begin{equation}\label{210111e2_8}
    \bars_{y_n}^{\lambda}f_{y_n}(x):=
    \int_{\R^{n-1}}e^{2\pi i \lambda|x-y|}a(x-y) f_{y_n}(y')\,dy'
\end{equation}
and
\begin{equation}
    f_{y_n}(y'):=f(y',y_n).
\end{equation}
By H\"{o}lder's inequality and by the fact that $a$ has a compact support, it suffices to prove
\begin{equation}\label{1010216}
    \|\bars^{\lambda}_{y_n}f_{y_n}\|_{L^p(\R^n)} \lesssim \lambda^{-n/p+\epsilon}\|f_{y_n}\|_{L^p_{y'}(\R^{n-1})}
\end{equation}
uniformly for every $|y_n| \lesssim 1$.
By a translation $x_n \mapsto x_n-y_n$, we obtain an operator independent of the variable $y_n$. Hence, we without loss of generality assume that $y_n=0$. To proceed, we write the phase function in the oscillatory integral \eqref{210111e2_8} as follows:
\begin{equation}
    \lambda\big|(x',x_n)-(y', 0)\big|= \lambda |x_n|\sqrt{1+|x'-y'|^2/x_n^2}.
\end{equation}
 We apply a change of variables 
\begin{equation}\label{pseudo_conformal}
 (u_1, \dots, u_{n-1}, t):=(x'/x_n, 1/x_n),
\end{equation}
which is the pseudo-conformal change of variables (see for instance Carbery \cite{MR1151328}, Tao \cite[Section 2.3]{MR2233925} or Rogers \cite{MR2456277}). By the support assumption on $a$, the Jacobian is comparable to one. After this change of variables, our operator $\bars^{\lambda}_0f_0$ becomes
\begin{equation}\label{201114e3_10}
    \tilde{S}^{\lambda}f_0(u,t):=
    \int_{\R^{n-1}}e^{2\pi i \lambda t^{-1}\sqrt{1+|u-ty'|^2 } }
    \tilde{a}(u-ty', t) f_0(y')\,dy',
\end{equation}
where $y'=(y_1, \dots, y_{n-1})$ and $\tilde{a}(\cdot,\cdot)$ is a smooth function that has compact supports in all its variables and is supported away from the origin in its second variable. At this point, due to spatial orthogonality, we see that in \eqref{201114e3_10} we can insert for free a smooth cut-off function $\tilde{a}(y')$, supported near the origin, and it is equivalent to bound 
\begin{equation}
    \int_{\R^{n-1}}e^{2\pi i \lambda t^{-1}\sqrt{1+|u-ty'|^2 } }
    \tilde{a}(u-ty', t)\tilde{a}(y') f_0(y')\,dy'.
\end{equation}
In the end, we pass from scale one to scale $\lambda$ via the change of variables $(u,t) \mapsto \lambda^{-1}(u,t)$. This gives an operator of the form in \eqref{1010210}, and therefore finishes the proof of Theorem \ref{201204thm3_1}. \\

In the end, we make a reduction to the amplitude function $a^{\lambda}(u, t; \omega)$ so that it will have a product form $a_1^{\lambda}(u, t) a_2^{\lambda}(\omega)$. Before the reduction, let us fix the notation. In the definition of $\bar{S}^{\lambda}$, we see that the variable $t$  plays a distinguished role, compared with other variables. Therefore, from now on, we will write $x=(x_1, \dots, x_{n-1})$ and $\bx=(x, t)\in \R^n$.  
Moreover, define 
\begin{equation}\label{210131e2_15}
    \phi^{\lambda}(x, t;\omega):=\lambda \phi\big(\frac{x}{\lambda}, \frac{t}{\lambda};\omega\big):=\frac{\lambda^2}{t}\sqrt{1+|\frac{x}{\lambda}-\frac{t}{\lambda}\omega|^2}.
\end{equation}
Under this notation, we can write 
\begin{equation}
    \bar{S}^{\lambda} g(\bx)=\int_{\R^{n-1}} e^{2\pi i \phi^{\lambda}(\bx; \omega)}a^{\lambda}(\bx; \omega) g(\omega) d\omega.
\end{equation}
Via a standard Fourier expansion, it suffices to consider 
\begin{equation}
\int_{\R^{n-1}} e^{2\pi i  \phi^{\lambda}(x, t; \omega)} a_1^{\lambda}(x, t)a_2(\omega) g(\omega)d\omega,
\end{equation}
where
\begin{equation}
    a_1^{\lambda}(x, t):=a_1(x/\lambda, t/\lambda),
\end{equation}
with $a_1(\cdot, \cdot)$ being a compactly supported function in both variables and supported away from the origin in its second variable, and $a_2(\cdot)$ is compactly supported near the origin. We need to prove 
\begin{equation}\label{201204e3_19}
\Big\|\int_{\R^{n-1}} e^{2\pi i  \phi^{\lambda}(x, t; \omega)} a_1^{\lambda}(x, t)a_2(\omega) g(\omega)d\omega
\Big\|_{L^p(\R^n)}\lesim \lambda^{\epsilon} \|g\|_{L^p(\R^{n-1})}. 
\end{equation}
Such an estimate will be proven via an inductive argument on $\lambda$. In order to set up the induction, we need to be more quantitative about the choice of amplitude functions. Define 
\begin{equation}\label{201230e3_20}
    H^{\lambda}g(\bx) :=  \int_{\R^{n-1}} e^{2\pi i \phi^{\lambda}(x, t;  \omega)}  g(\omega)d\omega.
\end{equation}
Let $C_n$ be a large enough constant such that \begin{equation}
    B_{C_n \lambda}(0)\setminus \{\bx: |t|\le \lambda/C_n\}, \text{ with } B_{\lambda}(0):=[-\lambda, \lambda]^n,
\end{equation}
contains the support of $a^{\lambda}_1$. 
To prove \eqref{201204e3_19}, it is elementary to see that it suffices to prove 
\begin{thm}\label{201204thm3_3}
Under the above notation, it holds that 
\begin{equation}\label{201204e3_21}
    \Big\|H^{\lambda} g\Big\|_{L^p(B_{C_n \lambda}(0)\setminus \{\bx: |t|\le \lambda/C_n\})} \lesim_{ \epsilon} \lambda^{\epsilon} \|g\|_{L^p([0, 1]^{n-1})},
\end{equation}
for every $\epsilon>0$, every $g: [0, 1]^{n-1}\mapsto \C$, dimension $n\ge 3$ and exponent $p \geq \pknown$.
\end{thm}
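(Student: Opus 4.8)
The plan is to prove Theorem \ref{201204thm3_3} by a two--parameter induction on scales. The primary parameter is $\lambda$; the secondary one records how \emph{degenerate}, i.e.\ how strongly compressed into a thin slab, the frequency support of $g$ has become after previous rescaling steps, and the small quantities $\dt$, $\dd$ of the notation will quantify the corresponding transversality and degeneracy defects. The induction hypothesis, set up in Section \ref{210325section3}, will be a quantitative form of \eqref{201204e3_21} valid at all strictly smaller scales $\lambda' < \lambda$ (allowing these auxiliary parameters, and with admissible constants drawn from the hierarchy \eqref{constants_z}); the base case of very small $\lambda$ is trivial. The work of this and the later sections is to propagate the hypothesis to scale $\lambda$ and then to discard the auxiliary parameters, recovering \eqref{201204e3_21}.

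First I would run the Bourgain--Guth broad--narrow analysis. Partition $[0,1]^{n-1}$ into caps $\tau$ of a suitably chosen intermediate side length and write $g=\sum_\tau g_\tau$. At each point $\bx$ of the region in \eqref{201204e3_21}, either several caps $\tau$ in transverse position each contribute a comparable share of $|H^\lambda g(\bx)|$ --- the \emph{broad} case, in which $|H^\lambda g(\bx)|$ is dominated by the $k$-broad norm of $H^\lambda g$ for a suitable $k$ --- or all significant caps lie in a thin neighbourhood of an affine subspace --- the \emph{narrow} case. In the narrow case I would invoke the special parabolic-rescaling symmetry of the phase $\phi^\lambda$ (Lemma \ref{201229lem5_3}): localising $\omega$ to such a slab and applying a compensating, pseudo-conformal-compatible change of the physical variables $(x,t)$ turns $H^\lambda$ restricted to that slab into an operator of the same shape \eqref{201230e3_20} at a strictly smaller scale $\lambda'$ and with a possibly more degenerate amplitude. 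The induction hypothesis then applies at scale $\lambda'$, and summing over the caps --- where the intermediate cap size is fixed --- yields a gain of a small power of $\lambda$. This is precisely where one uses that, after the pseudo-conformal normalisation, the Bochner--Riesz phase obeys the same parabolic rescaling as the paraboloid extension phase, rather than the pathological curved-wave-packet behaviour of a generic H\"ormander phase.

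It then remains to establish the broad norm estimate $\|H^\lambda g\|_{\BLk(B_{C_n \lambda}(0))}\lesim_\epsilon \lambda^\epsilon\|g\|_{L^p([0,1]^{n-1})}$ for the relevant $k$ and every $p\ge\pknown$, which occupies Sections \ref{f_section7}--\ref{f_section9}. This is carried out by Guth's polynomial method: decompose $H^\lambda g$ into wave packets (Section \ref{f_section4}); apply polynomial partitioning of $B_{C_n \lambda}(0)$ into cells adapted to a variety $Z$ of controlled degree; and split the wave packets meeting a neighbourhood of $Z$ into those \emph{tangent} to $Z$ and those \emph{transverse} to $Z$. The transverse wave packets are controlled by the polynomial Wolff axioms of Guth, Zahl and Katz--Rogers, exactly as in \cite{MR3454378}, \cite{HR2019} and \cite{hickman2020note}; the tangent wave packets are handled by combining the transverse equidistribution estimate of Section \ref{section_transverse} with the comparison of wave packets at scale $\lambda$ and at smaller scales from Section \ref{210325section5}, which drives a further internal induction on scales. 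Summing the cellular and tangential contributions and optimising the degree of $Z$ closes the broad estimate.

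The hard part will be threefold. First, the bookkeeping: the losses from the broad--narrow splitting, from the parabolic rescaling in the narrow case, and from each round of polynomial partitioning must all be absorbed into the single factor $\lambda^\epsilon$, which is what dictates the nested hierarchy \eqref{constants_z} and forces the two--parameter structure. Second, one must check that the rescaled objects stay in the induction class --- that Lemma \ref{201229lem5_3} genuinely reproduces an operator of the form \eqref{201230e3_20} at the smaller scale with amplitudes of the required type. Third, and most importantly, one must verify that the transverse equidistribution property and the multi-scale wave packet structure hold for the Bochner--Riesz phase $\phi^\lambda$ just as for the paraboloid; once this is confirmed, the broad estimate follows the Fourier restriction template of \cite{HR2019} and \cite{hickman2020note} with only cosmetic modifications --- which is the central message of the paper.
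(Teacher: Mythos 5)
Your overall architecture (Bourgain--Guth broad--narrow reduction, parabolic rescaling via Lemma \ref{201229lem5_3} in the narrow case, and a broad-norm estimate proved by wave packets, transverse equidistribution and the polynomial method with polynomial Wolff axioms) does match the paper. But the crux of Theorem \ref{201204thm3_3} is the choice of the second induction parameter, and there you have guessed wrong in a way that breaks the argument. You propose to induct on $\lambda$ together with a parameter recording how ``compressed'' the frequency support of $g$ has become (your $\dt$, $\dd$). The actual difficulty after the parabolic rescaling is not on the frequency side at all: rescaling a cap $\tau$ of side $K^{-1}$ sends $(\lambda,R)\mapsto(\lambda/K,R/K^2)$ and maps the spatial ball to a box of dimensions roughly $(R/K)^{n-1}\times R/K^2$ sitting at heights $t\sim R/K^2$, which can be far smaller than the new scale $\lambda'=\lambda/K$. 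An induction hypothesis indexed only by $\lambda'$ and a frequency-degeneracy parameter gives you nothing for such a region, because the phase $\phi^{\lambda}$ becomes genuinely more singular as $t$ shrinks relative to $\lambda$ (the $t$ in the denominator in \eqref{210131e2_15}; see Remark \ref{rem:3_1}). The paper's induction hypothesis \eqref{201201e5_7} is therefore a statement about $L^p(B_R)$ for \emph{all} spatial scales $1\le R\le\lambda^{1-\epsilon}$, with $B_R\subset[-\lambda_R,\lambda_R]\times[R/C_n,C_n\lambda]$; the slightly odd quantity $\lambda_{K,R}$ and the constraint $R\le\lambda^{1-\epsilon}$ exist precisely so that the rescaled box $\widetilde D_R$ in \eqref{201202e6_33} lands back inside the admissible class (see \eqref{210201e3_69}). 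Without inducting on the spatial scale $R$ jointly with $\lambda$, Lemma \ref{201229lem5_3} cannot be closed, and your plan stalls at exactly the step you flag as ``check that the rescaled objects stay in the induction class.''

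A second, smaller omission: in the narrow case you attribute the gain to ``summing over the caps,'' but a triangle-inequality summation over the $\sim K^{m-1}$ caps near a subspace loses too much to close the induction. The paper needs the Bourgain--Demeter $\ell^p$-decoupling gain $K^{(m-1)(1/2-1/p)}$ on each $B_{K^2}$ (Lemma \ref{201108lemma5_1}), and a nontrivial portion of Section \ref{210325section3} is devoted to making decoupling usable here: approximating $\phi^{\lambda}$ by a constant-coefficient elliptic phase on $B_{K^2}$, removing the Schwartz tails of the weight (inadmissible in the Bochner--Riesz setting), and arranging, via Taylor expansion in the spirit of Beltran--Hickman--Sogge, that the resulting modulations $b_{\bfn,\beta'}(\bx_1;\omega)$ do not depend on the particular $B_{K^2}$, which again uses $R\le\lambda^{1-\epsilon}$. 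Your sketch of the broad part (Sections \ref{f_section4}--\ref{f_section9}) is consistent with the paper, up to the cosmetic point that the broad estimate is proved with the mixed-norm right-hand side $\|g\|_2^{2/p}\|g\|_\infty^{1-2/p}$ and combined with restricted-type interpolation, not directly with $\|g\|_{L^p}$.
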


\section{Reduction to broad norm estimates}\label{210325section3}

\subsection{The induction hypothesis}\label{sec3.1}

We will prove \eqref{201204e3_21} via an inductive argument. In this subsection we set up the induction and state the induction hypothesis. Let $1\le R\le \lambda$. Let $K_{R}$ be a large number that is to be determined: It is much larger than one, but much smaller compared with $R^{\epsilon}$.\footnote{It will be chosen to be $R^{\delta'}$ for some extremely small $\delta'\ll \epsilon$.} Out of certain technical reason, we introduce a scale $\lambda_{R}$ that is comparable to $\lambda$. For $K\ge 2$, we define $\lambda_{K, R}$: The parameter $K$ will eventually be set to be $K_{R}$ and $\lambda_R=\lambda_{K_{R}, R}$. If $\lambda/R< K$, we define $\lambda_{K, R}=C_n \lambda. $ If $\lambda/R\ge K$, then we define
\begin{equation}
    \lambda_{K, R}:=C_n \lambda\Big(2+\frac{1}{K}+\dots+\frac{1}{K^{[\log_{K} (\lambda/R)]-1}}\Big),
\end{equation}
where $[\log_{K}(\lambda/R)]$ refers to the largest integer that does not exceed $\log_{K}(\lambda/R)$. Set $\lambda_R:=\lambda_{K_{R}, R}$. Note that $C_n \lambda\le \lambda_{K, R}\le 3C_n \lambda$. To prove \eqref{201204e3_21}, it suffices to prove \begin{equation}\label{201201e5_7}
    \Big\|H^{\lambda} g\Big\|_{L^p(B_R)} \lesim_{n, p, \epsilon} R^{\epsilon} \|g\|_{L^p([0, 1]^{n-1})},
\end{equation}
for every $1\le R\le \lambda^{1-\epsilon}$, and every cube 
\begin{equation}\label{210201e3_3}
    B_R\subset [-\lambda_{R}, \lambda_{R}]\times [R/C_n, C_n \lambda]. 
\end{equation}
This will be proven via an inductive argument on $\lambda$ and $R$. 
\begin{remark}\label{rem:3_1}
\rm
  We will see that it is crucial to run an induction on both parameters $\lambda$ and $R$. It is worth mentioning that the smaller $R$ is, the more ``singular" our phase function $\phi^{\lambda}$ behaves, which, roughly speaking, can be explained by that $t$ appears in the denominator in \eqref{210131e2_15}. To deal with the case that $R$ is much smaller compared with $\lambda$, we will need to use very particular properties (see for instance \eqref{210131e3_20z} and \eqref{210131e3_28}) of the phase function $\phi^{\lambda}$. That $R$ can be much smaller compared with $\lambda$ seems to indicate that our induction is perhaps different from that of Guth, Hickman and Iliopoulou \cite{MR4047925}. In \cite{MR4047925}, the authors there inserted (and they could as well) a compactly supported amplitude function in the definition of $H^{\lambda}$ so that they always have $|t|\gtrsim \lambda$. 
\end{remark}
\begin{remark}
\rm

The requirement that $R\le \lambda^{1-\epsilon}$ will be used in the proof of Lemma \ref{201108lemma5_1}. 
\end{remark}
The base case of the induction $\lambda=R=1$ is trivial. We assume that we have proven that 
\begin{equation}\label{201201e5_9}
    \Big\|H^{\lambda'} g\Big\|_{L^p(B_{R'})} \le C_{n, p, \epsilon} (R')^{\epsilon} \|g\|_{L^p([0, 1]^{n-1})},
\end{equation}
for every $\lambda'\le \lambda/2$, $1\le R'\le (\lambda')^{1-\epsilon}$, and every cube 
\begin{equation}
    B_{R'}\subset [-\lambda'_{R'}, \lambda'_{R'}]\times [R'/C_n, C_n \lambda']. 
\end{equation}
Our goal is to prove the same hold with $\lambda, R$.

\subsection{Reduction to broad norm estimate}

\medskip

For $\bx\in [-\lambda_{R}, \lambda_{R}]\times [R/C_n, C_n \lambda]$ and $\omega\in  [0, 1]^{n-1}$, we define Gauss maps and rescaled Gauss maps. Define 
\begin{equation}
    G_0(\bx; \omega):=\partial_{\omega_1}\nabla_{\bx}\phi\wedge \dots \wedge \partial_{\omega_{n-1}}\nabla_{\bx}\phi.
\end{equation}
Moreover, define 
\begin{equation}
    G(\bx; \omega):=\frac{G_0(\bx; \omega)}{|G_0(\bx; \omega)|}.
\end{equation}
Define rescaled Gauss map 
\begin{equation}
    G^{\lambda}(\bx; \omega):=G(\bx/\lambda; \omega).
\end{equation}
Via some elementary computation, we obtain 
\begin{lem}\label{210111lem3_1}
Under the above notation, we have that the rescaled Gauss maps equals 
\begin{equation}
\label{Gauss-map}
    G^{\lambda}(\bx; \omega)=\frac{(\omega, 1)}{\sqrt{1+|\omega|^2}}. 
\end{equation}
In particular, the rescaled Gauss map does not depend on $\bx$. 
\end{lem}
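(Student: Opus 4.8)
The statement is purely computational, so the plan is to compute the Gauss map $G_0(\bx;\omega)$ directly from the phase $\phi(x,t;\omega)=\frac{1}{t}\sqrt{1+|x-t\omega|^2}$ and verify that after normalization it equals $(\omega,1)/\sqrt{1+|\omega|^2}$. First I would introduce the shorthand $v:=x-t\omega\in\R^{n-1}$ and $\rho:=\sqrt{1+|v|^2}$, so that $\phi = \rho/t$. The key is to compute $\nabla_{\bx}\phi = (\nabla_x\phi,\partial_t\phi)$ and then the mixed derivatives $\partial_{\omega_j}\nabla_{\bx}\phi$ for $j=1,\dots,n-1$, and finally take the wedge product of these $n-1$ vectors in $\R^n$.

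The main computation: since $\partial_{x_i}\rho = v_i/\rho$ we get $\nabla_x\phi = v/(t\rho)$, and $\partial_t\phi = -\rho/t^2 + \frac{1}{t}\cdot\frac{v\cdot(-\omega)}{\rho} = -\frac{\rho}{t^2} - \frac{v\cdot\omega}{t\rho}$. Now differentiate in $\omega_j$: since $\partial_{\omega_j}v = -t e_j$, one finds $\partial_{\omega_j}(\nabla_x\phi)$ is a vector in $\R^{n-1}$ and $\partial_{\omega_j}(\partial_t\phi)$ is a scalar. A convenient way to organize the wedge product is to recall that for vectors $w_1,\dots,w_{n-1}\in\R^n$, the wedge $w_1\wedge\dots\wedge w_{n-1}$ is the vector whose $k$-th component is $(-1)^{k+1}$ times the determinant of the $(n-1)\times(n-1)$ matrix obtained by deleting the $k$-th coordinate. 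So I would assemble the $(n-1)\times n$ matrix $M$ whose $j$-th row is $\partial_{\omega_j}\nabla_{\bx}\phi$, and show that the vector of signed maximal minors of $M$ is parallel to $(\omega,1)$. The cleanest route is probably to show directly that $M\cdot(\omega,1)^{\mathsf T}=0$ (i.e. $(\omega,1)$ is orthogonal to every row), which already forces $G_0$ to be parallel to $(\omega,1)$ up to sign since $M$ has rank $n-1$ generically; then a sign/normalization check gives the stated formula, and since $|(\omega,1)/\sqrt{1+|\omega|^2}|=1$ the normalization is automatic. To see $M\cdot(\omega,1)^{\mathsf T}=0$: the $j$-th entry is $\omega\cdot\partial_{\omega_j}(\nabla_x\phi) + \partial_{\omega_j}(\partial_t\phi)$, which should telescope. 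Indeed $\sum_i \omega_i\,\partial_{\omega_j}\partial_{x_i}\phi + \partial_{\omega_j}\partial_t\phi = \partial_{\omega_j}\!\big(\omega\cdot\nabla_x\phi + \partial_t\phi\big) - \partial_{x_j}\phi$ (the extra $-\partial_{x_j}\phi$ coming from differentiating the explicit $\omega_j$ in the factor $\omega_i$), and one checks the Euler-type identity $\omega\cdot\nabla_x\phi + \partial_t\phi = -\phi/t \cdot(\text{something})$ — more precisely a direct substitution gives $x\cdot\nabla_x\phi + t\,\partial_t\phi = -\phi$ (homogeneity of degree $-1$ in $\bx$), and combining the scaling relations yields that the $j$-th entry of $M(\omega,1)^{\mathsf T}$ vanishes.

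Then I would note that the final claim — independence of $\bx$ — is immediate from the formula, and that the rescaled Gauss map $G^\lambda(\bx;\omega)=G(\bx/\lambda;\omega)$ inherits this since the right-hand side $(\omega,1)/\sqrt{1+|\omega|^2}$ contains no $\bx$. I do not expect a genuine obstacle here; the only mildly delicate points are (i) bookkeeping the signs in the wedge-product/minor computation, which I would sidestep by the orthogonality argument above plus checking one component (say, verifying the last coordinate of $G_0$ has the correct sign on the support of the amplitude, where $t\simeq 1$ and $|v|$ bounded), and (ii) confirming that $G_0\neq 0$ on the relevant domain so that the normalization $G=G_0/|G_0|$ makes sense — this follows because the $(n-1)\times(n-1)$ minor obtained by deleting the last coordinate is, up to nonzero factors, $\det$ of a matrix of the form $(\text{scalar})\cdot I + (\text{rank-one})$, which is nonvanishing on the support. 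The heart of the matter is thus the elementary identity $x\cdot\nabla_x\phi+t\,\partial_t\phi=-\phi$ together with the parabolic-rescaling structure $\phi^\lambda(\bx;\omega)=\lambda\phi(\bx/\lambda;\omega)$, both of which are transparent from \eqref{210131e2_15}.
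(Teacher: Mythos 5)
Your overall strategy is sound and is essentially a cleaner packaging of the paper's argument: the paper simply writes out all the mixed second derivatives \eqref{210131e3_12}--\eqref{210131e3_14} of $\phi^{\lambda}$ and asserts that the wedge product is parallel to $(\omega,1)$, whereas you reduce this to showing that $(\omega,1)$ annihilates every row of the $(n-1)\times n$ matrix $M=\big(\partial_{\omega_j}\nabla_{\bx}\phi\big)_j$ and that $M$ has rank $n-1$ (the latter being exactly the nonvanishing mixed Hessian, cf.\ \eqref{210201e3_40}). However, the identity you single out as ``the heart of the matter'' is false: $\phi(x,t;\omega)=t^{-1}\sqrt{1+|x-t\omega|^{2}}$ is \emph{not} homogeneous of degree $-1$ in $\bx=(x,t)$, because of the $1$ under the square root (the scaling in \eqref{210131e2_15} is joint in $(x,t,\lambda)$, not in $\bx$ alone). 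A direct computation gives $x\cdot\nabla_x\phi+t\,\partial_t\phi=-\big(t\sqrt{1+|x-t\omega|^{2}}\big)^{-1}$, not $-\phi$, so the step ``combining the scaling relations yields that the $j$-th entry of $M(\omega,1)^{\mathsf T}$ vanishes'' does not go through as written.

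The fix is the identity you stated only loosely at the outset, and it is the one your telescoping actually needs. Writing $\phi(x,t;\omega)=t^{-1}\psi(x-t\omega)$ with $\psi(v)=\sqrt{1+|v|^{2}}$, one gets immediately $\omega\cdot\nabla_x\phi+\partial_t\phi=-\phi/t$ and $\nabla_\omega\phi=-t\,\nabla_x\phi$. Then, exactly as in your reduction, the $j$-th entry of $M(\omega,1)^{\mathsf T}$ equals $\partial_{\omega_j}\big(\omega\cdot\nabla_x\phi+\partial_t\phi\big)-\partial_{x_j}\phi=-t^{-1}\partial_{\omega_j}\phi-\partial_{x_j}\phi=0$. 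With this replacement, the remainder of your plan (rank $n-1$ from the nondegenerate mixed Hessian, plus the sign and normalization check, and the trivial passage from $G$ to $G^{\lambda}$) is correct and yields \eqref{Gauss-map}.
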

\begin{proof}[Proof of Lemma \ref{210111lem3_1}] For future use, we collect a few useful computations regarding the phase function $\phi^{\lambda}$. First of all, 
\begin{equation}\label{210131e3_10}
    \nabla_{x}\phi^{\lambda}(x, t; \omega)=\frac{\lambda}{t}\frac{ (x-t\omega)}{\sqrt{\lambda^2+|x-t\omega|^2}}; \ \  \partial_{t}\phi^{\lambda}(x, t; \omega)=-\frac{\lambda}{t^2}\frac{ \lambda^2+x\cdot (x-t\omega)}{\sqrt{\lambda^2+|x-t\omega|^2}}.
\end{equation}
Next, we compute the gradient in $\omega$:
\begin{equation}
    \partial_{\omega}\phi^{\lambda}(x, t; \omega)=\frac{\lambda(t\omega-x)}{\sqrt{\lambda^2+|x-t\omega|^2}}.
\end{equation}
Regarding the related second order derivatives, we have 
\begin{equation}\label{210131e3_12}
    \partial_{x_i}\partial_{\omega_j} \phi^{\lambda}(x, t; \omega)=\frac{\lambda(x_i-t \omega_i)(x_j-t\omega_j)}{\big(\lambda^2+|x-t\omega|^2\big)^{3/2}} \text{ if } i\neq j, 
\end{equation}
\begin{equation}\label{210131e3_13}
    \partial_{x_i}\partial_{\omega_i} \phi^{\lambda}(x, t; \omega)=-\frac{\lambda \big(\lambda^2+|x-t\omega|^2-(x_i-t\omega_i)^2\big)}{\big(\lambda^2+|x-t\omega|^2\big)^{3/2}},
\end{equation}
and 
\begin{equation}\label{210131e3_14}
    \partial_t\partial_{\omega_j}\phi^{\lambda}(x, t; \omega)=\frac{-x_j\omega\cdot (x-t\omega)+\omega_j x\cdot(x-t\omega)}{\big(\lambda^2+|x-t\omega|^2\big)^{3/2}}.
\end{equation}
Combining all these, we obtain 
\begin{equation}
\begin{split}
    & \partial_{\omega_1}\nabla_{\bx}\phi^{\lambda}\wedge \dots \wedge \partial_{\omega_{n-1}}\nabla_{\bx}\phi^{\lambda}
\end{split}
\end{equation}
is parallel to $(\omega, 1)$. By normalization, we obtain the desired result. 
\end{proof}
Let $K\ge 1$. We divide $[0,1]^{n-1}$ into  caps $\tau$ of side length $K^{-2}$. Let $g_{\tau}$ denote the restriction of $g$ to $\tau$. Moreover, 
\begin{equation}
    G^{\lambda}(\tau):=\{G^{\lambda}(\bx; \omega): \omega\in \tau\}. 
\end{equation}
Let $V\subset \R^n$ be a linear subspace. Let $\ang(G^{\lambda}(\tau), V)$ denote the smallest angle between any non-zero vector $v\in V$ and $v'\in G^{\lambda}(\tau)$. Moreover, we say that $\tau\notin_{K}V$ if $\ang(G^{\lambda}( \tau), V)\ge K^{-1}$; otherwise, we say $\tau\in_K V$. If the value of $K$ is clear from the context, we often abbreviate $\tau\notin_K V$ to $\tau\notin V$. 
Next, let us introduce the notion of broad norms.  Fix $B_{K^2}\subset [-\lambda_R, \lambda_R]\times [R/C_n, C_n \lambda]$ centered at $\bx_0$. Define 
\begin{equation}
    \mu_{\operat g}(B_{K^2}):=\min_{V_1, \dots, V_A\in \text{Gr}(k-1, n)} \Big(\max_{\substack{\tau\notin V_a\\ \text{ for any } 1\le a\le A}} \|\operat g_{\tau}\|_{L^p(B_{K^2})}^p\Big).
\end{equation}
Here $\mathrm{Gr}(k-1,n)$ is the Gressmannian manifold of all $(k-1)$-dimensional subspaces in $\R^n$, and $k$ is to be determined, and $A$ is a parameter that is less important and its choice will become clear later. For $U\subset \R^n$, define 
\begin{equation}
    \|\operat g\|_{\BLka^p(U)}:=\Big(\sum_{B_{K^2}} \frac{|B_{K^2}\cap U|}{|B_{K^2}|} \mu_{\operat g}(B_{K^2})\Big)^{1/p}. 
\end{equation}
Next we should study and prove broad norm estimates.
\begin{thm}[Broad norm estimate]\label{201204thm5_1}
Let $2\le k\le n-1$, and 
\begin{equation}
    p\ge p_n(k):=2+\frac{6}{2(n-1)+(k-1)\prod_{i=k}^{n-1} \frac{2i}{2i+1}}.
\end{equation}
Then for every $\epsilon>0$, there exists $A$ such that 
\begin{equation}
\label{main-esti}
    \|H^{\lambda} g\|_{\BLka^p(B_{R})}\lesim_{K, \epsilon} R^{\epsilon} \|g\|_{L^2}^{2/p}\|g\|_{L^{\infty}}^{1-2/p},
\end{equation}
for every $K\ge 1$, $1\le R\le \lambda$, where $B_{R}$ is a ball of radius $R$ satisfying $B_{R}\subset [-3C_n \lambda, 3 C_n \lambda]^{n-1}\times [R/C_n,  C_n \lambda]$. Moreover, the implicit constant depends polynomially on $K$. 
\end{thm}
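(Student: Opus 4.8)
\textbf{Proof proposal for the broad norm estimate (Theorem \ref{201204thm5_1}).}

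The plan is to prove this by a two-parameter induction on the scales $\lambda$ and $R$, running in parallel with the induction hypothesis \eqref{201201e5_9}, and combining a Bourgain--Guth broad/narrow decomposition with the polynomial-method machinery (polynomial partitioning, polynomial Wolff axioms, transverse equidistribution) as in Guth \cite{guth2018}, Hickman--Rogers \cite{HR2019} and Hickman--Zahl \cite{hickman2020note}. The key point, flagged repeatedly in the introduction, is that after the pseudo-conformal change of variables the phase $\phi^\lambda$ in \eqref{210131e2_15} has rescaled Gauss map \eqref{Gauss-map} independent of $\bx$, so the wave packets behave exactly like those for the extension operator of the paraboloid; hence one may transcribe the restriction-theory argument essentially verbatim, with the scale $R$ playing the role the usual radius plays in the restriction problem and with $\lambda$ tracking the "ambient" scale. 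First I would set up wave packet decomposition at scale $R$ (Section \ref{f_section4}), so that $H^\lambda g_\tau$ on $B_R$ is, up to rapidly decaying errors, a sum of wave packets $\psi_{T}$ each concentrated on an $R^{1/2}\times\cdots\times R^{1/2}\times R$ tube $T$ with direction determined by $\omega_\tau$ via the Gauss map.

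The main body of the argument is the induction step. Run the broad-narrow dichotomy at the $K^{-2}$-cap scale: on each ball $B_{K^2}$, either a single "narrow" cluster of caps concentrated near some $(k-1)$-plane dominates $\|H^\lambda g\|_{L^p(B_{K^2})}$ — in which case one rescales (here is where the special parabolic rescaling structure of $\phi^\lambda$ from the proof of Lemma \ref{201229lem5_3} is used, to reduce to $H^{\lambda'}$ with $\lambda'\le\lambda/2$ and apply \eqref{201201e5_9}) — or the $k$-broad part dominates, and one must prove the broad estimate itself. For the broad case, apply polynomial partitioning (Guth's version adapted to broad norms) to cut $B_R$ into $\sim R^{O(\delta)}$ cells on which the contribution is balanced, plus a wall neighborhood. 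The cellular term is handled by induction on $R$ at scale $R' \sim R/2$ or by a bush argument; the wall/tangential term is where one invokes the polynomial Wolff axioms of Katz--Rogers \cite{MR3881832} (to bound the number of tubes tangent to a low-degree variety) together with the transverse equidistribution estimate of Section \ref{section_transverse} (to gain on the $L^2$ norm over a neighborhood of the variety) and a lower-dimensional instance of the broad estimate at level $k-1$. Iterating this dichotomy across the dyadic scales from $K^2$ up to $R$, and optimizing the exponents, produces the threshold $p_n(k)$ via the product $\prod_{i=k}^{n-1}\frac{2i}{2i+1}$ — the same numerology as in \cite{hickman2020note}.

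The interpolation on the right-hand side of \eqref{main-esti}, namely $\|g\|_{L^2}^{2/p}\|g\|_{L^\infty}^{1-2/p}$, is produced in the standard way: one first proves the estimate with $\|g\|_{L^2}$ on the right after normalizing $\|g\|_\infty\lesssim 1$ and decomposing $g$ into pieces of roughly constant size (pigeonholing), so that $L^p$-type and $L^2$-type information can be combined, with the $K$-dependence kept polynomial because $K$ only enters through the number of caps $\sim K^{2(n-1)}$ and a bounded number of rescalings at each dyadic step. I would carry out the steps in this order: (i) wave packet decomposition and basic properties; (ii) the broad-narrow reduction, using parabolic rescaling for the narrow part; (iii) polynomial partitioning and the cellular/wall split; (iv) the tangential estimate via polynomial Wolff axioms plus transverse equidistribution, feeding in the $(k-1)$-broad estimate; (v) bookkeeping the induction-on-scales to close both inductions and extract $p_n(k)$.

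\textbf{Main obstacle.} The hard part will be the tangential (wall) term: making the transverse equidistribution estimate interact correctly with the polynomial Wolff axioms in the Bochner--Riesz geometry. Because $R$ may be much smaller than $\lambda$ (see Remark \ref{rem:3_1}), the phase $\phi^\lambda$ is genuinely more singular than the paraboloid extension phase, and one must verify that the quantitative transverse equidistribution of wave packets — proven in Section \ref{section_transverse} directly for the input function $g$ — survives the rescalings and gives the same gain as in \cite{guth2018, HR2019, hickman2020note}. Keeping the dependence on $K$ polynomial throughout the iterated polynomial partitioning, and checking that the lower-dimensional broad estimates invoked at level $k-1$ are available with the correct induction hypothesis, is the other delicate bookkeeping point.
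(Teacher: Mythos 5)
Your plan misplaces the two key ingredients of Section \ref{210325section3} inside the proof of Theorem \ref{201204thm5_1}, and this is a genuine structural gap rather than a stylistic difference. In the paper, the Bourgain--Guth broad/narrow dichotomy, the parabolic rescaling of Lemma \ref{201229lem5_3}, and the induction hypothesis \eqref{201201e5_9} are used only to \emph{deduce} Theorem \ref{201204thm3_3} from Theorem \ref{201204thm5_1}; they play no role in the proof of the broad norm estimate itself. Indeed, the broad norm $\BLka^p$ already discards, on each $B_{K^2}$, the caps lying near the worst $(k-1)$-planes, so there is no ``narrow cluster dominates'' case to rescale away when proving \eqref{main-esti}; moreover \eqref{201201e5_9} is an $L^p\to L^p$ bound valid only for $p\ge\pknown$, while \eqref{main-esti} is an $L^2$--$L^\infty$ interpolated bound claimed down to $p_n(k)<\pknown$, so the narrow/rescaling step you propose cannot produce the stated estimate, and tying Theorem \ref{201204thm5_1} to \eqref{201201e5_9} would also threaten circularity, since the $L^p$ induction of Section \ref{sec3.1} needs the broad estimate to hold with constants independent of that induction.

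The second gap is in how you close the tangential (wall) term. The paper does not invoke a lower-dimensional $(k-1)$-broad estimate there; instead it runs the two algorithms of Sections \ref{f_section8}--\ref{f_section9} ([alg 1] iterating polynomial partitioning with the cellular/algebraic and tangent/transverse splittings at a fixed variety dimension, [alg 2] recursing over dimensions to build nested multigrains), terminates via the vanishing property of Lemma \ref{vanishinglemma} once the tangency dimension drops below $k$, and extracts the threshold $p_n(k)$ from the \emph{nested} polynomial Wolff axioms (Lemma \ref{nestedpoly}, after Zahl and Hickman--Rogers--Zhang) applied to the multigrain direction sets, combined with the Property I--IV bookkeeping and Lemma \ref{lem92}, following the optimization of Hickman--Zahl. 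A single-scale application of the Katz--Rogers axiom plus transverse equidistribution feeding a $(k-1)$-broad estimate, as you describe, would at best reproduce the weaker Guth-type exponents and not the product $\prod_{i=k}^{n-1}\frac{2i}{2i+1}$ in $p_n(k)$; the multiscale grain structure is exactly what your sketch omits. Your identification of the pseudo-conformal straightening of wave packets (Lemma \ref{210111lem3_1}) and of transverse equidistribution as the inputs that survive from the restriction setting is correct, but the proof of Theorem \ref{201204thm5_1} must be organized around the multigrain algorithms, not around a broad/narrow induction on $(\lambda,R)$.
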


In the rest of this section, we will finish the proof of Theorem \ref{201204thm3_3} by assuming Theorem \ref{201204thm5_1}. We will show that Theorem \ref{201204thm5_1} implies \eqref{201204e3_21} whenever $n\ge 3$ and 
\begin{equation}\label{210112e3_18}
    2+\frac{4}{2n-k}\le p\le 2+\frac{2}{k-2}.
\end{equation}
The same optimization process as in Hickman and Zahl \cite[page 4]{hickman2020note} will give Theorem \ref{201204thm3_3}.\\

Let us begin the proof. The main steps are the proofs of the following Lemma \ref{201108lemma5_1} and Lemma \ref{201229lem5_3}. After proving these two lemmas, it is standard to deduce Theorem \ref{201204thm3_3} whenever $p$ satisfies \eqref{210112e3_18}. For the sake of completeness, we provide some details for this step. By a restricted type interpolation, we may assume that $g=\chi_E$ for some set $E$. Let us take $K=R^{\epsilon'}$ for some $0<\epsilon' \ll \epsilon$. By Lemma \ref{201108lemma5_1} and Lemma \ref{201229lem5_3}, and a standard application of the broad-narrow analysis (see for instance \cite[page 358]{MR4047925}) of Bourgain and Guth \cite{MR2860188} and Guth \cite{guth2018}, one obtains that
\begin{equation}
    \|H^{\lambda}g\|_{L^p(B_R)} \lesssim_{n,p,\epsilon} R^{\epsilon/2}\|g\|_2^{p/2}\|g\|_{\infty}^{1-2/p}+R^{\epsilon}R^{-C\epsilon'}\|g\|_{p}.
\end{equation}
Since $g$ is a characteristic function, the above term is bounded by a constant multiple of
\begin{equation}
    \big(R^{-\epsilon/2}+R^{-C\epsilon'} \big)R^{\epsilon}\|g\|_{L^p}.
\end{equation}
Since $R$ is a large number, this closes the induction and completes the proof.\\

From now on, we will focus on the proofs of those two lemma. We start with Lemma \ref{201108lemma5_1}, a decoupling inequality. 

\begin{lem}\label{201108lemma5_1}
Let $2\le m\le n$ and $V$ be an $m$-dimensional linear subspace in $\R^n$. Let $K=K_{R}$ and  $B_{K^2}\subset [-\lambda_R, \lambda_R]\times [R/C_n, C_n \lambda]$. For every $2\le p\le 2m/(m-1)$ and every $\varepsilon>0$, it holds that 
\begin{equation}
\begin{split}
    & \Big\|\sum_{\tau\in V: l(\tau)=K^{-1}}\operat g_{\tau}\Big\|_{L^p(B_{K^2})}\lesssim_{N,\varepsilon, p} \rapid(\lambda) \|g\|_2+\\
    &  \sum_{\beta\in \N_0^{n-1}} \sum_{\beta'\in \N_0^{n}} \sum_{\bfn \in \N^n} \frac{2^{-N |\beta'|} K^{(m-1)(1/2-1/p)+\varepsilon}}{(1+|\beta|)^{10n} (1+|\bfn|)^{100n}}  \Big(\sum_{\tau} \|\operat (g_{\tau}(\omega) b_{\bfn, \beta'}(\bx_1; \omega)e^{2\pi i\beta\cdot \omega})\|_{L^p(B_{K^2})}^p\Big)^{1/p}.
\end{split}
\end{equation}
Here $\bx_1$ is the center of $B_{R}$ (not $B_{K^2}$), $\tau\in V$ means $\tau\in_{K} V$, and $N$ can be as large as we would like. The function $b_{\bfn, \beta'}(\bx_1; \omega)$ satisfies the uniform bound $|b_{\bfn, \beta'}|\lesim 1$ in all parameters. 
\end{lem}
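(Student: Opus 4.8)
The plan is, following the oscillatory-integral decoupling arguments of Bourgain--Guth and Guth--Hickman--Iliopoulou, to localize to a single ball $B_{K^2}\subset B_R$, extract from $\phi^\lambda$ the affine-in-$\bx$ ``model'' phase that carries the curvature of the H\"ormander surface, absorb the remaining discrepancy into smooth amplitude factors which are then expanded into Fourier series, and finally apply $\ell^2 L^p$ decoupling for a lower-dimensional non-degenerate surface together with the trivial $\ell^2\hookrightarrow\ell^p$ embedding over the relevant caps.

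\smallskip
\textbf{Step 1 (reduction to a model phase on $B_{K^2}$).} Fix a ball $B_{K^2}\subset[-\lambda_R,\lambda_R]\times[R/C_n,C_n\lambda]$, with centre $\bx_0$, contained in $B_R$; recall $\bx_1$ is the (fixed) centre of $B_R$ and $K=K_R=R^{\delta'}$ with $\delta'\ll\varepsilon$. I would Taylor-expand $\phi^\lambda(\cdot;\omega)$ in $\bx$ about $\bx_0$, keeping only the affine part:
\[
\phi^\lambda(\bx;\omega)=\bx\cdot\nabla_{\bx}\phi^\lambda(\bx_0;\omega)-\bx_0\cdot\nabla_{\bx}\phi^\lambda(\bx_0;\omega)+\phi^\lambda(\bx_0;\omega)+E_{B_{K^2}}(\bx;\omega),
\]
where $E_{B_{K^2}}$ gathers the second- and higher-order remainders. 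Using the scaling $\phi^\lambda=\lambda\phi(\bx/\lambda;\cdot)$ from \eqref{210131e2_15} and the explicit derivative formulas \eqref{210131e3_10}--\eqref{210131e3_14} — and here, crucially, not a soft H\"ormander-type bound — one checks that on $B_{K^2}$ the function $E_{B_{K^2}}$, after the parabolic rescaling $\bx\mapsto K^{-2}\bx$, $\omega\mapsto K^{-1}\omega$, is a smooth symbol of order $0$ (all derivatives $O(1)$) with constants uniform over $B_{K^2}$ and over $\lambda\ge R^{1/(1-\varepsilon)}$; this is exactly where the hypotheses $R\le\lambda^{1-\varepsilon}$ and $K=K_R$ enter. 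To make the output uniform over all sub-balls $B_{K^2}\subset B_R$, I would then re-express the amplitude factors through the fixed point $\bx_1$ rather than $\bx_0$, at the cost of yet another $O(1)$ smooth factor. Expanding $e^{2\pi i E_{B_{K^2}}}$ and the (Schwartz) cut-offs of $H^\lambda$ into Fourier series in the rescaled $\omega$ variable, and dyadically decomposing their tails, produces the three summations in the statement: $\beta\in\N_0^{n-1}$ (Fourier frequencies in $\omega$, rapid polynomial decay $(1+|\beta|)^{-10n}$), and $\bfn\in\N^n$, $\beta'\in\N_0^n$ (from the amplitude and the dyadic/Schwartz tails, with decay $(1+|\bfn|)^{-100n}$ and $2^{-N|\beta'|}$), together with uniformly bounded amplitudes $b_{\bfn,\beta'}(\bx_1;\omega)$ and an honestly negligible contribution of size $\rapid(\lambda)\|g\|_2$ from the far/non-stationary parts. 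After this reduction it suffices to prove, for each fixed $(\beta,\beta',\bfn)$,
\[
\Big\|\sum_{\tau\in V}E^{\mathrm{mod}}_\tau h_\tau\Big\|_{L^p(B_{K^2})}\lesssim_\varepsilon K^{(m-1)(1/2-1/p)+\varepsilon}\Big(\sum_{\tau\in V}\|E^{\mathrm{mod}}_\tau h_\tau\|_{L^p(B_{K^2})}^p\Big)^{1/p},
\]
where $h_\tau(\omega)=g_\tau(\omega)\,b_{\bfn,\beta'}(\bx_1;\omega)\,e^{2\pi i\beta\cdot\omega}$, $\tau$ runs over caps of side $K^{-1}$ with $\tau\in_K V$, and $E^{\mathrm{mod}}_\tau$ is the restriction to $\tau$ of the Fourier extension operator of the hypersurface $\Sigma_{\bx_0}:=\{\nabla_{\bx}\phi^\lambda(\bx_0;\omega):\omega\in[0,1]^{n-1}\}$.

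\smallskip
\textbf{Step 2 (decoupling).} By Lemma \ref{210111lem3_1} the Gauss map $G^\lambda(\bx;\omega)=(\omega,1)/\sqrt{1+|\omega|^2}$ is independent of $\bx$; hence $\Sigma_{\bx_0}$ has the same (sphere) Gauss map as a piece of the unit sphere, in particular non-vanishing Gaussian curvature, uniformly in $\bx_0$. The caps with $\tau\in_K V$ are precisely those whose normals lie within angle $\sim K^{-1}$ of the $m$-plane $V$; there are $\sim K^{m-1}$ of them, and the corresponding planks in frequency space all lie within a $\sim K^{-1}$-neighbourhood of the $(m-1)$-dimensional non-degenerate sub-surface of $\Sigma_{\bx_0}$ cut out by this normal condition. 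Therefore, after an affine change of variables in $\bx$ normalizing that sub-surface to an $(m-1)$-dimensional paraboloid, the Bourgain--Demeter $\ell^2 L^p$ decoupling theorem applies for $2\le p\le 2m/(m-1)$ (inside the sharp range $2\le p\le 2(m+1)/(m-1)$), giving the $\ell^2$ form of the inequality with constant $K^\varepsilon$. The Hölder embedding $\|(a_\tau)\|_{\ell^2}\le(\#\{\tau\})^{1/2-1/p}\|(a_\tau)\|_{\ell^p}\lesssim K^{(m-1)(1/2-1/p)}\|(a_\tau)\|_{\ell^p}$ then converts this into the $\ell^p$ form above. Undoing the change of variables and the Step-1 reduction, and summing the convergent series over $\beta,\bfn,\beta'$ against their decay factors, yields the lemma.

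\smallskip
\textbf{Main obstacle.} The only substantial point is the symbol estimate in Step 1: showing that on every $B_{K^2}\subset B_R$ the deviation of $\phi^\lambda$ from its affine model is, after the natural parabolic rescaling, a zeroth-order symbol with uniform constants — and that this persists in the ``singular'' regime $R\ll\lambda$, where $t$ can be as small as $\sim R$ and $\phi^\lambda$ degenerates (cf.\ Remark \ref{rem:3_1}). This forces one to use the precise algebraic form \eqref{210131e2_15} of $\phi^\lambda$ (how $t$ sits in the denominator) and the identities \eqref{210131e3_10}--\eqref{210131e3_14}, rather than any abstract curvature hypothesis, and it is the reason the constraints $R\le\lambda^{1-\varepsilon}$ and $K=K_R$ (so that $K$ is only a tiny power of $R$) appear in the statement. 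Once these bounds are in place, the Fourier expansions converge at the advertised rates and the remaining steps are routine.
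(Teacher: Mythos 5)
Your skeleton is the same as the paper's (linearize the phase in $\bx$ about the center of $B_{K^2}$, push the discrepancy into amplitude factors expanded in Fourier series, apply Bourgain--Demeter decoupling to the frozen surface $\{\nabla_{\bx}\phi^{\lambda}(\bx_0;\omega)\}$, and convert $\ell^2$ to $\ell^p$ over the $\sim K^{m-1}$ caps near $V$), but the central analytic claim in your Step 1 is false, and it is exactly the point the paper is designed to get around. The second-order Taylor remainder $E_{B_{K^2}}=e^{\lambda}_{2,\bx_0}$ is \emph{not} an order-zero symbol on $B_{K^2}$ with uniform constants, and $R\le\lambda^{1-\varepsilon}$ does not make it one: since $|\partial_{tt}\phi^{\lambda}|\lesssim\lambda^2/R^3$, on a ball of radius $K^2$ the remainder can be as large as $K^4\lambda^2/R^3$, which is enormous when, say, $R\sim\lambda^{1/2}$ (an admissible value), and the rescaled $\bx$-derivatives are just as bad; this is precisely the ``singular'' regime emphasized in Remark \ref{rem:3_1}. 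What actually saves the argument is not a symbol bound on $E_{B_{K^2}}$ but the fact that its \emph{$\omega$-dependence} is tiny: by \eqref{210131e3_20z} one has $|e^{\lambda}_{2,\bx_0}(\bx;\omega)-e^{\lambda}_{2,\bx_0}(\bx;0)|\lesssim K^4/\lambda$ (see \eqref{210131e3_26z}), so the huge part is $\omega$-independent, factors out of the $\tau$-sum as a unimodular function of $\bx$, and the remaining factor has $\omega$-derivatives $O(K^4/\lambda)$ by \eqref{210131e3_29z}, which is what produces the $(1+|\beta|)^{-10n}$ decay. Without this observation your Fourier expansion of $e^{2\pi iE_{B_{K^2}}}$ has no usable coefficient bounds.

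Relatedly, you misplace where $R\le\lambda^{1-\varepsilon}$ and the base point $\bx_1$ genuinely enter, and this is the other substantive gap. Decoupling inevitably yields a weight $w_{B_{K^2}}$ with tails, which must be removed; the paper covers the tails by translates $B_{K^2,\bfn}$ and modulates them back to $B_{K^2}$, and the modulation $e_{\bfn,K}(\bx_0;\omega)$ in \eqref{210131e3_46} depends on the center $\bx_0$ of the particular ball $B_{K^2}$. To obtain coefficients $b_{\bfn,\beta'}(\bx_1;\omega)$ depending only on the center of $B_R$ --- which the lemma requires so that one can later sum over all $B_{K^2}\subset B_R$ --- one must Taylor-expand these modulations in $\bx_0$ about $\bx_1$, and the resulting factors $(K^2|\bfn|)^{|\beta'|}R^{|\beta'|}\lambda^{-|\beta'|}$ are summable precisely because $R\le\lambda^{1-\varepsilon}$; this is the true source of the $\beta'$-sum and of the hypothesis, as the remark following the lemma makes explicit. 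Your ``at the cost of yet another $O(1)$ smooth factor'' glosses exactly this step, and since you attribute $R\le\lambda^{1-\varepsilon}$ to the (false) symbol estimate instead, the uniformity in $B_{K^2}$ is left unjustified. Finally, your reduction ends with the model extension operator $E^{\mathrm{mod}}_\tau h_\tau$ on the right-hand side, whereas the lemma requires $H^{\lambda}\big(g_\tau b_{\bfn,\beta'}e^{2\pi i\beta\cdot\omega}\big)$; one still has to Fourier-expand ``back'' to reinstate the phase $\phi^{\lambda}(\bx;\omega)$ (a further $\beta''$-expansion absorbed into the $\beta$-sum). That last step is routine, but the two points above are genuine gaps rather than presentational differences.
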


The proof is based on the decoupling inequality of Bourgain and Demeter \cite{MR3374964} and Taylor's expansion. It is quite important that on the right hand side we have $B_{K^2}$ without any tails. 
\begin{remark}
\rm
We would like to mention that it is crucial for $b_{\bfn, \beta'}$ not to depend on the location of $B_{K^2}$. This will allow us sum over all balls $B_{K^2}\subset B_R$. If $b_{\bfn, \beta'}$ were allowed to depend on the location of $B_{K^2}$, the proof of Lemma \ref{201108lemma5_1} would be much simpler and we would not need the requirement that $R\le \lambda^{1-\epsilon}$. 
\end{remark}
\begin{proof}[Proof of Lemma \ref{201108lemma5_1}]
Let us use $\bx_0=(x_0, t_0)$ to denote the center of $B_{K^2}$. We will approximate the phase function $\phi^{\lambda}$ on $B_{K^2}$. Write 
\begin{equation}
    \phi^{\lambda}(\bx; \omega)-\phi^{\lambda}(\bx_0; \omega)+\nabla_{\bx}\phi^{\lambda}(\bx_0; \omega)\bx_0=\nabla_{\bx}\phi^{\lambda}(\bx_0; \omega)\bx+e^{\lambda}_{2, \bx_0}(\bx; \omega).
\end{equation}
Here $e^{\lambda}_{2, \bx_0}$ is the second order remainder term in Taylor's expansion. To see what estimates it satisfies, let us first collect some useful estimates for various derivatives. First of all, from \eqref{210131e3_10}, we see that 
\begin{equation}
    \begin{split}
        & \big|\nabla_x \phi^{\lambda}(\bx; \omega)\big|\lesim \lambda/R,\ \  \big|\partial_t \phi^{\lambda}(\bx; \omega)\big|\lesim (\lambda/R)^2.
    \end{split}
\end{equation}
By taking a further derivative in $\bx$, we obtain 
\begin{equation}
    \big|\nabla^2_x \phi^{\lambda}(\bx; \omega)\big|\lesim 1/R,\ \  \big|\partial_{tt} \phi^{\lambda}(\bx; \omega)\big|\lesim \lambda^2/R^3.
\end{equation}
We see the second order derivative in $x$ is not bad, but that in $t$ is always very bad when $R\ll \lambda$. In other words, we do not necessarily know that the ``error" term $e^{\lambda}_{2, \bx_0}(\bx; \omega)$ has amplitude smaller than one. One way of fixing this problem is to observe that 
\begin{equation}\label{210131e3_26z}
    \big|e^{\lambda}_{2, \bx_0}(\bx; \omega)-e^{\lambda}_{2, \bx_0}(\bx; 0)\big| \lesim K^4/\lambda,
\end{equation}
which follows from 
\begin{equation}\label{210131e3_20z}
    \big|\nabla_{x, t}^2\nabla_{\omega}\phi^{\lambda}(\bx; \omega)\big|\lesim \lambda^{-1}
\end{equation}
and mean value theorems; the pointwise bound \eqref{210131e3_20z} follows from taking a further derivative in $\bx$ for the terms in \eqref{210131e3_12}, \eqref{210131e3_13} and \eqref{210131e3_14}. For later use, we record more estimates on higher order derivatives of the phase function. Via the chain rule, we obtain 
\begin{equation}\label{210131e3_28}
    |\nabla_{\bx}^{\beta} \nabla_{\omega}^{\beta'}\phi^{\lambda}(\bx; \omega)| \lesim_{\beta} \lambda^{-|\beta|+1}
\end{equation}
for every multi-indices $\beta$ and $\beta'$ with $|\beta|, |\beta'|\ge 1$. In particular, these imply 
\begin{equation}\label{210131e3_29z}
    |\nabla^{\beta}_{\omega} e^{\lambda}_{2, \bx_0}(\bx; \omega)|\lesim_{\beta} K^4/\lambda, \text{  for every } |\beta|\ge 1.
\end{equation}
Let us write 
\begin{equation}
    \begin{split}
        & \sum_{\tau\in V} H^{\lambda} g_{\tau}  =\sum_{\tau} \int e^{2\pi i \phi^{\lambda}(\bx; \omega)}g_{\tau}(\omega)d\omega\\
        & =\sum_{\tau} e^{2\pi ie^{\lambda}_{2, \bx_0}(\bx; 0)}\int e^{2\pi i \nabla_{\bx} \phi^{\lambda}(\bx_0; \omega)\bx} e^{2\pi i e^{\lambda}_{2, \bx_0}(\bx; \omega)-2\pi ie^{\lambda}_{2, \bx_0}(\bx; 0)} \big(g_{\tau}(\omega) e^{2\pi i (\phi^{\lambda}(\bx_0; \omega)-\nabla_{\bx}\phi^{\lambda}(\bx_0; \omega)\bx_0)}\big)d\omega. 
    \end{split}
\end{equation}
Denote 
\begin{equation}\label{210201e3_33}
    a^{\lambda}_{ \bx_0}(\bx; \omega):=e^{2\pi i e^{\lambda}_{2, \bx_0}(\bx; \omega)-2\pi ie^{\lambda}_{2, \bx_0}(\bx; 0)}
\end{equation}
and 
\begin{equation}\label{210201e3_34}
    g^{\lambda}_{\tau, \bx_0}(\omega):=\big(g_{\tau}(\omega) e^{2\pi i (\phi^{\lambda}(\bx_0; \omega)-\nabla_{\bx}\phi^{\lambda}(\bx_0; \omega)\bx_0)}\big).
\end{equation}
We apply the Fourier expansion to $a^{\lambda}_{ \bx_0}(\bx; \omega)$ and write it as
\begin{equation}\label{210201e3_35}
    \sum_{\beta\in \N_0^{n-1}} a_{\beta, \bx_0}^{\lambda}(\bx) e^{2\pi i \beta\cdot \omega};
\end{equation}
the bound in \eqref{210131e3_26z} guarantees that such an expansion is meaningful. So far we have 
\begin{equation}
    \sum_{\tau} H^{\lambda} g_{\tau}=\sum_{\beta\in \N_0^{n-1}}\sum_{\tau} e^{2\pi ie^{\lambda}_{2, \bx_0}(\bx; 0)} \int e^{2\pi i (\nabla_{\bx} \phi^{\lambda}(\bx_0; \omega))\cdot\bx} a^{\lambda}_{\beta, \bx_0}(\bx)g^{\lambda}_{\tau, \bx_0}(\omega)e^{2\pi i\beta\cdot \omega}d\omega. 
\end{equation}
By the triangle inequality, 
\begin{equation}\label{210112e3_27}
\begin{split}
    \|\sum_{\tau} H^{\lambda} g_{\tau}\|_{L^p(B_{K^2})}& \le \sum_{\beta} \Big\|\sum_{\tau}\int e^{2\pi i (\nabla_{\bx} \phi^{\lambda}(\bx_0; \omega))\cdot\bx} a^{\lambda}_{\beta, \bx_0}(\bx)g^{\lambda}_{\tau, \bx_0}(\omega)e^{2\pi i\beta\cdot \omega}d\omega \Big\|_{L^p(B_{K^2})}\\
    & \lesssim \sum_{\beta} (1+|\beta|)^{-10n} \Big\|\sum_{\tau}\int e^{2\pi i(\nabla_{\bx} \phi^{\lambda}(\bx_0; \omega))\cdot\bx} g^{\lambda}_{\tau, \bx_0}(\omega)e^{2\pi i\beta\cdot \omega}d\omega \Big\|_{L^p(B_{K^2})},
\end{split}
\end{equation}
where the decay in $\beta$ follows from \eqref{210131e3_28} and \eqref{210131e3_29z}. Via a direct computation, we obtain $(\nabla_{\bx} \phi^{\lambda}(\bx_0; \omega))\cdot\bx$ equals 
\begin{equation}
    \begin{split}
        & \frac{\lambda}{t_0} \frac{x_0-t_0 \omega}{\sqrt{\lambda^2+|x_0-t_0 \omega|^2}} \cdot x- \frac{\lambda(\lambda^2+|x_0|^2-t_0 (x_0\cdot \omega))}{t_0^2 \sqrt{\lambda^2+|x_0-t_0 \omega|^2}}\cdot t.
    \end{split}
\end{equation}
Regarding the parametrized surface 
\begin{equation}\label{210112e3_29}
    \Big(\frac{\lambda}{t_0} \frac{x_0-t_0 \omega}{\sqrt{\lambda^2+|x_0-t_0 \omega|^2}},- \frac{\lambda(\lambda^2+|x_0|^2-t_0 (x_0\cdot \omega))}{t_0^2 \sqrt{\lambda^2+|x_0-t_0 \omega|^2}}\Big),
\end{equation}
we have that the Jacobian of the first $(n-1)$ components in $\omega$ is comparable to $1$; more precisely, without loss of generality, we may assume that the first component of $x_0-t_0w$ is nonzero. Then the Jacobian matrix $\nabla_{\omega}\nabla_{x} \phi^{\lambda}(\bx_0; \omega)$, which are explicitly calculated in \eqref{210131e3_12} and \eqref{210131e3_13}, has eigenvalue $-\lambda^3(\lambda^2+|x_0-t_0\omega|^2)^{-3/2}$ with the eigenvector $x_0-t_0w$, and eigenvalue  and $-\lambda(\lambda^2+|x_0-t_0\omega|^2)^{-1/2}$ with the $n-1$ linearly independent eigenvectors
\begin{equation}
    (-(x_0-t_0w)_2,(x_0-t_0w)_1,0,\ldots,0),\ldots,(-(x_0-t_0w)_n,0,\ldots,0,(x_0-t_0w)_1),
\end{equation}
where $(x_0-t_0w)_i$ is the $i$th component of the vector $x_0-t_0w$. This can be verified by a direct computation. Therefore, we obtain
\begin{equation}\label{210201e3_40}
    \big|\det \nabla_{\omega}\nabla_{x} \phi^{\lambda}(\bx_0; \omega)\big|=\lambda^{n+2} \big(\lambda^2+|x_0-t_0\omega|^2\big)^{-(n+2)/2}\simeq 1. 
\end{equation}
As a consequence, we obtain that the above parametrized surface is regular. Next, we will show that it has non-vanishing Gaussian curvatures. Recall the computation of normal directions in Lemma \ref{210111lem3_1}. Via a direct computation, we see that the Gaussian curvature at a fixed point $\omega_0$ is comparable to 
\begin{equation}
    \begin{split}
        & \det \Big(\nabla^2_{\omega}\langle \nabla_{\bx} \phi^{\lambda}(\bx_0; \omega), (\omega_0, 1)\rangle\Big)\Big|_{\omega=\omega_0}\simeq 1.
    \end{split}
\end{equation}
This, combined with the fact that
\begin{equation}
    \nabla^2_{\omega}\langle \nabla_{\bx} \phi^{\lambda}(\bx_0; \omega), (\omega_0, 1)\rangle\big|_{x_0=0; \omega_0=0}
\end{equation}
is the identity matrix and a simple continuity argument, further implies that  the parametrized surface in \eqref{210112e3_29} is elliptic. This allows us to apply decoupling inequalities for surfaces of non-vanishing Gaussian curvatures, due to Bourgain and Demeter \cite{MR3374964}, and bound \eqref{210112e3_27} by  
\begin{equation}\label{210112e3_33}
    \sum_{\beta} (1+|\beta|)^{-10n} K^{(m-1)(1/2-1/p)+\epsilon} \Big(\sum_{\tau}\Big\|\int e^{2\pi i(\nabla_{\bx} \phi^{\lambda}(\bx_0; \omega))\cdot\bx} g^{\lambda}_{\tau, \bx_0}(\omega)e^{2\pi i \beta\cdot \omega}d\omega \Big\|_{L^p(w_{B_{K^2}})}^p\Big)^{1/p},
\end{equation}
where $w_{B_{K^2}}$ is given by 
\begin{equation}
    \big(1+K^{-2}|\bx-\bx_0|\big)^{-N(\epsilon)},
\end{equation}
where $N(\epsilon)$ is a large constant depending on $\epsilon$ and its choice will become later. 
\begin{remark}
\rm
We will need $N(\epsilon)\to \infty$ much faster than $1/\epsilon$ as $\epsilon\to 0$. For instance, taking $N(\epsilon)=\epsilon^{-100}$ will be more than enough. This choice of weight will be used in the following way: If $|\bx-\bx_0|\ge K^2 \lambda^{\epsilon^{10}}$, then the weight function can be controlled by $\lambda^{-\epsilon^{-90}}$, and therefore is negligible as $\rapid(\lambda)$. 
\end{remark}
The weight function $w_{B_{K^2}}$ has tails that are not allowed in the study of the Bochner-Riesz problem, and we need to get rid of it. For $\bfn\in \Z^n$, let $B_{K^2, \bfn}$ be the translation of $B_{K^2}$ by $K^2\bfn$. 
\begin{equation}\label{210201e3_45}
    \begin{split}
        & \Big\|\int e^{2\pi i(\nabla_{\bx} \phi^{\lambda}(\bx_0; \omega))\cdot\bx} g^{\lambda}_{\tau, \bx_0}(\omega)e^{2\pi i \beta\cdot \omega}d\omega \Big\|_{L^p(w_{B_{K^2}})}\\
        & \le \rapid(\lambda)\|g\|_2+ \sum_{\substack{\bfn\in \Z^n\\ |\bfn|\le \lambda^{\epsilon^{10}}}} (1+|\bfn|)^{-N(\epsilon)} \Big\|\int e^{2\pi i(\nabla_{\bx} \phi^{\lambda}(\bx_0; \omega))\cdot\bx} g^{\lambda}_{\tau, \bx_0}(\omega)e^{2\pi i \beta\cdot \omega}d\omega \Big\|_{L^p(B_{K^2, \bfn})}.
    \end{split}
\end{equation}
As we have a constant coefficient operator in the last display, we can modulate the function $g^{\lambda}_{\tau, \bx_0}$ such that the ball $B_{K^2, \bfn}$ is shifted back to $B_{K^2}$. Let us use $e_{\bfn, K}(\bx_0; \omega)$ to denote the modulation function. Therefore, 
\begin{equation}\label{210131e3_46}
    e_{\bfn, K}(\bx_0; \omega)=e^{2\pi i K^2 (\nabla_{\bx} \phi^{\lambda}(\bx_0; \omega))\cdot\bfn},
\end{equation}
and we have the bound 
\begin{equation}\label{210112e3_36}
    \begin{split}
        & \Big\|\int e^{2\pi i (\nabla_{\bx} \phi^{\lambda}(\bx_0; \omega))\cdot\bx} g^{\lambda}_{\tau, \bx_0}(\omega)e^{2\pi i \beta\cdot \omega}d\omega \Big\|_{L^p(w_{B_{K^2}})}\\
        & \le \rapid(\lambda)\|g\|_2+ \sum_{\substack{\bfn\in \Z^n\\|\bfn|\le \lambda^{\epsilon^{10}}}} (1+|\bfn|)^{-N(\epsilon)} \Big\|\int e^{2\pi i (\nabla_{\bx} \phi^{\lambda}(\bx_0; \omega))\cdot\bx} g^{\lambda}_{\tau, \bx_0}(\omega) e_{\bfn, K}(\bx_0; \omega)e^{2\pi i \beta\cdot \omega}d\omega \Big\|_{L^p(B_{K^2})}.
    \end{split}
\end{equation}
Unfortunately the modulation function in \eqref{210131e3_46} depends on $\bx_0$. Let us try to get rid of this dependence via Taylor's expansion; we learnt this idea from Beltran, Hickman and Sogge \cite{MR4078231}. First of all, let us write the $L^p$ norm on the right hand side of \eqref{210112e3_36} as 
\begin{equation}\label{210201e3_48}
    \Big\|\int e^{2\pi i (\nabla_{\bx} \phi^{\lambda}(\bx_0; \omega))\cdot\bx} g^{\lambda}_{\tau, \bx_0}(\omega) b_{\bfn, K}(\bx_0; \omega)e^{2\pi i \beta\cdot \omega} d\omega \Big\|_{L^p(B_{K^2})},
\end{equation}
with 
\begin{equation}
    b_{\bfn, K}(\bx_0; \omega):=e^{2\pi iK^2 \bfn\cdot(\nabla_{\bx}\phi^{\lambda}(\bx_0; \omega)-\nabla_{\bx}\phi^{\lambda}(\bx_0; 0))}.
\end{equation}
To get rid of the dependence of $b_{\bfn, K}$ on $\bx_0$, we carry out a Taylor expansion of it about $\bx_1$, an arbitrary point inside $B_{R}$; for convenience, we take $\bx_1$ to be the center of $B_R$. Recall the bound in \eqref{210131e3_28}. As a consequence, we obtain that 
\begin{equation}
    \Big|\nabla_{\bx}^{\beta}\Big[K^2 \bfn\cdot(\nabla_{\bx}\phi^{\lambda}(\bx; \omega)-\nabla_{\bx}\phi^{\lambda}(\bx; 0))\Big]\Big|\lesim K^2|\bfn| \lambda^{-|\beta|}, \text{ for every } \beta,
\end{equation}
which further implies that
\begin{equation}
    \big|\nabla_{\bx}^{\beta} b_{\bfn, K}(\bx; \omega)\big| \lesim (K^2|\bfn|)^{|\beta|} \lambda^{-|\beta|} \text{ for every } |\beta|\ge 1.
\end{equation}
This allows us to write \begin{equation}
    b_{\bfn, K}(\bx_0; \omega)=\sum_{\beta'} 2^{-N|\beta'|} (K^2|\bfn|)^{|\beta'|}\lambda^{-|\beta'|} b_{\bfn, \beta'}(\bx_1; \omega) (\bx_0-\bx_1)^{\beta'}+\mathrm{RapDec}(\lambda),
\end{equation}
where the dependence of $b_{\bfn, \beta'}$ on $K$ has been compressed, and these Taylor coefficients satisfy the uniform bound $|b_{\bfn, \beta'}(\bx_1; \omega)|\lesim 1$. 
We go back to \eqref{210201e3_48} and bound it by 
\begin{equation}
    \sum_{\beta'} 2^{-N|\beta'|}  (K^2\lambda^{\epsilon^{10}})^{|\beta'|} R^{|\beta'|}\lambda^{-|\beta'|}\Big\|\int e^{2\pi i (\nabla_{\bx} \phi^{\lambda}(\bx_0; \omega))\cdot\bx} g^{\lambda}_{\tau, \bx_0}(\omega) b_{\bfn, \beta'}(\bx_1; \omega)e^{2\pi i \beta\cdot \omega}d\omega \Big\|_{L^p(B_{K^2})}.
\end{equation}
Recall the assumption on $R$ that $R\le \lambda^{1-\epsilon}$. It can be used to control the constant factors in the last display. So far we have controlled \eqref{210201e3_45} by 
\begin{equation}\label{210201e3_54}
    \rapid(\lambda)\|g\|_2+\sum_{\beta'} \sum_{\bfn}  2^{-N|\beta'|}(1+|\bfn|)^{-N(\epsilon)} \Big\|\int e^{2\pi i (\nabla_{\bx} \phi^{\lambda}(\bx_0; \omega))\cdot\bx} g^{\lambda}_{\tau, \bx_0}(\omega) b_{\bfn, \beta'}(\bx_1; \omega)e^{2\pi i \beta\cdot \omega}d\omega \Big\|_{L^p(B_{K^2})}.
\end{equation}
Now each term we have is of the form 
\begin{equation}
\begin{split}
    &  \int e^{2\pi i (\nabla_{\bx} \phi^{\lambda}(\bx_0; \omega))\cdot\bx} g^{\lambda}_{\tau, \bx_0}(\omega)b_{\bfn, \beta'}(\bx_1; \omega) e^{2\pi i \beta\cdot \omega} d\omega\\
    & =\int e^{2\pi i(\nabla_{\bx} \phi^{\lambda}(\bx_0; \omega))\cdot\bx} g_{\tau}(\omega) e^{2\pi i (\phi^{\lambda}(\bx_0; \omega)-\nabla_{\bx}\phi^{\lambda}(\bx_0; \omega)\bx_0)}b_{\bfn, \beta'}(\bx_1; \omega)e^{2\pi i \beta\cdot \omega}d\omega\\
    &=\int e^{2\pi i \phi^{\lambda}(\bx; \omega)-2\pi i e^{\lambda}_{2, \bx_0}(\bx; \omega)}g_{\tau}(\omega) b_{\bfn, \beta'}(\bx_1; \omega)e^{2\pi i \beta\cdot \omega}d\omega. 
\end{split}
\end{equation}
We apply a Fourier expansion ``back" (compared with the one in \eqref{210201e3_33}--\eqref{210201e3_35}) to get rid of $e^{\lambda}_{2, \bx_0}(\bx; \omega)$ and obtain 
\begin{equation}
    \begin{split}
        & \Big\|\int e^{2\pi i (\nabla_{\bx} \phi^{\lambda}(\bx_0; \omega))\cdot\bx} g^{\lambda}_{\tau, \bx_0}(\omega)b_{\bfn, \beta'}(\bx_1; \omega) e^{2\pi i \beta\cdot \omega} d\omega \Big\|_{L^p(B_{K^2})}\\
        & \lesssim  \sum_{\beta''\in \N_0^{n-1}}
        (1+|\beta''|)^{-10n}
        \Big\|\int e^{2\pi i \phi^{\lambda}(\bx; \omega)} g_{\tau}(\omega) b_{\bfn, \beta'}(\bx_1; \omega) e^{2\pi i (\beta+\beta'')\cdot \omega}d\omega \Big\|_{L^p(B_{K^2})},
    \end{split}
\end{equation}
which, when substituted into \eqref{210201e3_54} and \eqref{210112e3_33}, implies the desired the decoupling inequality. 
\end{proof}




\begin{lem}[Parabolic rescaling]\label{201229lem5_3}
Let $w=(w_1, \dots, w_{n-1}) \in [0,1]^{n-1}$ and $K=K_{R}$. Let $\tau=[w_1,w_1+K^{-1}] \times\dots\times [w_{n-1},w_{n-1}+K^{-1}]$ with $\tau\subset [0, 1]^{n-1}$.
Under the induction hypothesis, for every  ball $B_R$ with $1\le R\le \lambda^{1-\epsilon}$ satisfying 
\begin{equation}
    B_R\subset [-\lambda_{K, R}, \lambda_{K, R}]\times [R/C_n, C_n \lambda]
\end{equation}
we obtain
\begin{equation}
    \|H^{\lambda}g_{\tau}\|_{L^p(B_R)} \lesssim_{\epsilon, \delta}R^{\epsilon} R^{\delta}( K^{-1})^{(n-1)-\frac{2n}{p}}
\|g_{\tau}\|_{L^p([0, 1]^{n-1})},
\end{equation}
for every $\delta>0$.
\end{lem}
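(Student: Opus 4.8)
The plan is to uncover an \emph{exact} parabolic rescaling identity hidden in the phase \eqref{210131e2_15}, use it to turn $H^\lambda g_\tau$ into an operator of exactly the same form but at the smaller parameters $\lambda'=\lambda/K$ and $R'\sim R/K^2$, and then invoke the induction hypothesis \eqref{201201e5_9}. First I would record the identity: writing $\omega=w+K^{-1}\eta$ with $\eta\in[0,1]^{n-1}$, a direct computation from \eqref{210131e2_15} (recall $\phi^\lambda(\bx;\omega)=\tfrac{\lambda}{t}\sqrt{\lambda^2+|x-t\omega|^2}$) gives
\[
\phi^\lambda(x,t;w+K^{-1}\eta)=\phi^{\lambda/K}\Big(\frac{x-tw}{K},\,\frac{t}{K^2};\,\eta\Big),
\]
with \emph{no} lower order remainder whatsoever, since both sides equal $\tfrac{\lambda}{t}\sqrt{\lambda^2+|(x-tw)-tK^{-1}\eta|^2}$; this clean behaviour ($t$ contracting by $K^{-2}$, $x$ by $K^{-1}$, and the global scale $\lambda$ by $K^{-1}$) is the ``special parabolic rescaling structure'' of $\phi^\lambda$. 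Substituting into \eqref{201230e3_20} and setting $\tilde g(\eta):=g(w+K^{-1}\eta)$ -- a function on $[0,1]^{n-1}$ with $\|\tilde g\|_{L^p}=K^{(n-1)/p}\|g_\tau\|_{L^p}$ -- yields the pointwise identity $H^\lambda g_\tau(x,t)=K^{-(n-1)}(H^{\lambda/K}\tilde g)\big(\tfrac{x-tw}{K},\tfrac{t}{K^2}\big)$.

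Next I would change variables $(y,s)=\big(\tfrac{x-tw}{K},\tfrac{t}{K^2}\big)$ in $\|H^\lambda g_\tau\|_{L^p(B_R)}^p$ (the Jacobian is the constant $K^{-(n+1)}$), reducing matters to $\int_\Omega|H^{\lambda/K}\tilde g|^p$, where $\Omega$, the image of $B_R$, is an ellipsoid of dimensions $\sim(R/K)^{n-1}\times(R/K^2)$ on which the constraint $t\geq R/C_n$ forces $s\geq(R/K^2)/C_n$. I would cover $\Omega$ by $O_n(K^{n-1})$ balls of radius $R':=R/(4C_nK^2)$, arranged so that each lies inside the admissible region $[-\lambda'_{R'},\lambda'_{R'}]\times[R'/C_n,C_n\lambda']$ with $\lambda':=\lambda/K$; for $R$ larger than some $R_0$ one has $\lambda'\leq\lambda/2$ and $1\leq R'\leq(\lambda')^{1-\epsilon}$ (using $R\leq\lambda^{1-\epsilon}$ and $K=R^{\delta'}$ with $\delta'<1/2$), so the induction hypothesis \eqref{201201e5_9} applies on each ball, while the bounded range $R\leq R_0$ is dispatched by the trivial bound $\|H^\lambda g_\tau\|_{L^p(B_R)}\leq|B_R|^{1/p}|\tau|^{1/p'}\|g_\tau\|_{L^p}$. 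Summing the $O_n(K^{n-1})$ contributions and collecting the powers of $K$ -- the prefactor $K^{-(n-1)p}$, the Jacobian $K^{n+1}$, the number $K^{n-1}$ of balls, and $\|\tilde g\|_{L^p}^p=K^{n-1}\|g_\tau\|_{L^p}^p$ -- produces
\[
\|H^\lambda g_\tau\|_{L^p(B_R)}\lesssim_{n,p,\epsilon}K^{(n-1)/p}\,R^\epsilon\,(K^{-1})^{(n-1)-2n/p}\,\|g_\tau\|_{L^p}.
\]
Since $K=R^{\delta'}$ with $\delta'$ as small as we wish (in particular $\delta'(n-1)/p\leq\delta$), the extra factor $K^{(n-1)/p}$ is $\leq R^\delta$, which is exactly the claimed estimate.

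The only step I expect to be genuinely delicate is the covering: verifying that \emph{every} covering ball, in particular those abutting the face $\{\,s\approx R'/C_n\,\}$ of the admissible region, satisfies the hypotheses of \eqref{201201e5_9}. This is precisely where the degeneration of $\phi^\lambda$ as $t\to0$ (equivalently, as $R$ becomes small relative to $\lambda$), flagged in Remark \ref{rem:3_1}, enters, and it is the reason one cannot simply cover $\Omega$ by a single ball of radius $R/K$ (which would protrude below the face $\{\,t\approx R/C_n\,\}$ once $B_R$ sits near it), as one may in the Fourier restriction setting; being forced to use $\sim K^{n-1}$ smaller balls of radius $\sim R/K^2$ is what produces the extra $K^{(n-1)/p}$, and hence the harmless $R^\delta$ in the statement. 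Everything else follows mechanically from the exact identity of the first paragraph.
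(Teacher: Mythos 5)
Your opening step is correct and is exactly the paper's: the identity $\phi^{\lambda}(x,t;w+K^{-1}\eta)=\phi^{\lambda/K}\big(\tfrac{x-tw}{K},\tfrac{t}{K^2};\eta\big)$ is the ``special parabolic rescaling structure'' that the paper implements as the chain of changes of variables $\omega\mapsto w+\eta$, $x-tw\mapsto x$, $\eta\mapsto\eta/K$, $(x,t)\mapsto(Kx,K^2t)$, and your Jacobian/normalization bookkeeping is right. The gap is in the last step. After rescaling, the relevant region is not a ball but the slab $\widetilde{D}_R$ of dimensions $(R/K)^{n-1}\times(R/K^2)$, and the whole difficulty of the lemma is to prove \eqref{201204e6_43} on this slab with only an $R^{O(\delta)}$ loss, $\delta$ arbitrary. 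Your covering of the slab by $\sim K^{n-1}$ balls of radius $\sim R/K^2$, followed by the induction hypothesis on each ball and trivial summation, double counts $\|\tilde g\|_p$ and produces the factor $K^{(n-1)/p}$ that you yourself compute. Absorbing this into $R^{\delta}$ is not legitimate: in the lemma, $\delta>0$ is a free parameter to be chosen \emph{after} $K=K_R=R^{\delta'}$ is fixed (in the broad--narrow deduction one needs the total loss to be beaten by a net gain $K^{-C}$, where $C=C(n,p,k)$ degenerates to $0$ at the endpoints of the range \eqref{210112e3_18}; this forces $\delta\ll\delta'$). Your argument only yields $\delta\geq\delta'(n-1)/p$, i.e.\ a \emph{fixed positive power of $K$} of loss, which destroys the exponent bookkeeping precisely in the regime the lemma is designed for. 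This is exactly why the paper does not argue by covering: it decomposes $g$ into caps $\theta$ of side $(R'')^{-1}$, proves the discretized estimate of Lemma \ref{201229lem5_4} on $R''$-balls (this is where the induction hypothesis enters), and then recombines over the slab using the locally constant property (Lemma \ref{201204lem6_4}) together with the $L^2$ bound of Lemma \ref{201231lem4_7} and $L^2$--$L^\infty$ interpolation, in \eqref{201231e5_65}--\eqref{201231e5_66}; this recovers the full factor $(K^{-1})^{(n-1)-2n/p}$ with only the $R^{O(\delta)}$ loss coming from the mollification at scale $(R'')^{1-\delta}$ --- not from any ball counting.

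A secondary point: you locate the ``delicate'' containment at the face $\{s\approx R'/C_n\}$, but that face is automatic (the image has $s\geq R/(C_nK^2)$). The genuinely delicate containment is the lateral one: the shear $x\mapsto x-tw$ displaces the region in $x$ by up to $\sup_{B_R} t$, and one must check that after division by $K$ the image still fits inside $[-\lambda'_{K,R''},\lambda'_{K,R''}]^{n-1}\times[R''/C_n,C_n\lambda']$, i.e.\ \eqref{210201e3_69}. This is the entire reason for the geometric-series definition of $\lambda_{K,R}$ and the inequality $\tfrac{\lambda_{K,R}}{K}+\tfrac{C_nR}{K}\leq\lambda'_{K,R''}$, which the paper singles out as ``exactly the reason of having a complicated expression of $\lambda_{K,R}$''; your proposal never verifies this containment for its covering balls. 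So even setting aside the $K^{(n-1)/p}$ loss, the proposal is missing the one piece of bookkeeping that the two-parameter induction was set up to provide.
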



\begin{proof}[Proof of Lemma \ref{201229lem5_3}]
We use a change of variables
\begin{equation}
    \eta:=-w+\omega.
\end{equation} 
Recall that our phase function is given by 
\begin{equation}
    \phi^{\lambda}(x, t; \omega)=\frac{\lambda}{t}\sqrt{\lambda^2+|x-t\omega|^2}.
\end{equation}
Under the change of variables, it becomes 
\begin{equation}
    \frac{\lambda}{t}\sqrt{\lambda^2+|x-t(\eta+w)|^2}=\frac{\lambda}{t}\sqrt{\lambda^2+|(x-tw)-t\eta|^2}.
\end{equation}
Next we apply the change of variables 
\begin{equation}
    x-tw\mapsto x; \ t\mapsto t.
\end{equation}
Under this change of variables, let us assume that $B_R$ becomes $\widetilde{B}_R$. Note that $\widetilde{B}_R$ is a parallelogram: The smallest rectangle that contains it is at most twice as large as $B_R$. 
Now our phase becomes 
\begin{equation}
    \frac{\lambda}{t}\sqrt{1+|x-t\eta|^2},
\end{equation}
and therefore we need to bound 
\begin{equation}
    \|H^{\lambda} g_{\widetilde{\tau}}\|_{L^p(\widetilde{B}_R)}, \text{ with } \widetilde{\tau}=[0, 1/K]^{n-1}.
\end{equation}
Next we apply a scaling argument and set $\eta\mapsto \eta/K.$
Our phase becomes 
\begin{equation}
    \frac{\lambda}{t}\sqrt{1+|x-t\frac{\eta}{K}|^2}.
\end{equation}
In the end, we apply the change of variables $x\mapsto Kx;  t\mapsto K^2 t.$ 
It is important to keep track of the domain of evaluating the $L^p$ norm. Let us use $\widetilde{D}_R$ to denote the smallest rectangle that contains the image of $\widetilde{B}_R$ after the above change of variables. After some elementary computation, we see that 
\begin{equation}\label{201202e6_33}
    \widetilde{D}_R \subset [-\frac{\lambda_{K, R}}{K}-\frac{C_n R}{K}, \frac{\lambda_{K, R}}{K}+\frac{C_n R}{K}]^{n-1}\times [\frac{R}{K^2}\frac{1}{C_n}, \frac{\lambda}{K^2}C_n],
\end{equation}
and $\widetilde{D}_R$ is of dimension \begin{equation}
    (2\frac{R}{K})\times (2\frac{R}{K})\times \dots \times (2\frac{R}{K})\times \frac{R}{K^2}. 
\end{equation}
Set $\lambda'=\lambda/K$, $R'=R/K$ and $R''=R/K^2$. What is important in \eqref{201202e6_33} is that 
\begin{equation}
    \frac{\lambda_{K, R}}{K}+\frac{C_n R}{K}\le  \lambda'_{K, R''}\le \lambda'_{R''},
\end{equation}
which is exactly the reason of having a complicated expression of $\lambda_{K, R}$ in our induction hypothesis. As a consequence, we know that 
\begin{equation}\label{210201e3_69}
    \widetilde{D}_R \subset [-\lambda'_{K, R''}, \lambda'_{K, R''}]\times [R''/C_n, \lambda' C_n]. 
\end{equation}
\begin{remark}
\rm
Here we make a remark on the choice of the $t$-interval $[R/C_n, C_n \lambda]$ in \eqref{210201e3_3} from our induction hypothesis. In our current setup, it should not be replaced by anything like $[R/C_n, C_n R]$ or $[R/C_n, C_n R^{\frac{1}{1-\epsilon}}]$. For instance, if we replace it by the latter, then the analogue of \eqref{210201e3_69}, which is
\begin{equation}
    \widetilde{D}_R \subset [-\lambda'_{K, R''}, \lambda'_{K, R''}]\times [R''/C_n, (R'')^{\frac{1}{1-\epsilon}} C_n],
\end{equation}
does not hold, and therefore we can not apply the induction hypothesis. If one would really like to use a time interval of the form $[R/C_n, C_n R^{\frac{1}{1-\epsilon}}]$ in the induction hypothesis, which may bring some convenience like $t$ is always essentially comparable to $R$, then one can cut $\widetilde{D}_R$ into smaller balls and then make use of the extra gain in $K$ from \eqref{210201e3_71}, which is not explored in the current setup. 
\end{remark}
From now on, we will try to bound $\|H^{\lambda'} \widetilde{g}\|_{L^p(\widetilde{D}_R)},$
where $\widetilde{g}$ is now a function supported on $[0, 1]^{n-1}$. It remains to prove 
\begin{equation}\label{210201e3_71}
    \|H^{\lambda'} \widetilde{g}\|_{L^p(\widetilde{D}_R)} \lesim_{\epsilon, \delta} (R'/K)^{\epsilon} (R'/K)^{\delta} \|\widetilde{g}\|_p.
\end{equation}
Actually we will not need the gain in $K$, and only need to prove 
\begin{equation}
    \|H^{\lambda'} g\|_{L^p(\widetilde{D}_R)} \lesim_{\epsilon, \delta} R^{\epsilon} R^{\delta} \|g\|_p,
\end{equation}
for every $\delta>0$ and every $g$. \\

Let us denote 
\begin{equation}\label{201231e5_44}
    D_{R'}\subset [-\lambda'_{K, R''}, \lambda'_{K, R''}]\times [R''/C_n, \lambda' C_n]
\end{equation} 
a rectangular box of dimension 
\begin{equation}
    R'\times \dots R'\times R''.
\end{equation}
We need to prove 
\begin{equation}\label{201204e6_43}
     \|H^{\lambda'} g\|_{L^p(D_{R'})} \lesim_{\epsilon, \delta} R^{\epsilon} R^{\delta} \|g\|_p.
\end{equation}
To prove such a bound, we need the following discrete  version of the operator $H^{\lambda'}$. This step is where we apply our induction hypothesis. 
\begin{lem}[Lemma 11.8 \cite{MR4047925}]\label{201229lem5_4}
Let $\mathcal{D}$ be a maximal $(R'')^{-1}$-separated discrete subset of $[0, 1]^{n-1}$. Then 
\begin{equation}\label{201229e5_47}
    \Big\|\sum_{\omega_{\theta}\in \mathcal{D}} e^{i \phi^{\lambda}(\cdot\ ; \omega_{\theta})} F(\omega_{\theta}) \Big\|_{L^p(B_{R''})} \lesim_{\epsilon} (R'')^{\epsilon} (R'')^{(n-1)/p'} \|F\|_{\ell^p(\mathcal{D})},
\end{equation}
for every $F: \mathcal{D} \rightarrow \C$ and every ball $B_{R''}\subset D_{R'}$ of radius $R''$.  
\end{lem}
\begin{proof}[Proof of Lemma \ref{201229lem5_4}] The proof relies an approximation argument via Taylor's expansion, and is essentially the same as that of Lemma 11.8 \cite{MR4047925}, see also Section 5 of \cite{MR2860188}. Let $\bx_0$ be the center of $B_{R''}$. Let $\psi: \R^{n-1}\to \R$ be a function supported on the ball of radius 2 centered at the origin, satisfying $0\le \psi\le 1$ and $\psi(\omega)=1$ for every $|\omega|\le 1$. For each $\omega_{\theta}\in \mathcal{D}$, define $\psi_{\theta}(\omega):=\psi(10R''(\omega-\omega_{\theta}))$. For every $x\in B_{R''}$, the sum on the left hand side of \eqref{201229e5_47} is a constant multiple of 
\begin{equation}\label{201231e5_48}
    \begin{split}
        (R'')^{n-1}\int_{\R^{n-1}} e^{2\pi i \phi^{\lambda}(\bx; \omega)} e^{-2\pi i\phi^{\lambda}(\bx_0; \omega)+2\pi i\phi^{\lambda}(\bx_0; \omega_{\theta})} \Big[\sum_{\omega_{\theta}\in \mathcal{D}} e^{-2\pi i \Omega^{\lambda}_{\theta}(\bx; \omega)} F(\omega_{\theta}) \psi_{\theta}(\omega)\Big]d\omega, 
    \end{split}
\end{equation}
where 
\begin{equation}
    \Omega^{\lambda}_{\theta}(\bx; \omega):=\phi^{\lambda}(\bx; \omega)-\phi^{\lambda}(\bx; \omega_{\theta})-\phi^{\lambda}(\bx_0; \omega)+\phi^{\lambda}(\bx_0; \omega_{\theta}). 
\end{equation}
By mean value theorems and the bound \eqref{210131e3_28}, we see that 
\begin{equation}\label{210201e3_78}
    |\Omega_{\theta}^{\lambda}(\bx; \omega)|\lesim |\bx-\bx_0||\omega-\omega_{\theta}|\lesim 1,
\end{equation}
whenever $x\in B_{R''}$ and $\omega$ is in the support of $\psi_{\theta}$. 
Roughly speaking, this allows us to treat $e^{-2\pi i \Omega^{\lambda}_{\theta}(\bx; \omega)}$ as the constant function $1$, which can be made rigorous via Fourier expansion. To apply Fourier expansion in $\bx$, we need to examine higher order derivatives of the function. Similar to how we obtained \eqref{210201e3_78}, we obtain from \eqref{210131e3_28} and mean value theorems that \begin{equation}
    \begin{split}
     \big|\nabla_{\bx}^{\beta} \Omega_{\theta}^{\lambda}(\bx; \omega)\big| \lesim 1, \ \  \big|\nabla_{\bx}^{\beta} e^{2\pi i\Omega_{\theta}^{\lambda}(\bx; \omega)}\big| \lesim 1
    \end{split}
\end{equation}
for every multi-index $\beta$. As a consequence, we are able to bound \eqref{201231e5_48} by 
\begin{equation}
    (R'')^{n-1} \sum_{k\in \Z^n} (1+|k|)^{-(n+1)}|H^{\lambda} f_k(x)|, 
\end{equation}
where 
\begin{equation}
    f_k(\omega):=\sum_{\omega_{\theta}\in \mathcal{D}} F(\omega_{\theta}) c_{k, \theta}(\omega)\psi_{\theta}(\omega),
\end{equation}
with $c_{k, \theta}$ satisfying the uniform bound $\|c_{k, \theta}\|_{\infty}\le 1$. We apply our induction hypothesis to $H^{\lambda} f_k$ on the ball $B_{R''}$; in particular, \eqref{201231e5_44} guarantees that our induction hypothesis is applicable. This is gives us the desired estimate.
\end{proof}

Let us begin the proof of \eqref{201204e6_43}. Out of certain technical reason of handling tails, we introduce new notation. For given $R, \lambda$  satisfying $R\le \lambda^{1-\epsilon}$, let $a_{\lambda, R}(x, t)$ be a non-negative smooth bump function supported on $[-2\lambda_R, 2\lambda_R]\times [R/(2C_n), 2C_n \lambda]$, and equal to one on $[-\lambda_R, \lambda_R]\times [R/C_n, C_n \lambda]$. Define 
\begin{equation}\label{210112e3_63}
    H^{\lambda, R} g(\bx):=a_{\lambda, R}(x, t)\int e^{2\pi i\phi^{\lambda}(\bx; \omega)} g(\omega) d\omega.
\end{equation}
This notation will only be used in this section. Note that for all the $\bx$ that we care about, that is, $\bx\in [-\lambda_R, \lambda_R]\times [R/C_n, C_n \lambda]$, it always holds that 
\begin{equation}
    H^{\lambda, R} g(\bx)=H^{\lambda} g(\bx).
\end{equation}
Let us return to the proof of \eqref{201204e6_43}. 
Cover $[0, 1]^{n-1}$ by caps $\theta$ of side length $(R'')^{-1}$. Decompose $g$ as $g=\sum_{\theta} g_{\theta}$. Let $\omega_{\theta}$ be the center of $\theta$. Define 
\begin{equation}
    H^{\lambda'}_{\theta} g(\bx):=e^{-2\pi i \phi^{\lambda'}(\bx; \omega_{\theta})} H^{\lambda'} g(\bx),
\end{equation}
and 
\begin{equation}
    H^{\lambda', R''}_{\theta} g(\bx):=a_{\lambda', R''}(x, t) e^{-2\pi i \phi^{\lambda'}(\bx; \omega_{\theta})} H^{\lambda'} g(\bx),
\end{equation}
so that for every $\bx\in B_{R''}\subset D_{R'}$, it holds that
\begin{equation}
    \begin{split}
        H^{\lambda'} g(\bx)=H^{\lambda', R''} g(\bx)& =\sum_{\theta}  e^{2\pi i \phi^{\lambda'}(\bx; \omega_{\theta})} H^{\lambda'}_{\theta} g_{\theta}(\bx)\\
        &=\sum_{\theta}  e^{2\pi i \phi^{\lambda'}(\bx; \omega_{\theta})} H^{\lambda', R''}_{\theta} g_{\theta}(\bx).
    \end{split}
\end{equation}
To proceed, we need the following lemma. 
\begin{lem}\label{201204lem6_4}
For every $\delta>0$, the following statement holds. For every $\bx$ we have  
\begin{equation}
    H^{\lambda', R''}_{\theta} g_{\theta}(\bx)=H^{\lambda', R''}_{\theta} g_{\theta}*\eta_{(R'')^{1-\delta}}(\bx)+  \rapid(\lambda') \|g\|_2, 
\end{equation}
for some rapidly decreasing function $\eta$ such that $|\eta|$ admits a smooth, rapidly decreasing majorant $\zeta: \R^n\mapsto [0, \infty)$ which is locally constant at scale 1.  
\end{lem} 
The proof of this lemma is pretty standard, see for instance the bottom of page 363 in \cite{MR4047925} and Lemma 5.8 there. 
\begin{remark}
\rm
The parameter $\delta>0$ will be picked to be extremely small, and much smaller compared with $\epsilon$. The use of the parameter $\delta$ and the scale $(R'')^{\delta}$ is the reason for $K$ to depend on $R$, which further explains the necessity of the polynomial dependence of the implicit constant on $K$ in Theorem \ref{201204thm5_1}. 
\end{remark}



One way we will be using the locally constant property of $\zeta$  is 
\begin{equation}\label{201231e5_60}
    \zeta_{(R'')^{1-\delta}}(\bx)\lesim (R'')^{\delta}\zeta_{(R'')^{1-\delta}}(\by) \text{ if } |\bx-\by|\lesim R''.
\end{equation}
To control the $L^p$ norm of $H^{\lambda'} g=H^{\lambda', R''} g$ on $D_{R'}$, we cover $D_{R'}$ by finitely-overlapping $R''$-balls, and let $B_{R''}$ be some member of this cover. Let $\bx_0$ denote the center of $B_{R''}$, then 
\begin{equation}
    |H^{\lambda', R''} g(\bx_0+\bz)|\lesim R^{\delta} \int_{\R^n} \Big|\sum_{\theta} e^{2\pi i \phi^{\lambda'}(\bx_0+\bz; \omega_{\theta})} H^{\lambda', R''}_{\theta} g_{\theta}(\by) \Big| \zeta_{(R'')^{1-\delta}}(\bx_0-\by) d\by,
\end{equation}
for every $\bz$ with $|\bz|\le R''$. We follow the same lines of proof as in \cite[page 364]{MR4047925}. To be more precise, we take the $L^p$ norm in $\bz$ and see that $\|H^{\lambda'}g\|_{L^p(B_{R''})}$ can be bounded by 
\begin{equation}
    R^{\delta} \int_{\R^n} \Big\|\sum_{\theta} e^{2\pi i \phi^{\lambda'}(\bx_0+\bz; \omega_{\theta})} H^{\lambda', R''}_{\theta} g_{\theta}(\by) \Big\|_{L^p(\{|\bz|\le R''\})} \zeta_{(R'')^{1-\delta}(\bx_0-\by)}d\by.
\end{equation}
For each fixed $\by$, we apply Lemma \ref{201229lem5_4} and see that the $L^p$ norm in the above display can be bounded by 
\begin{equation}
    (R'')^{\epsilon} (R'')^{(n-1)/p} \Big(\sum_{\theta} |H_{\theta}^{\lambda', R''} g_{\theta}(\by)|^p \Big)^{1/p}.
\end{equation}
We then apply the locally constant property \eqref{201231e5_60} again, sum over all balls $B_{R''}$ inside $D_{R'}$ and obtain 
\begin{equation}
    \|H^{\lambda'} g\|_{L^p(D_{R'})}\lesim_{\epsilon} (R'')^{\epsilon+O(\delta)} (R'')^{(n-1)/p'-n/p}\Big( \int_{\R^n} \sum_{\theta} \|H^{\lambda', R''} g_{\theta}\|^p_{L^p(D_{R'}-\by)} \zeta_{(R'')^{1-\delta}}(\by) d\by\Big)^{1/p},
\end{equation}
where $D_{R'}-\by$ means a translation in $\by$. It remains to prove that 
\begin{equation}\label{201231e5_65}
    \Big( \int_{\R^n} \sum_{\theta} \|H^{\lambda', R''} g_{\theta}\|^p_{L^p(D_{R'}-\by)} \zeta_{(R'')^{1-\delta}}(\by) d\by\Big)^{1/p} \lesim (R'')^{-(n-1)/p'+n/p} \|g\|_p.
\end{equation}
This is the step where it is convenient to have a cut-off function in space and time in the definition of $H^{\lambda', R''}f$. We will show that  
\begin{equation}\label{201231e5_66}
    \|H^{\lambda', R''} g_{\theta}\|_{L^p(D_{R'}-\by)} \lesim (R'')^{-(n-1)/p'+n/p} \|g_{\theta}\|_p,
\end{equation}
which immediately implies \eqref{201231e5_65}. To prove this bound, we use interpolation at $p=2$ and $p=\infty$. At $p=2$, we need the following lemma. 
\begin{lem}\label{201231lem4_7}
For every $1\le R\le \lambda$, it holds that 
\begin{equation}
\label{Hormander-type}
    \|H^{\lambda, R}g(x, t)\|_{L^2_x} \lesim \|g\|_2,
\end{equation}
uniformly in $t\in \R$. 
\end{lem}
\begin{proof}[Proof of Lemma \ref{201231lem4_7}]
This estimate is well-known. As it is short, we still include it here. By the main theorem in \cite{MR340924}, which is proven via a $TT^*$ argument and Schur's text, the desired estimate follows once we verify that 
\begin{equation}
    |\det \nabla_{x}\nabla_{\omega}\phi(x, t; \omega)|\simeq 1,
\end{equation}
uniformly in $|t|\lesim 1, |x|\lesim 1$. However this has been verified and used in \eqref{210201e3_40}.  
\end{proof}
As a consequence of Lemma \ref{201231lem4_7}, we obtain
\begin{equation}
    \|H^{\lambda', R''} g_{\theta}\|_{L^{2}(D_{R'}-\by)}\lesim  R^{-(n-1)(1/2-1/p)+1/2}\|g_{\theta}\|_p.
\end{equation}
At $p=\infty$, we have the trivial bound that 
\begin{equation}
    \|H^{\lambda', R''} g_{\theta}\|_{L^{\infty}(D_{R'}-\by)}\le R^{-(n-1)/p'}\|g_{\theta}\|_p.
\end{equation}
Interpolation implies the desired \eqref{201231e5_66}. This finishes the proof of Lemma \ref{201229lem5_3}. 
\end{proof}

\section{Wave packets and their essential supports}\label{f_section4}

The rest of the paper is devoted to the proof of Theorem \ref{201204thm5_1}, the broad norm estimate. When we were handling the narrow part in the previous section, we used an inductive argument, and in order to close induction we had to be very careful with the choice of various amplitude functions $\bar{a}_1, \bar{a}_2$, and regions of taking integrations; we also had to introduce $H^{\lambda, R}g(\bx)$ and distinguish it from $H^{\lambda} g(\bx)$ as it was very often inconvenient to carry amplitude functions in our operator. From now on, these points are not as crucial as before, and we will always work with $H^{\lambda, R}g(\bx)$, a function that is compactly supported in $[-4C_n \lambda, 4C_n \lambda]\times [R/(2C_n), 2C_n \lambda]$. Moreover, to simplify notation, we always abbreviate $H^{\lambda, R}$ to $H^{\lambda}$: The dependence on $R$ will be emphasized through the region we evaluate the $L^p$ norm of $H^{\lambda}$.

\subsection{Wave packet decomposition}
Let $r \geq 1$ and take a collection $\Theta_r$ of dyadic cubes of side length  $\frac{9}{11}r^{-1/2}$ covering the ball $B^{n-1}(0,2)$. We take a smooth partition of unity $(\psi_{\theta})_{\theta \in \Theta_r}$ with $\supp \psi_{\theta}\subset \frac{11}{10}\theta$ for the ball $B^{n-1}(0,2)$ such that
\begin{equation}
    \| \partial_{w}^{\alpha}\psi_{\theta}\|_{L^{\infty}} \lesssim_{\alpha} r^{|\alpha|/2}
\end{equation}
for any $\alpha \in \N_0^{n-1}$. We denote by $\omega_{\theta}$ the center of $\theta$. Given a function $g$, we perform a Fourier series decomposition to the function $g \psi_{\theta}$ on the region $\frac{11}{9}\theta$ and obtain
\begin{equation}
    g(w)\psi_{\theta}(w)\cdot \mathbbm{1}_{\frac{11}{10}\theta}(\omega)=\Big(\frac{  r^{1/2}}{2\pi} \Big)^{n-1}
    \sum_{v \in  r^{1/2}\Z^{n-1}}(g \psi_{\theta})^{\wedge}(v)e^{2\pi iv \cdot w }\mathbbm{1}_{\frac{11}{10}\theta}(\omega).
\end{equation}
Let $\widetilde{\psi}_{\theta}$ be a non-negative smooth cutoff function supported on $\frac{11}{9}\theta$ and equal to 1 on $\frac{11}{10}\theta$. We can therefore write 
\begin{equation}
    g(w)\psi_{\theta}(w)\cdot \widetilde{\psi}_{\theta}(\omega)=\Big(\frac{  r^{1/2}}{2\pi} \Big)^{n-1}
    \sum_{v \in  r^{1/2}\Z^{n-1}}(g \psi_{\theta})^{\wedge}(v)e^{2\pi iv \cdot w }\widetilde{\psi}_{\theta}(\omega).
\end{equation}
If we also define
\begin{equation}
    g_{\theta,v}(w):=
    \Big(\frac{r^{1/2}}{2\pi} \Big)^{n-1}
    (g \psi_{\theta})^{\wedge}(v)e^{2\pi iv \cdot \omega }\widetilde{\psi}_{\theta}(\om).
\end{equation}
then we have 
\begin{equation}
    g=\sum_{(\theta,v) \in \Theta_r \times r^{1/2}\Z^{n-1} }g_{\theta,v}. 
\end{equation}
This finishes our wave packet decomposition. 
\medskip

Let
$1\le r\le R$. We fix a ball $B(\bx_0,r)$ of radius $r$ satisfying 
\begin{equation}\label{210103e5_6}
    B(\bx_0,r) \subset [-3C_n \lambda, 3 C_n \lambda]^{n-1} \times [R/C_n, \lambda C_n].
\end{equation} 
Let us fix a function $f$. We define a collection of tubes associated to the ball $B(\bx_0,r)$ by
\begin{equation}\label{4747}
    \T[B(\bx_0,r)]:=\{ T_{\theta,v}(\bx_0): (\theta,v) \in \Theta_r \times r^{1/2}\Z^{n-1}   \},
\end{equation}
where $T_{\theta,v}(\bx_0)$ is some set that will be determined later. Given $T_{\theta,v}(\bx_0) \in \T[B(\bx_0,r)]$, we define a function
\begin{equation}\label{modulations}
    g_{T_{\theta,v}(\bx_0) }(\om):= e^{-2\pi i\phi^{\lambda}(\bx_0; \om) }(  g(\cdot)e^{2\pi i\phi^{\lambda}(\bx_0; \ \cdot) } )_{\theta,v}(\om).
\end{equation}
Notice that
\begin{equation}\label{4949}
    H^{\lambda}g(x,t)=\sum_{T \in \T[B(\bx_0,r)] }H^{\lambda}g_T(x,t).
\end{equation}
In the rest of the section, we will define tubes $T$ so that $H^{\lambda}g_T$ is essentially supported on $T$.

\begin{remark}
{\rm
It is worth mentioning the motivation and advantages of the notation $g_{T_{\theta,v}}$. In \cite{guth2018} and \cite{MR4047925}, the following notations are introduced:
 \begin{equation}
     \widetilde{\bx}=\bx-\bx_0 \;\;\; \text{and} \;\;\; \tilde{g}(w)=g(w)e^{2\pi i \phi^{\lambda}(\bx_0;w)}.
 \end{equation}
 In these notations, we have
 \begin{equation}
     H^{\lambda}g(x)=\tilde{H}^{\lambda}\tilde{g}(\widetilde{\bx}),
 \end{equation}
 where
 \begin{equation}
  \tilde{H}^{\lambda}\tilde{g}(\tilde{\bx}) :=  \int_{\R^{n-1}} e^{2\pi i \phi^{\lambda}(\tilde{\bx}+\bx_0;  \omega)} 
  e^{-2\pi i \phi^{\lambda}(\bx_0;w)}
  \tilde{g}(\omega)d\omega.   
 \end{equation}
  The motivation of their notations is that the function $\tilde{H}^{\lambda}\tilde{g}$ is defined on the ball centered at the origin so that we can apply the wave packet decomposition to the function $\tilde{g}$. However, in our proof of Theorem \ref{201204thm5_1} (in particular, Section 9 and 10), these notations cause some confusion because the function $\tilde{g}$ depends on the point $\bx_0$, but the notation $\tilde{g}$ does not show this dependence. 
 
 To make things clear, what we have done above is to apply the wave packet decomposition to $\tilde{g}$ and change back the variable $\widetilde{\bx}$ to the original variable $\bx$ (See \eqref{modulations} and \eqref{4949}). Note that our tube $T_{\theta,v}(\bx_0)$ indicates the dependence on the point $\bx_0$. }
\end{remark}

\begin{lem}[$L^2$-orthogonality]\label{210103lem5_1}
For any set $\T \subset \T[B(\bx_0,r)]$, it holds that
\begin{equation}
    \|\sum_{T \in \T} g_{T}\|_2^2 \lesim
    \sum_{T \in \T}
    \| g_{T}\|_2^2\lesim \|g\|_2^2.
\end{equation}
Moreover, if $\ZT$ is any collection of tubes with the same $\theta$, then 
    \begin{equation}\label{210112e4_11}
    \|\sum_{T \in \T} g_{T}\|_2^2 \sim
    \sum_{T \in \T}
    \| g_{T}\|_2^2.
\end{equation}
\end{lem}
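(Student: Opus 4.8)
The plan is to reduce the $L^2$-orthogonality statement to a standard almost-orthogonality fact about the functions $g_{T_{\theta,v}(\bx_0)}$, which are modulated copies of the wave packets $g_{\theta,v}$ in the decomposition of the (modulated) function $g(\cdot)e^{2\pi i \phi^{\lambda}(\bx_0;\cdot)}$. Since the modulation $e^{-2\pi i\phi^{\lambda}(\bx_0;\omega)}$ in \eqref{modulations} is a unimodular factor that is common to all $T$, it preserves $L^2$ norms and does not affect any orthogonality among the $g_T$; so I would first pass to the tilde-normalization and work directly with the wave packets $\widetilde g_{\theta,v}$ of $\widetilde g := g(\cdot)e^{2\pi i \phi^{\lambda}(\bx_0;\cdot)}$, and prove the three inequalities for those.

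First I would establish the Fourier-support (frequency) separation of the wave packets in the $\theta$ variable. By construction $\widetilde g_{\theta,v}$ is supported (in $\omega$) in $\frac{11}{9}\theta$, and the cubes $\theta$ are finitely overlapping; hence for any fixed $v$-assignment the functions $\{\widetilde g_{\theta,v}\}_{\theta}$ have boundedly overlapping supports, which already gives $\|\sum_\theta \widetilde g_{\theta,\cdot}\|_2^2 \lesssim \sum_\theta \|\widetilde g_{\theta,\cdot}\|_2^2$. For a fixed $\theta$, the $v$'s run over $r^{1/2}\Z^{n-1}$ and the functions $\widetilde g_{\theta,v}(\omega) = c_{\theta,v}\, e^{2\pi i v\cdot\omega}\,\widetilde\psi_\theta(\omega)$ with $c_{\theta,v} = (r^{1/2}/2\pi)^{n-1}(\widetilde g\psi_\theta)^{\wedge}(v)$; these are (up to the smooth cutoff $\widetilde\psi_\theta$ which is $\equiv 1$ on $\frac{11}{10}\theta \supset \frac{11}{9}\theta'$ where the Fourier data lives) exactly the terms of a Fourier series on a cube of sidelength comparable to $r^{-1/2}$, so the exponentials $e^{2\pi i v\cdot\omega}$ with $v\in r^{1/2}\Z^{n-1}$ are orthogonal on that cube. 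This yields \eqref{210112e4_11}: $\|\sum_{T\in\ZT}\widetilde g_{T}\|_2^2 \sim \sum_{T\in\ZT}\|\widetilde g_T\|_2^2$ when all $T$ share the same $\theta$, up to harmless constants coming from the cutoff $\widetilde\psi_\theta$ versus $\mathbbm 1_{\frac{11}{10}\theta}$ and the partition-of-unity weight.

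Combining the two, for a general $\ZT\subset\ZT[B(\bx_0,r)]$ I would split the sum over $T$ according to $\theta$, apply \eqref{210112e4_11} (or just Bessel/Plancherel) inside each $\theta$, and then sum over $\theta$ using the finite overlap of the supports $\frac{11}{9}\theta$ — this gives the first inequality $\|\sum_T g_T\|_2^2 \lesssim \sum_T\|g_T\|_2^2$. The final inequality $\sum_T\|g_T\|_2^2\lesssim\|g\|_2^2$ follows from summing the Fourier-side Plancherel identity $\sum_v|c_{\theta,v}|^2 \sim r^{-(n-1)}\|\widetilde g\psi_\theta\|_{L^2(\frac{11}{9}\theta)}^2 \cdot r^{n-1}\!=\!\|\widetilde g\psi_\theta\|_2^2$ (tracking the $(r^{1/2}/2\pi)^{n-1}$ normalizations), together with $\|\widetilde\psi_\theta\|_\infty\lesssim 1$ and $\sum_\theta\psi_\theta^2\lesssim 1$, and finally $\|\widetilde g\|_2 = \|g\|_2$ since the modulation is unimodular.

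I do not expect a serious obstacle here; the statement is the standard wave-packet $L^2$-orthogonality and the only thing to be careful about is bookkeeping of the normalization constants $(r^{1/2}/2\pi)^{n-1}$ and the passage between the sharp cutoff $\mathbbm 1_{\frac{11}{10}\theta}$ appearing in the Fourier series identity and the smooth cutoff $\widetilde\psi_\theta$ defining $g_{\theta,v}$ — since $\widetilde\psi_\theta\equiv 1$ on the support of the relevant data, these agree where it matters. The mildest technical point is that $\{\widetilde g_{\theta,v}\}_{\theta,v}$ is only \emph{almost} orthogonal (bounded overlap of the $\frac{11}{9}\theta$ and the smooth rather than sharp cutoffs), which is exactly why the first and third displays are stated with $\lesssim$ while the single-$\theta$ statement \eqref{210112e4_11} is an equivalence; I would make sure the constants in $\sim$ in \eqref{210112e4_11} are genuinely two-sided by noting $\widetilde\psi_\theta$ is supported in $\frac{11}{9}\theta$ and $\equiv 1$ on the slightly smaller cube carrying the Fourier series, so no mass is created or destroyed.
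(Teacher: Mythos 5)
Your reduction via the unimodular modulation $e^{-2\pi i\phi^{\lambda}(\bx_0;\cdot)}$ and both $\lesssim$ inequalities are fine, and they are exactly the "Plancherel" argument the paper omits: finite overlap of the supports $\tfrac{11}{9}\theta$ across $\theta$, Parseval on the period cube together with $0\le\widetilde{\psi}_{\theta}\le 1$ for the $v$-sum, and $\sum_{\theta}\|\widetilde g\,\psi_{\theta}\|_2^2\lesssim\|g\|_2^2$. The genuine gap is in the lower half of \eqref{210112e4_11}. The characters $e^{2\pi i v\cdot\omega}$, $v\in r^{1/2}\Z^{n-1}$, are orthogonal on the cube of side exactly $r^{-1/2}$, i.e.\ on $\tfrac{11}{9}\theta$, whereas $\widetilde{\psi}_{\theta}\equiv 1$ only on $\tfrac{11}{10}\theta$, of side $\tfrac{9}{10}r^{-1/2}$: there is no cube on which both facts hold, and your parenthetical ``up to the smooth cutoff \dots which is $\equiv 1$ \dots where the Fourier data lives'' (and the closing claim that ``no mass is created or destroyed'') trades on conflating the two. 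Concretely, for a common-$\theta$ subcollection with $v$ ranging over $V\subsetneq r^{1/2}\Z^{n-1}$ one has $\sum_{T\in\T}\widetilde g_{T}=\widetilde{\psi}_{\theta}\cdot S_V h$ with $h:=\widetilde g\,\psi_{\theta}$ and $S_V$ the partial Fourier sum on $\tfrac{11}{9}\theta$; while $h$ is supported where $\widetilde{\psi}_{\theta}=1$, the partial sum $S_Vh$ is not (it is merely periodic), so the cutoff genuinely matters at exactly the step you wave away.

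To see why this is not mere bookkeeping, unfold the weight:
\begin{equation*}
  \Big\|\widetilde{\psi}_{\theta}\,S_Vh\Big\|_{L^2(\R^{n-1})}^2=\int_{\frac{11}{9}\theta}W(\omega)\,|S_Vh(\omega)|^2\,d\omega,\qquad W:=\sum_{m\in r^{-1/2}\Z^{n-1}}\widetilde{\psi}_{\theta}^2(\cdot+m),
\end{equation*}
so the lower bound you need would follow from $W\gtrsim 1$; but since $\widetilde{\psi}_{\theta}$ is supported in the period cube $\tfrac{11}{9}\theta$ itself, $W$ vanishes on the boundaries of the period cells. The defect is real: taking $V$ a large block of consecutive frequencies and $h$ supported in $\tfrac{11}{10}\theta$ with $(\hat h(v))_{v\in V}$ prescribed (possible, since a nonzero trigonometric polynomial with spectrum in $V$ cannot vanish on the open set $\tfrac{11}{10}\theta$) so that $S_Vh$ is essentially a high modulation times a bump concentrated near a corner of $\tfrac{11}{9}\theta$, the ratio $\|\sum_{T}g_T\|_2^2\big/\sum_{T}\|g_T\|_2^2$ becomes arbitrarily small. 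So the lower bound in \eqref{210112e4_11} cannot be obtained by your route as written; it does become immediate either for the full family of $v$'s attached to a given $\theta$ (then $S_Vh=h$ lives where $\widetilde{\psi}_{\theta}=1$), or after the harmless modification of choosing $\widetilde{\psi}_{\theta}\equiv 1$ on all of $\tfrac{11}{9}\theta$ and supported in a slightly larger cube, which forces $W\ge 1$ and makes your periodization/Plancherel argument close for arbitrary common-$\theta$ subcollections. (Also, the containment ``$\tfrac{11}{10}\theta\supset\tfrac{11}{9}\theta'$'' in your second paragraph does not parse; it is a symptom of the same cube confusion.)
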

Lemma \ref{210103lem5_1} follows immediately from Plancherel's theorem, and the proof is omitted. 
\begin{remark}
\rm

In the above lemma, without the extra assumption, \eqref{210112e4_11} may not be correct, due to the frequency overlapping. 
\end{remark}

The definition of $T_{\theta, v}(\bx_0)$ relies on which of the following two cases we are in:
\begin{equation}\label{2011227_8z}
    |v-\nabla_\omega\phi^{\lambda}(\bx_0;\omega_{\theta})|\ge \frac{10 C_n\lambda}{\sqrt{1+(10C_n)^2}} \text{ or } |v-\nabla_\omega\phi^{\lambda}(\bx_0;\omega_{\theta})|\le \frac{10 C_n \lambda}{\sqrt{1+(10 C_n)^2}}.
\end{equation}
In the former case, we define $T_{\theta,v}$ to be an empty set, and show that $H^{\lambda}f_T$ always  decays rapidly, that is, it is bounded by  $\lambda^{-N}\|f\|_2$ pointwise. In the latter case, we define $T_{\theta,v}$ to be as follows: Define a line $l_{\theta,v}$ as
\begin{equation}
\label{coreline}
    l_{\theta,v}:=\big\{(x,t):\omega_\theta t-x=\frac{\la (\nabla_\omega\phi^{\lambda}(\bx_0;\omega_{\theta})-v)}{\sqrt{\la^2-|\nabla_\omega\phi^{\lambda}(\bx_0;\omega_{\theta})-v|^2}}\big\}
\end{equation}
and a $r^{1/2+\de}$ tube $T_{\theta,v}$ as $T_{\theta,v}=\mc{N}_{r^{1/2+\de}}(l_{\theta,v})$. Here $0<\delta\ll \epsilon$ is a small parameter whose choice will become clear later, and $\mc{N}_{r^{1/2}+\delta}$ means the $r^{1/2+\delta}$-neighbourhood. Next, we show that the essential support of $H^{\lambda} f_T$ is $T$. The precise meaning of essential supports will be made clear in the statements of the following two lemmas. 
\begin{lem}\label{210103lem5_2}
If $T=T_{\theta, v}(\bx_0)$ is such that the former case in \eqref{2011227_8z} holds, then we have 
\begin{equation}
    \|H^{\lambda} g_T\|_{L^{\infty}(B(\bx_0, r))} \lesim_N \lambda^{-N} \|g\|_2,
\end{equation}
for  every $N\in \N$, whenever $B(\bx_0, r)$ satisfies \eqref{210103e5_6}. 
\end{lem}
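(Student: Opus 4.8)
The plan is to run a standard non-stationary phase argument. Recall that
\[
H^{\lambda} g_T(\bx) = \int_{\R^{n-1}} e^{2\pi i \phi^{\lambda}(\bx;\omega)} g_T(\omega)\, d\omega,
\]
where, by \eqref{modulations}, $g_T(\omega) = e^{-2\pi i \phi^{\lambda}(\bx_0;\omega)} (g(\cdot) e^{2\pi i \phi^{\lambda}(\bx_0;\cdot)})_{\theta,v}(\omega)$. The frequency factor $e^{2\pi i v\cdot\omega}$ built into the wave packet $(\cdot)_{\theta,v}$, together with the modulation by $e^{-2\pi i \phi^{\lambda}(\bx_0;\omega)}$, means that the total phase of the integrand is
\[
\phi^{\lambda}(\bx;\omega) - \phi^{\lambda}(\bx_0;\omega) + v\cdot\omega,
\]
modulated against a bump that is smooth at scale $r^{-1/2}$ and supported in $\tfrac{11}{9}\theta$. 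First I would compute the $\omega$-gradient of this phase at $\bx=\bx_0$: it equals $\nabla_\omega \phi^{\lambda}(\bx_0;\omega) - \nabla_\omega\phi^{\lambda}(\bx_0;\omega) + v$... wait, more carefully, the gradient of $v\cdot\omega$ is $v$, so at $\bx=\bx_0$ the total $\omega$-gradient of the phase is simply $v$, while on the support of the bump $\nabla_\omega\phi^{\lambda}(\bx_0;\omega)$ stays within $O(r^{-1/2}\cdot\lambda)$ of $\nabla_\omega\phi^{\lambda}(\bx_0;\omega_\theta)$ by the second-derivative bound \eqref{210131e3_12}--\eqref{210131e3_13} (note $|\partial_\omega^2\phi^{\lambda}|\lesssim\lambda$, and the $\omega$-diameter of $\tfrac{11}{9}\theta$ is $\lesssim r^{-1/2}$, so the variation is $\lesssim \lambda r^{-1/2}\le \lambda$, which is acceptable compared to the gap below). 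The key point from \eqref{210131e3_10} is that $|\nabla_\omega\phi^{\lambda}(\bx;\omega)| = \tfrac{\lambda|x-t\omega|}{\sqrt{\lambda^2+|x-t\omega|^2}} < \lambda$ always, and on the region \eqref{210103e5_6} one has $|x-t\omega|\lesssim C_n\lambda$, giving $|\nabla_\omega\phi^{\lambda}(\bx;\omega)|\le \tfrac{10C_n\lambda}{\sqrt{1+(10C_n)^2}}$ (after possibly enlarging $C_n$). Hence in the former case of \eqref{2011227_8z}, namely $|v - \nabla_\omega\phi^{\lambda}(\bx_0;\omega_\theta)|\ge \tfrac{10C_n\lambda}{\sqrt{1+(10C_n)^2}}$, the total $\omega$-gradient of the phase,
\[
\nabla_\omega\phi^{\lambda}(\bx;\omega) + v - \text{(correction)},
\]
is bounded below in magnitude by a positive multiple of $\lambda$ for all $\bx$ in $B(\bx_0,r)$ and all $\omega$ in the support of the bump. (Here $r\le R\le\lambda$ ensures the variation of $\nabla_\omega\phi^\lambda$ in $\bx$ over $B(\bx_0,r)$ is also controlled, using \eqref{210131e3_20z} or \eqref{210131e3_28}.)

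Second, with a lower bound $|\nabla_\omega(\text{phase})|\gtrsim\lambda$ in hand, I would integrate by parts in $\omega$ repeatedly, using the operator $L = \tfrac{1}{2\pi i}\tfrac{\nabla_\omega(\text{phase})}{|\nabla_\omega(\text{phase})|^2}\cdot\nabla_\omega$, which satisfies $L e^{2\pi i(\text{phase})} = e^{2\pi i(\text{phase})}$. Each application of $L^{\mathsf{t}}$ to the amplitude produces a factor of size $O(\lambda^{-1})$ times derivatives of the phase (controlled by \eqref{210131e3_28}, which gives $|\nabla_\omega^{\beta'}\phi^\lambda|\lesssim\lambda$ for any $\beta'$ — crucially the right-hand side does not grow) and derivatives of the bump (which cost $r^{1/2}\le\lambda^{1/2}$ each). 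After $N'$ integrations by parts, the integrand is bounded by $\lambda^{-N'}\cdot\lambda^{N'/2}\cdot(\text{stuff})\lesssim\lambda^{-N'/2}$ times pointwise size controls, and integrating the resulting expression (supported on a cap of measure $\sim r^{-(n-1)/2}$) and using Cauchy--Schwarz to pass from $\|(g\psi_\theta)^\wedge(v)\|$-type quantities to $\|g\|_2$ yields
\[
\|H^\lambda g_T\|_{L^\infty(B(\bx_0,r))}\lesssim_{N'} \lambda^{-N'/2}\|g\|_2,
\]
which is the claim after relabeling $N = N'/2$.

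The main obstacle, and the point requiring care, is verifying uniformly over $\bx\in B(\bx_0,r)$ and over $\omega$ in the (curved, $\lambda$-dependent) support of the wave packet that the lower bound $|\nabla_\omega(\text{phase})|\gtrsim\lambda$ genuinely holds — i.e. quantifying the two competing errors: the variation of $\nabla_\omega\phi^\lambda(\bx;\omega)$ as $\bx$ ranges over $B(\bx_0,r)$ (handled by $r\le\lambda$ together with \eqref{210131e3_28}), and the variation in $\omega$ across the cap $\tfrac{11}{9}\theta$ (handled by the second-order bounds and $r\le R\le\lambda$). Both errors must be shown to be a small fraction of the gap $\tfrac{10C_n\lambda}{\sqrt{1+(10C_n)^2}}$ from \eqref{2011227_8z}; this is where the precise choice of the constant $C_n$ and of the threshold in \eqref{2011227_8z} is used. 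Everything else is routine non-stationary phase bookkeeping.
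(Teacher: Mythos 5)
Your overall strategy is the same as the paper's: identify the total phase $\phi^{\lambda}(\bx;\omega)-\phi^{\lambda}(\bx_0;\omega)+v\cdot\omega$, prove a lower bound $|\nabla_\omega(\mathrm{phase})|\gtrsim\lambda$ on the support of the wave packet, and then integrate by parts. However, the step you yourself flag as the main obstacle is exactly where your estimates do not close, for two concrete reasons. First, your upper bound $|\nabla_\omega\phi^{\lambda}(\bx;\omega)|\le \tfrac{10C_n\lambda}{\sqrt{1+(10C_n)^2}}$ ``after possibly enlarging $C_n$'' is self-defeating: $C_n$ is the same constant that defines the threshold in \eqref{2011227_8z}, so with that bound the triangle inequality produces a gap of size zero, not $\gtrsim\lambda$. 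The paper instead uses the support information $|t|\le 2C_n\lambda$, $|x|\le 4C_n\lambda$ (so $|t\omega_\theta-x|\le 6C_n\lambda$) and the monotonicity of $s\mapsto s/\sqrt{1+s^2}$ to get the strictly smaller bound $|\nabla_\omega\phi^{\lambda}(\bx;\omega_\theta)|\le \tfrac{6C_n\lambda}{\sqrt{1+(6C_n)^2}}$; the difference of the two explicit constants is what yields a genuine gap $\gtrsim\lambda$, and this is precisely why the threshold was set at $10C_n$ rather than at the supremum of $|\nabla_\omega\phi^{\lambda}|$.

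Second, your control of the $\omega$-variation across the cap is too lossy. You bound the variation of $\nabla_\omega\phi^{\lambda}(\bx_0;\cdot)$ over a cap of diameter $r^{-1/2}$ by $\lambda r^{-1/2}$ and declare it acceptable because it is $\le\lambda$; but the gap it must be compared against is only a small constant times $\lambda$, and the lemma allows any $r\ge 1$, so for bounded $r$ the error $\lambda r^{-1/2}$ can swallow the gap entirely. The paper's splitting \eqref{2011227_14} avoids this: the large main term is evaluated only at the cap center $\omega_\theta$, and the cap-variation is applied to the \emph{difference} phase $\phi^{\lambda}_{\bx_0}(\bx;\omega)=\phi^{\lambda}(\bx;\omega)-\phi^{\lambda}(\bx_0;\omega)$, where the mixed-derivative bound \eqref{210131e3_28} together with $|\bx-\bx_0|\le r$ gives an error $O(r\cdot r^{-1/2})=O(r^{1/2})$, harmless since $r\le\lambda$. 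With these two corrections (which is the content of the paper's computation), the remaining integration-by-parts bookkeeping in your write-up is fine.
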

\begin{proof}[Proof of Lemma \ref{210103lem5_2}]
For given $\bx_0$, we denote $g_{\bx_0}(\omega)=g(\omega) e^{2\pi i\phi^{\lambda}(\bx_0; \omega)}$. Recall the definition of $g_T$ that 
\begin{equation}
\label{single-wp-tube}
    g_T(\omega)=e^{-2\pi i\phi^{\lambda}(\bx_0; \omega)} (g_{\bx_0}\psi_{\theta})^{\wedge}(v) \Big(\frac{r^{1/2}}{2\pi}\Big)^{n-1} e^{2\pi iv\cdot \omega}\widetilde{\psi}_{\theta}(\omega). 
\end{equation}
Let $H^{\lambda}$ act on $f_T$ and we obtain 
\begin{equation}
\label{H-lambda-g}
    H^{\lambda} g_T(x, t)=\Big(\frac{r^{1/2}}{2\pi}\Big)^{n-1} (g_{\bx_0}\psi_{\theta})^{\wedge}(v)a_{\lambda, R}(x, t) \int e^{2\pi i\phi_{\bx_0}^{\lambda}(\bx; \omega)+2\pi iv\cdot \omega} \widetilde{\psi}_{\theta}(\omega)d\omega,
\end{equation}
where  
\begin{equation}\label{210103e5_19}
    \phi_{\bx_0}^{\lambda}(\bx; \omega):=\phi^{\lambda}(\bx; \omega)-\phi^{\lambda}(\bx_0; \omega).
\end{equation}
We make the change of variable $\omega\to\omega+\om_\theta$ and 
consider the oscillatory integral 
\begin{equation}
\begin{split}
    & \int e^{2\pi i \big(\phi^{\lambda}_{\bx_0}(x,t;\om+\om_\theta)+v\cdot\omega\big)} \widetilde{\psi}(r^{1/2} \omega) d\omega\\
    & = r^{-(n-1)/2}\int e^{2\pi i \big(\phi^{\lambda}_{\bx_0}(x,t;r^{-1/2}\om+\om_\theta)+r^{-1/2}v\cdot\omega\big)} \widetilde{\psi}(\omega) d\omega.
\end{split}
\end{equation}
Denote $\omega':=r^{-1/2}\omega$. First of all, $\nabla_{\omega}$ of the phase function equals 
\begin{equation}\label{2011227_13}
    r^{-1/2}\Big(\nabla_{\omega} \phi^{\lambda}_{\bx_0}(x,t;\om'+\om_\theta)+v\Big).
\end{equation}
We write 
\begin{equation}\label{2011227_14}
\begin{split}
    & \eqref{2011227_13} = r^{-1/2}\Big(\nabla_{\omega} \phi^{\lambda}_{\bx_0}(x,t;\om'+\om_\theta)-\nabla_{\omega} \phi^{\lambda}_{\bx_0}(x,t;\om_\theta)\Big)\\
    & + r^{-1/2} \Big(\nabla_{\omega} \phi^{\lambda}_{\bx_0}(x,t;\om_\theta)+v\Big).
\end{split}
\end{equation}
We first look at the second term. By the definition of $\phi^{\lambda}_{\bx_0}$, it equals to 
\begin{equation}\label{201122e7_15}
    \begin{split}
        & r^{-1/2}\Big(\nabla_{\omega} \phi^{\lambda}(x, t; \omega_{\theta})-\nabla_{\omega} \phi^{\lambda}(x_0, t_0; \omega_{\theta})+v\Big).
    \end{split}
\end{equation}
Recall we are in the case that 
\begin{equation}
    |\nabla_{\omega} \phi^{\lambda}(x_0, t_0; \omega_{\theta})-v|\ge \frac{10 C_n \lambda}{\sqrt{1+(10C_n)^2}}.
\end{equation}
Moreover, recall that $|t|\le 2C_n \lambda$ and $|x|\le 4C_n \lambda, $ and therefore we obtain 
\begin{equation}
\begin{split}
    & |\nabla_{\omega} \phi^{\lambda}(x, t; \omega_{\theta})|=\lambda\Big|\frac{t\om_{\theta}-x}{\sqrt{\la^2+|x-t\om_{\theta}|^2}}\Big| \le \frac{6C_n\lambda}{\sqrt{1+(6 C_n)^2}}
\end{split}
\end{equation}
By the triangle inequality, we obtain that 
\begin{equation}
    \eqref{201122e7_15}\gtrsim \Big(\frac{10 C_n \lambda}{\sqrt{1+(10C_n)^2}}-\frac{6 C_n \lambda}{\sqrt{1+(6C_n)^2}}\Big)r^{-1/2} \gtrsim \lambda/ r^{1/2}. 
\end{equation}
Let us continue to compute the first term in \eqref{2011227_14}. 
First,
\begin{align}\label{210201e4_25}
    & \nabla_\om\phi^{\lambda}_{\bx_0}(x,t;\om)=\la\Big(\frac{t\om-x}{\sqrt{\la^2+|x-t\om|^2}}-\frac{t_0\om-x_0}{\sqrt{\la^2+|x_0-t_0\om|^2}}\Big).
\end{align}
Therefore 
\begin{equation}\label{210201e4_26}
    \begin{split}
        & \nabla_\om\phi^{\lambda}_{\bx_0}(x,t;\om'+\om_\theta)-\nabla_\om\phi^{\lambda}_{\bx_0}(x,t;\om_\theta)\\
        & = \la\Big(\frac{t(\omega'+\om_\theta)-x}{\sqrt{\la^2+|x-t(\om'+\omega_{\theta})|^2}}-\frac{t_0(\om'+\omega_{\theta})-x_0}{\sqrt{\la^2+|x_0-t_0(\om'+\omega_{\theta})|^2}}\Big)\\
        &-\la\Big(\frac{t\om_\theta-x}{\sqrt{\la^2+|x-t\om_{\theta}|^2}}-\frac{t_0\om_{\theta}-x_0}{\sqrt{\la^2+|x_0-t_0\om_{\theta}|^2}}\Big)
    \end{split}
\end{equation}
We use the bound \eqref{210131e3_28} and mean value theorems, and see that the above expression can be bounded from above by $r^{1/2}$, which, together with the leading coefficient $r^{-1/2}$, produces the bound $1$ for the first term. Putting everything together, we see that 
\begin{equation}
    \eqref{2011227_14}\gtrsim \lambda/r^{1/2}-1\gtrsim \lambda/r^{1/2}. \end{equation}
The desired rapid decay follows immediately from integration by parts.
\end{proof}

\begin{lem}\label{210103lem5_3}
For $T=T_{\theta, v}$ and $(x,t)\in B(\bx_0, r)\setminus T_{\theta,v}$ with $B(\bx_0, r)$ satisfying \eqref{210103e5_6}, it holds that 
\begin{equation}
\label{wave-packet-decay}
    | H^\la  g_T(x,t)|\leq (1+r^{-1/2}|\nabla_\om\phi^{\lambda}_{\bx_0}(x,t;\om_\theta)+v|)^{-N}{\rm RapDec}(r)\|g\|_2.
\end{equation}
Here $\phi^{\lambda}_{\bx_0}$ is defined as in \eqref{210103e5_19}. 
\end{lem}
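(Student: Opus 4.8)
The plan is to run the same non-stationary phase argument as in the proof of Lemma~\ref{210103lem5_2}, the new ingredient being a geometric dictionary between the defining equation of the core line $l_{\theta,v}$ in \eqref{coreline} and the size of $\nabla_\omega\phi^\lambda_{\bx_0}(x,t;\omega_\theta)+v$. If we are in the former alternative of \eqref{2011227_8z} then $T_{\theta,v}=\emptyset$ and the claim follows from Lemma~\ref{210103lem5_2} together with the lower bound $|\nabla_\omega\phi^\lambda_{\bx_0}(x,t;\omega_\theta)+v|\gtrsim\lambda$ obtained inside that proof; so from now on I assume the latter alternative, i.e.\ $|\xi|\le\tfrac{10C_n\lambda}{\sqrt{1+(10C_n)^2}}$ where $\xi:=\nabla_\omega\phi^\lambda(\bx_0;\omega_\theta)-v$. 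As in \eqref{H-lambda-g}, after the substitution $\omega\mapsto r^{-1/2}\omega+\omega_\theta$ (whose Jacobian cancels the $r$-powers in the prefactor) we have that $H^\lambda g_T(x,t)$ is a constant times $(g_{\bx_0}\psi_\theta)^\wedge(v)\,a_{\lambda,R}(x,t)\int e^{2\pi i\Psi(\omega)}\widetilde\psi(\omega)\,d\omega$, with $\Psi(\omega):=\phi^\lambda_{\bx_0}(\bx;r^{-1/2}\omega+\omega_\theta)+r^{-1/2}v\cdot\omega$, $\widetilde\psi$ a bump at unit scale, and $\phi^\lambda_{\bx_0}$ as in \eqref{210103e5_19}.

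Next I would record the derivative bounds on $\Psi$. Writing $\phi^\lambda_{\bx_0}(\bx;\cdot)=\phi^\lambda(\bx;\cdot)-\phi^\lambda(\bx_0;\cdot)$ and using the fundamental theorem of calculus in $\bx$ together with \eqref{210131e3_28}, one gets $|\nabla_\omega^k\phi^\lambda_{\bx_0}(\bx;\cdot)|\lesssim_k|\bx-\bx_0|\lesssim r$ for every $k\ge1$. Hence for $k\ge2$ we have $|\nabla_\omega^k\Psi|=r^{-k/2}|\nabla_\omega^k\phi^\lambda_{\bx_0}(\bx;\cdot)|\lesssim r^{1-k/2}\lesssim1$, while $\nabla_\omega\Psi=r^{-1/2}\big(\nabla_\omega\phi^\lambda_{\bx_0}(\bx;\omega_\theta)+v\big)+E$ with $|E|\lesssim r^{-1/2}\cdot r^{-1/2}\cdot r=O(1)$ by the mean value theorem in $\omega$ (this is the same estimate appearing in the proof of Lemma~\ref{210103lem5_2}).

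The geometric step is the heart of the matter. Let $F(y):=\lambda y/\sqrt{\lambda^2+|y|^2}$, a bijection of $\R^{n-1}$ onto $B(0,\lambda)$ with inverse $F^{-1}(\eta)=\lambda\eta/\sqrt{\lambda^2-|\eta|^2}$; a Jacobian computation shows that $F$ is bi-Lipschitz on $\{|y|\le 10C_n\lambda\}$ with constants depending only on $C_n$. By the formula for $\partial_\omega\phi^\lambda$ recorded earlier in the excerpt, $\nabla_\omega\phi^\lambda(x,t;\omega_\theta)=F(t\omega_\theta-x)$, and by \eqref{coreline} the core line is exactly $l_{\theta,v}=\{(x,t):t\omega_\theta-x=F^{-1}(\xi)\}$. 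Since $(x,t)\in B(\bx_0,r)$ satisfies \eqref{210103e5_6} we have $|t\omega_\theta-x|\le 5C_n\lambda$, and the non-degeneracy $|\xi|\le\tfrac{10C_n\lambda}{\sqrt{1+(10C_n)^2}}$ gives $|F^{-1}(\xi)|\le 10C_n\lambda$; so the bi-Lipschitz bound applies and yields
\[
  |\nabla_\omega\phi^\lambda_{\bx_0}(x,t;\omega_\theta)+v|=|F(t\omega_\theta-x)-F(F^{-1}(\xi))|\sim|t\omega_\theta-x-F^{-1}(\xi)|\sim\mathrm{dist}\big((x,t),l_{\theta,v}\big),
\]
the last comparison because $l_{\theta,v}$ has direction $(\omega_\theta,1)$ with $|\omega_\theta|\lesssim1$. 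Consequently $(x,t)\notin T_{\theta,v}=\mathcal{N}_{r^{1/2+\delta}}(l_{\theta,v})$ forces $r^{-1/2}|\nabla_\omega\phi^\lambda_{\bx_0}(x,t;\omega_\theta)+v|\gtrsim r^{\delta}$, which dominates the $O(1)$ error $E$ and gives $|\nabla_\omega\Psi|\sim r^{-1/2}|\nabla_\omega\phi^\lambda_{\bx_0}(x,t;\omega_\theta)+v|\gtrsim r^\delta$.

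Finally I would integrate by parts in $\omega$: since $|\nabla_\omega\Psi|\gtrsim r^\delta\gg1$ and $|\nabla_\omega^k\Psi|\lesssim1$ for $k\ge2$, the standard non-stationary phase estimate bounds the oscillatory integral by $C_M\big(r^{-1/2}|\nabla_\omega\phi^\lambda_{\bx_0}(x,t;\omega_\theta)+v|\big)^{-M}$ for every $M$. Combining this with $|(g_{\bx_0}\psi_\theta)^\wedge(v)|\le\|g\psi_\theta\|_1\lesssim r^{-(n-1)/4}\|g\|_2$ (Cauchy--Schwarz on $\theta$) and splitting $(r^{-1/2}|\cdots|)^{-M}\le r^{-\delta(M-N)}(r^{-1/2}|\cdots|)^{-N}$ with $M$ arbitrarily large, one obtains the stated bound with a $\mathrm{RapDec}(r)$ gain. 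The main obstacle is precisely this geometric step — verifying $|\nabla_\omega\phi^\lambda_{\bx_0}(x,t;\omega_\theta)+v|\sim\mathrm{dist}((x,t),l_{\theta,v})$ — which crucially uses the latter alternative in \eqref{2011227_8z} to keep both $t\omega_\theta-x$ and $F^{-1}(\xi)$ inside the region $\{|y|\lesssim\lambda\}$ where $F$ is non-degenerate, together with the fact that the enlarged tube radius $r^{1/2+\delta}$ beats the $O(r^{1/2})$ scale at which $\nabla_\omega\Psi$ is only ``barely'' non-stationary.
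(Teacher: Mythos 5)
Your argument is correct and follows essentially the same route as the paper: non-stationary phase after the rescaling $\omega\mapsto r^{-1/2}\omega+\omega_\theta$, with the $O(1)$ error term controlled via \eqref{210131e3_28}, giving rapid decay whenever $r^{-1/2}|\nabla_\omega\phi^{\lambda}_{\bx_0}(x,t;\omega_\theta)+v|\gg r^{\delta}$, followed by the identification of the sublevel set $\{|\nabla_\omega\phi^{\lambda}_{\bx_0}(x,t;\omega_\theta)+v|\lesssim r^{(1+\delta)/2}\}$ with a neighborhood of the core line \eqref{coreline}. Your explicit bi-Lipschitz map $F$ is just a hands-on version of the paper's appeal to the Jacobian bound \eqref{210201e3_40}, so the two proofs coincide in substance.
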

This lemma describes the essential support of a wave packet, and it is the right hand side of \eqref{wave-packet-decay} that motivates the definition of the core line in \eqref{coreline}; see \eqref{coreline-2} below that connects the expression of the core line in \eqref{coreline} with the right hand side of \eqref{wave-packet-decay}. 
\begin{proof}[Proof of Lemma \ref{210103lem5_3}]
Note that each piece $H^\la  g_{T}$ has the expression
\begin{equation}
    H^{\lambda} g_T(x, t)=\Big(\frac{r^{1/2}}{2\pi}\Big)^{n-1} (g_{\bx_0}\psi_{\theta})^{\wedge}(v) a_{\lambda, R}(x, t) \int e^{i2\pi \big(\phi_{\bx_0}^{\lambda}(\bx; \omega)+v\cdot \omega\big)} \widetilde{\psi}_{\theta}(\omega)d\omega,
\end{equation}
We will see that the proof is quite similar to that of Lemma \ref{210103lem5_2}. We make the change of variable $\omega\to\omega+\om_\theta$ and 
consider the oscillatory integral 
\begin{equation}
\begin{split}
    & \int e^{i2\pi \big( \phi^{\lambda}_{\bx_0}(x,t;\om+\om_\theta)+v\cdot\omega\big)} \widetilde{\psi}(r^{1/2} \omega) d\omega\\
    & = r^{-(n-1)/2}\int e^{i2\pi \big(\phi^{\lambda}_{\bx_0}(x,t;r^{-1/2}\om+\om_\theta)+r^{-1/2}v\cdot\omega\big)} \widetilde{\psi}(\omega) d\omega.
\end{split}
\end{equation}
Let us compute the derivative of the phase function. We write the derivative as in \eqref{2011227_14}. There are two terms there. The first term contributes $1$, which has been shown in \eqref{210201e4_25} and \eqref{210201e4_26}. We therefore have 
\begin{equation}
    |\eqref{2011227_14}|\gtrsim r^{-1/2} \big|\nabla_{\omega} \phi^{\lambda}_{\bx_0}(x, t; \omega_{\theta})+v \big|-1. 
\end{equation}
This  gives us the desired rapid decay in the complement of the set
\begin{equation}\label{201122e7_29}
    E_{\theta,v}=\{(x,t):|\nabla_\om\phi^{\lambda}_{\bx_0}(x,t;\om_\theta)+v|\lesim r^{(1+\de)/2}\}.
\end{equation}
It remains to show $E_{\theta,v}\subset T_{\theta,v}$. We solve the equation  $\nabla_\om\phi^{\lambda}_{\bx_0}(x,t;\om_\theta)+v=0$ for the variables $(x,t)$ to have
\begin{equation}
\label{coreline-2}
    \frac{\la\big(t\omega_\theta-x\big)}{\sqrt{\la^2+|x-t\om_\theta|^2}}=-v+\frac{\la(t_0\omega_\theta-x_0)}{\sqrt{\la^2+|x_0-t_0\om_\theta|^2}}=-v+\nabla_{\omega}\phi^{\lambda}(\bx_0; \omega_{\theta}),
\end{equation}
which indeed is the straight line $l_{\theta,v}$. Moreover, we can solve \eqref{201122e7_29} directly, and see that it indeed lies inside our tube. Another way of seeing it is to notice that the Jacobian of $\nabla_{\omega}\phi^{\lambda}_{\bx_0}(x, t; \omega_{\theta})$ in $x$ is comparable to 1, as has been verified in \eqref{210201e3_40}.
\end{proof}

\subsection{$L^2$ properties for wave packets}

In this subsection, we prove two lemmas about $L^2$ properties of wave packets. One of these two lemmas, Lemma \ref{210103lem6_1}, will not be directly used in our proof; however we would still like to include it here and show that wave packets in our setting have similar properties to those in the Fourier restriction problem. 

\begin{lem}\label{210103lem6_1}
Let $B(\bx_0, r)$ satisfy \eqref{210103e5_6}. Given a tube $T_1=T_{\theta_1,v_1}\in \T[B(\bx_0, r)]$, all but $r^{O(\de)}$ many tubes $T_2=T_{\theta_2,v_2}\in \T[B(\bx_0, r)]$ satisfy
\begin{equation}
\label{l2-orthogonality-1}
    \int_{|t-t_0|\leq r}\int H^\la g_{T_1}(x,t)\overline{H^\la g_{T_2}}(x,t)dxdt={\rm{RapDec}}(r)\|g\|_2^2.
\end{equation}
\end{lem}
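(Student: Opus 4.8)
\textbf{Proof proposal for Lemma \ref{210103lem6_1}.}

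The plan is to reduce the spatial integral to a statement about overlap of wave packet supports, using the essential support property from Lemma \ref{210103lem5_3}. First I would use the rapid decay in \eqref{wave-packet-decay} to replace each $H^\la g_{T_i}$ by a function that is (up to $\mathrm{RapDec}(r)\|g\|_2$ errors) supported on the tube $T_i = \mathcal{N}_{r^{1/2+\de}}(l_{\theta_i, v_i})$; recall also from Lemma \ref{210103lem5_2} that if $T_i$ falls into the former case of \eqref{2011227_8z} then the whole contribution is negligible, so we may assume both tubes are honest $r^{1/2+\de}$-tubes. Then on the slab $\{|t - t_0| \le r\}$, if the tubes $T_1$ and $T_2$ are \emph{disjoint} (i.e. their cores $l_{\theta_1,v_1}$ and $l_{\theta_2, v_2}$ stay at distance $\gg r^{1/2+\de}$ apart throughout the slab), then the integrand is pointwise $\mathrm{RapDec}(r)\|g\|_2^2$ and \eqref{l2-orthogonality-1} follows. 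So the lemma reduces to the combinatorial claim: for a fixed $T_1$, all but $r^{O(\de)}$ of the tubes $T_2 \in \T[B(\bx_0,r)]$ are disjoint from $T_1$ inside the slab $\{|t-t_0|\le r\}$.

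To prove the combinatorial claim I would parametrize: the tubes in $\T[B(\bx_0,r)]$ are indexed by $(\theta, v)$ with $\theta$ ranging over $\sim r^{(n-1)/2}$ caps of side $r^{-1/2}$ and $v \in r^{1/2}\Z^{n-1}$ constrained by the latter case of \eqref{2011227_8z} (so $\sim r^{(n-1)/2}$ choices of $v$). The core line $l_{\theta, v}$ is given explicitly in \eqref{coreline}: its direction is essentially $(\omega_\theta, 1)$ and its position (the value of $\omega_\theta t - x$) is essentially $\la(\nabla_\omega \phi^\la(\bx_0; \omega_\theta) - v)/\sqrt{\la^2 - |\nabla_\omega \phi^\la(\bx_0;\omega_\theta) - v|^2}$. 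Two such lines can pass within $r^{1/2+\de}$ of each other inside a slab of $t$-width $r$ only if (a) their directions differ by $\lesssim r^{-1/2+\de}$, which forces $|\omega_{\theta_1} - \omega_{\theta_2}| \lesssim r^{-1/2+\de}$, i.e. $\theta_2$ is one of $r^{O(\de)}$ caps near $\theta_1$; and (b) for each such $\theta_2$, the position parameter must match to within $r^{1/2+\de}$, and since the map $v \mapsto \la(\nabla_\omega\phi^\la(\bx_0;\omega_\theta)-v)/\sqrt{\la^2 - |\cdots|^2}$ has Jacobian comparable to $1$ on the relevant region (this is the same nondegeneracy used in \eqref{210201e3_40}, restricted to the region where $|\nabla_\omega\phi^\la(\bx_0;\omega_\theta)-v|$ is bounded away from $\la$ by the $10C_n/\sqrt{1+(10C_n)^2}$ constraint), there are only $r^{O(\de)}$ admissible $v$. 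Multiplying, at most $r^{O(\de)}$ tubes $T_2$ are non-disjoint from $T_1$.

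The main obstacle, and the point requiring the most care, is step (a)--(b): one has to be honest that ``non-disjoint in the slab of $t$-width $r$'' really does pin down $\theta_2$ and $v_2$ up to $r^{O(\de)}$ choices, which hinges on the core line being an \emph{actual straight line} whose direction and offset depend nicely on $(\theta, v)$ --- this is exactly why the pseudo-conformal transformation was applied, since it is what makes the wave packets straight (contrast the curved wave packets in the Hörmander-type setting discussed in the introduction). A secondary technical point is handling the $t$-dependence of the distance between two lines over a window of length $r$: since the two lines have directions differing by $\theta_1 - \theta_2$, their separation grows linearly in $|t - t_0|$ with slope $\sim |\omega_{\theta_1} - \omega_{\theta_2}|$, so over $|t-t_0| \le r$ the condition that they stay within $r^{1/2+\de}$ is genuinely $|\omega_{\theta_1}-\omega_{\theta_2}| \lesssim r^{-1/2+\de}$, and one should double-check that the $r^\de$-thickening of the tubes is compatible with this bookkeeping. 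Once the combinatorics is in place, assembling the pointwise decay bound and integrating over the slab to absorb everything into $\mathrm{RapDec}(r)\|g\|_2^2$ is routine.
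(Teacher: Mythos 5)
Your reduction to a support-overlap statement misses the main case, because the combinatorial claim it rests on is false. Two core lines whose directions are well separated (say $|\omega_{\theta_1}-\omega_{\theta_2}|\sim 1$) can perfectly well come within $r^{1/2+\delta}$ of each other somewhere in the slab $\{|t-t_0|\le r\}$: they simply cross. Your step (a) conflates ``the two lines stay within $r^{1/2+\delta}$ of each other \emph{throughout} the slab'' (which indeed forces $|\omega_{\theta_1}-\omega_{\theta_2}|\lesssim r^{-1/2+\delta}$) with ``the two tubes are non-disjoint in the slab,'' which only requires closeness at a single point. For a fixed $T_1$, the tubes $T_2$ meeting $T_1$ inside the slab include a bush through every point of $T_1$: for each of the $\sim r^{(n-1)/2}$ caps $\theta_2$ there are admissible $v_2$ (at angle $\sim 1$, roughly $r^{1/2+O(\delta)}$ of them), so the number of non-disjoint $T_2$ is a positive power of $r$, not $r^{O(\delta)}$. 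Hence your argument, even if completed, would prove \eqref{l2-orthogonality-1} only for tubes genuinely disjoint from $T_1$, a much weaker statement than the lemma; and for an intersecting transverse pair the product $H^{\lambda}g_{T_1}\overline{H^{\lambda}g_{T_2}}$ integrated over the (nonempty) intersection region is in general only polynomially small, so no support/disjointness argument alone can close that case.

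The missing ingredient is oscillation, and this is exactly how the paper argues: writing each $H^{\lambda}g_{T_i}$ as an explicit oscillatory integral, in the case $|\omega_1-\omega_2|\ge r^{-1/2+\delta}$ one integrates by parts in $x$ over the slab (whose $x$-extent is $\sim\lambda$, set by the amplitude $a_{\lambda,R}$, not by $B(\bx_0,r)$), using the nondegeneracy $|\det\nabla_{\omega}\nabla_{x}\phi|\simeq 1$ from \eqref{210201e3_40} to get a phase gradient $\gtrsim \lambda|\omega_1-\omega_2|$; this is a frequency-side orthogonality that has no physical-space counterpart for crossing tubes. Only in the complementary case $|\omega_1-\omega_2|\le r^{-1/2+\delta}$ and $|v_1-v_2|\ge r^{1/2+\delta}$ (nearly parallel tubes with separated positions) does the pointwise decay of Lemma \ref{210103lem5_3} --- the substance of your support argument --- apply, and these two cases together leave precisely the $r^{O(\delta)}$ exceptional tubes in the statement. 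A secondary gap: Lemma \ref{210103lem5_3} is stated only for $(x,t)\in B(\bx_0,r)$, whereas the integral in \eqref{l2-orthogonality-1} runs over the whole slab where $x$ ranges over a region of size $\sim\lambda$, and its proof uses $|\bx-\bx_0|\lesssim r$; so even the support step you invoke would need a separate extension there.
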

\begin{proof}[Proof of Lemma \ref{210103lem6_1}]
Note that each piece $T^\la  g_{T}$ has the expression
\begin{equation}
    H^{\lambda} g_T(x, t)=\Big(\frac{r^{1/2}}{2\pi}\Big)^{n-1} (g_{\bx_0}\psi_{\theta})^{\wedge}(v) a_{\lambda, R}(x,t) \int e^{i2\pi \big(\phi_{\bx_0}^{\lambda}(\bx; \omega)+v\cdot \omega\big)} \widetilde{\psi}_{\theta}(\omega)d\omega,
\end{equation}
To estimate \eqref{l2-orthogonality-1}, it suffices to consider the oscillatory integral
\begin{equation}
    \int_{|t-t_0|\le r} \int e^{2\pi i
    \big(\phi^{\lambda}_{\bx_0}(x,t;\om_1)-\phi^{\lambda}_{\bx_0}(x,t;\om_2)\big)}a_{\lambda, R}(x,t)dx dt,
\end{equation}
with $|\omega_1-\omega_{\theta_1}|\le r^{-1/2}$ and $|\omega_2-\omega_{\theta_2}|\le r^{-1/2}$, and show that it decays rapidly. As we will apply the argument of integration by parts in $x$, we do the change of variable $x\mapsto \lambda x$ and obtain 
\begin{equation}
    \lambda^{n-1} \int_{|t-t_0|\le r}\int e^{2\pi i\lambda\big(\phi(x, t'; \omega_1)-\phi(x'_0, t'_0; \omega_1)-\phi(x, t'; \omega_2)+\phi(x'_0, t'_0; \omega_2)\big)}a_{\lambda, R}(\lambda x, t)dx dt,
\end{equation}
where we have also set $t':=t/\lambda$, $x'_0:=x_0/\lambda$ and $t'_0:=t_0/\lambda$.

Let us first assume that we are in the case $|\omega_1-\omega_2|\ge r^{-1/2+\delta}$. We compute the partial derivative of the phase function in the $x$ variable
and obtain 
\begin{equation}\label{210202e4_38}
    \begin{split}
        & \frac{\lambda}{t'}\frac{x-t'\omega_1}{\sqrt{1+|x-t'\omega_1|^2}} -\frac{\lambda}{t'}\frac{x-t'\omega_2}{\sqrt{1+|x-t'\omega_2|^2}}.
    \end{split}
\end{equation}
Recall that the mixed Hessian of $\phi(x, t; \omega)$ in $x$ and $\omega$ is non-vanishing, that is, 
\begin{equation}
    |\det \nabla_{\omega}\nabla_{x}\phi(x, t; \omega)|\simeq 1,
\end{equation}
as has been verified in \eqref{210201e3_40}. This, combined with mean value theorems, implies that 
\begin{equation}
    |\eqref{210202e4_38}|\gtrsim \lambda |\omega_1-\omega_2|.
\end{equation}
Now we can apply integration by parts and finish the proof of the case $|\omega_1-\omega_2|\ge r^{-1/2+\delta}$. \\

In the end, we consider the case $|\omega_1-\omega_2|\le r^{-1/2+\delta}$ and $|v_1-v_2|\ge r^{1/2+\delta}$. However, this case follows immediately from the pointwise estimate in Lemma \ref{210103lem5_3}.
\end{proof}


\section{Comparing wave packets at different scales}\label{210325section5}
In the previous section, we built up a wave packet decomposition for a function $g$ on the ball $B(\bx_0,r)$.   Since our proof of Theorem \ref{201204thm5_1} relies on the multiscale argument as in \cite{guth2018} and \cite{MR4047925}, it is necessary to compare wave packets at two different scales. In this section, we will prove lemmas in Section 9 of \cite{MR4047925} in our setting. It is worth mentioning that in their paper they consider more general operators. However, their lemmas are valid only when the operators are of ``reduced form''. Since we do not reduce our operator to reduced form, we are unable to simply cite their lemmas. Instead, we follow their arguments and prove the lemmas in our setting. We include the details for the completeness of the paper.
\medskip

Consider another ball $B(\widetilde{\bx}_0,\rho)\subset B(\bx_0,r)$ for some $\widetilde{\bx}_0=(\widetilde{x}_0,\widetilde{t}_0)\in\ZR^n$ and $r^{1/2} <\rho<r$. 
Since we are considering wave packets at different scales, it would be convenient to introduce some notations to distinguish wave packets from different scales. We will use the notation $\widetilde{\T}[B(\widetilde{\bx}_0,\rho)]$ for the collection ${\T}[B(\widetilde{\bx}_0,\rho)]$ in \eqref{4747}. We will denote the elements of $\widetilde{\T}[B(\widetilde{\bx}_0,\rho)]$ by $\widetilde{T}_{\widetilde{\theta},\widetilde{v}}$. Here, the symbol $\sim$ indicates that the objects  are generated at a  smaller scale.
\medskip




We find it appropriate to introduce three more definitions here. 
\begin{defi}
We say a function $h$ is concentrated on wave packets from a tube set $\ZT_\al$, if
\begin{equation}
    h=\sum_{T\in\ZT_\al} h_T+\rapid(r)\|h\|_2.
\end{equation}
\end{defi}
\begin{defi}
For a ball $B$ and $\W \subset \T[B]$, we define
\begin{equation}
\label{h_W}
    h|_{\W}:=\sum_{T \in \W}h_T.
\end{equation}
\end{defi}
\begin{defi}
Let $({\theta},v) \in \Theta_r \times r^{1/2}\Z^{n-1}$ and let $(\tilde{\theta},\wt v) \in \Theta_\rho \times \rho^{1/2}\Z^{n-1}$. we define a collection of smaller tubes $\wt T_{\ti\theta,\wt v}$ that are ``close to" the bigger tube $T_{\theta,v}$ as
\begin{equation}\label{smalltubesinlargetube}
\widetilde{\ZT}_{\theta,v}[B(\widetilde{\bx}_0,\rho)]:=
    \big\{\widetilde{T}_{\widetilde{\theta},\widetilde{v}} \in 
    \widetilde{\T}[B(\widetilde{\bx}_0,\rho)]
    :{\rm{dist}}(\theta,\widetilde{\theta})\lesssim \rho^{-1/2}, \; |\tilde{v}-(\nabla_\om\phi_{\bx_0}^{\lambda}(\widetilde{\bx}_0;\omega_{\theta})+v)|\lesim{r^{(1+\de)/2}}\big\}.
\end{equation}
We sometimes abbreviate the collection $\widetilde{\ZT}_{\theta,v}[B(\widetilde{\bx}_0,\rho)]$ to $\widetilde{\T}_{\theta,v}$ for simplicity.
\end{defi}

Notice that we can write
\begin{equation}\label{210314e5_3}
    \tilde{v}-(\nabla_{\omega}\phi^{\lambda}_{\bx_0}(\widetilde{\bx}_0; \omega_{\theta})+v)=\big(\nabla_{\omega}\phi^{\lambda}(\widetilde{\bx}_0; \omega_{\theta})+\tilde{v}\big)-\big(\nabla_{\omega}\phi^{\lambda}(\bx_0; \omega_{\theta})+v\big).
\end{equation}
Heuristically, on the ball $B(\widetilde{\bx}_0,\rho)$, only those new wave packets concentrated in $\widetilde{\ZT}_{\theta,v}[B(\widetilde{\bx}_0,\rho)]$ would make significant contribution to our old wave packet $g_{T_{\theta,v}}$. This will be stated rigorously in the following lemma.

\begin{lem}\label{1861}
Let $T_{\theta,v} \in \T[B(\bx_0,r)]$. Then it holds that 
\begin{equation}
    g_{T_{\theta,v}}=(g_{T_{\theta,v}})|_{\widetilde{\ZT}_{\theta,v}[B(\widetilde{\bx}_0,\rho)]}+{\rm{RapDec}}(r)\|g\|_2.
\end{equation}
\end{lem}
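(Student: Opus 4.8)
The plan is to expand the wave packet $g_{T_{\theta,v}}$ at scale $r$ into wave packets at scale $\rho$, and show that all the pieces whose new labels $(\tilde\theta,\tilde v)$ fall outside the collection $\widetilde{\ZT}_{\theta,v}[B(\widetilde\bx_0,\rho)]$ contribute only $\mathrm{RapDec}(r)\|g\|_2$. First I would recall that, by \eqref{single-wp-tube}, the function $g_{T_{\theta,v}}$ has Fourier support (in $\omega$) essentially contained in $\tfrac{11}{9}\theta$, i.e.\ in a cube of side $\sim r^{-1/2}$ centered at $\omega_\theta$, and that its "position" is recorded by the modulation frequency $v$. Applying the scale-$\rho$ wave packet decomposition of Section \ref{f_section4} to $g_{T_{\theta,v}}$ on the ball $B(\widetilde\bx_0,\rho)$, we write $g_{T_{\theta,v}} = \sum_{\widetilde T_{\tilde\theta,\tilde v}} (g_{T_{\theta,v}})_{\widetilde T_{\tilde\theta,\tilde v}} + \mathrm{RapDec}(\rho)\|g\|_2$. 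Since $\rho^{-1/2} \gg r^{-1/2}$, only those $\tilde\theta$ with $\mathrm{dist}(\theta,\tilde\theta)\lesssim \rho^{-1/2}$ can have nontrivial overlap with $\tfrac{11}{9}\theta$; for the rest, the relevant cutoff $\widetilde\psi_{\tilde\theta}$ is supported away from the $\omega$-support of $g_{T_{\theta,v}}$, so that piece vanishes identically (or is $\mathrm{RapDec}$). This handles the first constraint in \eqref{smalltubesinlargetube}.

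For the second constraint, I would fix a $\tilde\theta$ with $\mathrm{dist}(\theta,\tilde\theta)\lesssim\rho^{-1/2}$ and analyze, for each admissible $\tilde v \in \rho^{1/2}\Z^{n-1}$, the coefficient $\big((g_{T_{\theta,v}})_{\bx_0} \, \psi_{\tilde\theta}\big)^\wedge(\tilde v)$ that arises in the scale-$\rho$ decomposition based at $\widetilde\bx_0$ — here I use the reformulation \eqref{210314e5_3}, which says the defining condition is $|\big(\nabla_\omega\phi^\lambda(\widetilde\bx_0;\omega_\theta)+\tilde v\big) - \big(\nabla_\omega\phi^\lambda(\bx_0;\omega_\theta)+v\big)| \lesssim r^{(1+\delta)/2}$. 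The key computation is that $g_{T_{\theta,v}}$, after removing the modulation $e^{-2\pi i\phi^\lambda(\bx_0;\omega)}$ and re-inserting $e^{2\pi i\phi^\lambda(\widetilde\bx_0;\omega)}$ (the change of base point for the scale-$\rho$ decomposition), is, up to amplitude factors bounded by $1$, a function of the form $e^{2\pi i(\phi^\lambda_{\bx_0}(\widetilde\bx_0;\omega) + v\cdot\omega)}\widetilde\psi_\theta(\omega)$ times a smooth bump. By \eqref{210131e3_28} the phase $\phi^\lambda_{\bx_0}(\widetilde\bx_0;\omega)$ has all derivatives in $\omega$ of order $\ge 2$ bounded by $O(r/\lambda)$ (since $|\widetilde\bx_0-\bx_0|\lesssim r$), which on a cap of size $r^{-1/2}$ is negligible; hence the $\omega$-frequency of this function is concentrated, up to $O(r^\delta)$ with rapidly decaying tails, near $\nabla_\omega\phi^\lambda_{\bx_0}(\widetilde\bx_0;\omega_\theta) + v$. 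Taking the Fourier coefficient at $\tilde v$ and using stationary phase / repeated integration by parts then shows $|((g_{T_{\theta,v}})_{\widetilde\bx_0}\psi_{\tilde\theta})^\wedge(\tilde v)|$ decays like $(1 + \rho^{-1/2}|\tilde v - (\nabla_\omega\phi^\lambda_{\bx_0}(\widetilde\bx_0;\omega_\theta)+v)|)^{-N}$ (more precisely with the $r^{(1+\delta)/2}$-threshold replacing $\rho^{1/2}$ in the natural place), which is $\mathrm{RapDec}(r)$ once $|\tilde v - (\nabla_\omega\phi^\lambda_{\bx_0}(\widetilde\bx_0;\omega_\theta)+v)| \gg r^{(1+\delta)/2}$. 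Summing the $\mathrm{RapDec}(r)$ contributions over the $\mathrm{poly}(r)$ many excluded tubes preserves the $\mathrm{RapDec}(r)\|g\|_2$ bound, and replacing $\mathrm{RapDec}(\rho)$ by $\mathrm{RapDec}(r)$ is harmless since $\rho > r^{1/2}$.

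I expect the main obstacle to be a careful bookkeeping issue rather than a conceptual one: one must track how the change of base point from $\bx_0$ to $\widetilde\bx_0$ interacts with the two modulation factors $e^{\mp 2\pi i\phi^\lambda(\cdot;\omega)}$ in \eqref{modulations}, and verify that the error terms coming from the second-order Taylor remainder of $\phi^\lambda_{\bx_0}(\widetilde\bx_0;\cdot)$ on a scale-$r^{-1/2}$ cap — which have size $\lesssim r\cdot r^{-1}/\lambda \cdot(\cdots)$ — genuinely do not spread the $\omega$-frequency support beyond the allowed $r^{(1+\delta)/2}$ window. This is exactly the kind of estimate that is straightforward given \eqref{210131e3_28} but requires attention to the interplay between the three scales $\rho^{-1/2} \gg r^{-1/2} \gg \lambda^{-1}$ and the exponent $\delta$; it is the analogue of Lemma 9.x of \cite{MR4047925}, and the proof there can be followed essentially verbatim once the derivative bounds of Section \ref{f_section4} are in hand.
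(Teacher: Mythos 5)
Your proposal is correct and takes essentially the same route as the paper: expand the scale-$r$ packet into scale-$\rho$ packets, kill the far angular caps by the $\omega$-support of $g_{\theta,v}$, and kill the far $\tilde v$'s by showing the Fourier transform of $e^{2\pi i\phi^{\lambda}_{\bx_0}(\widetilde{\bx}_0;\,\cdot\,)}g_{\theta,v}$ concentrates within $\ll r^{(1+\delta)/2}$ of $\nabla_\omega\phi^{\lambda}_{\bx_0}(\widetilde{\bx}_0;\omega_\theta)+v$ via integration by parts, the mean value theorem with $|\bx_0-\widetilde{\bx}_0|\lesssim r$, and \eqref{210131e3_28}. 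One small slip: the $\omega$-derivatives of order $\ge 2$ of $\phi^{\lambda}_{\bx_0}(\widetilde{\bx}_0;\cdot)$ are $O(r)$, not $O(r/\lambda)$ (from $|\nabla_{\bx}\nabla_\omega^{\beta'}\phi^{\lambda}|\lesssim 1$), but this only changes the gradient drift across a cap to $O(r^{1/2})$, which is still $\ll r^{(1+\delta)/2}$, so the argument is unaffected.
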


\begin{proof}

By the definition of wave packets, 
\begin{equation}
\begin{split}
    (g_{T_{\theta,v}(\bx_0)})_{\widetilde{T}_{\tilde{\theta},\tilde{v}}(\widetilde{\bx}_0)}(\omega)&=e^{-2\pi i\phi^{\lambda}(\widetilde{\bx}_0; \omega) }(  g_{T_{\theta,v}}(\cdot)e^{2\pi i\phi^{\lambda}(\widetilde{\bx}_0; \, \cdot\,) } )_{\tilde{\theta},\tilde{v}}(\omega)
    \\&=e^{-2\pi i\phi^{\lambda}(\widetilde{\bx}_0; \omega) }\Big( 
    e^{2\pi i(\phi^{\lambda}(\widetilde{\bx}_0; \, \cdot\,)-\phi^{\lambda}({\bx}_0; \, \cdot\,) ) }
    (g(\cdot)e^{2\pi i\phi^{\lambda}(\bx_0;\,\cdot \,) } )_{{\theta,v}}(\cdot) \Big)_{\tilde{\theta},\tilde{v}}(\omega). 
\end{split}
\end{equation}
Since a function $g_{\theta,v}$ is supported near $\theta$ for every function $g$, by the above expression, we see that 
\begin{equation}
    (g_{T_{\theta,v}(\bx_0)})_{\widetilde{T}_{\tilde{\theta},\tilde{v}}(\widetilde{\bx}_0)} \equiv 0 \;\; \text{unless} \;\; \text{dist}(\theta,\widetilde{\theta})\lesssim \rho^{-1/2}.
\end{equation}
By renaming the function $g(\omega) e^{2\pi i \phi^{\lambda}(\bx_0,\omega)}$ by $g(\omega)$,
it remains to show that
\begin{equation}
    \big( 
    e^{2\pi i\phi^{\lambda}_{\bx_0}(\widetilde{\bx}_0; \, \cdot\,) }
    g_{{\theta,v}}(\cdot) \big)_{\tilde{\theta},\tilde{v}}=\mathrm{RapDec}(r)\|g\|_2,
\end{equation}
whenever 
$
    |-v+\tilde{v}-\nabla_\om\phi_{\bx_0}^{\lambda}(\widetilde{\bx}_0;\omega_{\theta})|\gtrsim {r^{(1+\de)/2}}.$
By the definition, it amounts to proving
\begin{equation}
\big(e^{2\pi i\phi_{\bx_0}^{\lambda}(\widetilde{\bx}_0; \, \cdot\,) }g_{\theta,v}(\cdot) \psi_{\tilde{\theta}}(\cdot)\big)^{\wedge}(\tilde{v})
=\mathrm{RapDec}(r)\|g\|_2.
\end{equation}
The left hand side can be written as
\begin{equation}
    \big(\widehat{\psi}_{\tilde{\theta}}*\big(
    e^{2\pi i\phi_{\bx_0}^{\lambda}(\widetilde{\bx}_0; \, \cdot\,) }g_{\theta,v}(\cdot) \big)^{\wedge}\big)(\tilde{v}).
\end{equation}
Since 
 the function $\widehat{\psi}_{\tilde{\theta}}$ is essentially supported in $B(0,\rho^{1/2})$, it suffices to show that
\begin{equation}\label{21.01.06.42}
\begin{split}
    \big(
    e^{2\pi i\phi_{\bx_0}^{\lambda}(\widetilde{\bx}_0; \, \cdot\,) }g_{\theta,v}(\cdot) \big)^{\wedge}(z)
    =(1+r^{-1/2}|z-v-\nabla_\omega\phi_{\bx_0}^{\lambda}(\wt\bx_0;\omega_{\theta})|)^{-(n+1)} \mathrm{RapDec}(r)\|g\|_2
\end{split}
\end{equation}
for every $|z-v-\nabla_\omega\phi_{\bx_0}^{\lambda}(\wt\bx_0;\omega_{\theta})| \gtrsim r^{(1+\delta)/2}.$
We take a function $\widetilde{\widetilde{\psi}}_{\theta}(\omega)=\widetilde{\widetilde{\psi}}(r^{1/2}(\omega-\omega_{\theta}))$ for some compactly supported function $\widetilde{\widetilde{\psi}}$ so that this function is adapted to $\theta$ but is equal to one on the support of $\widetilde{\psi}_{\theta}$. Since $g_{\theta,v}$ is supported on the support of $\widetilde{\psi}_{\theta}$, the left hand side of \eqref{21.01.06.42} can be written as
\begin{equation}
    \big(
    e^{2\pi i\phi_{\bx_0}^{\lambda}(\widetilde{\bx}_0; \, \cdot\,) }\widetilde{\widetilde{\psi}}_{\theta}(\cdot)g_{\theta,v}(\cdot) \big)^{\wedge}(z)=
    \big(
    e^{2\pi i\phi_{\bx_0}^{\lambda}(\widetilde{\bx}_0; \, \cdot\,) }\widetilde{\widetilde{\psi}}_{\theta}(\cdot)\big)^{\wedge}
    *\widehat{g_{\theta,v}}(z).
\end{equation}
Since the function $\widehat{g_{\theta,v}}$ is concentrated on $B(v,r^{(1+\delta)/2})$, by the above expression, the claim \eqref{21.01.06.42} is reduced to
\begin{equation}\label{21.01.06.610}
\begin{split}
    \int_{\R^n} 
    e^{2\pi i(-z \cdot \omega+
    \phi_{\bx_0}^{\lambda}(\widetilde{\bx}_0; \omega) ) }\widetilde{\widetilde{\psi}}_{\theta}(\omega)\,d\omega
    =\big(1+r^{-1/2}|z-\nabla_\om\phi_{\bx_0}^{\lambda}(\widetilde{\bx}_0;\omega_{\theta})|\big)^{-(n+1)}\mathrm{RapDec}(r)
\end{split}
\end{equation}
whenever $|z-\nabla_\om\phi_{\bx_0}^{\lambda}(\widetilde{\bx}_0;\omega_{\theta})| \gtrsim r^{(1+\delta)/2}$.
We apply the change of variables: $\omega \mapsto r^{-1/2}\omega+\omega_{\theta}$ and the above integral becomes
\begin{equation}
    r^{-(n-1)/2}e^{-2\pi i z \cdot \omega_{\theta}}
    \int_{\R^n} 
    e^{2\pi i(-r^{-1/2}z \cdot \omega+
    \phi_{\bx_0}^{\lambda}(\widetilde{\bx}_0; \omega_{\theta}+r^{-1/2}\omega)) }\widetilde{\widetilde{\psi}}(\omega)\,d\omega.
\end{equation}
By the stationary phase method, the estimate \eqref{21.01.06.610} follows from
\begin{equation}
    |z -\nabla_{\omega}
    \phi_{\bx_0}^{\lambda}(\widetilde{\bx}_0; \omega_{\theta}+r^{-1/2}\omega) | \gtrsim r^{(1+\delta)/2}
\end{equation}
for every $|z-\nabla_\om\phi_{\bx_0}^{\lambda}(\widetilde{\bx}_0;\omega_{\theta})| \gtrsim r^{(1+\delta)/2}$. By using a mean value theorem with $|\bx_0-\widetilde{\bx}_0| \lesssim r$, this follows from
\begin{equation}
    |\nabla^2_{\omega} \nabla_{\bx}\phi^{\lambda}(x,t;\omega)| \lesssim 1
\end{equation}
for every $|x|+|t| \lesssim \lambda$, which has been verified in \eqref{210131e3_28}.
\end{proof}

\begin{lem}\label{0128lemma}
Suppose that $T_{\theta,v}\in \T[B(\bx_0,r)]$.
If $\widetilde{T}_{\tilde{\theta},\tilde{v}} \in \widetilde{\T}_{\theta,v}[B(\widetilde{\bx}_0,\rho)]$, then it holds that
\begin{equation}\label{210312e5_16}
    \mathrm{HausDist}(\widetilde{T}_{\tilde{\theta},\tilde{v}},T_{\theta,v} \cap B(\widetilde{\bx}_0,\rho ))\lesssim r^{1/2+\delta}
\end{equation}
and
\begin{equation}\label{210312e5_17}
    \ang(G(w_{\theta}),G(w_{\tilde{\theta}})) \lesssim \rho^{-1/2}.
\end{equation}
\end{lem}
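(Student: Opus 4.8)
The plan is to verify the two claims \eqref{210312e5_16} and \eqref{210312e5_17} separately, in both cases reducing to the defining conditions of $\widetilde{\T}_{\theta,v}[B(\widetilde{\bx}_0,\rho)]$ in \eqref{smalltubesinlargetube}, namely $\mathrm{dist}(\theta,\widetilde\theta)\lesssim\rho^{-1/2}$ and $|\tilde v-(\nabla_\omega\phi^\lambda_{\bx_0}(\widetilde{\bx}_0;\omega_\theta)+v)|\lesssim r^{(1+\delta)/2}$.

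For \eqref{210312e5_17}: by Lemma \ref{210111lem3_1} the rescaled Gauss map is $G^\lambda(\bx;\omega)=(\omega,1)/\sqrt{1+|\omega|^2}$, which is a smooth function of $\omega$ alone with bounded derivatives on the relevant compact $\omega$-range. Hence $\ang(G(w_\theta),G(w_{\widetilde\theta}))\lesssim|\omega_\theta-\omega_{\widetilde\theta}|\lesssim\mathrm{dist}(\theta,\widetilde\theta)\lesssim\rho^{-1/2}$, which is exactly the claimed bound. (One small point: one must relate the ``$G$'' in the statement to the rescaled Gauss map $G^\lambda$; since $G^\lambda(\bx;\omega)=G(\bx/\lambda;\omega)$ and the rescaled version is independent of $\bx$, the angle comparison is the same, so this is routine.)

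For \eqref{210312e5_16}: both $T_{\theta,v}$ and $\widetilde T_{\widetilde\theta,\widetilde v}$ are $O(r^{1/2+\delta})$- and $O(\rho^{1/2+\delta})$-neighborhoods of the lines $l_{\theta,v}$ and $l_{\widetilde\theta,\widetilde v}$ defined via \eqref{coreline}. Since $\rho<r$, it suffices to show that on the ball $B(\widetilde{\bx}_0,\rho)$ the two core lines are within $O(r^{1/2+\delta})$ of each other in Hausdorff distance; the neighborhood thickenings then only cost another $O(r^{1/2+\delta})$. Recall from \eqref{coreline-2} that the core line $l_{\theta,v}$ is the solution set of $\nabla_\omega\phi^\lambda_{\bx_0}(\bx;\omega_\theta)+v=0$, i.e. $\nabla_\omega\phi^\lambda(\bx;\omega_\theta) = \nabla_\omega\phi^\lambda(\bx_0;\omega_\theta)-v$, and likewise $l_{\widetilde\theta,\widetilde v}$ is where $\nabla_\omega\phi^\lambda(\bx;\omega_{\widetilde\theta})=\nabla_\omega\phi^\lambda(\widetilde{\bx}_0;\omega_{\widetilde\theta})-\widetilde v$. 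The idea is to show these two level sets define nearly the same affine condition on $\bx$. First, using $\mathrm{dist}(\theta,\widetilde\theta)\lesssim\rho^{-1/2}$ together with the second-derivative bound $|\nabla^2_\omega\nabla_{\bx}\phi^\lambda|\lesssim1$ from \eqref{210131e3_28} and a mean value theorem, one gets $|\nabla_\omega\phi^\lambda(\bx;\omega_\theta)-\nabla_\omega\phi^\lambda(\bx;\omega_{\widetilde\theta})|\lesssim|\omega_\theta-\omega_{\widetilde\theta}|\cdot|\bx|\cdot\lambda^{-1}\lesssim\rho^{-1/2}$ on the relevant region (here one uses $|\bx|\lesssim\lambda$). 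Second, one rewrites the right-hand constant for $l_{\widetilde\theta,\widetilde v}$ using \eqref{210314e5_3}: $\widetilde v-(\nabla_\omega\phi^\lambda_{\bx_0}(\widetilde{\bx}_0;\omega_\theta)+v) = (\nabla_\omega\phi^\lambda(\widetilde{\bx}_0;\omega_\theta)+\widetilde v)-(\nabla_\omega\phi^\lambda(\bx_0;\omega_\theta)+v)$ has magnitude $\lesssim r^{(1+\delta)/2}$ by membership in $\widetilde\T_{\theta,v}$. Combining, the two defining equations for $\bx$ differ in their inhomogeneous term by $O(r^{(1+\delta)/2})$ (and in the $\omega_\theta$ versus $\omega_{\widetilde\theta}$ slot by $O(\rho^{-1/2})$, harmless). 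Since the map $\bx\mapsto\nabla_\omega\phi^\lambda(\bx;\omega_\theta)$ has mixed Hessian in $(x,\omega)$ of determinant $\simeq1$ by \eqref{210201e3_40}, its level sets (lines, once we also account for the $t$-direction via the explicit formula \eqref{coreline}) are stable under an $O(r^{(1+\delta)/2})$ perturbation of the defining value: two such lines, restricted to the ball $B(\widetilde{\bx}_0,\rho)$ of radius $\rho<r$, stay within $O(r^{1/2+\delta})$ of each other. This yields \eqref{210312e5_16}.

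I expect the main obstacle to be the last step: carefully controlling the Hausdorff distance between the two core lines on $B(\widetilde{\bx}_0,\rho)$ from the $O(r^{(1+\delta)/2})$ discrepancy in the gradient equation. One must be mildly careful because the ``error'' $r^{(1+\delta)/2}$ is larger than $\rho^{1/2+\delta}$ when $\rho<r$, so the comparison is really being made at the scale $r^{1/2+\delta}$, which is precisely the thickness of the larger tube $T_{\theta,v}$ — this is consistent with the statement, but it means the estimate is tight and one should track constants honestly, using the non-degeneracy $|\det\nabla_x\nabla_\omega\phi^\lambda|\simeq1$ to invert the linearized relation and the explicit parametrization \eqref{coreline} to handle the $t$-component of the line (the line is not a graph over $t$ in an entirely trivial way, but \eqref{coreline} writes it as $\omega_\theta t - x = \mathrm{const}$, so it is an affine line with a fixed direction $(\omega_\theta,1)$ up to the $O(\rho^{-1/2})$ wobble from replacing $\omega_\theta$ by $\omega_{\widetilde\theta}$). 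Everything else is a routine application of mean value theorems and the derivative bounds \eqref{210131e3_28} and \eqref{210201e3_40} already established.
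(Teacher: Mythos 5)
Your handling of \eqref{210312e5_17} is fine and matches the paper (it is the trivial part). The gap is in \eqref{210312e5_16}, specifically in how you dispose of the $\omega_{\theta}$ versus $\omega_{\tilde{\theta}}$ discrepancy. The claimed bound $|\nabla_\omega\phi^{\lambda}(\bx;\omega_{\theta})-\nabla_\omega\phi^{\lambda}(\bx;\omega_{\tilde{\theta}})|\lesssim|\omega_{\theta}-\omega_{\tilde{\theta}}|\,|\bx|\,\lambda^{-1}\lesssim\rho^{-1/2}$ is false: the bound \eqref{210131e3_28} only controls derivatives containing at least one $\bx$-derivative, while the pure $\omega$-Hessian $\nabla^2_\omega\phi^{\lambda}$ has size comparable to $t$ (which ranges up to $C_n\lambda$), as one sees by differentiating $\nabla_\omega\phi^{\lambda}(x,t;\omega)=\lambda(t\omega-x)\big(\lambda^2+|x-t\omega|^2\big)^{-1/2}$ in $\omega$. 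Consequently, if you rewrite both core-line equations with the same cap $\omega_{\theta}$, the leftover term is of size up to $\lambda\rho^{-1/2}$, which can be far larger than $r^{1/2+\delta}$ (take, e.g., $t\sim\lambda\sim r$ and $\rho\sim r^{1/2}$); it is not a harmless $O(\rho^{-1/2})$ perturbation of the inhomogeneous constant, so your final step (an $O(r^{(1+\delta)/2})$ discrepancy plus nondegeneracy) does not go through as stated.

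What rescues the lemma, and what the paper's proof is really about, is a second-order cancellation rather than a pointwise bound: the large quantity $\nabla_\omega\phi^{\lambda}(\cdot;\omega_{\tilde{\theta}})-\nabla_\omega\phi^{\lambda}(\cdot;\omega_{\theta})$ appears both at the running point (through the equation \eqref{210314e5_19} for $l_{\tilde{\theta},\tilde{v}}$) and, via \eqref{210314e5_3} and the second condition in \eqref{smalltubesinlargetube}, evaluated at $\widetilde{\bx}_0$ inside the constant; the difference of these two evaluations is controlled by the mean value theorem and \eqref{210131e3_28} with $|\beta|=1$, $|\beta'|=2$ by $|\bx-\widetilde{\bx}_0|\,|\omega_{\theta}-\omega_{\tilde{\theta}}|\lesssim\rho\cdot\rho^{-1/2}\leq r^{1/2}$, valid precisely because the comparison takes place inside $B(\widetilde{\bx}_0,\rho)$. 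Only after this cancellation does the nondegeneracy \eqref{210201e3_40} convert the resulting $O(r^{(1+\delta)/2})$ gradient discrepancy into the claimed $O(r^{1/2+\delta})$ separation of the crossing points at the time slice $t=\widetilde{t}_0$ (the paper also treats the easy regime $\rho\leq r^{1/2+\delta}$ separately). The rest of your outline (reduce to core lines, angle $\lesssim\rho^{-1/2}$, compare at a fixed time slice, invert via the mixed Hessian) agrees with the paper, but the step you flagged as the main obstacle is exactly where your sketch substitutes an incorrect estimate for the needed cancellation, so the proof as proposed is incomplete.
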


\begin{proof}
The bound \eqref{210312e5_17} is trivial, and we only need to prove \eqref{210312e5_16}. Let us assume that neither $\widetilde{T}_{\tilde{\theta}, \tilde{v}}$ nor $T_{\theta,v} \cap B(\widetilde{\bx}_0,\rho )$ is empty. Let $l_{\theta, v}$ and $l_{\tilde{\theta}, \tilde{v}}$ be the core lines of $T_{\theta,v}$ and $\widetilde{T}_{\tilde{\theta}, \tilde{v}}$ respectively. The two tubes involved in \eqref{210312e5_16} have width $\lesim r^{1/2+\delta}$, and therefore to prove \eqref{210312e5_16}, it suffices to consider the core lines of the tubes only. From the definition of core lines (see also \eqref{coreline-2}), we can write $l_{\theta, v}$ as
\begin{equation}\label{210314e5_18}
    \nabla_{\omega}\phi^{\lambda}(x, t; \omega_{\theta})-\nabla_{\omega} \phi^{\lambda}(x_0, t_0; \omega_{\theta})=-v;
\end{equation}
moreover, we can write the core line of $l_{\tilde{\theta}, \tilde{v}}$ as \begin{equation}\label{210314e5_19}
    \nabla_{\omega}\phi^{\lambda}(x, t; \omega_{\tilde{\theta}})-\nabla_{\omega} \phi^{\lambda}(\widetilde{x}_0, \widetilde{t}_0; \omega_{\tilde{\theta}})=-\tilde{v}.
\end{equation}
Suppose that $l_{\theta, v}$ passes through a point $(x_1, \widetilde{t}_0)$ and that $l_{\tilde{\theta}, \tilde{v}}$ passes through $(x_2, \widetilde{t}_0)$. Note that the angle between $l_{\theta, v}$ and $l_{\tilde{\theta}, \tilde{v}}$ is at most $\rho^{-1/2}$ and that we are computing a Hausdorff distance within a ball of radius $\rho$. To prove \eqref{210312e5_16}, it therefore suffices to show that $|x_1-x_2|\lesim r^{1/2+\delta}$, which, by \eqref{210201e3_40}, is the same as saying that 
\begin{equation}\label{210313e5_20}
    \Big|\nabla_{\omega} \phi^{\lambda}(x_1, \widetilde{t}_0; \omega_{\theta})-\nabla_{\omega}\phi^{\lambda}(x_2, \widetilde{t}_0; \omega_{\theta})\Big|\lesim r^{1/2+\delta}. 
\end{equation}
We consider two cases $\rho\le r^{1/2+\delta}$ and $\rho\ge r^{1/2+\delta}$ separately. \\

Assume we are in the former case. By the assumption that $T_{\theta, v}\cap B(\widetilde{\bx}_0, \rho)$ is not empty, we obtain $|x_1-\widetilde{x}_0|\lesim r^{1/2+\delta}$. Moreover, by \eqref{smalltubesinlargetube}, \eqref{210314e5_3}, \eqref{210314e5_18}, \eqref{210314e5_19} and the triangle inequality, we obtain that 
\begin{equation}\label{210314e5_21}
    \begin{split}
        & \big|\nabla_{\omega}\phi^{\lambda}(x_2, \widetilde{t}_0; \omega_{\tilde{\theta}})-\nabla_{\omega}\phi^{\lambda}(\widetilde{x}_0, \widetilde{t}_0; \omega_{\tilde{\theta}})\big|\\
        & \lesim r^{1/2+\delta} + \big|\nabla_{\omega}\phi^{\lambda}(x_1, \widetilde{t}_0; \omega_{\theta})-\nabla_{\omega}\phi^{\lambda}(\widetilde{x}_0, \widetilde{t}_0; \omega_{\theta})\big|.
    \end{split}
\end{equation}
This, together with \eqref{210201e3_40} and the mean value theorem, implies that 
\begin{equation}
    |x_2-\widetilde{x}_0| \lesim r^{1/2+\delta},
\end{equation}
which further leads to the desired bound for $|x_1-x_2|$.\\

Assume we are in the latter case. The starting point of the proof of this is similar as above. By the assumption that $T_{\theta, v}\cap B(\widetilde{\bx}_0, \rho)$ is not empty, we obtain $|x_1-\widetilde{x}_0|\lesim \rho$. This, combined with \eqref{210314e5_21}, implies that $|x_2-\widetilde{x}_0|\lesim \rho$. Next, the proof starts to be different. 
\begin{equation}
    \begin{split}
        & \nabla_{\omega} \phi^{\lambda}(x_1, \widetilde{t}_0; \omega_{\theta})-\nabla_{\omega}\phi^{\lambda}(x_2, \widetilde{t}_0; \omega_{\theta})\\
        &=\nabla_{\omega} \phi^{\lambda}(x_1, \widetilde{t}_0; \omega_{\theta})-\nabla_{\omega} \phi^{\lambda}(x_2, \widetilde{t}_0; \omega_{\tilde{\theta}})+\nabla_{\omega} \phi^{\lambda}(x_2, \widetilde{t}_0; \omega_{\tilde{\theta}})-\nabla_{\omega}\phi^{\lambda}(x_2, \widetilde{t}_0; \omega_{\theta})
    \end{split}
\end{equation}
By \eqref{210314e5_18} and \eqref{210314e5_19}, the last expression can be written as 
\begin{equation}
    \nabla_{\omega} \phi^{\lambda}(\widetilde{x}_0, \widetilde{t}_0; \omega_{\theta})-\nabla_{\omega} \phi^{\lambda}(\widetilde{x}_0, \widetilde{t}_0; \omega_{\tilde{\theta}})+\nabla_{\omega} \phi^{\lambda}(x_2, \widetilde{t}_0; \omega_{\tilde{\theta}})-\nabla_{\omega}\phi^{\lambda}(x_2, \widetilde{t}_0; \omega_{\theta}).
\end{equation}
By the mean value theorem and the bound \eqref{210131e3_28} with $\beta=1$ and $\beta'=2$, the absolute value of the last display can be bounded by 
\begin{equation}
    |x_2-\widetilde{x}_0| |\om_{\tilde{\theta}}-\om_{\theta}|\lesim \rho \rho^{-1/2}\lesim \rho^{1/2}.
\end{equation}
Recall that $\rho\le r$. This finishes the proof of latter case, thus the proof of the whole lemma.
\end{proof}



\section{The transverse equidistribution property}\label{section_transverse}

The proof of the transverse equidistribution estimate requires us to study wave packets from different scales. To make ourselves clear, we sometimes use ``large wave packet" to mean the scale $r$ wave packet, and use ``small wave packet" to mean the scale $\rho$ one.

Recall the admissible parameters in \eqref{constants_z}.
Let us introduce more notations for the next several sections. The first notation is about the transverse complete intersection. The second definition is about the tangency between tubes and a transverse complete intersection. The third definition is about collections of tangent tubes at two different scales.

\begin{defi}

Let $P_1,\ldots,P_{n-m}:\R^n \rightarrow \R$ be polynomials. We consider the common zero set
\begin{equation}\label{variety}
    Z(P_1,\ldots,P_{n-m}):= \{x \in \R^n: P_1(x)=\cdots=P_{n-m}(x)=0 \}.
\end{equation}
Suppose that for all $z \in Z(P_1,\ldots,P_{n-m})$, one has
\begin{equation}
\bigwedge_{j=1}^{n-m}
    \nabla P_j(z) \neq 0.
\end{equation}
Then a connected branch of this set, or a union of connected branches of this set, is called an $m$-dimensional transverse complete intersection. Given a set $Z$ of the form \eqref{variety}, the degree of $Z$ is defined by
\begin{equation}
    \min \Big({\prod_{i=1}^{n-m}\mathrm{deg}(P_i)}\Big),
\end{equation}
where the minimum is taken over all possible representations of $Z=Z(P_1,\ldots,P_{n-m})$.

\end{defi}

\begin{defi} Let $r \geq 1$ and $Z$ be an $m$-dimensional transverse complete intersection. A tube $T_{\theta,v}(\bx_0) \in \T[B(\bx_0,r)]$ is said to be $r^{-1/2+\delta_m}$-tangent to $Z$ in $B(\bx_0,r)$ if it satisfies
\begin{itemize}
    \item $T_{\theta,v}(\bx_0) \subset N_{r^{1/2+\delta_m}}(Z) \cap B(\bx_0,r)$; 
    \item For every $z \in Z \cap B(\bx_0,r)$, if there is $y \in T_{\theta,v}(\bx_0)$ with $|z-y| \lesssim r^{1/2+\delta_m}$, then one has
    \begin{equation}
    \label{angle-condition}
        \ang(G(\theta),T_zZ) \lesssim r^{-1/2+\delta_m}.
    \end{equation}
    Here, $T_zZ$ is the tangent space of $Z$ at $z$.
\end{itemize}

\end{defi}

\begin{defi}
Let $r\geq \rho \geq 1$ and $Z$ be an $m$-dimensional transverse complete intersection and let $B(\tilde{\bx}_0,\rho) \subset B(\bx_0,r)$. Define a collection of bigger tangent tubes inside a smaller ball as
\begin{equation}
\begin{split}
\label{T-Z}
    \T_{Z}[B(\bx_0,r)]
    := \{ T \in \T[B(\bx_0,r)]: T \text{ is $r^{-1/2+\delta_m}$-tangent to } Z \text{ in } B(\bx_0,r) \}.
\end{split}
\end{equation}
Given an arbitrary translation $b\in\ZR^n$, define 
\begin{equation}
\begin{split}
    \widetilde{\T}_b[B(\tilde{\bx}_0,\rho)]:=\{ \widetilde{T} \in \widetilde{\T}[B(\tilde{\bx}_0,\rho)]: \widetilde{T} \text{ is $\rho^{-1/2+\delta_m}$-tangent to $Z+b$ in $B(\tilde{\bx}_0,\rho)$}  \}.
\end{split}
\end{equation}
For simplicity, we sometimes use the notation $\T_Z$ and $\widetilde{\T}_b$ in short for $\T_{Z}[B(\bx_0,r)]$ and $\widetilde{\T}_b[B(\tilde{\bx}_0,\rho)]$, respectively.
\end{defi}

Let $1 \leq r^{1/2} \leq \rho \leq r$ and $\wt\bx_0=(\wt x_0,\wt t_0)\in\ZR^n$.
For every pair $(\tilde{\theta},w) \in \Theta_\rho \times r^{1/2}\Z^{n-1}$, we define a collection of bigger tubes 
\begin{equation}
\T_{\tilde{\theta},w} :=\{T_{\theta,v} \in \T[B(\bx_0,r)]: \mathrm{dist}(\theta,\tilde{\theta}) \lesssim \rho^{-1/2} \;\; \mathrm{and} \;\; |v+ \nabla_\om\phi_{\bx_0}^{\lambda}(\wt\bx_0;\om_\theta) -w| \lesssim r^{1/2} \}.
\end{equation}
Then for every $T\in\ZT_{\ti\theta,w}\cap\ZT_Z$, if $T\cap B(\wt\bx_0,2\rho)\not=\varnothing$, the intersection of $T$ and the horizontal plane $\{t=\wt t_0\}$ is contained in the ball $B:=B^{n-1}(w_{H},Cr^{1/2+\de})\subset B^{n-1}(\wt x_0,3\rho)$, where the center $w_H$ is defined as
\begin{equation}
\label{w-H}
    w_H:=\wt t_0\om_{\ti\theta}+\frac{\la (\nabla_\omega\phi^{\lambda}(\wt\bx_0;\omega_{\ti\theta})-w)}{(\la^2-|\nabla_\omega\phi^{\lambda}(\wt\bx_0;\omega_{\ti\theta})-w|^2)^{1/2}}.
\end{equation}
This is due to the coreline equation \eqref{coreline} for the tube $T$, and Lemma \ref{0128lemma}. We remark that if we perturb $\wt t_0$ by an extremely small factor $\be$, the intersection between $T$ and the horizontal plane $\{t=\bar t_0\}$ with $\bar t_0=\wt t_0+\be$ is still contained in the ball $B$.

\medskip

The main goal of this section is to prove the following lemma.
\begin{lem}\label{transverse-equidistribution}
Let $|b| \lesssim r^{1/2+\delta_m}$ and recall \eqref{h_W}.
Suppose that $h$ is concentrated on large wave packets from $\T_{Z} \cap \T_{\tilde{\theta},w}$ for some $(\tilde{\theta},w) \in \Theta_\rho \times r^{1/2}\Z^{n-1}$.  Then for every $\widetilde{\W} \subset \widetilde{\T}_b$, 
\begin{equation}
\label{local-TEE}
    \big\| h|_{\widetilde\W} \big\|_2^2 \lesssim r^{O(\delta_m)}(r/\rho)^{-(n-m)/2}\|h\|_2^2.
\end{equation}
As a consequence, for every function $h$ concentrated on $\T_Z$, 
\begin{equation}
\label{global-TEE}
    \big\| h|_{\widetilde{\W}}\big\|_2^2 \lesssim r^{O(\delta_m)}(r/\rho)^{-(n-m)/2}\|h\|_2^2.
\end{equation}
\end{lem}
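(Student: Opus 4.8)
The plan is to prove the transverse equidistribution estimate \eqref{local-TEE} first, and then deduce \eqref{global-TEE} from it by a pigeonholing/dyadic-decomposition argument over the pairs $(\tilde\theta, w)$. The key idea, as announced in the introduction, is to work directly with the input function $h$ on the frequency side rather than with the output $H^\lambda h$. After the pseudo-conformal rescaling the phase $\phi^\lambda$ and its $\omega$-Hessian behave exactly as in the parabola/paraboloid case (the rescaled Gauss map \eqref{Gauss-map} is $\bx$-independent), so the wave packets tangent to an $m$-dimensional transverse complete intersection $Z$ should, in frequency, be concentrated in a $\sim (r/\rho)^{1/2}$-thin slab around an $m$-dimensional affine subspace — and in the transverse $(n-m)$ directions they are $\rho^{-1/2}$-separated rather than $r^{-1/2}$-separated, which is precisely the source of the gain $(r/\rho)^{-(n-m)/2}$.

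First I would set up the geometry: since $h$ is concentrated on $\T_Z\cap\T_{\tilde\theta,w}$, all the relevant large tubes have direction $G(\omega_\theta)$ making angle $\lesssim r^{-1/2+\delta_m}$ with $T_z Z$ for the relevant $z$, and by the remark following \eqref{w-H} they all pass near the fixed $(n-1)$-ball $B=B^{n-1}(w_H, Cr^{1/2+\delta})$ on the horizontal slice $\{t=\tilde t_0\}$. Translating $\bx_0$ to $\wt\bx_0$ using Lemma \ref{1861} (at the cost of $\rapid(r)\|h\|_2$), I may assume all wave packets are built at $\wt\bx_0$; then $\widehat{h_T}$ is concentrated on a dual $\rho^{(1+\delta)/2}$-ball around a point $v$, and the condition $\wt T\in\widetilde\T_b$ together with \eqref{210314e5_3} pins down $\tilde v$ to lie within $\rho^{-1/2+O(\delta_m)}$ of the tangent space of $Z+b$ at the relevant point. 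The heart of the matter is a Taylor expansion of $\phi^\lambda(\wt\bx_0;\cdot)$ about $\omega_{\tilde\theta}$: the linear term is absorbed into the modulation defining the wave packet, the quadratic term is the elliptic form whose Hessian is comparable to the identity (this is \eqref{210131e3_28} plus the ellipticity computation already carried out in Lemma \ref{210111lem3_1}), and the cubic-and-higher terms are $O(\lambda^{-1})$ on the $\rho^{-1/2}$-cap and hence negligible. Thus on each cap $\tilde\theta$ the frequency support of the tangent wave packets is, up to negligible error, an affine slab: $m$-dimensional and of full width $\sim\rho^{-1/2}$ in the tangent directions to $Z$, but only $\sim (r/\rho)^{1/2}\cdot r^{-1/2}=\rho^{-1/2}(r/\rho)^{?}$... more precisely the transverse frequency localization improves from the ambient $r^{1/2}\Z^{n-1}$-spacing to something concentrated in a $(r/\rho)^{1/2}$-fraction of the directions.

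Then I would run the $L^2$ orthogonality/counting argument: by Lemma \ref{210103lem5_1} and Lemma \ref{1861},
\begin{equation}
\big\|h|_{\widetilde\W}\big\|_2^2\lesssim\sum_{\wt T\in\widetilde\W}\|h_{\wt T}\|_2^2+\rapid(r)\|h\|_2^2,
\end{equation}
so it suffices to show that for each large tube $T$ the small tubes $\wt T\in\widetilde\T_b$ with $\wt T\subset\widetilde\T_{\theta,v}$ and nonempty intersection with $N_{\rho^{1/2+\delta_m}}(Z+b)$ — those are the only ones that can carry mass of $h_T$ up to rapid decay, by Lemma \ref{1861} again — number at most $r^{O(\delta_m)}(r/\rho)^{(m)/2}$ out of a total of $\sim (\rho/r^{1/2})^{?}$ possible $\tilde v$'s, i.e. a fraction $r^{O(\delta_m)}(r/\rho)^{-(n-m)/2}$. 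This is where the transverse-complete-intersection hypothesis is used: the tangency forces $\tilde v$ into a $\rho^{-1/2+O(\delta_m)}$-neighborhood of an $m$-plane inside a $\rho^{-1/2}$-ball's worth of choices, and the $n-m$ transverse directions each cost a factor $(\rho/r^{1/2})^{-1}\cdot\rho^{1/2}=(r/\rho)^{-1/2}$... collecting these gives $(r/\rho)^{-(n-m)/2}$ after accounting for the $\rho^{-1/2}$ vs. $r^{-1/2}$ scale discrepancy. Finally, \eqref{global-TEE} follows by writing $h=\sum_{(\tilde\theta,w)}h_{(\tilde\theta,w)}$, applying \eqref{local-TEE} to each piece, and summing with Cauchy--Schwarz and the $L^2$-orthogonality of the pieces over distinct $(\tilde\theta,w)$.

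\textbf{The main obstacle} I expect is making the "affine slab" heuristic of the previous paragraph fully rigorous in the regime $\rho$ close to $r^{1/2}$ (so $r/\rho$ close to $r^{1/2}$), where the Taylor remainder estimates are most delicate and one must be careful that the $\rho^{-1/2}$-caps $\tilde\theta$ are fine enough that the cubic error $O(\rho^{-3/2}\cdot\lambda^{-1}\cdot(\text{something}))$ really is dominated by the scales in play; in particular one needs the ambient estimate \eqref{210131e3_28} for $|\beta|\ge 1$, $|\beta'|\ge 1$ (which gives the crucial $\lambda^{-|\beta|+1}$ decay) rather than merely boundedness, and one must track that the relevant point $z\in Z\cap B(\wt\bx_0,\rho)$ at which $T_zZ$ is measured is the same up to $\rho$-errors for all tubes through a given cap — this is exactly the content of \eqref{210312e5_17} in Lemma \ref{0128lemma}. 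A secondary technical point is handling the translate $Z+b$ with $|b|\lesssim r^{1/2+\delta_m}$ uniformly; since $b$ is smaller than the tube width $r^{1/2+\delta}$ this should only perturb constants, but it must be stated carefully so that the counting bound is uniform in $b$.
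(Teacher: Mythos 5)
Your deduction of \eqref{global-TEE} from \eqref{local-TEE} matches the paper, and your geometric set-up (reduction to a single $(\tilde\theta,w)$, the ball $B$ on the slice $\{t=\tilde t_0\}$, re-centering wave packets at $\wt\bx_0$) is on track. But the core of your argument has a genuine gap: after reducing via Lemma \ref{210103lem5_1} to $\sum_{\wt T\in\widetilde\W}\|h_{\wt T}\|_2^2$, you declare that ``it suffices to show'' that the number of small tubes $\wt T\in\widetilde\T_b$ receiving mass from a given large tube is only a fraction $r^{O(\delta_m)}(r/\rho)^{-(n-m)/2}$ of the total. Counting tubes does not bound the sum: the quantities $\|h_{\wt T}\|_2^2$ are not a priori comparable to their average, so the selected minority of small wave packets could in principle carry essentially all of $\|h\|_2^2$ --- equidistribution of the mass among the small wave packets is exactly the content of the lemma, not something you may assume in the reduction. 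What closes this gap in the paper is an uncertainty-principle input that your proposal gestures at (the ``affine slab'' heuristic) but never deploys: since the directions of the contributing large tubes lie within $r^{-1/2+\delta_m}$ of a \emph{minimal} subspace $V$, the input $h$ is supported in an $r^{-1/2+\delta_m}$-slab around a translate of $V_0=V\cap\{t=\tilde t_0\}$, hence $\widehat{h}$ restricted to any affine plane $\Pi$ parallel to $V'=V_0^{\perp}$ is locally constant at scale $r^{1/2-\delta_m}$ (Lemma \ref{trans-equi-sub-lem}), so each $\rho^{1/2+\delta_m}$-ball on $\Pi$ captures at most $(\rho/r)^{\dim V'/2}$ of the mass on $\Pi$; this must then be combined with a covering estimate showing that the relevant frequencies lie in $N_{C\rho^{1/2+\delta_m}}(\Phi(Z_0))\cap\Phi(CB)$ and that this set meets each such $\Pi$ in at most $r^{O(\delta_m)}(r/\rho)^{(\dim Z_0-\dim V_0)/2}$ balls of radius $\rho^{1/2+\delta_m}$ (quantitative transversality of $\Phi(Z_0)$ to $V'$, which uses the minimality of $V$, plus Wongkew's theorem), as in Proposition \ref{trans-dist-main}.

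A secondary omission, fixable but necessary: the link between where a tangent small tube meets the slice $\{t=\tilde t_0\}$ and its frequency center $\tilde v$ is not the identity but the nonlinear map $\Phi(x)=-\nabla_\omega\phi^{\lambda}(x,\tilde t_0;\omega_{\tilde\theta})$ from \eqref{phi} (up to the fixed shift $\nabla_\omega\phi^{\lambda}(\wt\bx_0;\omega_{\tilde\theta})$), so the variety slice $Z_0$ must be pushed forward by $\Phi$ before any covering or transversality argument; one also needs that $\Phi$ distorts balls only by bounded factors and that $\Phi^{-1}$ of an affine plane is again a transverse complete intersection (Lemma \ref{essential-identity-lem} and the Claims in the proof of Proposition \ref{trans-dist-main}). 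Your proposal treats the frequency centers as if they sat directly on the tangent space of $Z+b$, which skips this step entirely.
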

We first show that \eqref{global-TEE} follows from the local one \eqref{local-TEE}. Decompose the function $h$ as
\begin{equation}
    h=\sum_{(\ti\theta,w)}h_{\ti\theta,w}
\end{equation}
such that $h_{\ti\theta,w}$ is concentrated on wave packets from $\ZT_Z\cap\ZT_{\ti\theta,w}$. Since when $|w-x|\geq Cr^{1/2+\de}$, 
$\wh{h}_{\ti\theta,w}(x)=\rapid(r)\|h\|_2$, we have the almost $L^2$ orthogonality
\begin{equation}
    \big\| h|_{\widetilde{\W}}\big\|_2^2\lesim r^{O(\de)}\sum_{(\ti\theta,w)} \big\| h_{\ti\theta,w}|_{\widetilde{\W}}\big\|_2^2.
\end{equation}
Finally, we use the local estimate \eqref{local-TEE} and Lemma \ref{210103lem5_1} to conclude the global estimate \eqref{global-TEE}.

\medskip
To prove \eqref{local-TEE}, note that $\wt\ZT_b$ only consists smaller tubes $\wt T$ that  $\wt T\cap B(\wt\bx_0,3\rho/2)\not=\varnothing$. By Lemma \ref{210103lem5_3}, Lemma \ref{1861} and Lemma \ref{0128lemma}, we can assume without loss of generality that for every bigger tube $T\in\ZT_{\ti\theta,w}\cap\ZT_Z$, one has $T\cap B(\wt\bx_0,2\rho)\not=\varnothing$.

We break the proof of the local transverse equidistribution estimate \eqref{local-TEE} into several smaller lemmas. 
The first thing we would like to find out is the location of $(h|_{\widetilde{\W}})^\wedge$. Consider $Z_0$, the intersection between of variety $Z+b$ and the horizontal hyperplane $\{(x,t):t=\bar t_0\}$. By the transversality theorem (For example, Theorem \ref{transversality-thm}), we can choose $\bar t_0=\wt t_0+\be$ for an extremely small number $\be$, so that $Z_0$ is a transverse complete intersection. Since the small perturbation $\be$ is harmless in our proof, to save us from abundant notations, let us assume $\bar t_0=\wt t_0$. Hence $Z_0=Z+b\cap\{t=\wt t_0\}$ can be considered as a transverse complete intersection in $\ZR^{n-1}$, and $\deg(Z_0)=O(\deg(Z))$.

If $H^\la$ was the Fourier extension operator, $(h|_{\widetilde{\W}})^\wedge$ is just contained in a thin neighborhood of $Z_0$. While in our case, $(h|_{\widetilde{\W}})^\wedge$ is roughly contained in a thin neighborhood of $\Phi(Z_0)$, where $\Phi:\ZR^{n-1}\to\ZR^{n-1}$ is a smooth map defined as 
\begin{equation}
\label{phi}
    \Phi(x):=-\nabla_\om\phi^{\lambda}(x,\wt t_0;\om_{\ti\theta})=- \frac{\la\big(\wt t_0\omega_{\ti\theta}-x\big)}{(\la^2+|x-\bar{t}_0\om_{\ti\theta}|^2)^{1/2}}.
\end{equation}

Before proving something about the support of $(h|_{\widetilde{\W}})^\wedge$ rigorously, let us take a look at the smooth function $\Phi$. The next lemma shows that $\Phi$ indeed looks like the identity map of $\ZR^{n-1}$.
\begin{lem}
\label{essential-identity-lem}
For any $x\in\ZR^{n-1}$ and $r\geq \l>0$ that $B^{n-1}(x,\l)\subset B^{n-1}(0,3C_n\la)$, one has
\begin{equation}
\label{esseitial-identity-eq}
   B(\Phi(x),\l/C)\subset\Phi(B(x,\l))\subset B(\Phi(x),C\l). 
\end{equation}
The constant $C$ only depends on the choice of $C_n$ in Theorem \ref{201204thm5_1}.
\end{lem}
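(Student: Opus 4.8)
The plan is to prove that $\Phi$ is essentially the identity map by showing that its Jacobian $\nabla\Phi$ is bounded above and below uniformly on the relevant region, and then integrating along line segments. Concretely, I would first compute $\nabla\Phi(x) = -\nabla_x\nabla_\omega\phi^\lambda(x,\tilde t_0;\omega_{\tilde\theta})$, whose explicit form is already available from \eqref{210131e3_12} and \eqref{210131e3_13} (with $t$ specialized to $\tilde t_0$, which satisfies $\tilde t_0 \simeq \lambda$ up to the constant $C_n$ on the region under consideration). By the eigenvalue computation carried out right before \eqref{210201e3_40}, the matrix $\nabla_\omega\nabla_x\phi^\lambda(\bx_0;\omega)$ has one eigenvalue comparable to $\lambda^3(\lambda^2+|x_0-t_0\omega|^2)^{-3/2}$ and $n-1$ eigenvalues comparable to $\lambda(\lambda^2+|x_0-t_0\omega|^2)^{-1/2}$; since all of $|x|, |t_0| \lesssim \lambda$ on $B^{n-1}(0,3C_n\lambda)$, every eigenvalue is comparable to $1$, with implied constants depending only on $C_n$. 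Hence there is a constant $C = C(C_n)$ with $C^{-1}|\xi| \le |\nabla\Phi(x)\xi| \le C|\xi|$ for all $\xi$ and all $x$ in the region.

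Given this two-sided bound on $\nabla\Phi$, the inclusion $\Phi(B^{n-1}(x,\ell)) \subset B^{n-1}(\Phi(x),C\ell)$ is immediate from the mean value theorem: for $y \in B^{n-1}(x,\ell)$, write $\Phi(y)-\Phi(x) = \int_0^1 \nabla\Phi(x+s(y-x))(y-x)\,ds$, and the segment stays inside $B^{n-1}(0,3C_n\lambda)$ (here one uses $\ell \le 3C_n\lambda$, or more safely enlarges the ambient region slightly), so $|\Phi(y)-\Phi(x)| \le C|y-x| \le C\ell$. For the reverse inclusion $B^{n-1}(\Phi(x),\ell/C) \subset \Phi(B^{n-1}(x,\ell))$, I would invoke a quantitative inverse/open mapping argument: since $\Phi$ is a smooth map whose derivative is invertible with $\|\nabla\Phi^{-1}\| \le C$ uniformly, a standard quantitative inverse function theorem (e.g. via the contraction mapping / Newton iteration applied to $y \mapsto y - (\nabla\Phi(x))^{-1}(\Phi(y)-z)$) shows that $\Phi$ maps $B^{n-1}(x,\ell)$ onto a set containing $B^{n-1}(\Phi(x),\ell/C')$ for a possibly larger but still $C_n$-dependent constant $C'$; adjusting the constant $C$ in the statement to be the larger of the two absorbs this. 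One should check the second-derivative bounds on $\Phi$ needed for the contraction estimate, but these follow from \eqref{210131e3_28} (higher $\bx$-derivatives of $\nabla_\omega\phi^\lambda$ are $O(\lambda^{-1})$, so $|\nabla^2\Phi| \lesssim \lambda^{-1}$, which is harmless at scale $\ell \le \lambda$).

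The main obstacle, and the only place requiring genuine care, is the reverse inclusion: one must make sure the quantitative inverse function theorem is applied with constants that depend only on $C_n$ and not on $x$, $\ell$, or $\lambda$. The key points are that $\|\nabla\Phi^{-1}\|$ is bounded by a $C_n$-dependent constant uniformly (from the eigenvalue computation) and that the Hessian bound $|\nabla^2\Phi| \lesssim \lambda^{-1}$ guarantees $\nabla\Phi$ varies by at most a small constant (much less than the lower eigenvalue bound) across any ball of radius $\ell \le \lambda$ — this is exactly the hypothesis needed for the Newton iteration to converge and to cover the full ball $B^{n-1}(\Phi(x),\ell/C)$. Everything else is routine once the uniform non-degeneracy of the mixed Hessian, already established in \eqref{210201e3_40}, is in hand.
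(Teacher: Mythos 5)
Your proposal is correct and takes essentially the same route as the paper's proof: uniform two-sided eigenvalue bounds on $J_\Phi=-\nabla_x\nabla_\omega\phi^{\lambda}$ (the same computation underlying \eqref{210201e3_40}), Taylor/mean value for the inclusion $\Phi(B(x,\ell))\subset B(\Phi(x),C\ell)$, and an inverse-function-theorem argument with $J_{\Phi^{-1}}=(J_\Phi)^{-1}$ for the reverse inclusion. The only point to state precisely is that the smallness of the quadratic error, equivalently of the variation of $\nabla\Phi$, comes from $\ell\le r\le R\le\lambda^{1-\epsilon}$ (as the paper uses), not merely $\ell\le\lambda$.
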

\begin{proof}
By Taylor's theorem, we have that for any $y\in B(x,r)$,
\begin{equation}
    \Phi(y)-\Phi(x)=(y-x)J_\Phi(x)+O(\la^{-1}|y-x|^2).
\end{equation}
The Jacobian $J_\Phi$ is a $(n-1)\times (n-1)$ symmetric matrix $A=(a_{kl})$, with
\begin{equation}
\label{jacobian}
    a_{kl}:=\begin{cases}
    \dfrac{\la(\wt t\om_{\ti\theta}-x)_k(\wt t\om_{\ti\theta}-x)_l}{(\la^2+|x-\wt t\om_{\ti\theta}|^2)^{3/2}} & \text{ when } k\not=l,\\[3ex]
    \dfrac{\la[\la^2+\sum_{j\not=k}(\wt t\om_{\ti\theta}-x)_j^2]}{(\la^2+|x-\wt t\om_{\ti\theta}|^2)^{3/2}} & \text{ when } k=l.
\end{cases}
\end{equation}
Similar to the argument in \eqref{210112e3_29}, we can simplify $A$ as 
\begin{equation}
    A=\frac{\la}{(\la^2+|x-\wt t\om_{\ti\theta}|^2)^{1/2}}\Big(\la^2 I_n-\frac{(\wt t\om_{\ti\theta}-x)^{T}(\wt t\om_{\ti\theta}-x)}{\la^2+|x-\wt t\om_{\ti\theta}|^2}\Big).
\end{equation}
Inside the bracket, the first matrix has eigenvalues $1$, while the second matrix has only one eigenvalue $|x-\wt t\om_{\ti\theta}|^2/(\la^2+|x-\wt t\om_{\ti\theta}|^2)\leq 4C_n/(1+4C_n)<1$ since $x\in B(0,3C_n\la)$ and since $t\leq C_n\la$. This proves that the eigenvalues of $A$ are all positive and have lower bound $C^{-1}$ and upper bound $C$ for an absolute constant that depends only on $C_n$, uniformly in $x$. Thus, 
\begin{equation}
    |\Phi(y)-\Phi(x)|\sim|y-x|+O(\la^{-1}|y-x|^2),
\end{equation}
which yields $\Phi(B(x,\l))\subset B(\Phi(x),C\l)$ since $|y-x|\leq r\leq R\leq \la^{1-\e}$ as assumed in \eqref{210201e3_3}. For the other side, by the implicit function theorem, we know that $\Phi^{-1}$ is well define in the domain $\Phi(B^{n-1}(0,3C_n\la))$ and $J_{\Phi^{-1}}(x)=(J_{\Phi}(x))^{-1}$. So the eigenvalues of $J_{\Phi^{-1}}(x)$ have an upper bound $C^{-1}$ and a lower bound $C$. Hence we can prove $B(\Phi(x),\l/C)\subset\Phi(B(x,\l))$ similarly.
\end{proof}
A direct corollary of this lemma is the following:
\begin{cor}
\label{corollary-essen-iden}
Let $Z_0$ and $B$ be as above. Then
\begin{equation}
    N_{\rho^{1/2+\de_m}/C}(\Phi(Z_0))\cap \Phi((1/C)B)\subset\Phi(N_{\rho^{1/2+\de_m}}(Z_0)\cap B)\subset N_{C\rho^{1/2+\de_m}}(\Phi(Z_0))\cap \Phi(CB).
\end{equation}
\end{cor}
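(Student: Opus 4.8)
The plan is to obtain Corollary \ref{corollary-essen-iden} as a purely formal consequence of Lemma \ref{essential-identity-lem}, by unwinding the definitions of $R$-neighborhoods and of images under $\Phi$. Two facts extracted from the proof of Lemma \ref{essential-identity-lem} will be used repeatedly. First, $\Phi$ is globally injective: writing $u=x-\bar t_0\omega_{\tilde\theta}$, \eqref{phi} becomes $\Phi(x)=\lambda u/\sqrt{\lambda^2+|u|^2}$, which is a radial diffeomorphism of $\R^{n-1}$ onto $B^{n-1}(0,\lambda)$. Second, on the region $B^{n-1}(0,3C_n\lambda)$ the maps $\Phi$ and $\Phi^{-1}$ are bi-Lipschitz with constant $C$; this is exactly the content of \eqref{esseitial-identity-eq}. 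The two set inclusions of the corollary will be handled separately: the right-hand one uses only the upper bound in \eqref{esseitial-identity-eq}, while the left-hand one uses the lower bound together with injectivity of $\Phi$.

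For the right inclusion I would take $y\in N_{\rho^{1/2+\delta_m}}(Z_0)\cap B$ and pick $z\in Z_0$ with $|y-z|\le\rho^{1/2+\delta_m}$. Applying the upper bound of \eqref{esseitial-identity-eq} at the point $z$ with scale $\ell=\rho^{1/2+\delta_m}$ gives $\Phi(y)\in\Phi(B^{n-1}(z,\ell))\subset B^{n-1}(\Phi(z),C\ell)$, so $\Phi(y)\in N_{C\rho^{1/2+\delta_m}}(\Phi(Z_0))$; since moreover $y\in B\subset CB$ one has $\Phi(y)\in\Phi(CB)$, which is the claimed membership.

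For the left inclusion I would take $w$ in the set on the left, write $w=\Phi(y)$ with $y\in(1/C)B\subset B$, and choose $z'\in Z_0$ with $|w-\Phi(z')|\le\rho^{1/2+\delta_m}/C$. Applying the lower bound of \eqref{esseitial-identity-eq} at the point $y$ with scale $\ell=\rho^{1/2+\delta_m}$ gives $B^{n-1}(\Phi(y),\ell/C)\subset\Phi(B^{n-1}(y,\ell))$, hence $\Phi(z')\in\Phi(B^{n-1}(y,\ell))$, and injectivity of $\Phi$ forces $z'\in B^{n-1}(y,\rho^{1/2+\delta_m})$. Consequently $y\in B^{n-1}(z',\rho^{1/2+\delta_m})\subset N_{\rho^{1/2+\delta_m}}(Z_0)$, and since also $y\in B$ this yields $w=\Phi(y)\in\Phi\big(N_{\rho^{1/2+\delta_m}}(Z_0)\cap B\big)$, as desired.

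The one thing I would need to check with a little care — and I regard this bookkeeping as the only mild obstacle, since there is no genuine new idea beyond Lemma \ref{essential-identity-lem} — is that all auxiliary balls to which \eqref{esseitial-identity-eq} is applied, namely $B^{n-1}(z,\ell)$ and $B^{n-1}(y,\ell)$, lie inside the region $B^{n-1}(0,3C_n\lambda)$ on which the bi-Lipschitz bounds hold, and that the scale $\ell=\rho^{1/2+\delta_m}$ meets the requirement $\ell\le\lambda^{1-\epsilon}$ of that lemma. Both follow from $B\subset B^{n-1}(\widetilde x_0,3\rho)$ with $\widetilde x_0\in[-3C_n\lambda,3C_n\lambda]^{n-1}$ and from $\rho\le r\le R\le\lambda^{1-\epsilon}$ together with $\delta_m$ being extremely small, after harmlessly enlarging the constant $C_n$ in the statement of Lemma \ref{essential-identity-lem} if necessary.
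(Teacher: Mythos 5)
Your proof is correct and is essentially the argument the paper intends: the corollary is stated there without proof as a direct consequence of Lemma \ref{essential-identity-lem}, and your unwinding of the two inclusions (upper bound of \eqref{esseitial-identity-eq} for the right inclusion, lower bound plus injectivity of $\Phi$ for the left) is exactly that derivation, with the injectivity supplied by the explicit radial form $\Phi(x)=\lambda u/\sqrt{\lambda^2+|u|^2}$ or by the invertibility already noted in the lemma's proof. The bookkeeping you flag (the auxiliary balls staying in the region where the bi-Lipschitz bounds hold, and $\rho^{1/2+\delta_m}\le\lambda^{1-\epsilon}$) is handled exactly as you say, so nothing is missing.
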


\medskip

Now via Lemma \ref{essential-identity-lem} and Corollary \ref{corollary-essen-iden}, we can say something rigorously related to the support of $(h|_{\widetilde{\W}})^\wedge$. This is shown in the next lemma.

\begin{lem}
Let $h$ be concentrated on bigger wave packets from $\ZT_{\ti\theta,w}\cap\ZT_Z$. Recall that $Z_0$ was defined in above \eqref{phi}, and $h|_{\widetilde{\W}}$ was introduced in Lemma \ref{transverse-equidistribution}. Then
\begin{equation}
\label{l2-comparison}
    \big\|(h|_{\widetilde{\W}})^\wedge\big\|_2\lesim\big\|\wh{h}\cdot \Id_{\{N_{C\rho^{1/2+\de_m}}(\Phi(Z_0))\cap \Phi(CB)\}}\big\|_2+\rapid(\rho)\|h\|_2.
\end{equation}
\end{lem}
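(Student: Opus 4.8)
\textbf{Proof plan for \eqref{l2-comparison}.}

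The plan is to unravel what $(h|_{\widetilde{\W}})^\wedge$ actually is, pass through the change of variables induced by $\Phi$, and then invoke the support information we already know about small wave packets. First I would recall that $h|_{\widetilde{\W}}=\sum_{\widetilde T\in\widetilde\W}h_{\widetilde T}$, and each $h_{\widetilde T}$ with $\widetilde T=\widetilde T_{\widetilde\theta_1,\widetilde v_1}(\wt\bx_0)$ is, up to the modulation $e^{-2\pi i\phi^\lambda(\wt\bx_0;\omega)}$, a standard wave packet: it is frequency-localized (in the sense of the Fourier transform in $\omega$) near $\widetilde v_1$ at scale $\rho^{(1+\delta)/2}$, and $\omega$-localized near $\widetilde\theta_1$ at scale $\rho^{-1/2}$, by the construction in Section \ref{f_section4} and the rapid decay in Lemma \ref{210103lem5_3}. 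Since $h$ is concentrated on $\widetilde\T_b$ (tubes $\rho^{-1/2+\delta_m}$-tangent to $Z+b$ inside $B(\wt\bx_0,\rho)$) and also, by hypothesis, on $\widetilde\T_{\widetilde\theta,w}\cap\widetilde\T_Z$ at scale $r$, the coreline equation \eqref{coreline} for the relevant small tubes, together with Lemma \ref{0128lemma}, forces the spatial core of each $\widetilde T$ contributing to $h|_{\widetilde\W}$ to meet the horizontal slice $\{t=\wt t_0\}$ inside $N_{\rho^{1/2+\delta_m}}(Z_0)\cap B$, where $Z_0=(Z+b)\cap\{t=\wt t_0\}$ and $B=B^{n-1}(w_H,Cr^{1/2+\delta})$.

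Next I would translate the tangency/coreline condition on $\widetilde T$ into a condition on the frequency support of $h_{\widetilde T}$. From the coreline equation in the form \eqref{coreline-2}, a small tube $\widetilde T_{\widetilde\theta_1,\widetilde v_1}$ passing through a point $(x_*,\wt t_0)$ has $\widetilde v_1 = -\nabla_\omega\phi^\lambda(x_*,\wt t_0;\omega_{\widetilde\theta_1})+\nabla_\omega\phi^\lambda(\wt\bx_0;\omega_{\widetilde\theta_1})$, i.e.\ (comparing $\omega_{\widetilde\theta_1}$ with $\omega_{\widetilde\theta}$ up to angle $O(\rho^{-1/2})$, which is absorbed into the $\rho^{1/2+\delta_m}$-error using \eqref{210131e3_28} and $|\wt\bx_0-x_*|\lesssim\rho$) the modulated frequency $\widetilde v_1 - \nabla_\omega\phi^\lambda(\wt\bx_0;\omega_{\widetilde\theta_1})$ lies within $O(\rho^{1/2+\delta_m})$ of $-\nabla_\omega\phi^\lambda(x_*,\wt t_0;\omega_{\widetilde\theta})=\Phi(x_*)$. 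Since $x_*$ ranges over $N_{\rho^{1/2+\delta_m}}(Z_0)\cap B$, the collection of these modulated frequencies lies in $\Phi\big(N_{\rho^{1/2+\delta_m}}(Z_0)\cap B\big)$, which by Corollary \ref{corollary-essen-iden} is contained in $N_{C\rho^{1/2+\delta_m}}(\Phi(Z_0))\cap\Phi(CB)$. Carefully: $\widehat h$ here means the Fourier transform of the $\bx_0$-modulated function $g(\cdot)e^{2\pi i\phi^\lambda(\wt\bx_0;\cdot)}$ (equivalently, $(h|_{\widetilde\W})^\wedge$ after stripping $e^{-2\pi i\phi^\lambda(\wt\bx_0;\cdot)}$), so that the wave-packet frequency of $h_{\widetilde T}$ is exactly $\widetilde v_1$ shifted by $\nabla_\omega\phi^\lambda(\wt\bx_0;\cdot)$ — this is the notational point the remark after \eqref{4949} is making, and I would state it explicitly so the reader sees that $\widehat h$ is honestly supported near $\Phi(Z_0)$.

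Finally, I would assemble the estimate. By Plancherel and the frequency-localization just described, $\|(h|_{\widetilde\W})^\wedge\|_2$ is, up to a $\rapid(\rho)\|h\|_2$ tail coming from the Schwartz-tail overlap of the bump functions $\widehat{\widetilde\psi}_{\widetilde\theta_1}$ (exactly as in the proof of Lemma \ref{1861}), bounded by $\|\widehat h\cdot\Id_{\{N_{C\rho^{1/2+\delta_m}}(\Phi(Z_0))\cap\Phi(CB)\}}\|_2$: one writes $h|_{\widetilde\W}=\sum_{\widetilde T\in\widetilde\W}h_{\widetilde T}$, notes each summand's Fourier support is contained in the fixed set $N_{C\rho^{1/2+\delta_m}}(\Phi(Z_0))\cap\Phi(CB)$ up to rapidly decaying tails, and uses the finite-overlap $L^2$-orthogonality from Lemma \ref{210103lem5_1} together with $\|h|_{\widetilde\W}\|_2\lesssim\|h\|_2$. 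The one genuinely delicate point — the step I expect to be the main obstacle — is controlling the Schwartz tails uniformly: a priori there are $\sim\rho^{O(1)}$ small tubes, and their individual tails, if summed naively, need not be $\rapid(\rho)$. This is handled the same way as in Lemma \ref{1861} and Lemma \ref{210103lem5_3}, by extracting a genuine rapidly decreasing factor $(1+\rho^{-1/2}|\,\cdot\,|)^{-N}$ from each wave packet before summing, which beats the polynomial count in the tube number; I would cite those lemmas rather than redo the computation. Everything else is bookkeeping with the change of variables $\Phi$, for which Lemma \ref{essential-identity-lem} and Corollary \ref{corollary-essen-iden} do the heavy lifting.
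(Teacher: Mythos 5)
Your geometric localization of the small-tube frequencies is sound and is essentially the paper's: you use the coreline relation \eqref{coreline-2}, the concentration on $\ZT_{\ti\theta,w}\cap\ZT_Z$ via Lemma \ref{1861} and Lemma \ref{0128lemma} to confine the slice intersections to $N_{\rho^{1/2+\de_m}}(Z_0)\cap B$, and Corollary \ref{corollary-essen-iden} to push this forward; your handling of the $\omega_{\wt\theta_1}$ versus $\omega_{\ti\theta}$ discrepancy via \eqref{210131e3_28} is in fact more careful than the paper's. (One small correction: $\wh h$ in \eqref{l2-comparison} is the Fourier transform of $h$ itself, the modulation $e^{2\pi i\phi^{\lambda}(\wt\bx_0;\cdot)}$ being absorbed into the convolution kernel; with your reading of $\wh h$ as the transform of the modulated function, the correct localization set would be the translate built from $\Psi(Z_0)$ rather than $\Phi(Z_0)$.)

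The genuine gap is in the final assembly. From ``each $\wh{h}_{\wt T}$, $\wt T\in\wt\W$, is essentially supported in $S:=N_{C\rho^{1/2+\de_m}}(\Phi(Z_0))\cap\Phi(CB)$'', Lemma \ref{210103lem5_1}, and $\|h|_{\wt\W}\|_2\lesim\|h\|_2$, you can only conclude that $(h|_{\wt\W})^{\wedge}$ is essentially supported in $S$ and that $\|(h|_{\wt\W})^{\wedge}\|_2\lesim\|h\|_2$; nothing in that chain produces the factor $\|\wh h\cdot\Id_{S}\|_2$ on the right of \eqref{l2-comparison}, and that factor is the whole point of the lemma, since in Proposition \ref{trans-dist-main} the quantity $\int_S|\wh h|^2$ is smaller than $\|h\|_2^2$ by the gain $(\rho/r)^{(n-m)/2}$. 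What is missing is the comparison between the wave-packet \emph{coefficients} of $h$ and the values of $\wh h$ over $S$: after the orthogonality step $\|h|_{\wt\W}\|_2^2\lesim\sum_{\wt T\in\wt\W}\|h_{\wt T}\|_2^2$, one must use \eqref{single-wp-tube} to write each coefficient as $(h_{\wt\bx_0}\psi_{\ti\theta})^{\wedge}(\wt v)=\wh h\ast K_{\ti\theta}(\wt v)$ with an $L^1$-normalized kernel $K_{\ti\theta}$, apply Cauchy--Schwarz, and bound the resulting kernel $K(y)=\sum_{\wt T_{\ti\theta,\wt v}\in\wt\W}\rho^{(n-1)/2}|K_{\ti\theta}(y-\wt v)|$ by $O(1)$ everywhere (there are $O(1)$ caps and the $\wt v$'s are $\rho^{1/2}$-separated) and by $\rapid(\rho)$ off $S$ (this is where your localization of the shifted centers, plus the stationary-phase decay of $K_{\ti\theta}$ at scale $\rho^{1/2}$, enters); this is exactly the paper's display \eqref{wpt-trans-equi} and the ensuing claim. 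An alternative route --- splitting $h$ into its frequency projection onto $S$ and the complement, and showing the complementary piece has negligible $\wt\W$-coefficients --- requires the same sampling comparison, so it cannot be bypassed by support information alone.
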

\begin{proof}
By Lemma \ref{210103lem5_1}, one has 
\begin{equation} 
    \|(h|_{\widetilde{\W}})^\wedge\|_2^2\lesim\sum_{\wt T\in\wt\W}\|{h}_{\wt T}\|_2^2.
\end{equation}
The scale $\rho$ wave packet $\wh{h}_{\ti T}$ was defined in \eqref{single-wp-tube}. We use its definition to have
\begin{equation}
    \sum_{\wt T\in\wt\W}\|\wh{h}_{\wt T}\|_2^2=\rho^{n-1}\sum_{\wt T\in\wt\W}|(h_{\wt\bx_0}\psi_{\ti\theta})^{\wedge}(v)|^2\|\wt\psi_{\ti\theta}\|_2^2=\rho^{(n-1)/2}\sum_{\wt T\in\wt\W}|(h_{\wt\bx_0}\psi_{\ti\theta})^{\wedge}(v)|^2.
\end{equation}
Since we can write $(h_{\wt\bx_0}\psi_{\ti\theta})^{\wedge}(v)=\wh{h}\ast (e^{-2\pi i\phi^{\lambda}(\wt\bx_0; \ \cdot) }\wt\psi_{\ti\theta}(\cdot))^\wedge(v)$, and since the $L^1$ norm of the second function $(e^{-2\pi i\phi^{\lambda}(\wt\bx_0; \ \cdot) }\wt\psi_{\ti\theta}(\cdot))^\wedge$ is bounded above by $O(1)$, one can use H\"older's inequality for $|(h_{\wt\bx_0}\psi_{\ti\theta})^{\wedge}(v)|^2$ and obtain
\begin{align}
\label{wpt-trans-equi}
    \sum_{\wt T\in\wt\W}\|\wh{h}_{\wt T}\|_2^2\lesim  \int|\wh{h}(y)|^2\Big(\sum_{\wt T_{\ti\theta,\ti v}\in\wt\W}|\rho^{(n-1)/2}(e^{-2\pi i\phi^{\lambda}(\wt\bx_0; \ \cdot) }\wt\psi_{\ti\theta}(\cdot))^\wedge(y-\wt v)|\Big)dy.
\end{align}
We claim that the sum inside the bracket is $O(1)$, and it decays rapidly outside of the set $N_{C\rho^{1/2+\de_m}}(\Phi(Z_0))\cap \Phi(CB)$. This proves the lemma.

To prove the $O(1)$ upper bound, one just needs to notice that there are $O(1)$ caps $\ti\theta$ making contribution in $\wt\W$. It remains to prove the rapidly-decaying property.

To prove the rapidly-decaying property, we first notice that $h_{\wt T_{\ti\theta,\wt v}}=\rapid(\rho)\|h\|_2$ unless $\wt T_{\ti\theta,\wt v}\cap\{t=\wt t_0\}\subset N_{2{\rho^{1/2+\de_m}}}(Z_0)\cap 2B$ for every smaller tube $\wt T_{\ti\theta,\wt v}\in\wt\W$. This is because on one hand $\wt T_{\ti\theta,\wt v}\cap \{t=\wt t_0\}\subset N_{2{\rho^{1/2+\de_m}}}(Z_0)$ since $\wt T_{\ti\theta,\wt v}$ is tangent to $Z+b$; on the other hand, since $h$ is concentrated on $\ZT_{\ti\theta,w}$, by Lemma \ref{1861} we know that $h_{\wt T_{\ti\theta,\wt v}}=\rapid(\rho)\|h\|_2$ unless $\wt T_{\ti\theta,\wt v}\in \widetilde{\ZT}_{\theta,v}[B(\widetilde{\bx}_0,\rho)]$ for some $(\theta,v)$ with  $T_{\theta,v}\in\ZT_{\wt\theta,w}$. But for such $T_{\theta,v}$, one has $T_{\theta,v}\cap \{t=\wt t_0\}\subset B$ as shown in above \eqref{w-H}. Hence by Lemma \ref{0128lemma} we have that $h_{\wt T_{\ti\theta,\wt v}}=\rapid(\rho)\|h\|_2$ unless $\wt T_{\ti\theta,\wt v}\cap \{t=\wt t_0\}\subset 2B$.

Now we only need to consider those smaller tubes $\wt T_{\ti\theta,\wt v}$ with $\wt T_{\ti\theta,\wt v}\cap\{t=\wt t_0\}\subset N_{2{\rho^{1/2+\de_m}}}(Z_0)\cap 2B$. Using this information, we would like to find out the location of $\wt v$ in the bracket of \eqref{wpt-trans-equi}. Recall that the coreline of the smaller tube $\wt T_{\ti\theta,\wt v}$ satisfies \eqref{coreline}, which indeed is \eqref{coreline-2}. On the hyperplane $\{t=\wt t_0\}$, we can rewrite (4.36) as $\wt v=-\nabla_\om\phi^\la(x,\wt t;\om_{\ti\theta})+\nabla_{\omega}\phi^{\lambda}(\wt\bx_0; \omega_{\ti\theta})=:\Psi(x)$. Hence by Corollary \ref{corollary-essen-iden} one  has
\begin{equation}
\label{location-widetilde-v}
    \wt v\in \Psi(T_{\ti\theta,\wt v}\cap\{t=\wt t_0\})\subset \Psi(N_{2{\rho^{1/2+\de_m}}}(Z_0)\cap 2B)\subset N_{C\rho^{1/2+\de_m}}\Psi(Z_0)\cap\Psi(2B).
\end{equation}

Finally, we know from Lemma \ref{210103lem5_3} that the function $(e^{-2\pi i\phi^{\lambda}(\wt\bx_0; \ \cdot) }\wt\psi_{\ti\theta}(\cdot))^\wedge(y-\wt v)$ in \eqref{wpt-trans-equi} decays rapidly unless $|y-\wt v+\nabla_\om\phi^\la(\wt\bx_0,\om_{\ti\theta})|\lesim \rho^{1/2+\de_m}$, that is, $y-\wt v\in B^{n-1}(-\nabla_\om\phi^\la(\wt\bx_0,\om_{\ti\theta}), C\rho^{1/2+\de_m})$. Plugging this back into \eqref{location-widetilde-v} and noticing $\Phi+\nabla_{\omega}\phi^{\lambda}(\wt\bx_0; \omega_{\ti\theta})=\Psi$, we therefore can conclude that $(e^{-2\pi i\phi^{\lambda}(\wt\bx_0; \ \cdot) }\wt\psi_{\ti\theta}(\cdot))^\wedge(y-\wt v)$ decays rapidly unless
\begin{equation}
     y\in N_{C\rho^{1/2+\de_m}}\Phi(Z_0)\cap\Phi(CB),
\end{equation}



\noindent where $\Phi$ was defined in \eqref{phi}. This proves the claim and hence the lemma.
\end{proof}

To conclude \eqref{local-TEE}, we need to prove the transverse equidistribution estimate stated below.
\begin{prop}
\label{trans-dist-main}
Recall $Z_0=(Z+b)\cap\{t=\wt t_0\}$ and $B=B^{n-1}(w,r^{1/2+\de_m})$. Suppose that $h$ is concentrated on scale $r$ wave packets in $\ZT_{\ti\theta,w}\cap\ZT_Z$. Then 
\begin{equation}
    \int|\wh{h}|^2\cdot \Id_{\{N_{C\rho^{1/2+\de_m}}(\Phi(Z_0))\cap \Phi(CB)\}}\lesim r^{O(\de_m)}\Big(\frac{\rho}{r}\Big)^{(n-m)/2}\|h\|_2^2.
\end{equation}
\end{prop}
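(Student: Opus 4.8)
The plan is to reduce Proposition~\ref{trans-dist-main} to the purely Euclidean transverse equidistribution estimate that appears in the Fourier restriction literature (Guth \cite{guth2018}, Guth--Hickman--Iliopoulou \cite{MR4047925}, Hickman--Zahl \cite{hickman2020note}), by first transporting the problem to the model case via the diffeomorphism $\Phi$ of Lemma~\ref{essential-identity-lem}. Since $h$ is concentrated on scale $r$ wave packets in $\T_{\tilde\theta,w}\cap\T_Z$, the function $\wh h$ is (up to $\rapid(r)\|h\|_2$) supported in $N_{Cr^{1/2+\delta_m}}(\Phi(Z_0'))\cap\Phi(CB')$ where $Z_0'$ is the slice $(Z+b')\cap\{t=\wt t_0\}$ corresponding to the larger tubes — this is exactly the content of the preceding lemma applied at scale $r$ instead of $\rho$. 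So one is comparing the $L^2$ mass of $\wh h$ over two nested thickened varieties $\Phi(N_{\rho^{1/2+\delta_m}}(Z_0))$ and $\Phi(N_{r^{1/2+\delta_m}}(Z_0))$.

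First I would pull everything back by $\Phi^{-1}$: set $G:=\wh h\circ\Phi$ (a minor abuse since $\wh h$ is a function, not a measure, but the Jacobian of $\Phi$ is comparable to $1$ and locally constant at the relevant scale by Lemma~\ref{essential-identity-lem}, so changing variables costs only $r^{O(\delta_m)}$). By Corollary~\ref{corollary-essen-iden}, $\Phi$ maps $\rho^{1/2+\delta_m}$-neighbourhoods of $Z_0$ to $\rho^{1/2+\delta_m}$-neighbourhoods of $\Phi(Z_0)$ and vice versa, up to the absolute constant $C$; hence the integral in question is comparable to $\int|G|^2\,\Id_{N_{C'\rho^{1/2+\delta_m}}(Z_0)\cap C'B}$, while the total mass $\|h\|_2^2=\|\wh h\|_2^2$ is comparable to $\|G\cdot\Id_{N_{C'r^{1/2+\delta_m}}(Z_0')\cap C'B'}\|_2^2$. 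Now $Z_0$ is an $(m-1)$-dimensional transverse complete intersection in $\R^{n-1}$ of degree $O(\deg Z)$ (the transversality theorem, applied via the harmless perturbation $\bar t_0=\wt t_0+\beta$, guarantees this), and the gain we must produce is $(\rho/r)^{(n-m)/2}=(\rho/r)^{((n-1)-(m-1))/2}$, i.e.\ exactly the codimension-of-$Z_0$-in-$\R^{n-1}$ exponent. This is precisely the classical transverse equidistribution estimate for a function whose Fourier support lies near a transverse complete intersection: passing from an $r^{1/2+\delta_m}$-neighbourhood to a $\rho^{1/2+\delta_m}$-neighbourhood of a codimension-$(n-m)$ variety in $\R^{n-1}$ loses a factor $(r/\rho)^{(n-m)/2}$. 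I would cite the version in \cite[Section~8]{guth2018} or \cite[Section~6]{MR4047925} (see also \cite{hickman2020note}); the proof there is a covering argument on $\rho$-balls combined with an uncertainty-principle/interpolation step that the paper has already flagged it can run directly on the input $G$.

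Two bookkeeping points need care in carrying this out. One is that the varieties attached to the larger and the smaller tubes are slightly different — $Z$ versus $Z+b$ with $|b|\lesssim r^{1/2+\delta_m}$, and the slices are taken at $\wt t_0$ in both cases after the $\beta$-perturbation — but since $|b|$ is below the thickening scale $r^{1/2+\delta_m}$ and $N_{C\rho^{1/2+\delta_m}}(Z_0)\subset N_{C'r^{1/2+\delta_m}}(Z_0')$ once $b$ is absorbed into the constant, the nesting of neighbourhoods survives, and the degree bound $\deg(Z_0)=O(\deg Z)=O(1)$ controls all implicit constants. The second is that $\Phi$ is not affine, only a diffeomorphism close to the identity in the sense of Lemma~\ref{essential-identity-lem}; its nonlinearity is $O(\lambda^{-1}|y-x|^2)=O(\lambda^{-1}R^2)=O(\lambda^{2\epsilon-1})$, which is $\ll\rho^{1/2}$ at all the scales $\rho\le r\le R\le\lambda^{1-\epsilon}$ we care about, so $\Phi$ distorts the thin neighbourhoods only within their own thickness and the transverse-complete-intersection structure (in particular the degree and the transversality) is preserved.

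\textbf{The main obstacle} is the second point: verifying that pushing a transverse complete intersection $Z_0\subset\R^{n-1}$ forward by the nonlinear map $\Phi$ yields a set that is still, up to enlarging the neighbourhood by $O(1)$, a transverse complete intersection of comparable degree, so that the black-box Euclidean transverse equidistribution estimate applies verbatim. This is where Lemma~\ref{essential-identity-lem} and Corollary~\ref{corollary-essen-iden} do the real work — $\Phi(Z_0)$ need not literally be a variety, but it is sandwiched between $O(1)$-dilates of neighbourhoods of the honest variety $\Phi(Z_0)$ (interpreting $\Phi(Z_0)$ as a submanifold, whose defining equations are $P_j\circ\Phi^{-1}$, of degree $O(\deg Z_0\cdot\deg\Phi^{-1})$ — here one uses a polynomial approximation of $\Phi^{-1}$ at the relevant scale, with error absorbed into the $\rho^{1/2+\delta_m}$ thickening). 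Once that reduction is cleanly stated, the remaining estimate is exactly \cite[Section~6]{MR4047925} and requires no new idea.
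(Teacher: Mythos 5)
There is a genuine gap at the heart of your reduction. The ``classical transverse equidistribution estimate'' you invoke --- that a band-limited function which is essentially supported in $N_{r^{1/2+\delta_m}}(Z_0)$ loses a factor $(r/\rho)^{(n-m)/2}$ when its mass is measured on $N_{\rho^{1/2+\delta_m}}(Z_0)$ --- is not a theorem, and it is not what \cite{guth2018} or \cite{MR4047925} prove. As a statement about functions with Fourier support in the unit ball it is false: take $Z_0$ a hyperplane in $\R^{n-1}$ and $G$ a smooth bump adapted to the slab $N_{\rho^{1/2}}(Z_0)\cap B(0,r)$; then $\widehat{G}$ sits in a dual box inside the unit ball, yet all of the mass of $G$ lies in the thin neighborhood, so the ratio is $1$ rather than $(\rho/r)^{1/2}$. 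What rules out this example in the actual lemmas of Guth and Guth--Hickman--Iliopoulou is the \emph{angular} half of the tangency hypothesis: because the contributing caps $\theta$ satisfy $\ang(G(\theta),T_zZ)\lesssim r^{-1/2+\delta_m}$, the support of $h$ (which is the Fourier support of $\wh h$) is confined to an $r^{-1/2+\delta_m}$-neighborhood of an affine subspace $V_0+\omega_V$, so $\wh h$ is locally constant at scale $r^{1/2-\delta_m}$ precisely in the directions $V'$ transverse to $V_0$, and this is the engine that forces equidistribution across the full $r^{1/2}$ thickness. Your reduction discards this directional information entirely (you only use the spatial containment of the tubes in $N_{r^{1/2+\delta_m}}(Z)$), so the black box you plan to cite does not exist in the form you need, and it cannot be applied ``verbatim'' to $G=\wh h\circ\Phi$ in any case, since composing with the nonlinear $\Phi$ does not literally preserve the frequency-localization structure that the cited proofs run on.

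Concretely, what is missing is the content of the paper's Lemma \ref{trans-equi-sub-lem} and the three Claims in the proof of Proposition \ref{trans-dist-main}: one must extract a minimal subspace $V$ capturing the directions $G(\theta)$ of the tangent wave packets, prove the per-plane equidistribution estimate on affine planes $\Pi$ parallel to $V'=(V\cap\{t=\wt t_0\})^{\perp}$ via the uncertainty principle, verify that $\Phi(Z_0)$ is quantitatively transverse to $V'$ (Claim 1, using the positivity of $J_\Phi$ from Lemma \ref{essential-identity-lem}), and then count the $\rho^{1/2+\delta_m}$-balls covering $\Pi\cap N_{C\rho^{1/2+\delta_m}}(\Phi(Z_0))\cap\Phi(CB)$ by pulling $\Pi$ back to the genuine quadric variety $\Phi^{-1}(\Pi)$ (Claim 2) and applying Wongkew's theorem (Claim 3); the gain $(\rho/r)^{(n-m)/2}$ arises only from the cancellation between the exponent $\dim V'$ in the equidistribution step and $\dim Z_0-\dim V_0$ in the covering count. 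Your proposed workaround for the nonlinearity of $\Phi$ --- approximating $\Phi^{-1}$ by a polynomial so that $\Phi(Z_0)$ becomes a bounded-degree variety --- is also left vague (no control of the approximating degree or of where the error is absorbed), but this is secondary: even granting it, the neighborhood-comparison statement you would then feed it to is the step that fails.
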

\noindent Note that the desired estimate \eqref{local-TEE} is a direct corollary of \eqref{l2-comparison} and Proposition \ref{trans-dist-main}.

\medskip

The proof of Proposition \ref{trans-dist-main} relies on an auxiliary lemma. Let us introduce some more definitions. First, define
\begin{equation}
    \ZT_{Z,B,\ti\theta}:=\{T_{\theta,v}\in\ZT_Z:\mathrm{dist}(\theta,\tilde{\theta}) \lesssim \rho^{-1/2}~\text{and}~T_{\theta,v}\cap\{t=\wt t_0\}\cap B\not=\varnothing\},
\end{equation}
so as proved in above \eqref{w-H}, one has $\ZT_{\ti\theta,w}\cap\ZT_Z\subset\ZT_{Z,B,\ti\theta}$. We will prove Proposition \ref{trans-dist-main} with $\ZT_{\ti\theta,w}\cap\ZT_Z$ replaced by $\ZT_{Z,B,\ti\theta}$. Next, for every (linear) subspace $V$ in $\ZR^n$, we define a collection of bigger wave packets $\ZT_{V,B,\ti\theta}$ as
\begin{equation}
    \ZT_{V,B,\ti\theta}:=\{(\theta,v):T_{\theta,v}\cap B\not=\varnothing,~\ang(G(\om_\theta),V)\lesim r^{-1/2+\de_m}~\text{and}~\mathrm{dist}(\theta,\tilde{\theta}) \lesssim \rho^{-1/2}\}.
\end{equation}


\begin{lem}
\label{trans-equi-sub-lem}
Suppose that $V$ is a subspace of $\ZR^{n}$. Let $V_0:=V\cap \{t=\wt t_0\}$ and define $V'$ as $V_0^\perp$ in $\ZR^{n-1}$. If $g$ is concentrated on bigger wave packets from $\ZT_{V,B,\ti\theta}$, if $\Pi\subset\{t=\wt t_0\}$ is any affine subspace parallel to $V'$ and if $y_1\in\Pi\cap \Phi(CB)$, then
\begin{equation}
\label{transverse-equi-subspace}
    \int_{\Pi\cap B(y_1,\rho^{1/2+\de_m})}|\wh g|^2\lesim r^{O(\de_m)}\Big(\frac{\rho^{1/2}}{r^{1/2}}\Big)^{\dim(V')}\int_{\Pi}|\wh g|^2. 
\end{equation}
\end{lem}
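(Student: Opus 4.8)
The statement (Lemma~\ref{trans-equi-sub-lem}) is the ``flat'' model case of the transverse equidistribution phenomenon, and the natural strategy is to reduce it to the genuinely flat Fourier-extension situation via the diffeomorphism $\Phi$, where the inequality becomes a standard fact about functions whose Fourier support lies in a thin slab around a subspace. First I would unwind the hypothesis that $g$ is concentrated on $\ZT_{V,B,\ti\theta}$: by the wave packet expansion and Lemma~\ref{210103lem5_1} it suffices to prove \eqref{transverse-equi-subspace} with $\wh g$ replaced by a sum of $\wh{g}_{T_{\theta,v}}$ over $T_{\theta,v}\in\ZT_{V,B,\ti\theta}$. By Lemma~\ref{210103lem5_3} each such $\wh{g}_{T_{\theta,v}}$ is (up to $\rapid(r)$) supported on the ball $B^{n-1}(v+\nabla_\om\phi^\la_{\bx_0}(\cdot\,;\om_\theta), Cr^{1/2+\de})$ — more precisely, as in the previous lemma, on a ball of radius $\sim r^{1/2+\de_m}$ centered near $-\nabla_\om\phi^\la(\wt\bx_0;\om_\theta)+v$. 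The angular condition $\ang(G(\om_\theta),V)\lesim r^{-1/2+\de_m}$ together with the computation of the Gauss map in Lemma~\ref{210111lem3_1} and the Jacobian bound \eqref{210201e3_40} forces these centers, after applying $\Phi$ (equivalently $\Psi$), to lie within $O(r^{1/2+\de_m})$ of the subspace $V_0=V\cap\{t=\wt t_0\}$; so $\wh g$ is essentially supported in the $O(r^{1/2+\de_m})$-neighborhood of an affine copy of $V_0$ inside $\{t=\wt t_0\}$.

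Having established that, the second step is the actual inequality. Write $\Pi$ as $y_1+V'$ with $V'=V_0^\perp$ in $\ZR^{n-1}$. On the one hand, restricting the integral $\int_\Pi|\wh g|^2$ to the translate $\Pi$ of $V'$ and using Plancherel in the $V'$-directions, $\wh g|_\Pi$ is (essentially) the Fourier transform, in $\dim(V')$ variables, of a function supported in an interval of length $O(r^{1/2+\de_m})$ in each of those directions (this is exactly where the thin-slab support enters: $V_0$ and $V'$ are orthogonal, so the $V'$-components of the frequency support have size $O(r^{1/2+\de_m})$). On the other hand, $B(y_1,\rho^{1/2+\de_m})\cap\Pi$ has diameter $\rho^{1/2+\de_m}$. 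A standard locally-constant/uncertainty-principle argument — a function with Fourier support in a ball of radius $L$ is, after convolving with a fixed bump, roughly constant at scale $L^{-1}$, so its $L^2$ mass on a ball of radius $M\le L^{-1}$ is at most $(ML)^{\dim}$ times its total $L^2$ mass — then gives, with $L\sim r^{-1/2-\de_m}$ and $M\sim\rho^{1/2+\de_m}$ (note $M\le \rho\le r\le \lambda^{1-\e}$, and the ratio $ML\sim(\rho/r)^{1/2}r^{O(\de_m)}$), exactly the bound $\int_{\Pi\cap B(y_1,\rho^{1/2+\de_m})}|\wh g|^2\lesim r^{O(\de_m)}(\rho^{1/2}/r^{1/2})^{\dim(V')}\int_\Pi|\wh g|^2$. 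The $r^{O(\de_m)}$ loss absorbs all the $\de$-fattenings and the finitely-many-caps overlap.

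The main obstacle I anticipate is not the flat inequality itself — that is classical — but making precise, with honest tails, the claim that $\wh g$ is supported near $y_1+V_0$ \emph{uniformly} over the tubes in $\ZT_{V,B,\ti\theta}$, and in particular that the pushforward under $\Phi$ does not distort $V_0$ by more than $O(r^{1/2+\de_m})$ over a ball of radius $\rho$. Here I would invoke Lemma~\ref{essential-identity-lem} and Corollary~\ref{corollary-essen-iden} (so that $\Phi$ behaves like the identity up to $O(\lambda^{-1}|y-x|^2)=O(\lambda^{-1}\rho^2)\ll\rho^{1/2+\de_m}$ curvature errors, using $\rho\le R\le\lambda^{1-\e}$), together with the bound $|\nabla^2_\omega\nabla_\bx\phi^\la|\lesim 1$ from \eqref{210131e3_28} to control how the centers $-\nabla_\om\phi^\la(\bx;\om_\theta)$ vary in $\bx$ across $B$ — this is the same mechanism already used in the proof of Lemma~\ref{0128lemma} and in the preceding lemma. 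Once these geometric facts are in place, the rest is the routine Plancherel-plus-locally-constant computation sketched above, and I would not grind through it in detail.
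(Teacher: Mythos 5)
Your second paragraph is, in substance, the paper's own argument: $\widehat g$ restricted to $\Pi$ has Fourier support, in the $\dim(V')$ variables along $\Pi$, in a ball of radius $\sim r^{-1/2+\de_m}$, hence is locally constant at the dual scale $\sim r^{1/2-\de_m}$, and comparing the mass on $\Pi\cap B(y_1,\rho^{1/2+\de_m})$ with the mass on $\Pi$ gives the factor $r^{O(\de_m)}(\rho/r)^{\dim(V')/2}$. The gap is in how you justify that frequency localisation. The fact you need is a statement about the support of $g$ itself in the $\om$-variable: by \eqref{Gauss-map} the Gauss map is $\om\mapsto(\om,1)/\sqrt{1+|\om|^2}$, so the condition $\ang(G(\om_\theta),V)\lesssim r^{-1/2+\de_m}$ places $\mathrm{supp}\,g$ inside an $r^{-1/2+\de_m}$-neighbourhood of a translate $V_0+\om_V$ within the unit ball; since $V'\perp V_0$, the $V'$-projection of $\mathrm{supp}\,g$ has diameter $O(r^{-1/2+\de_m})$, and this is the radius $L$ entering the locally constant step. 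What your first paragraph establishes instead is a physical-space statement about the centres of the $\widehat{g_T}$ after applying $\Phi$, and you then assert that ``the $V'$-components of the frequency support have size $O(r^{1/2+\de_m})$'' --- this is the wrong object (positions of $\widehat g$, not frequencies of $\widehat g|_\Pi$) and the wrong scale; it also contradicts the $L\sim r^{-1/2-\de_m}$ you use two lines later, and with $L\sim r^{1/2+\de_m}$ one would get $ML\sim(\rho r)^{1/2}r^{O(\de_m)}$, from which \eqref{transverse-equi-subspace} does not follow.

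Moreover, the physical-space claim is itself not right as stated, although fortunately it is not needed. The centre of the essential support of $\widehat{g_{T_{\theta,v}}}$ is $-\nabla_\om\phi^\lambda(x_T,\wt t_0;\om_\theta)$, with $x_T$ the intersection of the core line with $\{t=\wt t_0\}$; as $\om_\theta$ ranges over the slab around $V_0+\om_V$ these centres move in directions $\nabla^2_\om\phi^\lambda\cdot V_0$, and $\nabla^2_\om\phi^\lambda$ is a scalar multiple of the identity minus a rank-one correction of comparable size (the same structure as \eqref{jacobian}), so these directions can make an angle $\sim 1$ with $V_0$; in addition a deviation of $\om_\theta$ from the slab of size $r^{-1/2+\de_m}$ is amplified by a factor $\sim t_0\leq C_n\lambda$, which can exceed $r^{1/2+\de_m}$ since $\lambda$ may be much larger than $r$. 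So nothing forces the centres to stay within $O(r^{1/2+\de_m})$ of an affine copy of $V_0$, and the angular hypothesis is being invoked for a conclusion it does not give. None of $\Phi$, Lemma \ref{210103lem5_3} or Lemma \ref{essential-identity-lem} is needed for this particular lemma (they enter the surrounding lemma and Proposition \ref{trans-dist-main}); if you replace your first paragraph by the one-line $\om$-support observation above, the remainder of your argument coincides with the paper's proof.
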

\begin{proof}
Since $g$ is concentrated on large wave packets from $\ZT_{V,B,\ti\theta}$ and since $G(\om)$ is linear (up to a scalar depending on $\om$) as shown in \eqref{Gauss-map}, there is a shift $\om_V\in B^{n-1}(0,1)$ such that
\begin{equation}
    \{\om:\ang(G(\om),V)\lesim r^{-1/2+\de_m}\}\subset\{\om:\dist(\om,V_0+\om_V)\lesim r^{-1/2+\de_m}\}.
\end{equation}
It implies that $g$ is supported in the $r^{-1/2+\de_m}$ neighborhood of $V_0+\om_V$ inside the unit ball $B^{n-1}(0,1)$. As a result, the Fourier transform of $(\wh {g}|_\Pi)^{\vee}$ is supported in an $n-m$ dimensional $r^{-1/2+\de_m}$ ball centered at $\text{proj}_{V'}(\om_V)$, which implies (See also Lemma \ref{201204lem6_4})
\begin{equation}
    |(\wh g|_\Pi)|\lesim |(\wh g|_\Pi)|\ast \eta_{r^{1/2-\de_m}}. 
\end{equation}
Finally, we integrate $|(\wh g|_\Pi)|^2$ inside the ball $B(y,\rho^{1/2+\de_m})$ and invoke H\"older's inequality to conclude \eqref{transverse-equi-subspace}.
\end{proof}

\begin{proof}[Proof of Proposition \ref{trans-dist-main}]
We prove Proposition \ref{trans-dist-main} via Lemma \ref{trans-equi-sub-lem}. Our proof is similar to the proof of Lemma 6.2 in \cite{guth2018} and the proof of Lemma 8.4 in \cite{MR4047925}.

Since wave packets in $\ZT_{Z,B,\ti\theta}$ are tangent to the variety $Z$ inside the ball $B$, by the angular condition \eqref{angle-condition} we have
\begin{equation}
    \ang(G(\theta),T_zZ)\lesim r^{-1/2+\de_m}
\end{equation}
for every $z\in Z\cap 2B$ and $T_{\theta,v}\in\ZT_{Z,B,\ti\theta}$. One thus can find a subspace $V\subset\ZR^n$ of minimal dimension and $\dim V\leq\dim Z$ such that for all $\theta$ making contribution in $\ZT_{Z,B,\ti\theta}$, 
\begin{equation}
\label{angle-V}
    \ang(G(\theta),V)\lesim r^{-1/2+\de_m}.
\end{equation}
It implies that the function $h$ is indeed concentrated on wave packets from $\ZT_{V,B,\ti\theta}$. So we can apply Lemma \ref{trans-equi-sub-lem} to obtain a subspace $V'$ that 
\begin{equation}
\label{affine-subspace}
    \int_{\Pi\cap B(y_1,\rho^{1/2+\de_m})}|\wh h|^2\lesim r^{O(\de_m)}\Big(\frac{\rho^{1/2}}{r^{1/2}}\Big)^{\dim(V')}\int_{\Pi}|\wh h|^2
\end{equation}
for any $\Pi\subset\{t=\wt t_0\}$ being parallel to $V'$ and $y_1\in\Pi\cap \Phi(CB)$. To finish the proof, we need three additional claims.

{\bf{Claim 1.}} The pushfoward $\Phi(Z_0)$ is quantitatively transverse to $V'$ at every point $z\in\Phi(Z_0)\cap\Phi(B(0,3C_n\la))$.

{\bf{Claim 2.}} $\Phi^{-1}(\Pi)$ is an $n-1-\dim(V')$ dimensional transverse complete intersection in $\ZR^{n-1}$.

{\bf{Claim 3.}} $\Pi\cap N_{C\rho^{1/2+\de_m}}(\Phi(Z_0))\cap \Phi(CB)$ can be covered by $r^{O(\de_m)}(r^{1/2}/\rho^{1/2})^{\dim Z_0-\dim V_0}$ many balls in $\Pi$ of radius $\rho^{1/2+\de_m}$. 

Assume at first that the three claims were verified. We plug Claim 3 back to \eqref{affine-subspace} so that
\begin{equation}
    \int_{\Pi}|\wh{h}|^2\cdot \Id_{\{N_{C\rho^{1/2+\de_m}}(\Phi(Z_0))\cap \Phi(CB)\}}\lesim r^{O(\de_m)}\Big(\frac{\rho}{r}\Big)^{(n-m)/2}\int_{\Pi}|\wh h|^2.
\end{equation}
Integrate over all (generic) affine subspaces $\Pi$ that are parallel to $V'$ to conclude the proof of Proposition \ref{trans-dist-main}.

\medskip

\noindent\emph{Proof of Claim 1.} We follow essentially the proof strategy in \cite{guth2018}. Suppose that Claim 1 fails. It means that there is a point $\Phi(z)\in \Phi(Z_0)\cap \Phi(B(0,C\la))$ and a subspace $W_\Phi\subset T_{\Phi(z)}(\Phi(Z_0))$ with $\dim Z_0-\dim W_\Phi+\dim V'<n-1$, such that for any non-zero vector $w\in W_\Phi$, there is a big constant $C'$ only depending on the $C_n$ in Theorem \ref{201204thm5_1} that
\begin{equation}
    \ang(w, V')\leq (C')^{-2}.
\end{equation}
Let $(\Phi^{-1})^\ast_z:T_{\Phi(z)}\Phi(Z_0)\to T_zZ_0$ be the map between tangent spaces. Then $\ang(v,(\Phi^{-1})_z^\ast(v)) <\pi/2-(C')^{-1}$ for any $v\in T_{\Phi(z)}\Phi(Z_0)$ because of the positivity of $J_{\Phi^{-1}}$ (See Lemma \ref{essential-identity-lem}). Hence if we define $W$ as the pullback $W:=(\Phi^{-1})^\ast_z(W_\Phi)\subset T_z Z_0$, then $\ang(w,V_0)\gtrsim1$ since $V_0$ is perpendicular to $V'$. As we will show below, this indeed implies for arbitrary non-zero vector $w\in W$,
\begin{equation}
\label{angle-condition-1}
    \ang(w,V)\gtrsim1.
\end{equation}
To prove \eqref{angle-condition-1}, let  $w=w_1+w_2$ with $w_1\perp V_0$ and $w_2\in V_0$, so $\text{orth}_V(w)=\text{orth}_V(w_1)$ (here we use $\text{orth}_V(w):=w-\text{proj}_V(w)$ in convention). Note that the lower bound $\ang(w,V_0)\gtrsim1$ gives $|w_1|\gtrsim|w|$. Now for any unit vector $v\in V$, we write $v=v_1+v_2$ where $v_1\perp V_0$ and $v_2\in V_0$, so $|v_1|\leq1$ and $\ang(v_1,\ZR^{n-1})\gtrsim1$ due to \eqref{angle-V}. Hence $|w_1\cdot v|=|w_1\cdot v_1|\leq c|w_1|$ for some absolute constant $c<1$, which implies $|\text{orth}_V(w)|=|\text{orth}_V(w_1)|\sim|w_1|\gtrsim|w|$ and hence \eqref{angle-condition-1}. Finally, since $\dim(W)=\dim(W_\Phi)$, one has $\dim(Z_0)-\dim(W)<\dim(V)$. Hence the angle estimate \eqref{angle-condition-1} contradicts the minimality of $\dim(V)$, which can be seen via repeating the proof in page 115 of \cite{guth2018}. This proves Claim 1.

\medskip

\noindent\emph{Proof of Claim 2.}
Since $\Pi$ is an affine subspace, it is the intersection of $\ka:=n-1-\dim(V')$ mutually orthogonal hyperplanes $L_1,\ldots,L_\ka$. Suppose that each $L_j$ is parameterized as
\begin{equation}
    L_j:m_j\cdot x+l_j=0,
\end{equation}
where $m_j$ is the normal vector of $L_j$ and $l_j\in\ZR$. Then $\Phi^{-1}(L_j)$ is indeed a branch of the quadratic variety
\begin{equation}
    \Phi^{-1}(L_j):|\la m_j\cdot(\wt t_0\om_{\ti\theta}-x)|^2-(\la^2+|x-\wt t_0\om_{\ti\theta}|^2)l_j=0.
\end{equation}
Similar to the proof of Lemma \ref{essential-identity-lem}, we can show $J_{\Phi^{-1}}(x)$ is not degenerate for all $x\in\text{Range}(\Phi)$. Hence $\Phi^{-1}(L_1)\cap\ldots\cap\Phi^{-1}(L_\ka)$ is a transverse complete intersection.

\medskip

\noindent\emph{Proof of Claim 3.} Similar to Claim 1, we know that $\Phi^{-1}(\Pi)\cap Z_0$ is a transverse complete intersection inside $B(0,C_n\la)$ (In fact, by Lemma \ref{transverse-lemma}, the pullback $\Phi^{-1}(\Pi)$ is transverse to $Z$ for generic affine spaces $\Pi$). Note that the dimension of the variety $\Phi^{-1}(\Pi)\cap Z_0$ is $\dim Z_0-\dim V_0$. By Wongkew's theorem \cite{Wongkew}, the set $N_{C\rho^{1/2+\de_m}}(\Phi^{-1}(\Pi)\cap Z_0)\cap CB$ can be covered by 
\begin{equation}
    r^{O(\de_m)}\Big(\frac{r^{1/2}}{\rho^{1/2}}\Big)^{\dim Z_0-\dim V_0}
\end{equation}
many $(n-1)$-dimensional balls. Hence via Lemma \ref{essential-identity-lem} we know that $N_{C\rho^{1/2+\de_m}}(\Pi\cap \Phi(Z_0))\cap \Phi(CB)$ can be covered the the same amount of $(n-1)$-dimensional balls, up to a constant. 

Since $T_z\Phi(Z_0)$ is quantitatively transverse to $\Pi$ at every point $z\in\Phi(Z_0)\cap \Phi(2CB)$ by Claim 1, one can argue similarly as in Lemma 8.13 in \cite{MR4047925} to conclude 
\begin{equation}
    \Pi\cap N_{C\rho^{1/2+\de_m}}(\Phi(Z_0))\cap \Phi(CB)\subset {\rm Proj}_{\Pi}[N_{C\rho^{1/2+\de_m}}(\Pi\cap \Phi(Z_0))\cap \Phi(CB)].
\end{equation}
Therefore $\Pi\cap N_{C\rho^{1/2+\de_m}}(\Phi(Z_0))\cap \Phi(CB)$ can be covered by $r^{O(\de_m)}(r^{1/2}/\rho^{1/2})^{\dim Z_0-\dim V_0}$ many lower dimensional balls in $\Pi$ of radius $\rho^{1/2+\de_m}$. This proves Claim 3.

\end{proof}

\section{Multigrains and functions concentrated near a variety}\label{f_section7}

In this section, following \cite{HR2019} and \cite{hickman2020note}, we introduce some definitions and propositions for later use in Section \ref{f_section8} and \ref{f_section9}. In contrast to \cite{hickman2020note}, the definitions and propositions are stated in terms of the ``dimension'' of the transverse complete intersection instead of the codimension so that the notations are consistent with those in the previous sections.


\subsection{Multigrain and nested structure}
Here we introduce some definitions and lemmas about grains and multigrains, following Section 3 of \cite{hickman2020note}. 


\begin{defi}
A grain is defined to be a pair $(S,B_r)$ where $S \subset \R^n$ is a transverse complete intersection and $B_r \subset \R^n$ is a ball of some radius $r>0$. The dimension of a grain $(S,B_r)$ is the dimension of the transverse complete intersection $S$, and its degree is the degree of $S$.
\end{defi}

\begin{defi} Let $(S,B(\bx_0,r))$ be a grain of dimension $m$. A function $f$ is said to be tangent to $(S,B(\bx_0,r))$ if it is concentrated on wave packets belonging to the collection
\begin{equation}
    \{T_{\theta,v}(\bx_0) \in \T[B(\bx_0,r)]:T_{\theta,v}(\bx_0) \;\mathrm{ is }\; r^{-1/2+\delta_m}\mathrm{-tangent \; to}\; S\; \mathrm{in}\; B(\bx_0,r) \}.
\end{equation}
\end{defi}

\begin{defi}
A multigrain $\vec{S}_{m}$ is an $(n-m+1)$-tuple of grains
\begin{equation*}
    \vec{S}_{m} = (\mathcal{G}_n, \dots, \mathcal{G}_{m}), \qquad \mathcal{G}_i = (S_i,B_{r_i}) \qquad \textrm{     for $m \leq i \leq n$}
\end{equation*}
satisfying
\begin{itemize}
    \item $\mathrm{dim} (S_i) = i$ for $m \leq i \leq n$,
    \item $S_n\supset S_{n-1}\supset\cdots\supset S_{m}$,
    \item $B_{r_{n}} \supset B_{r_{n-1}} \supset \cdots \supset B_{r_{m}}$.
    \end{itemize}
\end{defi}
The parameter $n-m$ is referred to as the \textit{level} of the multigrain $\vec{S}_{m}$. The \textit{complexity} of the multigrain is defined to be the maximum of the degrees $\deg S_i$ over all $m \leq i \leq n$. Finally, the \textit{multiscale} of $\vec{S}_{m}$ is the tuple $\vec{r} = (r_n, r_{n-1}, \dots, r_{m})$.
For two multigrains $\vec{S}_l$ and $\vec{S}_m$ with $m \leq l$, we write 
\begin{equation}
\nonumber
    \vec{S}_m \preceq \vec{S}_l
\end{equation}
if the first $n-l+1$ components of $\vec{S}_m$ agree those of $\vec{S}_l$.

\begin{defi}\label{nestedtube}
Let $\vec{S}_{m} = (\mathcal{G}_n, \dots, \mathcal{G}_{m})$ be a multigrain and 
\begin{equation*}
    \mathcal{G}_i = \big(S_i, B(\bx_i,r_i)\big) \quad \textrm{for $m \leq i \leq n$.}
\end{equation*}
Define $\T[\vec{S}_{m}]$ to be the set of scale $R := r_n$ tubes $T_{\theta_n,v_n}(\bx_n) \in \T[B(\bx_n,r_n)] $ satisfying the following hypothesis:\medskip

\noindent \textbf{Nested tube hypothesis.} There exists $T_{\theta_i,v_i}(\bx_i) \in \T[B(\bx_i,r_i )]$ for $m \leq i < n$ such that 
\begin{enumerate}
    \item $\mathrm{dist}(\theta_i,\theta_j) \lesssim r_j^{-1/2}$,
    \item $\mathrm{dist}\big(T_{\theta_j,v_j}(\bx_j),\, T_{\theta_i,v_i}(\bx_i) \cap B(\bx_j,r_j) \big) \lesssim r_i^{(1+\delta)/2}$,
    \item $T_{\theta_j,v_j}(\bx_j) \subset N_{r_j^{1/2+\delta_j}}S_j$
\end{enumerate}
 hold true for all $i,j$ with $m \leq j \leq i \leq n$.
\end{defi}

The direction set of $\T[\vec{S}_{m}]$ is defined by
\begin{equation}
    \Theta[\vec{S}_{m}]:=\{\theta \in \Theta_R: T_{\theta,v} \in \T[\vec{S}_{m}] \text{ for some $v \in R^{1/2}\Z^{n-1}$} \}.
\end{equation}
A main ingredient of the proof of Theorem \ref{201204thm5_1} is the following lemma.
\begin{lem}[Lemma 3.7 of \cite{hickman2020note}]\label{nestedpoly}
Let $\vec{S}_m$ be a level $n-m$ multigrain with multiscale $\vec{r}_m=(r_n,\ldots,r_m)$ and complexity at most $d$. If $R=r_n$ and the constants in \eqref{constants_z} are chosen appropriately, then 
\begin{equation}
    \#\Theta[\vec{S}_{m}] \lesssim_{\epsilon_{\circ},d} \Big(\prod_{i=m}^{n-1}r_i^{-1/2} \Big)
    R^{(n-1)/2+\epsilon_{\circ}}.
\end{equation}
\end{lem}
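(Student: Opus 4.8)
The plan is to reduce the counting of directions in $\Theta[\vec{S}_m]$ to an iterated application of the polynomial Wolff axioms, exactly as in Lemma 3.7 of \cite{hickman2020note}, using the nested structure to pass from scale $r_n$ down through the intermediate scales $r_{n-1},\dots,r_m$. The key point is that the nested tube hypothesis ties each scale-$R$ tube $T_{\theta_n,v_n}(\bx_n)$ to a tube $T_{\theta_i,v_i}(\bx_i)$ tangent to the $i$-dimensional transverse complete intersection $S_i$ inside the ball $B(\bx_i,r_i)$, with the direction $\theta_i$ close to $\theta_n$ at scale $r_i^{-1/2}$. So one organizes the count by first grouping directions $\theta_n$ according to which $r_{n-1}^{-1/2}$-cap $\theta_{n-1}$ they lie near; within each such cap the directions $\theta_n$ are essentially free subject to being at scale $R^{-1/2}$, giving a factor $(r_{n-1}^{1/2}/R^{1/2}\cdot R^{1/2})^{\,?}$ — more precisely one uses that a set of tubes tangent to an $i$-dimensional variety obeys the polynomial Wolff axiom bound, i.e. the number of $r_i^{-1/2}$-separated directions of tubes tangent to $S_i$ in $B(\bx_i,r_i)$ is $\lesssim_{\epsilon_\circ,d} r_i^{(i-1)/2+\epsilon_\circ}$ (this is the content of the Katz–Rogers polynomial Wolff axioms, used as in \cite{HR2019,hickman2020note}, and applies here because, by Lemma \ref{210111lem3_1}, the rescaled Gauss map is literally the standard one so the tangency/direction geometry is identical to the Fourier restriction setting).

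The main steps, in order, would be: (1) Fix a level-$(n-m)$ multigrain $\vec{S}_m$ with multiscale $\vec{r}_m=(r_n,\dots,r_m)$ and complexity $\le d$, and set $R=r_n$. (2) For a scale-$R$ tube in $\T[\vec{S}_m]$, record the chain of directions $\theta_n,\theta_{n-1},\dots,\theta_m$ furnished by the nested tube hypothesis, with $\mathrm{dist}(\theta_i,\theta_j)\lesssim r_j^{-1/2}$ for $j\le i$. (3) Count from the bottom up: the number of admissible $\theta_m$ (directions of tubes $r_m^{-1/2+\delta_m}$-tangent to the $m$-dimensional $S_m$ in $B(\bx_m,r_m)$) is, by the polynomial Wolff axioms for $m$-dimensional transverse complete intersections, at most $\lesssim_{\epsilon_\circ,d} r_m^{(m-1)/2+\epsilon_\circ}$. (4) Pass from scale $r_{i+1}$ to scale $r_i$: given $\theta_{i+1}$ fixed, the refinements $\theta_i$ near it that are directions of tubes tangent to $S_i$ in $B(\bx_i,r_i)$ number at most a polynomial-Wolff factor; because $S_{i+1}\supset S_i$ and the balls are nested, the tangency at scale $r_{i+1}$ is inherited, and the number of new $r_i^{-1/2}$-caps inside an $r_{i+1}^{-1/2}$-cap that actually support tangent tubes is bounded by $\lesssim r_i^{-1/2} R^{1/2}$ times lower-order gains — carefully, by $(r_i^{-1/2}/r_{i+1}^{-1/2})^{i}$ adjusted by the dimension drop, which after telescoping yields the claimed product $\prod_{i=m}^{n-1} r_i^{-1/2}$ together with $R^{(n-1)/2}$. (5) Finally, count the number of $\theta_n$ inside each surviving $r_{n-1}^{-1/2}$-cap: these are just $R^{-1/2}$-separated directions, contributing $\lesssim (r_{n-1}^{1/2}/R^{1/2})^{n-1} \cdot (\text{the inherited bound})$, and multiply everything together, absorbing the $\epsilon_\circ$'s since the number of scales is bounded in terms of $\epsilon,\epsilon_\circ$ and the total $\epsilon_\circ$-loss can be taken $\le R^{\epsilon_\circ}$ by choosing the constants in \eqref{constants_z} appropriately.

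The step I expect to be the main obstacle is step (4): correctly bookkeeping the dimensional gain at each scale so that the telescoped product comes out to exactly $\big(\prod_{i=m}^{n-1} r_i^{-1/2}\big) R^{(n-1)/2+\epsilon_\circ}$ rather than something lossy. This requires that at each intermediate scale one genuinely uses the polynomial Wolff axioms for the $i$-dimensional variety $S_i$ (not just a trivial volume count), and that the nestedness conditions (1)–(3) of Definition \ref{nestedtube} are strong enough to guarantee that the tangent-tube count at scale $r_i$ localizes inside each $r_{i+1}^{-1/2}$-cap. In our setting this is not an additional difficulty beyond \cite{hickman2020note}, because Lemma \ref{210111lem3_1} shows the rescaled Gauss map $G^\lambda(\bx;\omega)=(\omega,1)/\sqrt{1+|\omega|^2}$ is independent of $\bx$ and agrees with the Fourier-restriction Gauss map, so the geometry of tangency to transverse complete intersections — hence the validity of the polynomial Wolff axioms and of all the combinatorial lemmas of Section 3 of \cite{hickman2020note} — transfers verbatim. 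Consequently the proof is essentially a citation of Lemma 3.7 of \cite{hickman2020note} once one checks this identification; I would spell out the dyadic-pigeonholing over intermediate scales and the telescoping product, and leave the polynomial Wolff input as a black box.
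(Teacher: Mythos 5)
Your geometric reduction is correct, and it is the same observation the paper makes: the tubes $T_{\theta,v}$ here are neighborhoods of straight lines whose direction depends on $\theta$ alone (see \eqref{coreline}; equivalently, by Lemma \ref{210111lem3_1} the rescaled Gauss map is independent of $\bx$ and agrees with the extension-operator one), so counting $\#\Theta[\vec{S}_m]$ is a Euclidean, Kakeya-type problem and the reduction in Lemma 3.7 of \cite{hickman2020note} can be followed verbatim once the nested data of Definition \ref{nestedtube} are matched up. The gap is in the combinatorial engine you propose to drive this with. The paper (like \cite{hickman2020note}) invokes the \emph{nested} polynomial Wolff axioms of Zahl \cite{MR4205111} and Hickman--Rogers--Zhang \cite{HRZ} as the black box, whereas you propose to iterate the single-scale Katz--Rogers axiom \cite{MR3881832} cap-by-cap and telescope (your steps (3)--(5)). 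That iteration does not yield the claimed product. Katz--Rogers bounds the \emph{total} number of $r_i^{-1/2}$-separated directions of tubes tangent to the $i$-dimensional $S_i$ in $B(\bx_i,r_i)$ by roughly $r_i^{(i-1)/2}$, but it gives no control on how those directions distribute among the $r_{i+1}^{-1/2}$-caps inherited from the coarser scale; the cap-localized count you need in step (4) is not a formal consequence of the single-scale axiom.

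Concretely, already for a level-two multigrain ($m=n-2$) the bottom-up scheme gives at best
\begin{equation*}
 r_{n-2}^{(n-3)/2}\cdot\Big(\frac{r_{n-1}}{r_{n-2}}\Big)^{(n-1)/2}\cdot\Big(\frac{R}{r_{n-1}}\Big)^{(n-1)/2}\;=\;r_{n-2}^{-1}\,R^{(n-1)/2},
\end{equation*}
which is strictly weaker than the target $r_{n-2}^{-1/2}r_{n-1}^{-1/2}R^{(n-1)/2}$ as soon as $r_{n-1}\gg r_{n-2}$; replacing the trivial middle factor by the global Katz--Rogers count at scale $r_{n-1}$ does not help, since that count cannot be localized to a single $r_{n-2}^{-1/2}$-cap. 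In general the telescoped single-scale argument only recovers $r_m^{-(n-m)/2}R^{(n-1)/2+\epsilon_\circ}$, i.e.\ the weaker count obtainable as in \cite{HR2019}; obtaining the full product $\prod_{i=m}^{n-1}r_i^{-1/2}$ is precisely what the nested polynomial Wolff axioms were introduced for. So the fix is small but essential: after your (correct) verification that the straight tubes and the nested tangency conditions (1)--(3) of Definition \ref{nestedtube} place you in the Euclidean setting, cite the nested axioms of \cite{MR4205111,HRZ} (as the paper does) rather than attempting to bootstrap \cite{MR3881832} across scales. (A minor slip in step (5): the number of $R^{-1/2}$-caps inside an $r_{n-1}^{-1/2}$-cap is $(R/r_{n-1})^{(n-1)/2}$, not $(r_{n-1}/R)^{(n-1)/2}$.)
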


Since our tubes $T_{\theta,v}$ are straight, and the tubes $T_{\theta,v}$ corresponding to the same $\theta$ indicate the same direction (see the definition of the tube $T_{\theta,v}$ \eqref{coreline}), the above lemma can be proved by using the nested polynomial Wolff axioms of Zahl \cite{MR4205111} and Hickman, Rogers and Zhang \cite{HRZ}; for instance, one can follow the proof of Lemma 3.7 of \cite{hickman2020note}. We leave out the details.



\subsection{Some lemmas}

There are more definitions and lemmas that we will need in Section 8 and 9. Let us state them here, following Section 8 of \cite{HR2019} and Section 5 of  \cite{hickman2020note}. In this subsection, we fix a scale $r \geq 1$ and a smaller scale $r^{1/2} \leq \rho \leq r$, and we consider balls
\begin{equation}
    B(\widetilde{\bx}_0,\rho) \subset B(\bx_0,r) \subset [-3C_n\lambda,3C_n\lambda]^{n-1} \times [R/C_n,C_nR].
\end{equation} 
Recall some definitions in Section 6:
\begin{equation}
\begin{split}
    \T_{Z}[B(\bx_0,r)]&= \{ T \in \T[B(\bx_0,r)]: T \text{ is $r^{-1/2+\delta_m}$-tangent to } Z \text{ in } B(\bx_0,r) \},
    \\
    \widetilde{\T}_{b}[B(\widetilde{\bx}_0,\rho)]&= \{ \widetilde{T} \in \widetilde{\T}[B(\widetilde{\bx}_0,\rho)]: \widetilde{T} \text{ is $\rho^{-1/2+\delta_m}$-tangent to } Z+b \text{ in } B(\widetilde{\bx}_0,\rho) \}.
\end{split}
\end{equation}
For some technical issue and rigorousness, we introduce a ``thickening" of an arbitrary subset of $\T[B(\bx_0,r)]$: Given any $\W \subset \T[B(\bx_0,r)]$, we define
\begin{equation}\label{020184}
    \W^*:=\{T_{\theta,v} \in \T[B(\bx_0,r)]: \mathrm{dist}(\theta,{\theta}') \lesssim r^{-1/2}\, \mathrm{and}\, |v-{v}'| \lesssim r^{(1+\delta)/2}\, \mathrm{for\; some}\, ({\theta}',{v}') \subset \W \}.
\end{equation}
Note that $\W^*$ is a set slightly larger than $\W$. Intuitively, one can identify $\W^\ast$ with $\W$.

\begin{defi}
\label{def-upperarrow}
For $\widetilde{\mathbb{W}} \subset \widetilde{\T}[B(\widetilde{\bx}_0,\rho)]$ we define $\uparrow\!\!\!
    \widetilde{\mathbb{W}}$ to be a collection of all pairs $T_{\theta,v}(\bx_0) \in \T[B(\bx_0,r)]$ such that there exists a non-empty set $\widetilde{T}_{\tilde{\theta},\tilde{v}}(\widetilde{\bx}_0) \in \widetilde{\mathbb{W}}$ satisfying
\begin{enumerate}
    \item $\mathrm{dist}(\tilde{\theta},{\theta}) \lesssim \rho^{-1/2}$
    \item $\mathrm{dist}(\widetilde{T}_{\tilde{\theta},\tilde{v}}(\widetilde{\bx}_0),T_{\theta,v}(\bx_0) \cap B(\widetilde{\bx}_0,\rho)) \lesssim r^{1/2+\delta}$.
\end{enumerate}

\end{defi}

\noindent This definition naturally appears when comparing wave packets at different scales by the following property: For every $\widetilde{\mathbb{W}} \subset \widetilde{\T}[B(\widetilde{\bx}_0,\rho)]$ and $g \in L^1$
\begin{equation}\label{reverselemma}
    g|_{\widetilde{\mathbb{W}}} = (g|_{\uparrow
    \widetilde{\mathbb{W}}})|_{\widetilde{\mathbb{W}}}+\mathrm{RapDec}(r)\|g\|_2.
\end{equation}
It can be thought of as a reverse version of Lemma \ref{1861}. Since the proof is straightforward, we leave out the details.

\begin{lem}\label{intersectionofsets}
For every set $\W_1,\W_2 \subset \T[B(\bx_0,r)]$ and function $g$, it holds that
\begin{equation}
\|(g|_{\W_1})|_{\W_2}\|_{L^2} \lesssim  \|g|_{{\W_1} \cap {\W_2^*}}\|_{L^2}+\mathrm{RapDec}(r)\|g\|_2.
\end{equation}
\end{lem}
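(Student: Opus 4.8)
The plan is to unwind the definitions of the restriction operators $g\mapsto g|_{\W}$ and $\W^*$ and reduce everything to a statement about wave packets indexed by pairs $(\theta,v)$. Recall that $g|_{\W_1}=\sum_{T\in\W_1}g_T$, so $(g|_{\W_1})|_{\W_2}=\sum_{T_2\in\W_2}\bigl(\sum_{T_1\in\W_1}g_{T_1}\bigr)_{T_2}$. The key point is that applying the wave packet decomposition a \emph{second time} (still at the same scale $r$, same ball $B(\bx_0,r)$, same base point $\bx_0$) to a function that is already a sum of wave packets $g_{T_1}$ essentially reproduces the same wave packets, up to rapidly decaying errors and up to the $O(r^{\delta})$-thickening encoded in $\W^*$. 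First I would record the elementary fact that for a single wave packet $g_{T_{\theta_1,v_1}}$, its re-decomposition $\bigl(g_{T_{\theta_1,v_1}}\bigr)_{T_{\theta_2,v_2}}$ is $\mathrm{RapDec}(r)\|g\|_2$ unless $\mathrm{dist}(\theta_1,\theta_2)\lesssim r^{-1/2}$ and $|v_1-v_2|\lesssim r^{(1+\delta)/2}$. The spatial-frequency localization statement is exactly the type of claim proved in Lemma \ref{1861} (with $\rho=r$ and $\widetilde\bx_0=\bx_0$, so $\phi^\lambda_{\bx_0}(\widetilde\bx_0;\cdot)\equiv 0$): there the condition ``$\mathrm{dist}(\theta,\widetilde\theta)\lesssim\rho^{-1/2}$'' forces the $\theta$-overlap, and the Fourier-support/stationary-phase argument forces the $v$-overlap.

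Granting that, the next step is purely combinatorial: write
\begin{equation}
(g|_{\W_1})|_{\W_2}=\sum_{T_2\in\W_2}\sum_{T_1\in\W_1}(g_{T_1})_{T_2}+\mathrm{RapDec}(r)\|g\|_2,
\end{equation}
and discard every term $(g_{T_1})_{T_2}$ with $T_1$ not ``close'' to $T_2$ in the above sense; the number of discarded terms is $\lesssim (\#\Theta_r)^2(\#v\text{'s})^2$, which is polynomial in $r$, so each contributes $\mathrm{RapDec}(r)\|g\|_2$ and their total is still $\mathrm{RapDec}(r)\|g\|_2$. In the surviving terms $T_1$ ranges over tubes close to some $T_2\in\W_2$, i.e.\ $T_1\in\W_2^*$; and $T_1\in\W_1$ by hypothesis. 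Hence every surviving wave packet $g_{T_1}$ has $T_1\in\W_1\cap\W_2^*$. Now re-sum: the surviving sum is $\sum_{T_1\in\W_1\cap\W_2^*}\bigl(\sum_{T_2\in\W_2}(g_{T_1})_{T_2}\bigr)$, and for fixed such $T_1$ the inner sum over $T_2$ is, again by the single-wave-packet localization, just $g_{T_1}$ up to $\mathrm{RapDec}(r)\|g\|_2$ (since $\sum_{T_2}(\cdot)_{T_2}$ is a partition of unity decomposition reconstructing the function). Therefore
\begin{equation}
(g|_{\W_1})|_{\W_2}=\sum_{T_1\in\W_1\cap\W_2^*}g_{T_1}+\mathrm{RapDec}(r)\|g\|_2 = g|_{\W_1\cap\W_2^*}+\mathrm{RapDec}(r)\|g\|_2.
\end{equation}
Taking $L^2$ norms and applying the triangle inequality gives the claimed bound; alternatively one can invoke the $L^2$-orthogonality Lemma \ref{210103lem5_1} directly on $g|_{\W_1\cap\W_2^*}$ to be safe, but the pointwise-in-$L^2$ identity already suffices.

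The main obstacle is the bookkeeping in the single-wave-packet re-decomposition lemma: one must check carefully that decomposing $g_{T_{\theta_1,v_1}}$ at scale $r$ with caps $\theta$ and lattice $r^{1/2}\Z^{n-1}$ genuinely localizes both in $\theta$ (this is immediate from the frequency support $\supp\widetilde\psi_{\theta_1}\subset\frac{11}{9}\theta_1$, which meets only $O(1)$ caps $\theta_2$) and in $v$ (this requires that $g_{T_{\theta_1,v_1}}$, which is $e^{2\pi i v_1\cdot\omega}\widetilde\psi_{\theta_1}(\omega)$ times a constant, has Fourier transform concentrated in $B(v_1,r^{(1+\delta)/2})$, so that convolving with $\widehat{\psi}_{\theta_2}$ — essentially supported in $B(0,r^{1/2})$ — keeps it concentrated near $v_1$). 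This is entirely parallel to the argument already carried out in the proof of Lemma \ref{1861}, specialized to $\widetilde\bx_0=\bx_0$ where the extra phase $\phi^\lambda_{\bx_0}(\widetilde\bx_0;\cdot)$ vanishes identically, so it is routine; the only genuine care needed is to make sure the thickening $\W_2^*$ in \eqref{020184} is defined with precisely the parameters $r^{-1/2}$ and $r^{(1+\delta)/2}$ that come out of this localization, which indeed it is. I would present the proof in this order: (i) state and prove the single-wave-packet localization; (ii) do the combinatorial discard-and-resum; (iii) conclude with the triangle inequality in $L^2$.
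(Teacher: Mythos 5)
Your overall strategy is the paper's: isolate the wave packets $T_1\in\W_1$ that are far from every $T_2\in\W_2$ (these contribute $\rapid(r)\|g\|_2$ by the single-packet localization contained in the proof of Lemma \ref{1861}), and handle the remaining ones, which lie in $\W_1\cap\W_2^*$. The genuine gap is in your re-summation step. For fixed $T_1\in\W_1\cap\W_2^*$ you assert $\sum_{T_2\in\W_2}(g_{T_1})_{T_2}=g_{T_1}+\rapid(r)\|g\|_2$ ``since $\sum_{T_2}(\cdot)_{T_2}$ is a partition of unity reconstructing the function''. That reconstruction only holds when the sum runs over \emph{all} $T_2\in\T[B(\bx_0,r)]$; here $\W_2$ is an arbitrary subset, and $T_1\in\W_2^*$ only guarantees that \emph{some} nearby tube lies in $\W_2$, not all of them. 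Re-decomposing a single packet $g_{T_{\theta_1,v_1}}$ spreads it non-negligibly over the $O(1)$ neighbouring caps $\theta_2$ meeting $\supp\widetilde\psi_{\theta_1}$ and over several lattice points $v_2$ with $|v_2-v_1|\lesssim r^{1/2}$, so if $\W_2$ omits some of these tubes the inner sum recovers only part of $g_{T_1}$. Consequently the identity $(g|_{\W_1})|_{\W_2}=g|_{\W_1\cap\W_2^*}+\rapid(r)\|g\|_2$ on which your conclusion rests is false in general: take $\W_1=\T[B(\bx_0,r)]$ and $\W_2=\{T_0\}$ a single tube; the left-hand side is essentially $g_{T_0}$, while the right-hand side is the sum of \emph{all} wave packets of $g$ with $(\theta,v)$ close to $(\theta_0,v_0)$, and these differ by genuinely nonzero wave packets, not by a rapidly decaying error. (The lemma is only an inequality, and this is why.)

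The estimate does survive, but only through the step you relegated to a parenthetical ``to be safe''. After your discard step what you actually have, up to $\rapid(r)\|g\|_2$, is the function $(g|_{\W_1\cap\W_2^*})|_{\W_2}$, and you must bound its $L^2$ norm by $\|g|_{\W_1\cap\W_2^*}\|_2$; that is exactly the first inequality of Lemma \ref{210103lem5_1} applied to $h=g|_{\W_1\cap\W_2^*}$, and it is not optional. With that insertion your argument collapses into the paper's proof: split $g|_{\W_1}=g|_{\W_1\cap\W_2^*}+g|_{\W_1\setminus\W_2^*}$, apply Lemma \ref{210103lem5_1} to the first piece, and use the localization from the proof of Lemma \ref{1861} to see that the second piece, after restriction to $\W_2$, is $\rapid(r)\|g\|_2$.
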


\begin{proof}
We split our function into two parts:
\begin{equation}
    (g|_{\W_1})|_{\W_2}=(g|_{\W_1 \cap \W_2^* })|_{\W_2}+
    (g|_{\W_1 \setminus \W_2^* })|_{\W_2}.
\end{equation}
We apply Lemma \ref{210103lem5_1} to the function $g|_{\W_1 \cap \W_2^*}$ and obtain
\begin{equation}
    \|(g|_{\W_1 \cap \W_2^* })|_{\W_2}\|_2 \lesssim
    \|g|_{\W_1 \cap \W_2^* }\|_2.
\end{equation}
On the other hand, by the proof of Lemma \ref{1861}, we see that \begin{equation} 
\|(g|_{\W_1 \setminus \W_2^* })|_{\W_2}\|_2 = \mathrm{RapDec}(r)\|g\|_2.
\end{equation}
This completes the proof.
\end{proof}

\begin{lem}\label{09lem96}
Let $Z=Z(P_1,\ldots,P_{n-m})$ be a transverse complete intersection, with $\mathrm{deg}P_j \leq d$. Let $b \in \R^n$ with $|b| \lesssim r^{1/2+\delta_m}$. Suppose that $B(\widetilde{\bx}_0,\rho) \subset B(\bx_0,r)$. If $g$ is concentrated on wave packets from $\T_Z[B(\bx_0,r)]$ and $\widetilde{\mathbb{W}} \subset \widetilde{\T}_{b}[B(\widetilde{\bx}_0,\rho)]$, then
\begin{equation}
    \|{g}|_{\widetilde{\mathbb{W}}}\|_2^2 \lesssim_d r^{O(\delta_m)}\Big(\frac{r}{\rho}\Big)^{-\frac{n-m}{2}}\|g|_{\uparrow \widetilde{\mathbb{W}}}\|_2^2+\mathrm{RapDec}(r)\|g\|_2^2.
\end{equation}
\end{lem}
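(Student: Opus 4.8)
The plan is to deduce Lemma~\ref{09lem96} from the transverse equidistribution estimate (Lemma~\ref{transverse-equidistribution}) after passing from the small-scale collection $\widetilde{\W}$ to the associated big-tube collection $\uparrow\!\widetilde{\W}$, following the scheme of \cite{HR2019} and \cite{hickman2020note}. First I would invoke the reverse comparison \eqref{reverselemma}, which gives $g|_{\widetilde{\W}}=(g|_{\uparrow\widetilde{\W}})|_{\widetilde{\W}}+\rapid(r)\|g\|_2$; squaring, and using $(a+b)^2\le 2a^2+2b^2$ to absorb the error into $\rapid(r)\|g\|_2^2$, the problem reduces to bounding $\|(g|_{\uparrow\widetilde{\W}})|_{\widetilde{\W}}\|_2^2$. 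Set $h:=g|_{\uparrow\widetilde{\W}}$, so that the quantity $\|h\|_2=\|g|_{\uparrow\widetilde{\W}}\|_2$ is exactly the one appearing on the right-hand side of the lemma; in particular one must take care not to replace $\|h\|_2$ by the a priori larger $\|g\|_2$ in what follows.

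The second, and main, step is to check that $h$ is concentrated on wave packets tangent to $Z$, so that Lemma~\ref{transverse-equidistribution} applies to it. Since $g$ is concentrated on $\T_Z[B(\bx_0,r)]$ by hypothesis, and since the reprojection $T\mapsto(g_{T'})_T$ of a single wave packet is $\rapid(r)\|g\|_2$ unless $T$ lies in the thickening $(\{T'\})^\ast$ from \eqref{020184} (this is the content behind the proofs of Lemma~\ref{1861} and Lemma~\ref{intersectionofsets}), one gets $h_T=\rapid(r)\|g\|_2$ for every $T\notin\T_Z[B(\bx_0,r)]^\ast$; summing the rapidly decaying contributions over the at most polynomially many relevant tubes shows that $h$ agrees up to $\rapid(r)\|g\|_2$ with $h|_{\T_Z[B(\bx_0,r)]^\ast\cap\,\uparrow\widetilde{\W}}$, which is genuinely concentrated on $\T_Z[B(\bx_0,r)]^\ast$. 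A tube in $\T_Z[B(\bx_0,r)]^\ast$ is still $r^{-1/2+O(\delta_m)}$-tangent to $Z$ in $B(\bx_0,r)$: the direction thickening moves $G(\omega_\theta)$ by $O(r^{-1/2})$ and the $v$-thickening moves the core line by $O(r^{1/2+\delta})$, and since $\delta\ll\delta_m$ by \eqref{constants_z} these perturbations are absorbed into the thresholds $r^{-1/2+\delta_m}$ and $r^{1/2+\delta_m}$ after enlarging implicit constants, which is harmless because Lemma~\ref{transverse-equidistribution} tolerates replacing $\delta_m$ by a bounded multiple of itself inside the $r^{O(\delta_m)}$-loss. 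Since $\widetilde{\W}\subset\widetilde{\T}_b[B(\widetilde{\bx}_0,\rho)]$ and $|b|\lesssim r^{1/2+\delta_m}$ are precisely the hypotheses of Lemma~\ref{transverse-equidistribution}, the global estimate \eqref{global-TEE} applied to $h|_{\T_Z[B(\bx_0,r)]^\ast\cap\,\uparrow\widetilde{\W}}$, together with the $L^2$-orthogonality of Lemma~\ref{210103lem5_1} (to pass between $h$ and this restriction), yields $\|h|_{\widetilde{\W}}\|_2^2\lesssim_d r^{O(\delta_m)}(r/\rho)^{-(n-m)/2}\|h\|_2^2+\rapid(r)\|g\|_2^2$; combining with the first step and recalling $\|h\|_2=\|g|_{\uparrow\widetilde{\W}}\|_2$ gives the claim.

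The step requiring the most care is the second one: one must track the $\rapid$ errors accurately and carry out the passage between $\T_Z$ and its thickening $\T_Z^\ast$ without ever degrading the decisive gain $(r/\rho)^{-(n-m)/2}$; everything else is a direct application of results already established above. (If one prefers to avoid discussing $\T_Z^\ast$ altogether, one may instead re-run the proof of Lemma~\ref{transverse-equidistribution}, whose $r^{O(\delta_m)}$-type losses — in particular the use of Wongkew's theorem and of Corollary~\ref{corollary-essen-iden} — are insensitive to replacing $\delta_m$ by $C\delta_m$; this is the cleanest route if one wants to be fully rigorous about the tangency constants.)
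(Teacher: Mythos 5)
Your proposal is correct and follows essentially the same route the paper intends: the paper's proof simply cites the reverse comparison \eqref{reverselemma}, Lemma \ref{intersectionofsets}, and the transverse equidistribution Lemma \ref{transverse-equidistribution} (deferring details to Lemma 5.3 of \cite{hickman2020note}), which is exactly your scheme of setting $h=g|_{\uparrow\widetilde{\W}}$, verifying via the $*$-thickening that $h$ is still (up to harmless constants absorbed in the $r^{O(\delta_m)}$ loss) concentrated on tubes tangent to $Z$, and then applying \eqref{global-TEE} while keeping $\|h\|_2=\|g|_{\uparrow\widetilde{\W}}\|_2$ rather than $\|g\|_2$. Your handling of the $\rapid(r)$ errors and of the degraded tangency constants is exactly the point that needs care, and you address it adequately.
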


\noindent The above lemma is a corollary of Lemma \ref{transverse-equidistribution} and \ref{intersectionofsets}. The same lemma for the Fourier extension operator for paraboloid is stated and proved in \cite{hickman2020note}. We refer to Lemma 5.3 of \cite{hickman2020note} for the details of the proof.

\begin{lem}\label{probab}
Let $Z=Z(P_1,\ldots,P_{n-m})$ be a transverse complete intersection, with $\mathrm{deg}P_j \leq d$.
Suppose that $B(\widetilde{\bx}_0,\rho) \subset B(\bx_0,r)$ and $B(\widetilde{\bx}_0,\rho) \cap N_{\rho^{1/2+\delta_m}}Z \neq \varnothing$. Let $g$ be concentrated on wave packets from $\T_Z[B(\bx_0,r)]$. Then there are a set of translates $\mathcal{B} \subset B(0,2r^{1/2+\delta_m})$ and functions $\{g_b\}_{b \in \mathcal{B}}$ such that
\begin{equation}\label{first96}
    \|H^{\lambda}g\|_{\mathrm{BL}_{k,A}^p(B(\tilde{\bx}_0,\rho))}^p \lesssim_d (\log{r})^{2p}
    \sum_{b \in \mathcal{B} }\|{H}^{\lambda}{g}_b\|_{\mathrm{BL}_{k,A}^p(B({\tilde{\bx}_0},\rho) \cap N_{\rho^{1/2+\delta_m}}(Z+b) )}^p+\mathrm{RapDec}(r)\|g\|_2^p,
\end{equation}
\begin{equation}\label{second97}
    \sum_{b \in \mathcal{B} }
    \|{g}_b\|_2^2 \lesssim_d \|g\|_2^2,
\end{equation}
\begin{equation}
   \# \mathcal{B} \lesssim_d (r/\rho)^{n(1/2+\delta_m)},
\end{equation}
and
\begin{equation}
    g_b:=g|_{\widetilde{\T}'_b[B(\widetilde{\bx}_0,\rho)]} \text{ \, for some set \, $\widetilde{\T}'_b[B(\widetilde{\bx}_0,\rho)] \subset \widetilde{\T}_b$}.\footnote{We refer to \eqref{0224718} for the explicit definition of $\widetilde{\T}_b'[B(\widetilde{\bx}_0,\rho)]$}
\end{equation}
\end{lem}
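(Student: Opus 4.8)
\textbf{Proof proposal for Lemma \ref{probab}.}

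The plan is to follow the same pigeonholing and ``dyadic decomposition of the variety'' strategy as in Lemma 5.4 of \cite{hickman2020note}, adapting the bookkeeping to our notation. First I would tile the ball $B(\bx_0,r)$ by a finitely-overlapping collection of translates $\{Z+b\}$ of $N_{\rho^{1/2+\delta_m}}(Z)$ as $b$ ranges over a maximal $\rho^{1/2+\delta_m}$-separated subset $\mathcal{B}_0$ of $B(0,2r^{1/2+\delta_m})$; this accounts for the cardinality bound $\#\mathcal{B}\lesssim_d (r/\rho)^{n(1/2+\delta_m)}$, and the dependence on $d$ is only through the requirement that each such neighborhood meets $B(\widetilde{\bx}_0,\rho)$ in a controlled way (using $\deg Z\le d$ and Wongkew's theorem, exactly as in Section 6). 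Since $g$ is concentrated on wave packets from $\T_Z[B(\bx_0,r)]$, each large tube $T\in\T_Z$ passing near $B(\widetilde\bx_0,\rho)$ is $r^{-1/2+\delta_m}$-tangent to $Z$; I would then, for each $b$, define $\widetilde{\T}'_b[B(\widetilde\bx_0,\rho)]$ to be the sub-collection of $\widetilde{\T}_b$ consisting of those small tubes $\widetilde T\in\widetilde{\T}_b$ whose ``parent'' large tubes in $\uparrow\!\widetilde T$ (in the sense of Definition \ref{def-upperarrow}) are themselves tangent to $Z$ and lie in the relevant piece; set $g_b:=g|_{\widetilde{\T}'_b[B(\widetilde\bx_0,\rho)]}$. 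With this choice, \eqref{0224718} of the full paper will record the explicit definition.

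The three quantitative conclusions are then extracted as follows. For \eqref{first96}, on $B(\widetilde\bx_0,\rho)$ every wave packet of $g$ is, up to $\mathrm{RapDec}(r)\|g\|_2$ errors, concentrated on tubes lying inside some $N_{\rho^{1/2+\delta_m}}(Z+b)$ — this is where one uses that tangency to $Z$ at scale $r$ localizes the tube into one of the $\rho$-scale translates, a covering argument of Wongkew type controlled by $\deg Z\le d$. Decomposing $g=\sum_{b}g_b+\mathrm{RapDec}(r)\|g\|_2$ and using that each point of $B(\widetilde\bx_0,\rho)$ lies in only $O(\log r)$ of the relevant translates (after a further pigeonholing on the overlap multiplicity, which is where the $(\log r)^{2p}$ factor enters), one bounds $H^\lambda g$ pointwise by $O(\log r)$ many $H^\lambda g_b$ and passes to the broad norm, which behaves well under such finite decompositions. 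For \eqref{second97}, the near-disjointness of the supports (in $\omega$ and in $v$) of the wave packets making up distinct $g_b$, combined with the $L^2$-orthogonality Lemma \ref{210103lem5_1}, gives $\sum_b\|g_b\|_2^2\lesssim_d\|g\|_2^2$; the factor $\log r$ in the overlap only affects the $b$-sum by a constant after Cauchy--Schwarz since the multiplicity is bounded. Finally, the inclusion $\widetilde{\T}'_b\subset\widetilde{\T}_b$ is immediate from the construction.

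The main obstacle I anticipate is the same one as in \cite{hickman2020note}: making the passage ``every wave packet tangent to $Z$ at scale $r$ is tangent to one of the translates $Z+b$ at scale $\rho$'' genuinely rigorous rather than heuristic, i.e.\ controlling the overlap function of the family $\{N_{\rho^{1/2+\delta_m}}(Z+b)\cap B(\widetilde\bx_0,\rho)\}_{b\in\mathcal{B}_0}$ by $O(\log r)$ after discarding a negligible set. This requires the quantitative transversality / covering estimates for transverse complete intersections (Wongkew's theorem, and the transversality theorem used in Section \ref{section_transverse}), and care that all implied constants depend only on $d$ and the admissible parameters, not on $Z$ itself. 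Since this is exactly the content of Lemma 5.4 of \cite{hickman2020note} — with our straight tubes $T_{\theta,v}$ and our phase $\phi^\lambda$ playing the roles of the extension-operator wave packets there, and all the geometric inputs (tube geometry, tangency, the $\Phi$-pushforward picture) having already been established in Sections \ref{f_section4}--\ref{section_transverse} — I would simply cite the proof of Lemma 5.3 and Lemma 5.4 of \cite{hickman2020note} for the remaining routine details, as the paper does for the companion Lemma \ref{09lem96}.
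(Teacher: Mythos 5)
There is a genuine gap, and it sits exactly where you placed the weight of the argument: the choice of the translates $\mathcal{B}$ and the multiplicity of the family $\{N_{\rho^{1/2+\delta_m}}(Z+b)\}_{b\in\mathcal{B}}$. If you take $\mathcal{B}$ to be a maximal $\rho^{1/2+\delta_m}$-separated net of $B(0,2r^{1/2+\delta_m})$, the number of $b\in\mathcal{B}$ whose neighborhood contains a given point is \emph{not} $O(\log r)$: by the Wongkew volume bound it can be as large as $(r/\rho)^{m(1/2+\delta_m)}$, i.e.\ polynomial in $r/\rho$. This breaks both halves of your argument. For \eqref{second97}, a small tube would then be assigned to polynomially many collections $\widetilde{\T}'_b$, and $L^2$-orthogonality alone gives $\sum_b\|g_b\|_2^2\lesssim M\|g\|_2^2$ with $M$ the multiplicity, which is unacceptable. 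For \eqref{first96}, splitting $H^\lambda g$ on a $K^2$-ball into even $O(\log r)$ pieces and invoking sub-additivity of the broad norm is not harmless: each application of the broad-norm triangle inequality halves the parameter $A$, so an $O(\log r)$-fold splitting degrades $A$ by a power of $r$, whereas \eqref{first96} keeps the same $A$ on both sides. The paper avoids both problems by invoking (the proof of) Lemma 10.5 of \cite{MR4047925}: the $(\log r)^{2p}$ there comes from a dyadic pigeonholing of the $K^2$-balls carrying the broad mass, after which each surviving $B_{K^2}$ is contained in the \emph{half}-neighborhood $N_{\rho^{1/2+\delta_m}/2}(Z+b)$ of a single translate (see \eqref{010799}), and each $B_{K^2}$ meets at most $O(1)$ of the full neighborhoods (see \eqref{0110910}); the comparison $\mu_{H^\lambda g}(B_{K^2})\lesssim \mu_{H^\lambda g_b}(B_{K^2})+\rapid(r)\|g\|_2^p$ is then a single two-term splitting via Lemma \ref{1861}, and \eqref{second97} follows from Lemma \ref{210103lem5_1} together with the $O(1)$ overlap. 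Your sketch attributes the logarithmic loss to an overlap bound that your construction does not supply, and it is precisely this bounded-overlap/assignment structure that has to be produced (or imported from the GHI pigeonholing lemma, suitably adapted since the operator here is not in normal form).

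A second, smaller but real gap: you assert that $\widetilde{\T}'_b\subset\widetilde{\T}_b$ is ``immediate from the construction''. It is not. Knowing that the parent tube $T_{\theta,v}$ is $r^{-1/2+\delta_m}$-tangent to $Z$ and that the child tube meets $N_{\rho^{1/2+\delta_m}/2}(Z+b)$ does not by itself give the scale-$\rho$ tangency conditions (containment of the whole child tube in $N_{\rho^{1/2+\delta_m}}(Z+b)$ over $B(\widetilde{\bx}_0,\rho)$ \emph{and} the finer angular condition at scale $\rho^{-1/2+\delta_m}$). In the paper this is exactly Lemma \ref{01017lem81}, whose proof is acknowledged to be technical (Proposition 9.2 of \cite{MR4047925}); any complete write-up of Lemma \ref{probab} must invoke or reprove it rather than treat it as definitional.
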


The proof of Lemma \ref{probab} requires the lemma below. 

\begin{lem}\label{01017lem81}
Let $\rho\leq r/2$ and $Z\subset\ZR^n$ be a transverse complete intersection.
Let $T_{\theta,v}\in\ZT_Z[B(\bx_0,r)]$ and $b\in B(0,2r^{1/2+\de_m})$. If $\widetilde{T}_{\ti\theta,\ti v}(\widetilde{\bx}_0)\in \widetilde{\ZT}_{\theta,v}[B(\widetilde{\bx}_0,\rho)]$ satisfies
    \begin{equation}
        \wt T_{\ti\theta,\ti v}(\widetilde{\bx}_0)\cap N_{\rho^{1/2+\de_m}/2}(Z+b)\neq \varnothing,
    \end{equation}
    then $\widetilde{T}_{\ti\theta,\ti v}\in\widetilde{\ZT}_{b}[B(\widetilde{\bx}_0,\rho)]$.
\end{lem}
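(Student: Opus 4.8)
The plan is to show that the tangency of the small tube $\widetilde{T}_{\ti\theta,\ti v}$ to $Z+b$ at scale $\rho$ follows from the tangency of the big tube $T_{\theta,v}$ to $Z$ at scale $r$, together with the closeness information encoded in $\widetilde{T}_{\ti\theta,\ti v}(\widetilde{\bx}_0)\in \widetilde{\ZT}_{\theta,v}[B(\widetilde{\bx}_0,\rho)]$ (which is Lemma \ref{0128lemma}) and the non-emptiness hypothesis. There are two conditions to verify in the definition of $\rho^{-1/2+\delta_m}$-tangency of $\widetilde{T}_{\ti\theta,\ti v}$ to $Z+b$ in $B(\widetilde{\bx}_0,\rho)$: the containment $\widetilde{T}_{\ti\theta,\ti v}(\widetilde{\bx}_0)\subset N_{\rho^{1/2+\delta_m}}(Z+b)\cap B(\widetilde{\bx}_0,\rho)$, and the angular condition \eqref{angle-condition} at every point $z\in (Z+b)\cap B(\widetilde{\bx}_0,\rho)$ near the tube.

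For the containment: since $\widetilde{T}_{\ti\theta,\ti v}$ lives in $B(\widetilde{\bx}_0,\rho)$ by construction, I only need $\widetilde{T}_{\ti\theta,\ti v}\subset N_{\rho^{1/2+\delta_m}}(Z+b)$. By Lemma \ref{0128lemma} applied with this pair of tubes, $\mathrm{HausDist}(\widetilde{T}_{\ti\theta,\ti v}, T_{\theta,v}\cap B(\widetilde{\bx}_0,\rho))\lesssim r^{1/2+\delta}$; combined with $T_{\theta,v}\subset N_{r^{1/2+\delta_m}}(Z)$ (from $T_{\theta,v}\in\ZT_Z$) and $|b|\lesssim r^{1/2+\delta_m}$, every point of $\widetilde{T}_{\ti\theta,\ti v}$ lies within $O(r^{1/2+\delta_m})$ of $Z+b$. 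This is not yet $O(\rho^{1/2+\delta_m})$ unless $\rho$ is comparable to $r$, so the key is to use the hypothesis $\widetilde{T}_{\ti\theta,\ti v}\cap N_{\rho^{1/2+\delta_m}/2}(Z+b)\neq\varnothing$: pick a point $y_0$ in that intersection, so $y_0$ is within $\rho^{1/2+\delta_m}/2$ of $Z+b$; then use that $\widetilde{T}_{\ti\theta,\ti v}$ is a $\rho^{1/2+\delta}$-tube whose core direction $G(\omega_{\ti\theta})$ makes angle $\lesssim \rho^{-1/2+\delta_m}$ with $T_z(Z+b)$ for $z$ near $y_0$ — this last fact comes from transferring the angular condition for $T_{\theta,v}$ via $\ang(G(\omega_\theta),G(\omega_{\ti\theta}))\lesssim\rho^{-1/2}$ (the second conclusion of Lemma \ref{0128lemma}) — and integrate along the tube: a tube of length $\lesssim\rho$ whose direction stays within $\rho^{-1/2+\delta_m}$ of the tangent plane of the (bounded-degree) variety $Z+b$, starting $\rho^{1/2+\delta_m}/2$-close to it, stays $O(\rho^{1/2+\delta_m})$-close throughout, by a standard Taylor-expansion/quantitative-transversality argument on $Z+b$ inside $B(\widetilde{\bx}_0,\rho)$.

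For the angular condition: given $z\in (Z+b)\cap B(\widetilde{\bx}_0,\rho)$ with some $y\in \widetilde{T}_{\ti\theta,\ti v}$ satisfying $|z-y|\lesssim\rho^{1/2+\delta_m}$, I must show $\ang(G(\omega_{\ti\theta}),T_z(Z+b))\lesssim\rho^{-1/2+\delta_m}$. Write $T_z(Z+b)=T_{z-b}Z$, and note $z-b$ is a point of $Z$ within $O(r^{1/2+\delta_m})$ of $T_{\theta,v}$ (using the containment just proved plus $|b|\lesssim r^{1/2+\delta_m}$), hence by the angular condition defining $T_{\theta,v}\in\ZT_Z$ we get $\ang(G(\omega_\theta),T_{z-b}Z)\lesssim r^{-1/2+\delta_m}\le\rho^{-1/2+\delta_m}$; then the triangle inequality for angles together with $\ang(G(\omega_\theta),G(\omega_{\ti\theta}))\lesssim\rho^{-1/2}$ gives the claim. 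The main obstacle is the first part: making rigorous the ``integrate along the tube'' step that upgrades an $O(r^{1/2+\delta_m})$ neighborhood bound to an $O(\rho^{1/2+\delta_m})$ one, which requires quantitative control of how far $Z+b$ (a transverse complete intersection of bounded degree) curves away from its tangent plane at the scale $\rho$; this is exactly the type of estimate used implicitly in Lemma \ref{0128lemma} and in the $\ZT_Z$ formalism of \cite{HR2019,hickman2020note}, and one should be able to cite or mimic Lemma 8.? there. Everything else is the triangle inequality plus the two conclusions of Lemma \ref{0128lemma}.
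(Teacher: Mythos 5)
Your overall skeleton is the natural one, and the angular half of your argument is fine: for $z\in(Z+b)\cap B(\widetilde{\bx}_0,\rho)$ close to the small tube, the point $z-b$ lies within $O(r^{1/2+\delta_m})$ of $T_{\theta,v}$ by \eqref{210312e5_16} and $|b|\lesssim r^{1/2+\delta_m}$, so the big-scale angular condition \eqref{angle-condition} plus \eqref{210312e5_17} and the triangle inequality give $\ang(G(\omega_{\tilde\theta}),T_z(Z+b))\lesssim \rho^{-1/2}+r^{-1/2+\delta_m}\lesssim\rho^{-1/2+\delta_m}$. Note, for calibration, that the paper itself does not prove this lemma at all: it explicitly defers to Proposition 9.2 of \cite{MR4047925}, precisely because the remaining half is the technical heart.

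That remaining half — the containment $\widetilde{T}_{\tilde\theta,\tilde v}\subset N_{\rho^{1/2+\delta_m}}(Z+b)$ — is where your proposal has a genuine gap. Your ``integrate along the tube'' step, made quantitative, bounds the growth of $s\mapsto\dist(\gamma(s),Z+b)$ along the core line by the angle between $G(\omega_{\tilde\theta})$ and the tangent plane at the nearest point, i.e.\ by $C(\rho^{-1/2}+r^{-1/2+\delta_m})$; over the length $\sim\rho$ of the tube this yields an increment of size $C\rho^{1/2}+C\rho\, r^{-1/2+\delta_m}$. The second term is only $O(\rho^{1/2+\delta_m})$ with an unspecified constant (already at $\rho=r/2$, the case actually used in the cellular step, it is comparable to $\rho^{1/2+\delta_m}$ itself), so starting from $\rho^{1/2+\delta_m}/2$ one cannot conclude membership in the $\rho^{1/2+\delta_m}$-neighborhood by this first-order estimate alone; an open--closed bootstrap at that threshold does not close, and one must also deal with the fact that the distance function is merely Lipschitz and that the nearest point on $Z+b$ must be kept in the region where \eqref{angle-condition} is available. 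This is exactly the content of the ``quite technical'' argument in Proposition 9.2 of \cite{MR4047925} (equivalently Guth's corresponding lemma for straight tubes), and it is not ``the type of estimate used implicitly in Lemma \ref{0128lemma}'': that lemma compares two straight core lines using only the phase-derivative bounds \eqref{210131e3_28} and carries no information about how the variety $Z$ curves at scale $\rho$. So either reproduce/adapt the argument of \cite{MR4047925} (which applies here since the tubes are straight), or cite it as the paper does; as written, the key step of your containment argument is asserted rather than proved.
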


\noindent The rigorous proof of Lemma  \ref{01017lem81} is quite technical, and we refer to the proof of Proposition 9.2 of \cite{MR4047925} for the details. We do not reproduce the proof here.

\begin{proof}[Proof of Lemma \ref{probab}]
We first apply
Lemma 10.5 of \cite{MR4047925}\footnote{We cannot directly apply the lemma because our operator is not of the normal form. However, one can prove the lemma for our operator by following the same argument. We leave out the details here.}. Then there exist a finite set $\mathcal{B} \subset B(0,2r^{1/2+\delta_m})$ with cardinality at most $O((r/\rho)^{n(1/2+\delta_m)})$ and a collection $\mathcal{B}'$ of finitely overlapping $K^2$-balls $B_{K^2}$ intersecting $B(\widetilde{\bx}_0,\rho)$ such that
\begin{equation}\label{010798}
    \|H^{\lambda}g\|_{\mathrm{BL}_{k,A}^p(B(
    \widetilde{\bx}_0,\rho))}^p \lesssim (\log{r})^{2p} \sum_{B_{K^2} \in \mathcal{B}' }
    \mu_{H^{\lambda}g }(B_{K^2}),
\end{equation}
and for each $B_{K^2} \in \mathcal{B}'$ the following holds: there exists some $b \in \mathcal{B}$ such that 
\begin{equation}\label{010799}
    B_{K^2} \subset N_{\rho^{1/2+\delta_m }/2}(Z+b),
\end{equation}
and there exist at most $O(1)$ vectors $b\in \mathcal{B}$ for which  
\begin{equation}\label{0110910}
    B_{K^2} \cap N_{\rho^{1/2+\delta_m }}(Z+b) \neq \varnothing.
\end{equation}
For every $b \in \mathcal{B}$, we let $\mathcal{B}_b'$ denote the collection of all the balls $B_{K^2}$ satisfying \eqref{010799}. By \eqref{010798} and \eqref{010799}, we know that
\begin{equation}\label{middle911}
    \|H^{\lambda}g\|_{\mathrm{BL}_{k,A}^p(B(
    \widetilde{\bx}_0,\rho))}^p \lesssim (\log{r})^{2p} 
    \sum_{b \in \mathcal{B}}
    \sum_{B_{K^2} \in \mathcal{B}_b' }
    \mu_{H^{\lambda}g }(B_{K^2}).
\end{equation}
We define
\begin{equation}\label{0224718}
    \widetilde{\T}_b'[B(\widetilde{\bx}_0,\rho)]:=\{\widetilde{T} \in \bigcup_{{T}_{\theta,v} \in \T_Z[B(\bx_0,r)] } \widetilde{\T}_{\theta,v}[B(\widetilde{\bx}_0,\rho)] : \widetilde{T} \cap \bigcup_{B_{K^2} \in \mathcal{B}_b' }B_{K^2} \neq \varnothing   \}.
\end{equation}
Since every element of $\widetilde{\T}_b'[B(\widetilde{\bx}_0,\rho)]$ intersects $N_{\rho^{1/2+\delta_m}/2}(Z+b)$, by Lemma \ref{01017lem81}, we know that $\widetilde{\T}_b'[B(\widetilde{\bx}_0,\rho)] \subset \widetilde{\T}_{b}[B(\widetilde{\bx}_0,\rho)]$. Let us define $g_b:=g|_{\widetilde{\T}_b'[B(\widetilde{\bx}_0,\rho)]}$. By the proof of Lemma \ref{1861}, the construction of the collection $\widetilde{\T}_b'[B(\widetilde{\bx}_0,\rho)]$, and the triangle inequality of the broad norm (Lemma 6.2 of \cite{MR4047925}), we also know that 
\begin{equation}
\begin{split}
    \mu_{H^{\lambda}g }(B_{K^2}) \lesssim 
    \mu_{ H^{\lambda}g_{b} }(B_{K^2}) +\mathrm{RapDec}(r)\|g\|_2^p
\end{split}
\end{equation}
for every $B_{K^2} \in \mathcal{B}_b'$. Therefore, by combining the above inequality with \eqref{middle911} and the cardinality condition on $\mathcal{B}$, we obtain the first inequality \eqref{first96}.

Let us now show the second inequality \eqref{second97}. By Lemma \ref{210103lem5_1} and \eqref{0110910}, we obtain \begin{equation}
\begin{split}
    \sum_{b \in \mathcal{B}}\|g_b\|_2^2 &= \sum_{b \in \mathcal{B}} \big\|\sum_{\widetilde{T}\in \widetilde{\T}_{b}'[B(\widetilde{\bx}_0,\rho)] }g_{\widetilde{T}}     \big\|_2^2 \\&
    \lesssim 
     \sum_{b \in \mathcal{B}}
     \sum_{\widetilde{T}\in \widetilde{\T}_{b}'[B(\widetilde{\bx}_0,\rho)] }
     \|g_{\widetilde{T}}\|_2^2
     \lesssim
     \sum_{\widetilde{T} \in \widetilde{\T}[B(\widetilde{\bx}_0,\rho)] }\|g_{\widetilde{T}}\|_2^2  \lesssim \|g\|_2^2.
\end{split}
\end{equation}
This completes the proof.
\end{proof}

\section{Finding polynomial structures}\label{f_section8}

In this and the next section, we build a big algorithm to break down the quantity $\|H^{\lambda} g\|_{\BLka^p(B_{R})}$ in our main estimate \eqref{main-esti}, and use it to prove our main result, Theorem \ref{201204thm5_1}. The whole algorithm is involved, so we split it into two smaller ones. This section is devoted for the first one, and we follow Section 6.1 in \cite{hickman2020note}. See also Section 9 in \cite{HR2019}.
\medskip

Recall the following admissible parameters introduced in \eqref{constants_z} 
\begin{equation}
    \epsilon^C \ll \delta \ll \delta_{n} \ll \delta_{n-1} \ll \cdots \ll \delta_1 \ll \epsilon_{\circ} \ll \epsilon.
\end{equation}
We define $\tilde{\delta}_{m-1}$ to be
\begin{equation}
    (1-\tilde{\delta}_{m-1})(1/2+\delta_{m-1})=1/2+\delta_m.
\end{equation}
Notice that $\delta_{m-1}/2 \leq \tilde{\delta}_{m-1} \leq 2 \delta_{m-1}$.
This constant $\tilde{\delta}_{m-1}$ is introduced for some technical reasons and plays a minor role in the proof.

\subsection{Polynomial partitioning lemma}\label{1221.131}

For a polynomial $P$, we denote by $\mathrm{cell}(P)$ a collection of the connected components of $\R^n \setminus Z(P)$. We state the polynoial partitioning lemma used in \cite{guth2018}.

\begin{lem}[\cite{guth2018}]\label{polynomiallemma}
Let $r \gg 1$ and $d,m$ be positive integers. Let $0<\delta_m \ll 1$. Suppose that
 $Z=Z(P_1,\ldots,P_{n-m})$ is a $m$-dimensional transverse complete intersection, with $\mathrm{deg}P_j \leq d$. Suppose that $F \in L^1(\R^n)$ is non-negative and supported on $B_r \cap N_{r^{1/2+\delta_m}}Z$. Then at least one of the following cases hold true:
\begin{enumerate}
    \item (Cellular case) There exists a polynomial $P$ of degree at most $O(d)$ with the following properties:
    \begin{enumerate}
        \item $\#\mathrm{cell}(P) \sim d^m$
        \item For each $O' \in \mathrm{cell}(P)$, define the shrunken cells $O:=O' \setminus N_{R^{1/2+\delta_m}}(Z(P))$. Then for every $O$
          \begin{equation*}
        \int_{O}F \sim d^{-m}\int_{\R^n}F.
    \end{equation*}
    Moreover, the number of the shrunken cells $O$ is comparable to $d^m$, and the diameter of $O$ is smaller than $r/2$.
    \end{enumerate}

    \item (Algebraic case) There exists $(m-1)$-dimensional transverse complete intersection $Y$, defined using polynomials of degree at most $O(d)$ such that
    \begin{equation*}
        \int_{B_r \cap N_{r^{1/2+\delta_m}}(Z) }F \lesssim 
        \int_{B_r \cap N_{r^{1/2+\delta_m}}(Y) }F.
    \end{equation*}
\end{enumerate}
\end{lem}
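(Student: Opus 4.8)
\textbf{Proof strategy for Lemma \ref{polynomiallemma} (polynomial partitioning on a variety).}

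The plan is to follow Guth's ham-sandwich / polynomial-partitioning argument adapted to the situation where the mass of $F$ is concentrated in the thin neighborhood $B_r \cap N_{r^{1/2+\delta_m}}(Z)$ of an $m$-dimensional transverse complete intersection. First I would normalize: after a harmless rescaling we may take $r\simeq 1$ for the combinatorial part and restore scales at the end, or (more honestly) keep $r$ and carry the $r^{1/2+\delta_m}$ thickness through. Apply the standard polynomial ham-sandwich theorem iteratively: given the finite measure $F\,dx$, there is a nonzero polynomial $P$ of degree $O(d)$ such that each of the $\sim d^{m}$ open cells $O'\in\mathrm{cell}(P)$ carries a roughly equal share of the mass, namely $\int_{O'}F\sim d^{-m}\int F$. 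The key point specific to the variety setting — and this is where Guth's insight enters — is that because $F$ is supported in an $r^{1/2+\delta_m}$-neighborhood of an $m$-dimensional variety $Z$, one chooses $d$ of size roughly a small power of $r$ (a constant depending on $\epsilon_\circ,\delta_m$), and a dimension count shows the algebraic set $Z(P)$ can only be ``responsible'' for a small fraction of the mass unless $Z(P)$ is forced to essentially contain $Z$ locally.

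Next I would run the dichotomy. Pass from the cells $O'$ to the shrunken cells $O:=O'\setminus N_{r^{1/2+\delta_m}}(Z(P))$. Either (cellular case) a positive proportion — in fact $\gtrsim d^{-m}\int F$ worth — of the mass survives in a typical shrunken cell $O$, for a number of cells comparable to $d^m$; here one also needs that $\mathrm{diam}(O)\le r/2$, which follows because $Z(P)$ has degree $O(d)$ and the cells of a degree-$O(d)$ partition of a ball of radius $r$ have diameter $O(r/d)<r/2$ once $d$ is large enough. Or (algebraic case) most of the mass lies in the wall, i.e. $\int_{B_r\cap N_{r^{1/2+\delta_m}}(Z)}F \lesssim \int_{B_r\cap N_{r^{1/2+\delta_m}}(Z(P))}F$; in this case one must extract from $Z(P)\cap N_{r^{1/2+\delta_m}}(Z)$ an $(m-1)$-dimensional transverse complete intersection $Y$ of degree $O(d)$ into whose neighborhood the bulk of $F$ is concentrated. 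This is done by intersecting $Z$ with $Z(P)$ and, via Sard/transversality (cf. the transversality theorem cited later in the paper) together with a generic perturbation, arranging that the intersection is genuinely transverse, hence a transverse complete intersection of the correct dimension $m-1$; one then covers $N_{r^{1/2+\delta_m}}(Z)\cap N_{r^{1/2+\delta_m}}(Z(P))$ by the neighborhood of $Y$ up to acceptable errors.

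I expect the genuinely delicate step to be the algebraic case: ensuring the intersection $Z\cap Z(P)$ can be replaced by a bona fide $(m-1)$-dimensional \emph{transverse} complete intersection rather than merely an algebraic set of dimension $\le m-1$ with possibly singular or lower-dimensional pieces. This requires (i) discarding the low-degree ``bad'' locus where the gradients $\nabla P_1,\dots,\nabla P_{n-m},\nabla P$ fail to be linearly independent — a set of controlled degree that, by a Wongkew-type volume bound, carries negligible $F$-mass — and (ii) a small generic translation/perturbation of $P$ to make the intersection transverse while keeping degrees $O(d)$ and not moving the neighborhoods by more than $O(r^{1/2+\delta_m})$. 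The cellular case, by contrast, is essentially bookkeeping: counting cells, controlling their diameters, and verifying that throwing away the $r^{1/2+\delta_m}$-collar of $Z(P)$ removes only a small fraction of each cell's mass because $F$ is spread along an $m$-dimensional object and the collar meets it in relatively small volume. Since the statement is quoted from \cite{guth2018}, the write-up can be brief, citing that paper for the detailed ham-sandwich iteration and the degree bookkeeping, and emphasizing only the adaptations (support of $F$ in $N_{r^{1/2+\delta_m}}(Z)$, the shrunken-cell diameter bound, and the transversality of $Y$).
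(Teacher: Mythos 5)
The paper itself gives no proof of Lemma \ref{polynomiallemma}: it is quoted from \cite{guth2018} and used as a black box, so within this paper an accurate statement plus citation is all that is needed. Judged as a standalone argument, however, your sketch has a genuine gap at its central point, the algebraic case. You fix a polynomial $P$ produced by the ham--sandwich iteration and then run the dichotomy for that $P$, proposing to take $Y$ to be (a perturbation of) $Z\cap Z(P)$ and to cover $N_\rho(Z)\cap N_\rho(Z(P))$ by $N_{C\rho}(Y)$, where $\rho:=r^{1/2+\delta_m}$. Nothing in the ham--sandwich construction controls how $Z(P)$ meets $Z$, and the covering fails exactly in near-tangent or near-parallel configurations. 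Worse, for a bad but admissible $P$ the algebraic alternative is simply false: take $Z=\{x_n=0\}$ (so $m=n-1$) and $F$ the indicator of $B_r\cap N_\rho(Z)$. The hyperplane $\{x_n=0\}$ is a perfectly legitimate first bisecting surface, and once the zero set of any factor of $P$ lies inside the slab, the wall $N_\rho(Z(P))$ contains the entire support of $F$, so every shrunken cell is empty; yet no $(m-1)$-dimensional transverse complete intersection $Y$ of degree $O(d)$ can satisfy $\int_{B_r\cap N_\rho(Z)}F\lesssim\int_{B_r\cap N_\rho(Y)}F$, since by Wongkew's theorem \cite{Wongkew} one has $|B_r\cap N_\rho(Y)|\lesssim_d \rho^{2}r^{n-2}\ll \rho\, r^{n-1}$. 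Thus your guiding heuristic (``$Z(P)$ can only carry a small fraction of the mass unless it essentially contains $Z$'') is precisely what this configuration violates, and a generic small perturbation of $P$ cannot repair it. The actual content of Guth's lemma is that the partitioning polynomial can be \emph{chosen adapted to $Z$} --- morally, it partitions $Z$ itself into $\sim d^m$ pieces by cuts meeting $Z$ transversally --- so that in the wall-dominated scenario the mass genuinely concentrates near a transverse $(m-1)$-dimensional intersection; that construction is the heart of the proof and is missing from your proposal.

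Two smaller points. The diameter bound is justified incorrectly: cells of a degree-$O(d)$ partition need not have diameter $O(r/d)$ (for a product of parallel hyperplanes the cells are slabs of diameter comparable to $r$); the condition $\mathrm{diam}(O)<r/2$ is instead arranged by multiplying $P$ by $O(1)$ affine polynomials chopping $B_r$ into pieces of diameter less than $r/2$, which changes the degree and the cell count only by constant factors. Also, the ham--sandwich step only gives the upper bound $\int_{O'}F\lesssim d^{-m}\int F$ for each cell; the matching lower bound for $\sim d^m$ cells is exactly what the cellular alternative asserts and comes from pigeonholing after the dichotomy, and in this paper $d$ is a large constant depending on the admissible parameters and $\deg Z$, not a small power of $r$.
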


Suppose that a function $g$ and a grain $(Z,B_r)$ of dimension $m$ are given. We will apply the above lemma to the function 
\begin{equation}
    F(x)=\sum_{B_{K^2}} \mu_{H^{\lambda}g}(B_{K^2})\frac{1}{|B_{K^2}|} 1_{{B_{K^2} \cap B_r \cap N_{r^{1/2+\delta_m} }(Z) } }(x)
\end{equation}
with a number $d$. This number $d$ will be much larger than the degrees of polynomial defining $Z$. If the cellular case holds true, then
\begin{equation}
    \|H^{\lambda}g\|_{\mathrm{BL}_{k,A}^p(B_r \cap N_{r^{1/2+\delta_m}}(Z))}^p \lesssim d^m \|H^{\lambda}g\|_{\mathrm{BL}_{k,A}^p( O  )}^p
\end{equation}
for all $O \in \mathcal{O}$. Here, every $O \in \mathcal{O}$ has a diameter at most $r/2$ and $\# \mathcal{O} \sim d^m$ for some sufficiently large number $d$, which will be determined later.
If the algebraic case holds true, then
\begin{equation}
    \|H^{\lambda}g\|_{\mathrm{BL}_{k,A}^p(B_r \cap N_{r^{1/2+\delta_m}}(Z))} \lesssim\|H^{\lambda}g\|_{\mathrm{BL}_{k,A}^p(B_r \cap N_{r^{1/2+\delta_m}}(Y))}
\end{equation}
for some $(m-1)$-dimensional transverse complete intersection $Y$, defined using polynomials of degree at most $d$. 

\subsection{The first algorithm}

Let us illustrate the first algorithm. This algorithm is the counterpart of the first algorithm in Section 6.1 of \cite{hickman2020note}. Let $1\leq m \leq n$.
\medskip

\noindent\underline{\texttt{Input}} 
The algorithm \texttt{[alg 1]} takes as its input: 
\begin{itemize}
    \item A grain $\big(Z, B(\bx_0,r)\big)$ of dimension $m$ with $B(\bx_0,r) \subset [-3C_n\lambda,3C_n\lambda]^{n-1} \times [R/C_n,C_n\lambda]$.
    \item A function $g \in L^1$ which is tangent to the grain $\big(Z, B(\bx_0,r)\big)$.
    \item An admissible large integer $A \in \N$.
\end{itemize}
\medskip

\noindent\underline{\texttt{Output}}
The $j$th stage of \texttt{[alg 1]} outputs:
\begin{itemize}
    \item A choice of spatial scale $\rho_j \geq 1$ 
    \item Certain integers $\#_{\bta}(j), \#_{\btc}(j) \in \N_0$ satisfying $\#_{\bta}(j) + \#_{\btc}(j) = j$.\footnote{The integers $\#_{\bta}(j)$ and $ \#_{\btc}(j)$ indicate the number of occurrences of algebraic cases and cellular cases, respectively.}
  \item A family of subsets $\mathcal{O}_j$ of $\R^n$ referred to as \emph{cells}. Each cell $O_j \in \mathcal{O}_j$ is contained in some $\rho_j$-ball $B_{O_j} := B(\bx_{O_j}, \rho_j)$. 
\item A collection of functions $\{ g_{O_j}\}_{O_j \in \mathcal{O}_j}$. For each cell $O_j$ there is a translate $Z_{O_j} := Z + \by_{O_j}$ such that  $g_{O_j}$ is tangent to the grain $(Z_{O_j},B_{O_j})$. 
  \item A large integer $d \in \mathbb{N}$ which depends only on the admissible parameters and $\deg Z$.
  \end{itemize}

All the outputs will be constructed so that for some constants
\begin{equation}\label{1221.136}
    C^{\mathrm{I}}_{j,\delta}(d,r), \; C^{\mathrm{II}}_{j,\delta}(d), \; C^{\mathrm{III}}_{j,\delta}(d,r), \; C^{\mathrm{IV}}_{j,\delta}(d,r) \lesssim_{d,\delta} r^{\epsilon_{\circ}} d^{\#_{\stc}(j)\delta},
\end{equation}
which will be defined explicitly in \eqref{HZconstants},
and $A_j=2^{-\#_{\sta}(j)}A$ the following properties hold true:
\medskip

\paragraph*{\underline{Property I}} For some fixed $N \in \mathbb{N}$
\begin{equation}
      \|H^{\lambda}g\|_{\mathrm{BL}_{k,A}^p(B(\bx_0,r))}^p \leq C^{\mathrm{I}}_{j,\delta}(d,r) \sum_{O_{j} \in \mathcal{O}_{j}} \|H^{\lambda}g_{O_j}\|_{\mathrm{BL}_{k,A_j}^p(O_{j})}^p + j r^{-N} \|g\|_{L^2(B^{n-1})}^p
\end{equation}

\paragraph*{\underline{Property II}} 
\begin{equation}
    \sum_{O_{j} \in \mathcal{O}_{j}} \|g_{O_j}\|_2^2 \leq C^{\mathrm{II}}_{j,\delta}(d)d^{\#_{\stc}(j)}  \|g\|_2^2.
\end{equation}

\paragraph*{\underline{Property III}} 
\begin{equation}
    \|g_{O_{j}}\|_{L^2(B^{n-1})}^2 \leq C_{j,\delta}^{\mathrm{III}}(d, r) \Big(\frac{r}{\rho_{j}}\Big)^{-\frac{n-m}{2}} d^{-\#_{\stc}(j)(m-1)}  \|g\|_{L^2(B^{n-1})}^2.
\end{equation}

To state the last property, we need to introduce some notations.
For $\widetilde{\W} \subseteq \widetilde{\T}[B({\bx}_{O_j},\rho_j)]$ let $\uparrow^j \! \widetilde{\W}$ denote the set of wave packets $T_{\theta,v}(\bx_0) \in \T[B(\bx_0,r)]$ satisfying 
\begin{equation}\label{uparrowj}
    \dist(\theta,\widetilde{\theta}_j) \leq c_j \rho_j^{-1/2} \quad \textrm{and} \quad \dist\big(\widetilde{T}_{\tilde{\theta}_j,\tilde{v}_j}({\bx}_{O_j}),\, T_{\theta,v}(\bx_0) \cap B_{O_j}\big) \leq c_j r^{1/2 + \delta}
\end{equation}
for some $\widetilde{T}_{\tilde{\theta}_j,\tilde{v}_j}({\bx}_{O_j}) \in \widetilde{\W}$. Here $\{c_j\}_{j=0}^{\infty}$ is a increasing positive sequence that each of which is bounded above by an absolute constant $C_{\circ}$. In fact, one can take $c_j=C_\circ(1-2^{-j/2})$ for some big constant $C_\circ$. We introduce this sequence only for the sake of rigorousness. Heuristically, one can take $c_j=1$.

\paragraph*{\underline{Property IV}}\label{1219P4} For any $\widetilde{\W} \subseteq \widetilde{\T}[B(\bx_{O_j}, \rho_j)]$, each $g_{O_j}$ satisfies
\begin{equation}
 \|{g}_{O_{j}}|_{\widetilde{\W}}\|_{2}^2 \leq C_{j,\delta}^{\mathrm{IV}}(d, r) \Big(\frac{r}{\rho_{j}}\Big)^{-\frac{n-m}{2}}\|g|_{\uparrow^j\widetilde{\W}}\|_{2}^2 + \mathrm{RapDec}(r)\|g\|_{L^2}^2.
\end{equation}
\hfill

\paragraph*{\underline{\texttt{Stopping conditions}}} 

Suppose that we have the $j$th stage of \texttt{[alg 1]} outputs. We stop our algorithm if one of \texttt{[tiny]} and \texttt{[tang]} holds true. 
\begin{itemize}
\item[\texttt{Stop:[tiny]}] The algorithm terminates if $\rho_j \leq r^{\tilde{\delta}_{m-1}}$.
\end{itemize}
\begin{itemize}
\item[\texttt{Stop:[tang]}]Let $C_{\textrm{\texttt{tang}}}$ and $C_{\mathrm{alg}}$ be large, fixed dimensional constants and $\tilde{\rho} := \rho_j^{1 - \tilde{\delta}_{m-1}}$. The algorithm terminates if there exist
\end{itemize}
\begin{itemize}
    \item $\mathcal{S}$ a collection of grains $(S,B_{\tilde{\rho}})$ of dimension $m-1$, scale $\tilde{\rho}$ and degree at most $C_{\mathrm{alg}}d$;
    \item An assignment of a function $g_{(S,B_{\tilde{\rho}})}$ to each $(S,B_{\tilde{\rho}}) \in \mathcal{S}$ which is tangent to $(S,B_{\tilde{\rho}})$
\end{itemize}
such that the following four conditions hold:  \medskip

\paragraph*{\underline{Condition I}}
\begin{equation*}
    \sum_{O_j \in \mathcal{O}_j} \|H^{\lambda}g_{O_j}\|_{\mathrm{BL}_{k,A_j}^p(O_{j})}^p \leq C_{\textrm{\texttt{tang}}}  \sum_{(S,B_{\tilde{\rho}}) \in \mathcal{S}} \|H^{\lambda}g_{(S,B_{\tilde{\rho}})}\|_{\mathrm{BL}_{k,A_j/2}^p(B_{\tilde{\rho}})}^p.
\end{equation*}

\paragraph*{\underline{Condition II}}

\begin{equation*}
   \sum_{(S,B_{\tilde{\rho}}) \in \mathcal{S}}\|g_{(S,B_{\tilde{\rho}})}\|_{L^2(B^{n-1})}^2  \leq C_{\textrm{\texttt{tang}}}r^{n\tilde{\delta}_m}\sum_{O_j \in \mathcal{O}_j}\|g_{O_j}\|_{L^2(B^{n-1})}^2.
\end{equation*}

\paragraph*{\underline{Condition III}} 
\begin{equation*}
    \max_{(S,B_{\tilde{\rho}}) \in \mathcal{S}}\|g_{(S,B_{\tilde{\rho}})}\|_{2}^2 \leq C_{\textrm{\texttt{tang}}}\max_{O_{j} \in \mathcal{O}_{j} }\|g_{O_j}\|_{2}^2.
\end{equation*}

To state the last condition, we need to introduce some notations. Let us denote by $\tilde{\bx}$ the center of $B_{\tilde{\rho}}$. Given $\widetilde{\W} \subset \widetilde{\T}[B(\tilde{\bx},\tilde{\rho}) ]$, we denote by
 $\uparrow\! \widetilde{\W}$ the set of all $T_{\theta, v}(\bx_{O_j}) \in \T[B(\bx_{O_j},\rho_j)]$ for which there exists some $\widetilde{T}_{\tilde{\theta},\tilde{v}}(\tilde{\bx}) \in \widetilde{\W}$ satisfying
\begin{equation}\label{uparrow1}
    \dist(\tilde{\theta},\theta) \lesssim \tilde{\rho}^{-1/2}, \quad \dist(\widetilde{T}_{\tilde{\theta},\tilde{v}}(\tilde{\bx}),\, T_{\theta,v}(\bx_{O_j}) \cap B(\tilde{\bx}, \tilde{\rho})\big) \lesssim \rho_j^{1/2+\delta}.
\end{equation} 

\paragraph*{\underline{Condition IV}} Given $(S, B(\tilde{\bx},\tilde{\rho})) \in \mathcal{S}$ there exists some $O_j \in \mathcal{O}_j$ such that 
\begin{equation*}
    \|{g}_{(S,B_{\tilde{\rho}})}|_{\widetilde{\W}}\|_{2}^2 \leq C_{\textrm{\texttt{tang}}} \|g_{O_j}|_{\uparrow\widetilde{\W}}\|_{2}^2
    \end{equation*}
    holds for all $\widetilde{\W} \subseteq \widetilde{\T}[B(\tilde{\bx},\tilde{\rho})]$.
    \\

\subsection{A construction of outputs in the first algorithm}

In this subsection, we construct outputs and show that they satisfy the desired properties. Let $d$ be a sufficiently large number, which will be determined later.
We first define the constants
\begin{equation}\label{HZconstants}
\begin{split}
    C_{j,\delta}^{\mathrm{I}}(d,r)&:=d^{\#_{\stc}(j)\delta}(\log{r})^{2p\#_{\sta}(j)(1+\delta) },
    \\
    C_{j,\delta}^{\mathrm{II}}(d)&:=
    d^{\#_{\stc}(j)\delta+n\#_{\sta}(j)(1+\delta) },
    \\
    C_{j,\delta}^{\mathrm{III}}(d,r)&:=
    d^{\#_{\stc}(j)\delta+\#_{\sta}(j)\delta }r^{C\#_{\sta}(j)\delta_m },
    \\
    C_{j,\delta}^{\mathrm{IV}}(d,r)&:=d^{j\delta}r^{C\#_{\sta}(j)\delta_m },
\end{split}
\end{equation}
and define the initial outputs
\begin{itemize}
    \item $\rho_0:=r$, $\#_{\bta}(0)=\#_{\btc}(0):=0$.
    \item $\mathcal{O}_0:=\{\R^n \}$ and $Z_{O_0}=Z$.
    \item $g_{\R^n}:=g$.
\end{itemize}
Note that \eqref{1221.136} holds true. Properties I, II, III, and IV are also vacuously true with the initial outputs.

Let us now assume that we have the outputs of $j$th stage and the stopping conditions fail. We need to construct the outputs of $(j+1)$th stage satisfying all the desired properties. For each function $g_{O_j}$, by the tangency assumption and Property III of $g_{O_j}$, we obtain
\begin{equation}
\label{tangency-assumption}
    \|H^{\lambda}g_{O_j}\|_{\mathrm{BL}_{k,A_j}^p(O_{j})}
    =
    \|H^{\lambda}g_{O_j}\|_{\mathrm{BL}_{k,A_j}^p(O_{j} \cap N_{\rho_j^{1/2+\delta_m}(Z+\by_{O_j} )})}+ \mathrm{RapDec}(r)\|g\|_2^p.
\end{equation}
Hence, we can apply the polynomial partitioning lemma to the first term on the right hand side as in the discussion of Section \ref{1221.131}. Let us denote by $\mathcal{O}_{j,\mathrm{cell}}$ the subcollection of $\mathcal{O}_j$ consisting of all the cells for which the cellular case holds. We define $\mathcal{O}_{j,\mathrm{alg}}:= \mathcal{O}_j \setminus \mathcal{O}_{j,\mathrm{cell}}$. By Property I of the $j$th outputs, we have
\begin{equation}\label{1221.1310}
\begin{split}
    \|H^{\lambda}g\|_{\mathrm{BL}_{k,A_j}^p(B(\bx_0,r) )}^p
    &\leq C_{j,\delta}^{\mathrm{I}}(d,r)
    \Big( \sum_{O_j \in \mathcal{O}_{j, \mathrm{cell} }} \|H^{\lambda}g_{O_j}\|_{\mathrm{BL}_{k,A_j}^p(O_{j})}^p+ 
    \sum_{O_j \in \mathcal{O}_{j, \mathrm{alg} }} \|H^{\lambda}g_{O_j}\|_{\mathrm{BL}_{k,A_j}^p(O_{j})}^p
    \Big)
    \\&
    +\big(jr^{-N}+\mathrm{RapDec}(r)\big)\|g\|_2^p.
\end{split}
\end{equation}
There are two cases depending on which term  dominates.
\subsubsection*{Cellular-dominant case}

Consider the case that the first term on the right hand side of \eqref{1221.1310} dominates the second term. In this case, we have
\begin{equation}\label{0120915}
    \|H^{\lambda}g\|_{\mathrm{BL}_{k,A_j}^p(B(\bx_0,r))}^p
    \leq 2C_{j,\delta}^{\mathrm{I}}(d,r)
    \sum_{O_j \in \mathcal{O}_{j, \mathrm{cell} }} \|H^{\lambda}g_{O_j}\|_{\mathrm{BL}_{k,A_j}^p(O_{j})}^p+\big(jr^{-N}+\mathrm{RapDec}(r)\big)\|g\|_2^p.
\end{equation}
We take the outputs 
\begin{equation}
    \rho_{j+1}:=\rho_j/2, \;\;\; \#_{\btc}(j+1)=\#_{\btc}(j)+1, \;\;\; \#_{\bta}(j+1)=\#_{\bta}(j).
\end{equation}

Let us now construct the collection of step $j+1$ cells $\mathcal{O}_{j+1}$ and $\{ g_{{O}_{j+1}}\}_{O_{j+1} \in \mathcal{O}_{j+1}}$ so that they satisfy the desired properties mentioned in the algorithm.  Since we are in the cellular case, for every $O_j \in \mathcal{O}_{j,\mathrm{cell}}$, we have a polynomial $P$, depending on the choice of $O_j$, of degree at most $d$ and we have the following properties:
\begin{itemize}
    \item The number of connected components of $\R^n \setminus Z(P)$ is comparable to $d^m$.
    \item For each connected component $O'$ of $\R^n \setminus Z(P)$, define the shrunken cell $O=O'\setminus N_{R^{1/2+\delta_m}}(Z(P))$. Then
    \begin{equation}
            \|H^{\lambda}g_{O_j}\|_{\mathrm{BL}_{k,A_j}^p(O_j)}^p \lesssim d^m \|H^{\lambda}g_{O_j}\|_{\mathrm{BL}_{k,A_j}^p( O  )}^p.
    \end{equation}
    \item The number of the shrunken cells $O$ is comparable to $d^m$.
    \item The diameter of $O$ is at most $r/2$.
\end{itemize}
 We denote by
$\mathcal{O}(O_j)$ the collection
of the shrunken cells. As mentioned above, we know that $\# \mathcal{O}(O_j) \sim d^m$ and each
$O \in \mathcal{O}(O_j)$ has diameter at most $\rho_{j+1}$, and
\begin{equation}
    \|H^{\lambda}g_{O_j}\|_{\mathrm{BL}_{k,A_j}^p(O_j)}^p \lesssim d^m \|H^{\lambda}g_{O_j}\|_{\mathrm{BL}_{k,A_j}^p( O  )}^p,
\end{equation}
and thus,
\begin{equation}\label{021820}
    \|H^{\lambda}g_{O_j}\|_{\mathrm{BL}_{k,A_j}^p(O_j)}^p \lesssim \sum_{O \in \mathcal{O}} \|H^{\lambda}g_{O_j}\|_{\mathrm{BL}_{k,A_j}^p( O  )}^p.
\end{equation}
Let us denote by $B(\bx_{O},\rho_{j+1})$ the ball containing $O$. Define
\begin{equation}
    \T_{O}
    := \T_{O}[B({\bx_{O_j}},\rho_{j})]:=\{T \in \T_{Z+\by_{O_j} }[B(\bx_{O_j},\rho_j)]: T \cap O \neq \varnothing   \},
\end{equation}
\begin{equation}
    \widetilde{\T}_{O}:= \widetilde{\T}_{O}[B(\bx_{O},\rho_{j+1})]:=\bigcup_{T_{\theta,v} \in \T_{O} }\widetilde{\T}_{\theta,v}[B(\bx_{O},\rho_{j+1})],
\end{equation}
where $\widetilde{\T}_{\theta,v}[B(\bx_{O},\rho_{j+1})]$ was defined in \eqref{smalltubesinlargetube}. By Lemma \ref{1861}, it holds that
\begin{equation}\label{21021801}
    \|H^{\lambda}g_{O_j}\|_{\mathrm{BL}_{k,A_j}^p( O  )} \lesssim 
    \|H^{\lambda}(( g_{O_j}|_{\T_{O}})|_{\widetilde{\T}_{O} })\|_{\mathrm{BL}_{k,A_j}^p( O  )}+\mathrm{RapDec}(r)\|g\|_2.
\end{equation}
By Lemma \ref{0128lemma} and a simple calculation, we know that
\begin{equation}
\begin{split}
    \bigcup_{\widetilde{T} \in \widetilde{\T}_{O} } \widetilde{T} &\subset N_{2\rho_j^{1/2+\delta_m}}(Z+\by_{O_j})
    \\&
    \subset \bigcup_{b \in \Z^{n}: |b| \lesssim 1}
    N_{\rho_{j+1}^{1/2+\delta_m}/2}\big(Z+\by_{O_j}+({\rho_{j+1}^{1/2+\delta_m}}/{2}) b \big).
    \end{split}
\end{equation}
Thus, every tube $\widetilde{T} \in \widetilde{\T}_{O}$ intersects $N_{\rho_{j+1}^{1/2+\delta_m}/2}\big(Z+\by_{O_j}+({\rho_{j+1}^{1/2+\delta_m}}/{2}) b \big)$ for some $b$ depending on the choice of $\widetilde{T}$. We now write 
\begin{equation}\label{21021802}
    \widetilde{\T}_{O}=\bigsqcup_{b \in \Z^n:|b| \lesssim 1}\widetilde{\T}_{O,b}
\end{equation}
where $\widetilde{\T}_{O,b}$ is some sub-collection of $\widetilde{\T}_{O}$ satisfying
\begin{equation}
    \widetilde{\T}_{O,b} \subset \{ \widetilde{T} \in \widetilde{\T}_{O}: \widetilde{T} \cap N_{\rho_{j+1}^{1/2+\delta_m}/2}\big(Z+\by_{O_j}+({\rho_{j+1}^{1/2+\delta_m}}/{2}) b \big) \neq \varnothing \}.\footnote{Possibly a tube $\widetilde{T}$ intersects $N_{\rho_{j+1}^{1/2+\delta_m}/2}\big(Z+\by_{O_j}+({\rho_{j+1}^{1/2+\delta_m}}/{2}) b \big)$ for many $b$. We simply choose one out of them.  }
\end{equation}
 Define
\begin{equation}
    \mathcal{O}_{j+1}(O_j):=\Big\{O \cap N_{\rho_{j+1}^{1/2+\delta_m}/2}\big(Z+\by_{O_j}+(\rho_{j+1}^{1/2+\delta_m}/2)b \big): O \in \mathcal{O}(O_j),\;\; b\in\ZZ^n~\text{with}~ |b| \lesssim 1 \Big\},
\end{equation}
\begin{equation}
    \mathcal{O}_{j+1}:= \bigcup_{O_j \in \mathcal{O}_{j,\mathrm{cell}} }\mathcal{O}_{j+1}( O_j).
\end{equation}
For every $O_{j+1} \in \mathcal{O}_{j+1}$, there exist corresponding  $O_j$, $b$, and $O$. Define a translate of the variety
\begin{equation}
    Z_{O_{j+1}}:=Z+\by_{O_{j+1}}:=Z+\by_{O_j}+(\rho_{j+1}^{1/2+\delta_m}/2)b
\end{equation} 
and a function
\begin{equation}\label{21021803}
    g_{O_{j+1}}:=\big( (g_{O_j})|_{\T_{O}}\big)\big|_{\widetilde{\T}_{O,b}}.
\end{equation}
Let us define $\bx_{O_{j+1}}:=\bx_{O}$. By Lemma \ref{01017lem81}, we have $\widetilde{\T}_{O,b} \subset \widetilde{\T}_{Z_{O_{j+1}} }[B({\bx}_{O_{j+1}},\rho_{j+1})]$. 

Via Lemma \ref{210103lem5_1} (the $L^2$ orthogonality), one has
\begin{equation}
\begin{split}
    \sum_{O_{j+1} \in \mathcal{O}_{j+1}(O_j)} \|g_{O_{j+1}}\|_2^2
        &\lesssim \sum_{O \in \mathcal{O}(O_j) }\sum_{b: |b| \lesssim 1}\| (g_{O_j}|_{\T_{O}})|_{\widetilde{\T}_{O,b} }\|_2^2
        \\  & \lesssim 
        \sum_{O\in \mathcal{O}(O_j) }
       \| g_{O_j}|_{\T_{O}}\|_2^2
       \\  & \lesssim \sum_{O \in \mathcal{O}(O_j) }\sum_{T \in \T_{O} }\|(g_{O_j})_T\|_2^2.
\end{split}
\end{equation}
By the fundamental theorem of algebra, we know that each tube $T_{\theta,v} \in \T[B(\bx_{O_j},\rho_j)]$ can intersect at most $O(d)$ cells $O_{j+1} \in \mathcal{O}_{j+1}(O_j)$. Therefore, we further have
\begin{equation}
\begin{split}
     \sum_{O \in \mathcal{O}(O_j) }\sum_{T \in \T_{O} }\|(g_{O_j})_T\|_2^2 &\lesssim
        \sum_{O \in \mathcal{O}(O_j) }
        \sum_{T \in \T[B(\bx_{O_j},\rho_j)] : T \cap O \neq \varnothing }
        \|(g_{O_j})_T\|_2^2
        \\  &\lesssim
        \sum_{T \in \T[B(\bx_{O_j},\rho_j)] }
        \sum_{O \in \mathcal{O}(O_j): T \cap O \neq \varnothing  }
        \|(g_{O_j})_T\|_2^2
        \lesssim d\|g_{O_j}\|_2^2.
\end{split}
\end{equation}
The above two estimates lead to 
\begin{equation}
\label{0218831}
    \sum_{O_{j+1} \in \mathcal{O}_{j+1}(O_j)} \|g_{O_{j+1}}\|_2^2\lesssim d\|g_{O_j}\|_2^2.
\end{equation}
Since we know that $\#\mathcal{O}_{j+1}(O_j) \sim d^m$, by pigeonholing argument, we can take a subcollection of $\mathcal{O}_{j+1}(O_j)$ such that the cardinality is similar to $\mathcal{O}_{j+1}(O_j)$ and every element $O_{j+1}$ of the subcollection satisfies
\begin{equation}\label{0120925}
    \|g_{O_{j+1}}\|_2^2 \lesssim d^{-(m-1)}\|g_{O_j}\|_2^2.   
\end{equation}
By abusing the notation, we still call  such subcollection $\mathcal{O}_{j+1}(O_j)$ and their union $\mathcal{O}_{j+1}$. This completes the construction of our outputs. 
    \medskip
    
It remains to show that our outputs satisfy the desired properties. The function $g_{O_{j+1}}$ is tangent to the grain $(Z_{O_{j+1}},B_{O_{j+1}})$ because of  $\widetilde{\T}_{O,b} \subset \widetilde{\T}_{Z_{O_{j+1}} }[B({\bx_{O_{j+1}}},\rho_{j+1})]$. Property I, II, and III follow from the arguments in page 254--256 of \cite{HR2019}. We follow pages 19--20 of \cite{hickman2020note} for Property IV.
Our proofs are very similar to theirs, so we only give a sketch of the proof.
\medskip

Let us start with Property I. By \eqref{0120915}, \eqref{021820}, \eqref{21021801}, \eqref{21021802} and \eqref{21021803} with the triangle and H\"{o}lder's inequality, we obtain
\begin{equation}
    \|H^{\lambda}g\|_{\mathrm{BL}_{k,A_j}^p(B(\bx_0,r))}^p
    \leq C C_{j,\delta}^{\mathrm{I}}(d,r)
    \sum_{O_{j+1} \in \mathcal{O}_{j+1 }} \|H^{\lambda}g_{O_{j+1}}\|_{\mathrm{BL}_{k,A_{j+1}}^p(O_{j+1})}^p+(j+1)r^{-N}\|g\|_2^p.
\end{equation}
Property I follows by taking $d$ sufficiently large so that $CC_{j,\delta}^{\mathrm{I}}(d,r) \leq C_{j+1,\delta}^{\mathrm{I}}(d,r)$. To prove Property II, we take the sum over $O_j \in \mathcal{O}_j$ to the inequality \eqref{0218831} and obtain
\begin{equation}
    \sum_{O_{j+1} \in \mathcal{O}_{j+1} }\|g_{O_{j+1}}\|_2^2 \lesssim d \sum_{O_j \in \mathcal{O}_j }\|g_{O_j}\|_2^2 \lesssim C_{j,\delta}^{\mathrm{II}}(d)d^{1+\#_{\stc}(j)}\|g\|_2^2,
\end{equation}
where the second inequality follows from Property II of the $j$th outputs. Property II follows by taking $d$ sufficiently large so that $CC_{j,\delta}^{\mathrm{II}}(d) \leq C_{j+1,\delta}^{\mathrm{II}}(d)$.
Similarly, Property III follows from the inequality \eqref{0120925} and Property II of the $j$th outputs. We leave out the details. 

Finally, let us prove Property IV. Recall that the notations $\uparrow^{j}\!\!\widetilde{\mathbb{W}}$ and $\uparrow\!\!\widetilde{\mathbb{W}}$  were introduced in \eqref{uparrowj} and in \eqref{uparrow1}. By an application of Lemma \ref{intersectionofsets}, \eqref{reverselemma}, and Property IV of the $j$th outputs, we obtain
\begin{equation}
\begin{split}
    \|g_{O_{j+1}}|_{\widetilde{\mathbb{W}}}\|_2^2 &\lesssim
    \|g_{O_{j}}|_{\uparrow\widetilde{\mathbb{W}}}\|_2^2+\mathrm{RapDec}(r)\|g\|_2^2
    \\&
    \lesssim 
    C_{j,\delta}^{\mathrm{IV}}(d, r) \Big(\frac{r}{\rho_{j}}\Big)^{-\frac{n-m}{2}}\|g|_{\uparrow^j(\uparrow\widetilde{\W})}\|_{2}^2 + \mathrm{RapDec}(r)\|g\|_{L^2}^2.
\end{split}
\end{equation}
It is proved that $\uparrow^j\!\!(\uparrow\!\! \widetilde{\mathbb{W}}) \subset \uparrow^{j+1}\!\!\widetilde{\mathbb{W}}$ on page 20 of \cite{hickman2020note}. Therefore, by arguing similarly as in the proof of Lemma \ref{intersectionofsets}, and by modifying the constant $c_{j+1}$ appropriately and taking $d$ sufficiently large, we obtain Property IV. This finishes the discussion on the cellular-dominant case.

\subsubsection*{Algebraic-dominant case}
Consider the case that the second term on the right hand side of \eqref{1221.1310} dominates the first term. Recall \eqref{tangency-assumption}: 
\begin{equation}
    \|H^{\lambda}g_{O_j}\|_{\mathrm{BL}_{k,A_j}^p(O_{j})}
    =
    \|H^{\lambda}g_{O_j}\|_{\mathrm{BL}_{k,A_j}^p(O_{j} \cap N_{\rho_j^{1/2+\delta_m}(Z+\by_{O_j} )})}+ \mathrm{RapDec}(r)\|g\|_2^p.
\end{equation}
Since we are in the algebraic case, we have
\begin{equation}\label{091024}
\begin{split}
    \|H^{\lambda}g\|_{\mathrm{BL}_{k,A_j}^p(B_r)}^p
    \leq 2 C_{j,\delta}^{\mathrm{I}}(d,r)
    \sum_{O_j \in \mathcal{O}_{j, \mathrm{alg} }} \|H^{\lambda}g_{O_j}\|_{\mathrm{BL}_{k,A_j}^p(O_{j} \cap N_{\rho^{1/2+\delta_m}}(Y(O_j)) )}^p+(jr^{-N}+\mathrm{RapDec}(r))\|g\|_2^p
\end{split}
\end{equation}
for some $(m-1)$-dimensional transverse complete intersection $Y(O_j)$ that is defined using polynomials of degree at most $d$. Abbreviate $Y(O_j)$ to $Y$ for simplicity. We take the outputs 
\begin{equation}
    \rho_{j+1}:=\rho_j^{1-\tilde{\delta}_{m-1}}, \;\;\; \#_{\btc}(j+1)=\#_{\btc}(j), \;\;\; \#_{\bta}(j+1)=\#_{\bta}(j)+1.
\end{equation}

It remains to define $\mathcal{O}_{j+1}$ and $\{ g_{{O}_{j+1}}\}_{O_{j+1} \in \mathcal{O}_{j+1}}$, and prove the desired properties. Given $O_j \in \mathcal{O}_{j,\mathrm{alg}}$,
we take a collection $\mathcal{B}(O_j)$ of finitely overlapping balls of $\rho_{j+1}$ covering $O_j \cap N_{\rho_j^{1/2+\delta_m}}Y$.
For every $B \in \mathcal{B}(O_j)$, we record tubes intersecting $B$;
\begin{equation}
    \T_B:=\{T \in \T_{Z_{O_j}}[B(\bx_{O_j},\rho_j)]: T \cap B  \neq \varnothing \}.
\end{equation}
Let us define tangent and transverse tubes.

\begin{defi}
We denote by $\T_{B,\mathrm{tang}}$ the collection of all the tubes $T_{\theta,v} \in \T_B$ such that
\begin{enumerate}
    \item $T_{\theta,v} \cap 2B \subset N_{2\rho_{j+1}^{1/2+\delta_{m-1}}}(Y)$;
    \item For every $x \in T_{\theta,v}$ and $y\in Y\cap 2B$ satisfying $|y-x| \lesssim \rho_{j+1}^{1/2+\delta_{m-1}}$, it holds that
    \begin{equation}
        \ang(G^{\lambda}(\omega_{\theta}),T_yY) \lesssim \rho_{j+1}^{-1/2+\delta_{m-1}}.
    \end{equation}
\end{enumerate}
\end{defi}

We define a collection of transverse tubes by $\T_{B,\mathrm{trans}}:=\T_{B}\, \setminus \, \T_{B,\mathrm{tang}}$, and define functions
\begin{equation}\label{841}
    g_{B,\mathrm{tang}}:= (g_{O_j})|_{\T_{B,\mathrm{tang}}},\;\;\;\;
    g_{B,\mathrm{trans}}:=
    (g_{O_j})|_{\T_{B,\mathrm{trans}}}.
\end{equation}
Since for any ball $B \in \mathcal{B}(O_j)$ the wave packets not intersecting $B$ are negligible, we have 
\begin{equation}
    H^{\lambda}g_{O_j}(x)=H^{\lambda}g_{B,\mathrm{tang}}(x)+H^{\lambda}g_{B,\mathrm{trans}}(x)+ \mathrm{RapDec}(r)\|g\|_2, \;\;\;\; x \in B.
\end{equation}
By the finite sub-additivity of the broad norm (See Lemma 6.1 of \cite{MR4047925}), we obtain
\begin{equation}\label{091030}
\begin{split}
&\sum_{O_j \in \mathcal{O}_{j,\mathrm{alg} }}
    \|H^{\lambda}g_{O_j}\|_{\mathrm{BL}_{k,A_j}^p(O_{j} \cap N_{\rho^{1/2+\delta_m}}(Y) )}^p
    \\&
    \sim
    \sum_{O_j \in \mathcal{O}_{j,\mathrm{alg} }} 
    \sum_{B \in \mathcal{B}(O_j) } \|H^{\lambda}g_{O_j}\|_{\mathrm{BL}_{k,A_j}^p(B)}^p
    \\&
    \lesssim
\sum_{O_j \in \mathcal{O}_{j,\mathrm{alg} }}
    \sum_{B \in \mathcal{B}(O_j)}
    \Big(
    \|H^{\lambda}g_{B,\mathrm{tang}}\|_{\mathrm{BL}_{k,A_{j+1}}^p(B)}^p+
     \|H^{\lambda}g_{B,\mathrm{trans}}\|_{\mathrm{BL}_{k,A_{j+1}}^p(B)}^p
    \Big)
    +\mathrm{RapDec}(r)\|g\|_2^p
    .
\end{split}
\end{equation}

Note that by the failure of the stopping condition of \texttt{[tang]}, it holds that
\begin{equation}\label{844}
    \sum_{O_j}
    \sum_{B \in \mathcal{B}(O_j)} \|H^{\lambda}g_{B,\mathrm{tang}}\|_{\mathrm{BL}_{k,A_{j+1}}^p(B)}^p \leq C_{\mathrm{tang} }^{-1}\sum_{O_j \in \mathcal{O}_j}\|H^{\lambda}g_{O_j}\|_{\mathrm{BL}_{k,A_j}^p(O_j)}^p.
\end{equation}
Indeed, we take collections of grains
$
    \mathcal{S}(O_j):=\{ (Y(O_j),B): B \in \mathcal{B}(O_j) \}
$ and $\mathcal{S}:=\bigcup_{O_j}\mathcal{S}(O_j)$, and take functions $g_{(Y,B)}:=g_{B,\mathrm{tang}}$. By Lemma \ref{0128lemma} and \eqref{841}, the function $g_{B,\mathrm{tang}}$ is tangent to $(Y(O_j),B)$. Also, one may see that Condition II, III and IV of the stopping condition are satisfied. By this and the failure of the stopping condition, Condition I fails and it gives \eqref{844}. 

Since we are in the algebraic case and the constant $C_{\mathrm{tang}}$ is sufficiently large, the contribution from the tangential wave packets can be absorbed into the left hand side of \eqref{091030} and we obtain
\begin{equation}\label{0220843}
\sum_{O_j \in \mathcal{O}_{j,\mathrm{alg} }}
    \|H^{\lambda}g_{O_j}\|_{\mathrm{BL}_{k,A_j}^p(O_{j} \cap N_{\rho^{1/2+\delta_m}}(Y) )}^p
    \lesssim
   \sum_{O_j \in \mathcal{O}_{j,\mathrm{alg} }}
    \sum_{B \in \mathcal{B}(O_j)} \|H^{\lambda}g_{B,\mathrm{trans}}\|_{\mathrm{BL}_{k,A_{j+1}}^p(B)}^p+\mathrm{RapDec}(r)\|g\|_2^p.
\end{equation}
We apply Lemma \ref{probab} to each $g_{B,\mathrm{trans}}$ and obtain a collection $\mathcal{B}$ of points such that
\begin{equation}\label{091031}
    \|H^{\lambda}g_{B,\mathrm{trans}}\|_{\mathrm{BL}_{k,A_{j+1}}^p(B)}^p \lesssim
    (\log{r})^{2p}
    \sum_{b \in \mathcal{B} }
    \|H^{\lambda}{g}_{B,\mathrm{trans},b}\|_{\mathrm{BL}_{k,A_{k+1} }^p(B \cap N_{\rho_{j+1}^{1/2+\delta_m}}(Z_{O_j}+b) )}^p 
\end{equation}
and
\begin{equation}
    \sum_{b \in \mathcal{B} }
    \|{g}_{B,\mathrm{trans},b}\|_2^2 \lesssim \|g_{B,\mathrm{trans}}\|_2^2,
\end{equation}
where ${g}_{B,\mathrm{trans},b}:= (g_{B,\mathrm{trans}})_b$. 

We define the collection of step $j+1$ cells by
\begin{equation}
    \mathcal{O}_{j+1}(O_j):=\{B \cap N_{\rho_{j+1}^{1/2+\delta_m} }(Z+\by_{O_j}+b): B \in \mathcal{B}(O_j) \;\; \mathrm{and} \;\; b \in \mathcal{B}   \},
\end{equation}
\begin{equation}
    \mathcal{O}_{j+1}:= \bigcup_{O_j \in \mathcal{O}_{j,\mathrm{alg} }}\mathcal{O}_{j+1}(O_j).
\end{equation}
For every $O_{j+1} \in \mathcal{O}_{j+1}$, we define a function $g_{O_{j+1}}:= {g}_{B, \mathrm{trans},b}$ and a point $\by_{O_{j+1}}:=\by_{O_j}+b$. 
\medskip

It remains to show that all these outputs satisfy the desired properties. We will follow arguments in pages 259--261 of \cite{HR2019} for Property I, II, and III, and pages 19--20 of \cite{hickman2020note} for Property IV. These arguments are also similar to the counterpart of the cellular-dominant case.  Hence, we give only a sketch here.
First, it is straightforward to see that Property I follows from \eqref{091024}, \eqref{0220843} and \eqref{091031} together with Property 1 of the $j$th outputs.
Before proving Property II, we recall that our tubes are straight. We apply Lemma 5.7 of \cite{guth2018} to the wave packets at scale $\rho_{j+1}$, and by the $L^2$-orthogonality, we obtain
\begin{equation}
    \sum_{O_{j+1}} \|g_{O_{j+1}}\|_2^2 \lesssim d^n \sum_{O_j}\|g_{O_j}\|_2^2.
\end{equation}
Property II now follows from the above inequality with Property II of the $j$th outputs. To prove Property III, we apply Lemma \ref{09lem96} to the function $g_{B,\mathrm{trans}}$ with $\widetilde{\W}=\widetilde{\T}_{b}'$ in defined \eqref{0224718} and by the $L^2$-orthogonality, we obtain
\begin{equation}
\begin{split}
    \|g_{O_{j+1}}\|_2^2 \lesssim r^{O(\delta_m)}(\frac{\rho_{j}}{\rho_{j+1}})^{-\frac{n-m}{2}}\|g_{O_j}\|_2^2+\mathrm{RapDec}(r)\|g\|_2^2.
\end{split}
\end{equation}
It suffices to combine this inequality with Property III of the $j$th outputs. Property IV also
follows from an application of Lemma \ref{09lem96}. We leave out the details.

\section{A proof of Theorem \ref{201204thm5_1} }\label{f_section9}
In this section, we prove Theorem \ref{201204thm5_1}, that is, we prove
\begin{equation}\label{1219.14.1}
    \|H^{\lambda}g\|_{\mathrm{BL}_{k,A}^p(B_{R})}
    \lesssim_{\epsilon} R^{\epsilon}\|g\|_{L^2}^{2/p} \|g\|_{L^{\infty}}^{1-2/p}
\end{equation}
for every $1 \leq R \leq \lambda$ and ball $B_R \subset [-3C_n\lambda,3C_n\lambda]^{n-1} \times [R/C_n,C_n\lambda]$. We follow the proofs in Section 4, 6.3, and 6.4 of \cite{hickman2020note}. 
Note that \eqref{1219.14.1} is obviously true unless $g$ satisfies the \textit{non-degenerate} hypothesis:
\begin{equation}\label{nondegenerate}
    \|H^{\lambda}g\|_{\mathrm{BL}_{k,A}^p(B_R) } \gtrsim R^{\epsilon}\|g\|_{L^2}.
    \end{equation}
Hence, in this section, we always assume that $g$ satisfies the non-degenerate hypothesis.
\medskip

Consider a family of Lebesgue exponents $p_i$ for $k \leq i \leq n$ satisfying 
\begin{equation*}
p_{k} \geq p_{k+1} \geq \dots \geq p_n =: p \geq 2
\end{equation*}
and define $0 \leq \alpha_i, \beta_i \leq 1$ in terms of the $p_i$ by
\begin{equation*}
    \alpha_{i} := \Big(\frac{1}{2} - \frac{1}{p_i}\Big)^{-1}\Big(\frac{1}{2} - \frac{1}{p_{i+1}}\Big) \quad \textrm{and} \quad \beta_{i} := \Big(\frac{1}{2} - \frac{1}{p_i}\Big)^{-1}\Big(\frac{1}{2} - \frac{1}{p_n}\Big)
\end{equation*}
for $k \leq i \leq n - 1$ and $\alpha_n :=: \beta_n:=: \beta_{n+1} := 1$. All the exponents $p_i$ will be determined later.

\subsection{The second algorithm}

Let us explain the second algorithm \texttt{[alg 2]}.
\medskip

\noindent\underline{\texttt{Input}} 
The algorithm takes as its input:
\begin{itemize}
    \item A ball $B(\bx_0,R) \subset [-3C_n\lambda,3C_n\lambda]^{n-1} \times [R/C_n,C_n\lambda]$.
    \item An admissible large integer $A \in \N$. 
    \item A function $g \in L^1$ satisfying the  {non-degenerate hypothesis} \eqref{nondegenerate}.
\end{itemize}

\noindent\underline{\texttt{Output}} 
The $(n+1-l)$th step of the recursion will produce:
\begin{itemize}
    \item An $(n+1-l)$-tuple of:
    \begin{itemize}
        \item Scales $\vec{r}_l = (r_n, \dots, r_l)$ satisfying $R = r_n > r_{n-1} > \dots > r_{l}$;
        \item  Large and (in general) non-admissible parameters $\vec{D}_l = (D_n, \dots, D_{l})$;
        \item Integers $\vec{A}_l=(A_n,\ldots,A_l)$ such that $A=A_n > A_{n-1} > \cdots > A_l$.
    \end{itemize}
    \item For $l \leq l' \leq n$ a family $\vec{\mathcal{S}}_{l'}$ of level $n-l'$ multigrains. Each $\vec{S}_{l'} \in \vec{\mathcal{S}}_{l'}$ has multiscale $\vec{r}_{l'} = (r_n, \cdots, r_{l'})$ and complexity $O_{\epsilon}(1)$. The families have a nested structure in the sense that for each $l \leq l' < n$ and each $\vec{S}_{l'} \in \vec{\mathcal{S}}_{l'}$,  there exists some $\vec{S}_{l'+1} \in \vec{\mathcal{S}}_{l'+1}$ such that $\vec{S}_{l'} \preceq \vec{S}_{l'+1}$.
    \item For $l \leq l' \leq n$  an assignment of a function $g_{\vec{S}_{l'}}$ to each $\vec{S}_{l'} \in \vec{\mathcal{S}}_{l'}$. Each $g_{\vec{S}_{l'}}$ is tangent to $(S_{l'}, B_{r_{l'}})$, the final component of $\vec{S}_{l'}$. Moreover, $S_{l'}$ is of dimension $l'$.
\end{itemize}

All these outputs will be chosen so that the following properties hold true.
\medskip

\paragraph*{\underline{Property I}} The inequality holds true
\begin{equation}
\|H^{\lambda}g\|_{\mathrm{BL}_{k,A}^p(B_{R})} \lesssim  M(\vec{r}_l, \vec{D}_l)R^{O(\epsilon_{\circ})} \|g\|_{L^2(B^{n-1})}^{1-\beta_l} \Big( \sum_{\vec{S}_l \in \mathcal{\vec{S}}_l} \|H^{\lambda}g_{\vec{S}_l}\|_{\mathrm{BL}_{k,A_{l}}^{p_l}(B_{r_l})}^{p_l}\Big)^{\frac{\beta_l}{p_l}},
\end{equation}
where the pair $(S_{l},B_{r_l})$ is the last component of the multigrain $\vec{S}_l$ and
\begin{equation*}
    M(\vec{r}_l, \vec{D}_l) := \Big(\prod_{i=l}^{n}D_i\Big)^{(n-l)\delta}\Big(\prod_{i=l}^{n} r_i^{\frac12(\beta_{i+1}-\beta_i)}D_i^{\frac12(\beta_{i+1} - \beta_{l})}\Big).
\end{equation*}

\paragraph*{\underline{Property II}} 
\begin{equation}
 \sum_{\vec{S}_l \in \mathcal{\vec{S}}_l} \|g_{\vec{S}_l}\|_2^{2} \lesssim \Big(\prod_{i = l }^{n} D_{i}^{1 + \delta}\Big) R^{O(\epsilon_{\circ})}\|g\|_{L^2(B^{n-1})}^{2}. 
\end{equation}

\paragraph*{\underline{Property III}}   For $l'$ with $l+1 \leq l' \leq n$,
\begin{equation}
 \max_{\vec{S}_l \in \mathcal{\vec{S}}_l}\|g_{\vec{S}_l}\|_2^2 \lesssim \Big(\prod_{i= l}^{l'-1}\big(\frac{r_{i+1}}{r_i} \big)^{-\frac{n-i}{2}} D_{i}^{-i + \delta}\Big) R^{O(\epsilon_{\circ})}\max_{\vec{S}_{l'} \in \vec{\mathcal{S}}_{l'}} \|g_{\vec{S}_{l'}}\|_2^2.
\end{equation}

To state the last property, we need to introduce some notations. Let us consider a multigrain $\vec{S}_{l}=(\mathcal{G}_n,\ldots,\mathcal{G}_{l}) \in \vec{\mathcal{S}}_{l}$ and denote   $\mathcal{G}_i=(S_{i}, B(\bx_{n-i},r_{i}))$.
Given $\widetilde{\W} \subseteq \widetilde{\T}[B(\bx_{n-l},r_{l})]$, let $\uparrow\uparrow_{l} \!\!\widetilde{\W}$ denote the set of wave packets $T_{\theta, v} \in \T[B(\bx_{n-l-1},r_{l+1})]$ for which there exists some $\widetilde{T}_{\tilde{\theta}, \tilde{v}} \in \widetilde{\W}$ satisfying
\begin{equation}
\label{two-upper-arrow}
    \dist(\tilde{\theta},\theta) \leq C_{\circ} r_{l}^{-1/2} \quad \textrm{and} \quad \dist\big(\widetilde{T}_{\tilde{\theta},\tilde{v}}(\bx_{n-l}),\, T_{\theta,v}(\bx_{n-l-1}) \cap B(\bx_{n-l},r_{l})\big) \leq C_{\circ} r_{l+1}^{1/2+\delta}.
\end{equation}
Here, the constant $C_{\circ}$ is the constant mentioned in the discussion below the inequality \eqref{uparrowj}.
\medskip

\paragraph*{\underline{Property IV}} 
Let $l \leq l' \leq n-1$. 
For every $\vec{S}_{l'}
\in \vec{\mathcal{S}}_{l'}$ 
, $\vec{S}_{l'+1} \in \vec{\mathcal{S}}_{l'+1}$ with $\vec{S}_{l'} \preceq \vec{S}_{l'+1}$, and  $\widetilde{\W} \subset \widetilde{\T}[B(\bx_{n-l'},r_{l'})]$, it holds that 
\begin{equation}
    \|{g}_{\vec{S}_{l'}}|_{\widetilde{\W}}\|_{L^2(B^{n-1})}^2 \lesssim \big( \frac{r_{l'+1}}{r_{l'}} \big)^{-\frac{n-l'}{2}} (D_{l'})^{\delta}  R^{O(\epsilon_{\circ})}\|g_{\vec{S}_{l'+1}}|_{(\uparrow\uparrow_{l'} \widetilde{\W})^{*}} \|_{2}^2
    +\mathrm{RapDec}(R)\|g\|_{L^2}^2.
\end{equation}
Here, the notation $*$ was introduced in \eqref{020184}

\paragraph*{\underline{\texttt{Stopping conditions}}} 

Suppose that we have the outputs of the $(n+1-l)$th stage of \texttt{[alg 2]}. We terminate our algorithm if the following condition \texttt{[tiny-dom]} is satisfied.
\begin{itemize}
\item[\texttt{Stop:[tiny-dom]}] 
The following inequality holds true:
\begin{equation}
    \sum_{\vec{S}_l \in \vec{\mathcal{S}}_l }\|H^{\lambda}g_{\vec{S}_l }\|_{\mathrm{BL}_{k,A_l}^{p_l}(B_{r_l}) }^{p_l} \leq 2 \sum_{\vec{S}_l \in \vec{\mathcal{S}}_{l,tiny}}\|H^{\lambda}g_{\vec{S}_l}\|_{\mathrm{BL}_{k,A_l}^{p_l}(B_{r_l}) }^{p_l},
\end{equation}
where the right-hand summation is restricted to those $\vec{S_l} \in \vec{\mathcal{S}}_l$ for which \texttt{[alg 1]} terminates due to the stopping condition \texttt{[tiny]}.
\end{itemize}

\subsection{A construction of outputs of the second algorithm} In this subsection, we construct the outputs and show that they satisfy the desired properties in the algorithm. We first define the initial outputs as follows.
\begin{itemize}
    \item $r_n:=R,\;\; D_n:=1, \;\; A_n:=A$.
    \item $\vec{\mathcal{S}}_n:=\{\vec{S}_n \}$ and $\vec{S}_n:=(\R^n,B(\bx_0,R))$.
    \item $g_{\vec{S}_n}:=g$.
\end{itemize}
These outputs vacuously satisfy the desired properties.

Let us now assume that we have the outputs of the $(n+1-l)$th step for some $1 \leq l \leq n$ and the stopping condition \texttt{[tiny-dom]} fails. We need to construct the outputs of the $(n+2-l)$th step so that they satisfy the desired properties. By the failure of the stopping condition \texttt{[tiny-dom]}, we know that \texttt{[alg 1]} stops due to \texttt{[tang]}. Hence,
\begin{equation}
    \sum_{\vec{S}_l \in \vec{\mathcal{S}}_l }\|H^{\lambda}g_{\vec{S}_l }\|_{\mathrm{BL}_{k,A_l}^{p_l}(B_{r_l}) }^{p_l} \leq 2 \sum_{\vec{S}_l \in \vec{\mathcal{S}}_{l,\mathrm{tang}}}\|H^{\lambda}g_{\vec{S}_l}\|_{\mathrm{BL}_{k,A_l}^{p_l}(B_{r_l}) }^{p_l},
\end{equation}
where the right-hand summation is restricted to those $\vec{S}_l \in \vec{\mathcal{S}}_{l}$ for which \texttt{[alg 1]} stops due to \texttt{[tang]}. By the definition of \texttt{[tang]} and the first three properties of \texttt{[alg 1]}, for each multigrain $\vec{S}_l \in \vec{\mathcal{S}}_{l,\mathrm{tang}}$, there exist 
\begin{itemize}
    \item a collection $\mathcal{S}_{l-1}[\vec{S}_l]$ of grains of dimension $l-1$, some scale $r_{l-1}$  and degree  $O(1)$
    \item an assignment of a function $g_{(S,B_{r_{l-1}})}$ to each  $(S,B_{r_{l-1}}) \in \mathcal{S}_{l-1}[\vec{S}_l]$ tangent to the grain $(S,B_{r_{l-1}})$
    \item some parameter $D_{l-1}:=d^{\#_{\stc}(j_0)}$ where \texttt{[alg 1]} terminates at the $j_0$th stage
    \item some parameter $A_{l-1}=A_l/2^{\#_{\sta}(j_0)}$
\end{itemize}
such that 
\begin{align} \label{1218.143}
     \|H^{\lambda}g_{\vec{S}_l }\|_{\mathrm{BL}_{k,A_l}^p(B_{r_l} )}^p  &\lesssim   R^{O(\epsilon_{\circ})}\!\!\!\!\!\!\!\!\!
     \sum_{(S,B_{r_{l-1}}) \in \mathcal{S}_{l-1}[\vec{S}_l] }\!\!\!\!\!\!\!\!\!\! \|H^{\lambda}\big((g_{\vec{S}_l})_{(S,B_{r_{l-1}})}\big)\|_{\mathrm{BL}_{k,A_{l-1}}^p(B_{r_{l-1}})}^p+\mathrm{RapDec}(R)\|g_{\vec{S}_l}\|_2^p,
     \\ \label{1218.144}
   \sum_{(S,B_{r_{l-1}}) \in \mathcal{S}_{l-1}[\vec{S}_l] }\|&(g_{\vec{S}_l})_{(S,B_{r_{l-1}})}\|_2^2  \lesssim 
   D_{l-1}^{1+\delta} R^{O(\epsilon_{\circ})} 
   \|g_{\vec{S}_l}\|_{2}^2,
   \\ \label{1218.145}
    \max_{(S,B_{r_{l-1}}) \in \mathcal{S }_{l-1}[\vec{S}_l]} \|&(g_{\vec{S}_l})_{(S,B_{r_{l-1}})}\|_{2}^2 \lesssim \Big(\frac{r_l}{r_{l-1}}\Big)^{-\frac{n-l}{2}}
    D_{l-1}^{-(l-1)+\delta}R^{O(\epsilon_{\circ})}
    \|g_{\vec{S}_{l}}\|_{2}^2,
    \\ \label{1219.146}
    \|({g}_{\vec{S}_{l}})_{(S,B_{r_{l-1}})}|_{\widetilde{\W}}&\|_2^2
    \lesssim
    \big( \frac{r_{l}}{r_{l-1}} \big)^{-\frac{n-l}{2}} D_{l-1}^{\delta}  R^{O(\epsilon_{\circ})}    
    \|g_{\vec{S}_l} |_{\uparrow^l (\uparrow \widetilde{\W})} \|_{2}^2+\mathrm{RapDec}(R)\|g_{\vec{S}_l}\|_2^2,
\end{align}
for every sub-collection $\widetilde{\W}$ of the wave packets at scale $r_{l-1}$. Here, we recall that the notations $\uparrow^l$ and $\uparrow$ are defined in $\eqref{uparrowj}$ and $\eqref{uparrow1}$, respectively.

Note that the parameters $r_{l-1}$, $D_{l-1}$, and $A_{l-1}$ may depend on the choice of the multigrain $\vec{S}_{l}$. To take uniform parameters independent of the choice, we apply a pigeonholing argument by losing some $(\log{R})^C$. By taking a sub-collection of $\vec{S}_{l,\mathrm{tang}}$ and abusing the notation, we may say that for every $\vec{S}_l \in \vec{S}_{l,\mathrm{tang}}$ the collection    $\mathcal{S}_{l-1}[\vec{S}_l]$ has the uniform parameters $r_{l-1}$, $D_{l-1}$ and $A_{l-1}$.

We define a family of multigrains by
\begin{equation}
    \vec{\mathcal{S}}_{l-1}:=\big\{\vec{S}_{l-1}:=
    (\vec{S}_{l},(S,B_{r_{l-1}})): (S,B_{r_{l-1}}) \in \mathcal{S}_{l-1}[\vec{S}_l]
    \big\}
\end{equation}
and the function $g_{\vec{S}_{l-1}}:=(g_{\vec{S}_l})_{(S,B_{r_{l-1}})}$. It is elementary to see that Property II follows from \eqref{1218.144} and Property III follows from \eqref{1218.145}. Also, by using H\"{o}lder's inequality (see pages 266--267 of the paper of \cite{HR2019}), Property I follows from \eqref{1218.144}. We leave out the details.
It remains to show Property IV. Since our outputs at the $(n+1-l)$th step satisfy Property IV, it suffices to prove for the case that $l'=l-1$. By \eqref{1219.146} and Property III, it suffices to prove that
\begin{equation}
    \|g_{\vec{S}_l} |_{\uparrow^l (\uparrow \widetilde{\W})} \|_{L^2}^2
    \lesssim \|g_{\vec{S}_l} |_{(\uparrow \uparrow_{l-1} \widetilde{\W})^{*} } \|_{L^2}^2 + \mathrm{RapDec}(R)\|g\|_{L^2}^2,
\end{equation}
where the notation $\uparrow \uparrow_{l} \! \! \widetilde{\W}$ was introduced in \eqref{two-upper-arrow} and $\W^\ast$ was defined in \eqref{020184}. If we can show $\uparrow^l \! \!(\uparrow \! \! \widetilde{\W}) \subset \uparrow \uparrow_{l-1} \! \! \widetilde{\W}$, then the estimate above follows from Lemma \ref{210103lem5_1} and Lemma \ref{intersectionofsets}. In fact, on page 20 of \cite{hickman2020note}, it is proved that $\uparrow^l \! \!(\uparrow \! \! \widetilde{\W}) \subset \uparrow^{l-1}\widetilde{\W}$. By the definition of $\uparrow \uparrow_{l-1} \! \! \widetilde{\W}$, we also know that $\uparrow^{l-1}\widetilde{\W} \subset \uparrow \uparrow_{l-1} \! \! \widetilde{\W}$, hence, $\uparrow^l \! \!(\uparrow \! \! \widetilde{\W}) \subset \uparrow \uparrow_{l-1} \! \! \widetilde{\W}$. This gives Property IV and finishes the proof.


\subsection{A proof of Theorem \ref{201204thm5_1} }

Let us prove the broad norm estimate \eqref{1219.14.1}.
We first state the vanishing property of the broad norm. 

\begin{lem}\label{vanishinglemma}
Let $1< r \leq R$, and let $1 \leq m <k \leq n$, and let ${Z}$ be an $m$-dimensional transverse complete intersection. Suppose that $g$ is concentrated on wave packets from $\mathbb{T}_{{Z}}[B(\widetilde{\bx}_0,r)]$. Then for every ball $B(\widetilde{\bx}_0,r) \subset [-3C_n \lambda, 3 C_n \lambda]^{n-1}\times [R/C_n, C_n\lambda]$, it holds that
\begin{equation}
    \|H^{\lambda}g\|_{\BLka^p(B(\widetilde{\bx}_0,r))}=\rm{RapDec}(r)\|g\|_{2}.
\end{equation}
\end{lem}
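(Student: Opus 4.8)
The statement is a ``vanishing'' or dimension-reduction estimate: if $g$ lives only on wave packets $r^{-1/2+\delta_m}$-tangent to an $m$-dimensional transverse complete intersection $Z$ with $m<k$, then the $k$-broad norm is negligible. The plan is to argue that the tangency forces all relevant caps $\theta$ to cluster near an $(m)$-dimensional linear subspace, leaving fewer than $k$ genuinely independent directions, which makes every term in the definition of $\mu_{H^\lambda g}(B_{K^2})$ admit a covering by $k-1$ subspaces; hence the max over $\tau\notin V_a$ is empty (or rapidly decaying). First I would fix a ball $B_{K^2}\subset B(\widetilde\bx_0,r)$ and recall that, by the tangency condition \eqref{angle-condition}, for every $T_{\theta,v}\in\mathbb T_Z[B(\widetilde\bx_0,r)]$ meeting a neighborhood of $Z$ we have $\ang(G(\theta),T_zZ)\lesssim r^{-1/2+\delta_m}$ for the nearby $z\in Z$. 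Since $\dim T_zZ=m$ and, by Lemma \ref{210111lem3_1}, the rescaled Gauss map $G^\lambda(\bx;\omega)=(\omega,1)/\sqrt{1+|\omega|^2}$ does not depend on $\bx$ (so $G$ is, up to a scalar, an affine function of $\omega$), all the caps $\theta$ contributing to $g$ on $B_{K^2}$ have $\omega_\theta$ confined to an $r^{-1/2+\delta_m}$-neighborhood of an affine $m$-plane; in particular they fit inside $O(1)$ caps of side $K^{-2}$ lying near an $m$-dimensional affine subspace of $\R^{n-1}$, provided $K^2\le r^{1/2-\delta_m}$ (which we may assume, as $K\le R^{\delta'}$ and $r$ is large — otherwise the whole estimate is about $O(1)$ caps and is trivial).

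\textbf{Key steps.} (1) Reduce to a single $B_{K^2}$: by the definition of $\|\cdot\|_{\mathrm{BL}_{k,A}^p}$ it suffices to bound $\mu_{H^\lambda g}(B_{K^2})$ for each $B_{K^2}$ meeting $B(\widetilde\bx_0,r)$, and by Lemma \ref{210103lem5_3} the contribution of wave packets whose core tube does not pass near $B_{K^2}$ is $\mathrm{RapDec}(r)\|g\|_2$; so $g$ restricted to the caps relevant near $B_{K^2}$ is supported on $O(1)$ caps $\tau$ whose centers lie in $N_{Cr^{-1/2+\delta_m}}$ of an $m$-dimensional affine subspace $W\subset\R^{n-1}$ (using the tangency bound plus the explicit Gauss map and mean value theorem, exactly as in the proof of Lemma \ref{trans-equi-sub-lem}). (2) Let $V_\ast\in\mathrm{Gr}(m,n)$ be the linear span of $\{G^\lambda(\widetilde\bx_0;\omega):\omega\in W\}$ together with a direction making it $m$-dimensional; then every contributing $\tau$ satisfies $\ang(G^\lambda(\tau),V_\ast)\lesssim K^{-2}r^{\delta_m}\ll K^{-1}$, i.e. $\tau\in_K V_\ast$. (3) Since $m<k$, we have $m\le k-1$, so $V_\ast$ (padded to dimension $k-1$) is a legitimate choice in the minimum defining $\mu_{H^\lambda g}(B_{K^2})$; taking $A\ge 1$ and $V_1=\cdots=V_A=V_\ast$ (or the padded version) we see that the set $\{\tau\notin V_a\ \forall a\}$ contains no cap that actually contributes, hence $\max_{\tau\notin V_\ast}\|H^\lambda g_\tau\|_{L^p(B_{K^2})}^p=\mathrm{RapDec}(r)\|g\|_2^p$. (4) Summing over the $B_{K^2}$ inside $B(\widetilde\bx_0,r)$ (there are $\lesssim r^n$ of them, polynomially many) and using the rapid decay absorbs the loss and gives $\|H^\lambda g\|_{\mathrm{BL}_{k,A}^p(B(\widetilde\bx_0,r))}=\mathrm{RapDec}(r)\|g\|_2$.

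\textbf{Main obstacle.} The delicate point is step (1)–(2): turning the \emph{geometric} tangency of tubes to a curved variety $Z$ (a statement about $T_zZ$ for $z$ ranging over $Z\cap B_{K^2}$) into a single \emph{linear-algebraic} statement that all contributing Gauss directions lie near one fixed $m$-plane $V_\ast$. One must check that over a $K^2$-ball the tangent planes $T_zZ$ vary by at most $O(K^2/r^{1/2})=O(r^{-1/2+\delta_m+\,\mathrm{small}})$, so that the union of the $G(\theta)$'s still lies in an $O(r^{-1/2+\delta_m})$-neighborhood of a fixed $m$-plane; this uses that $Z$ has bounded degree (so bounded curvature at scale $r^{1/2}$ away from a negligible singular set) and the fact, from Lemma \ref{210111lem3_1}, that $G^\lambda$ is $\bx$-independent and affine in $\omega$, which is precisely what makes the Bochner–Riesz wave packets behave like the restriction ones. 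Everything else — the reduction to one $B_{K^2}$, the rapid decay from Lemma \ref{210103lem5_3}, and the final summation — is routine and parallels the corresponding lemma in \cite{hickman2020note}.
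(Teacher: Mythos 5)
Your overall strategy is the intended one: the paper omits this proof and points to the analogous argument on page 339 of \cite{MR4047925}, which runs exactly as you outline (fix a ball $B_{K^2}$, use the tangency hypothesis to force all contributing directions into a small neighborhood of a single subspace of dimension at most $k-1$, then conclude from Lemma \ref{210103lem5_3} that every $\tau\notin V$ contributes only $\mathrm{RapDec}(r)\|g\|_2$, and sum over the polynomially many $B_{K^2}$). However, your step (2) contains a concrete dimensional error. If the cap centers $\omega_\theta$ filled out (a neighborhood of) an $m$-dimensional affine subspace $W\subset\R^{n-1}$, then the lifted set $\{(\omega,1):\omega\in W\}$ spans an $(m+1)$-dimensional linear subspace of $\R^n$; one cannot ``add a direction to make it $m$-dimensional'', and in the boundary case $m=k-1$ your $V_\ast$ would be $k$-dimensional, hence not an admissible choice in $\mathrm{Gr}(k-1,n)$ in the definition of $\mu_{H^{\lambda}g}(B_{K^2})$, so the minimum could not be taken over it and the argument would not close. (The correct count is that the relevant $\omega$'s cluster near an $(m-1)$-dimensional affine slice, namely the set of $\omega$ with $(\omega,1)$ close to $T_{z_0}Z$.)

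The repair is to drop the $\omega$-space detour entirely and use the tangency condition as stated. If $B_{K^2}\cap N_{r^{1/2+\delta_m}}(Z)=\varnothing$, then every wave packet in $\T_{Z}[B(\widetilde{\bx}_0,r)]$ misses $B_{K^2}$ and the whole contribution there is $\mathrm{RapDec}(r)\|g\|_2$. Otherwise pick one point $z_0\in Z$ with $\mathrm{dist}(z_0,B_{K^2})\lesssim r^{1/2+\delta_m}$ and set $V_\ast:=T_{z_0}Z\in\mathrm{Gr}(m,n)$ with $m\le k-1$. Any $T_{\theta,v}\in\T_{Z}[B(\widetilde{\bx}_0,r)]$ passing within $O(r^{1/2+\delta})$ of $B_{K^2}$ has a point $y$ with $|y-z_0|\lesssim r^{1/2+\delta_m}$, so the second bullet of the tangency definition applied at $z=z_0$ gives $\ang(G(\theta),V_\ast)\lesssim r^{-1/2+\delta_m}\ll K^{-1}$; hence for any $\tau\notin_K V_\ast$ no tangent wave packet with $\theta\subset\tau$ comes near $B_{K^2}$, and Lemma \ref{210103lem5_3} yields $\|H^{\lambda}g_\tau\|_{L^p(B_{K^2})}=\mathrm{RapDec}(r)\|g\|_2$, which bounds $\mu_{H^{\lambda}g}(B_{K^2})$. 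This single-reference-point argument also makes your ``main obstacle'' moot: you never need the tangent planes $T_zZ$ to vary slowly over $B_{K^2}$, and no curvature bound for a bounded-degree transverse complete intersection is available in general — the tangency definition is formulated precisely so that one point $z_0$ suffices. Finally, the comparison you actually need is $r^{-1/2+\delta_m}\ll K^{-1}$ (true in the regime where the lemma is invoked, since $K=R^{\delta'}$ with $\delta'$ far smaller than the exponents controlling the scales $r_l$); your parenthetical claim that the opposite regime is ``about $O(1)$ caps and trivial'' is not correct as stated, since the number of caps is $\sim K^{2(n-1)}$ regardless of $r$.
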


The proof is straightforward and we leave out the details here. The interested
reader should consult the proof of the analogous result in Page 339 of \cite{MR4047925}.
\medskip

 As a consequence of \texttt{[alg 2]} and Lemma \ref{vanishinglemma}, we will obtain the following \textit{multiscale grains decomposition}, which is the counterpart of that in Section 4.1 of \cite{hickman2020note}.
\medskip

\noindent\underline{\texttt{Input}} 
The algorithm takes as its input:
\begin{itemize}
    \item A ball $B(\bx_0,R) \subset [-3C_n\lambda,3C_n\lambda]^{n-1} \times [R/C_n,C_n\lambda]$.
    \item An admissible large integer $A \in \N$.
    \item A function $g \in L^1$ satisfying the  {non-degenerate hypothesis} \eqref{nondegenerate}.
\end{itemize}
\medskip

\noindent\underline{\texttt{Output}} The algorithm produces
\begin{itemize}
    \item $\mathcal{O}$ a finite collection of open subsets of $\R^n$ of diameter at most $R^{\epsilon_{\circ} }$
    \item A dimension $m$ with $k \leq m \leq n$ and an integer parameter $1 \leq A_{m-1} \leq A$
        \item Scales $\vec{r}_m = (r_n, \dots, r_m)$ satisfying $R = r_n > r_{n-1} > \dots > r_{m}$
        \item  Large  non-admissible parameters $\vec{D}_{m-1} = (D_n, \dots, D_{m-1})$.
      \item For $m \leq l \leq n$ a family $\vec{\mathcal{S}}_{m}$ of level $n-l$ multigrains. Each $\vec{S}_{l} \in \vec{\mathcal{S}}_{l}$ has multiscale $\vec{r}_{l} = (r_n, \cdots, r_{l})$ and complexity $O_{\epsilon}(1)$. The families have a nested structure in the sense that for each $m \leq l \leq n$ and each $\vec{S}_{l} \in \vec{\mathcal{S}}_{l}$,  there exists some $\vec{S}_{l+1} \in \vec{\mathcal{S}}_{l+1}$ such that $\vec{S}_{l} \preceq \vec{S}_{l+1}$.
    \item For $m \leq l \leq n$  an assignment of a function $g_{\vec{S}_{l}}$ to each $\vec{S}_{l} \in \vec{\mathcal{S}}_{l}$. Each $g_{\vec{S}_{l}}$ is tangent to $(S_{l}, B_{r_{l}})$, the final component of $\vec{S}_{l}$. Moreover, $S_l$ is of dimension $l$.
\end{itemize}

All these outputs will be chosen so that the following properties hold true.
\medskip

\paragraph*{\underline{Property I}} The inequality holds true
\begin{equation}
\|H^{\lambda}g\|_{\mathrm{BL}_{k,A}^p(B_{R})} \lesssim  M(\vec{r}_m, \vec{D}_m)R^{O(\epsilon_{\circ})} \|g\|_{L^2(B^{n-1})}^{1-\beta_m} \Big( \sum_{O \in \mathcal{O} } \|H^{\lambda}g_{O}\|_{\mathrm{BL}_{k,A_{m}}^{p_m}({O})}^{p_m}\Big)^{\frac{\beta_m}{p_m}},
\end{equation}
where
\begin{equation*}
    M(\vec{r}_m, \vec{D}_m) := \Big(\prod_{i=m}^{n}D_i\Big)^{(n-m)\delta}\Big(\prod_{i=m}^{n} r_i^{\frac12(\beta_{i+1}-\beta_i)}D_i^{\frac12(\beta_{i+1} - \beta_{m})}\Big).
\end{equation*}

\paragraph*{\underline{Property II}} 
\begin{equation}\label{1220.1417}
 \sum_{O \in \mathcal{O} } \|g_{O}\|_2^{2} \lesssim \Big(\prod_{i = m-1 }^{n} D_{i}^{1 + \delta}\Big) R^{O(\epsilon_{\circ})}\|g\|_{L^2(B^{n-1})}^{2}. 
\end{equation}

\paragraph*{\underline{Property III}}   For $m \leq l \leq n$
\begin{equation}\label{1220.1418}
 \max_{O \in \mathcal{O} }\|g_{O}\|_2^2 \lesssim r_{l}^{-\frac{n-l}{2}}\Big(\prod_{i= m-1}^{l-1}r_{i}^{-1/2} D_{i}^{-i + \delta}\Big) R^{O(\epsilon_{\circ})}\max_{\vec{S}_{l} \in \vec{\mathcal{S}}_{l}} \|g_{\vec{S}_{l}}\|_2^2.
\end{equation}

\paragraph*{\underline{Property IV}}   For $m \leq l \leq n-1$
\begin{equation}\label{1220.1419}
    \|g_{\vec{S}_{l}}\|_{L^2(B^{n-1})}^2 \lesssim r_{l}^{\frac{n-l}{2}}\Big(\prod_{i = l}^{n-1}r_i^{-1/2}D_i^{\delta} \Big)  R^{O(\epsilon_{\circ})}\|g_{\vec{S}_{l}}^{\#}\|_{2}^2,
\end{equation}
where
\begin{equation}\label{sharpnotation}
   g_{\vec{S}_{l}}^{\#} :=  g|_{\T[\vec{S}_l]}.
\end{equation}
Here, $\T[\vec{S}_l]$ is defined in Definition \ref{nestedtube}.
\\

Let us explain how we obtain the multiscale grains decomposition. We first note that \texttt{[alg 2]} terminates at $(n+1-m)$th step with $m \geq k$. Otherwise, \texttt{[alg 2]} does not terminate at $(n+1-k)$th step, and as an application of Lemma \ref{vanishinglemma}, we  obtain that
$\|H^{\lambda}g\|_{\BLka^p(B_R)} =\mathrm{RapDec}(R)\|g\|_2$. However, since we are assuming that $g$ satisfies the non-degenerate hypothesis \eqref{nondegenerate}, this does not take place.  

By the stopping condition of \texttt{[alg 2]}, we know that \texttt{[alg 1]} terminates due to \texttt{[tiny]}. Let us denote by $\mathcal{O}$ a final collection of cells and define $A_{m-1}$ and $D_{m-1}$ as in pages 267--268 of \cite{HR2019}. Then it is straightforward to see that Property I, II and III follow from those of \texttt{[alg 2]}. Let us explain how Property IV can be deduced from that of \texttt{[alg 2]}. Let us fix a $n-l$ level multigrain $\vec{S}_l$. Since $\vec{S}_l$ has a nested structure, we can take $n-i$ level multigrains $\vec{S}_i$ such that
\begin{equation}
    \vec{S}_l \preceq \vec{S}_{l+1} \preceq \cdots \preceq \vec{S}_n.
\end{equation}
For $l \leq i \leq n$, if $(S_i,B(\bx_{n-i},r_i))$ denotes the  $(n-i+1)$th component of $\vec{S}_l$, then let $\T_{\mathrm{tang}}[S_i]$ denote the set of all wave packets of scale $r_i$  tangent to $S_i$ in $B(\bx_{n-i},r_i)$.
We construct sets $\W_i \subset \T_{\mathrm{tang}}[S_i]$ for $l \leq i \leq n $ as follows; We first set
\begin{equation}
    \W_l:=\T_{\mathrm{tang}}[S_l],
\end{equation}
and define recursively
\begin{equation}\label{1220.1423}
\W_{i+1}:=\T_{\mathrm{tang}}[S_l] \cap (\uparrow\uparrow_i \!\! \W_i)^*.
\end{equation}
By Property IV of \texttt{[alg 2]}, we know that
\begin{equation}\label{1220.1424}
    \|{g}_{\vec{S}_{i}}|_{\W_i}\|_{L^2(B^{n-1})}^2 \lesssim \big( \frac{r_{i+1}}{r_{i}} \big)^{-\frac{n-i}{2}} D_{i}^{\delta}  R^{O(\epsilon_{\circ})}\|g_{\vec{S}_{i+1}}|_{(\uparrow\uparrow_{i} \W_i)^*} \|_{2}^2
    +\mathrm{RapDec}(R)\|g\|_{L^2}^2
\end{equation}
for $l \leq i \leq n$.
Recall that $g_{\vec{S}_{i+1}}$ is concentrated on wave packets belonging to $\T_{\mathrm{tang}}[S_{i+1}]$, and thus, we know that
\begin{equation}\label{1220.1425}
    g_{\vec{S}_{i+1}}=g_{\vec{S}_{i+1} }|_{\T_{\mathrm{tang}[S_{i+1}] }}+\mathrm{RapDec}(R)\|g_{\vec{S}_{i+1}}\|_2.
\end{equation}
By \eqref{1220.1423}, \eqref{1220.1424}, \eqref{1220.1425}, Lemma \ref{intersectionofsets}, and Property III, we obtain that
\begin{equation}
    \|{g}_{\vec{S}_{i}}|_{\W_i}\|_{L^2(B^{n-1})}^2 \lesssim \big( \frac{r_{i+1}}{r_{i}} \big)^{-\frac{n-i}{2}} D_{i}^{\delta}  R^{O(\epsilon_{\circ})}\|g_{\vec{S}_{i+1}}|_{\W_{i+1}} \|_{2}^2
    +\mathrm{RapDec}(R)\|g\|_{L^2}^2.
\end{equation}
We iterate this inequality, and it suffices to set the function
$g^{\#}_{\vec{S}_l}:=g|_{\W_0}$ and notice that this function satisfies the nested tube hypothesis in Definition \ref{nestedtube}. We refer to pages 22-23 of \cite{hickman2020note} for more details. This completes the proof of Property IV.
\\

Let us now see how the multiscale grains decomposition can be applied to prove Theorem \ref{201204thm5_1}. Recall that \eqref{1219.14.1} is vacuously true unless $g$ satisfies the non-degenerate hypothesis \eqref{nondegenerate}. Hence, we may assume that $g$ satisfies the non-degenerate hypothesis. We now apply the above multiscale grains decomposition and obtain
\begin{equation}
\|H^{\lambda}g\|_{\mathrm{BL}_{k,A}^p(B_{R})} \lesssim  M(\vec{r}_m, \vec{D}_m)R^{O(\epsilon_{\circ})} \|g\|_{L^2(B^{n-1})}^{1-\beta_m} \Big( \sum_{O \in \mathcal{O} } \|H^{\lambda}g_{O}\|_{\mathrm{BL}_{k,A_{m}}^{p_m}({O})}^{p_m}\Big)^{\frac{\beta_m}{p_m}}.
\end{equation}
Since each element of $\mathcal{O}$ has a diameter at most $R^{\epsilon_{\circ}}$, we obtain
\begin{equation}
    \|H^{\lambda}g_{O}\|_{\mathrm{BL}_{k,A_{m}}^{p_m}({O})}
    \lesssim R^{C\epsilon_{\circ}}\|g_{O}\|_2.
\end{equation}
By combining these two inequalities, we obtain
\begin{equation}
    \begin{split}
        \|H^{\lambda}g\|_{\mathrm{BL}_{k,A}^p(B_{R})} &\lesssim R^{C\epsilon_{\circ} }  M(\vec{r}_m, \vec{D}_m) \|g\|_{L^2(B^{n-1})}^{1-\beta_m} \Big( \sum_{O \in \mathcal{O} } \|g_{O}\|_{L^2}^{p_m}\Big)^{\frac{\beta_m}{p_m}}
        \\&
        \lesssim R^{C\epsilon_{\circ} }  M(\vec{r}_m, \vec{D}_m) \|g\|_{L^2(B^{n-1})}^{1-\beta_m} \Big( \underbrace{\sum_{O \in \mathcal{O} } \|g_{O}\|_{L^2}^{2}}_{\mathrm{Part\; I} }\Big)^{\frac{\beta_m}{p_m}}
        \big(\underbrace{\sup_{O \in \mathcal{O} }
        \|g_O\|^2_{{2}}}_{\mathrm{Part \; II}} \big)^{(\frac{1}{2}-\frac{1}{p_m}){\beta_m}}.
    \end{split}
\end{equation}
We apply \eqref{1220.1417} to Part I. Then by the definition of $\beta_m$, the right hand side is bounded by
\begin{equation}\label{1220.1430}
    \|H^{\lambda}g\|_{\mathrm{BL}_{k,A}^p(B_{R})} \lesssim 
    \Big(\prod_{i=m-1}^n r_i^{\frac12(\beta_{i+1}-\beta_i)}D_i^{\frac{\beta_{i+1}}{2}-(\frac12-\frac{1}{p_n})+C\delta }  \Big) R^{C\epsilon_{\circ}}
    \|g\|_2^{\frac{2}{p_n}}
        \big(\underbrace{\sup_{O \in \mathcal{O} }
        \|g_O\|^2_{{2}}}_{\mathrm{Part \; II}} \big)^{\frac12-\frac{1}{p_n}}
\end{equation}
where $r_{m-1}:=1$.
This inequality is the counterpart of the last inequality of Section 4.1 of \cite{hickman2020note}.
\medskip

To deal with Part II, we need the following lemma, which is the counterpart of Lemma 4.3 of \cite{hickman2020note}. This lemma is a corollary of the nested polynomial Wolff axioms.

\begin{lem}[cf. Lemma 4.3 of \cite{hickman2020note}]\label{lem92}
For $m \leq l \leq n$, and the functions $g^{\#}_{\vec{S}_l}$ defined in \eqref{sharpnotation}, it holds that
\begin{equation}
    \max_{\vec{S}_l \in \vec{\mathcal{{S}}}_l }
    \|g_{\vec{S}_{l}}^{\#}\|_{L^2}^2
    \lesssim
    \Big(\prod_{i=l}^{n-1}r_i^{-1/2} \Big)
    R^{\epsilon_{\circ}}
    \|g\|_{L^\infty}^2.
\end{equation}
\end{lem}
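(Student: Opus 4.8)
\textbf{Proof proposal for Lemma \ref{lem92}.}

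The plan is to reduce the bound on $\|g^{\#}_{\vec{S}_l}\|_{L^2}^2$ to a counting estimate on the number of directions $\theta$ of the wave packets making up $g^{\#}_{\vec{S}_l} = g|_{\T[\vec{S}_l]}$, and then to feed this into the $L^2$-orthogonality of wave packets. First I would recall the definition $g^{\#}_{\vec{S}_l} = g|_{\T[\vec{S}_l]} = \sum_{T \in \T[\vec{S}_l]} g_T$, where $\T[\vec{S}_l]$ is the nested-tube collection from Definition \ref{nestedtube}. By Lemma \ref{210103lem5_1} ($L^2$-orthogonality of wave packets) we have $\|g^{\#}_{\vec{S}_l}\|_2^2 \lesssim \sum_{T \in \T[\vec{S}_l]} \|g_T\|_2^2$. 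Grouping the tubes in $\T[\vec{S}_l]$ according to their direction $\theta \in \Theta[\vec{S}_l]$, and using that for a fixed $\theta$ the functions $g_{T_{\theta,v}}$ over the admissible $v$ are essentially orthogonal with $\sum_v \|g_{T_{\theta,v}}\|_2^2 \lesssim \|g_\theta\|_2^2$ (again Lemma \ref{210103lem5_1}, applied to tubes sharing a single $\theta$, together with $\|g_\theta\|_2 \le \|g_\theta\|_\infty |{\theta}|^{1/2} \lesssim \|g\|_\infty r_n^{-(n-1)/2}$ since $\theta$ has sidelength $\sim R^{-1/2}$), one arrives at
\begin{equation}
    \|g^{\#}_{\vec{S}_l}\|_2^2 \lesssim \#\Theta[\vec{S}_l] \cdot R^{-(n-1)} \|g\|_\infty^2 .
\end{equation}

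Next I would invoke Lemma \ref{nestedpoly} (Lemma 3.7 of \cite{hickman2020note}), which is precisely the nested polynomial Wolff axiom bound for the nested-tube direction set: since $\vec{S}_l$ is a level $n-l$ multigrain with multiscale $\vec{r}_l = (r_n, \dots, r_l)$ and complexity $O_\epsilon(1) \le d$, we get
\begin{equation}
    \#\Theta[\vec{S}_l] \lesssim_{\epsilon_\circ, d} \Big(\prod_{i=l}^{n-1} r_i^{-1/2}\Big) R^{(n-1)/2 + \epsilon_\circ}.
\end{equation}
Substituting this into the previous display yields
\begin{equation}
    \|g^{\#}_{\vec{S}_l}\|_2^2 \lesssim \Big(\prod_{i=l}^{n-1} r_i^{-1/2}\Big) R^{(n-1)/2 + \epsilon_\circ} \cdot R^{-(n-1)} \|g\|_\infty^2 = \Big(\prod_{i=l}^{n-1} r_i^{-1/2}\Big) R^{-(n-1)/2 + \epsilon_\circ} \|g\|_\infty^2,
\end{equation}
which is even stronger than claimed (there is an extra favorable factor $R^{-(n-1)/2}$); in any case it implies the stated inequality after taking the maximum over $\vec{S}_l \in \vec{\mathcal{S}}_l$. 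One should double check the bookkeeping of which factor of $R^{-(n-1)/2}$ is absorbed where to match the normalization in \eqref{1220.1430}, but the structure is exactly as above.

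The main obstacle, and the only genuinely nontrivial input, is the direction-counting estimate Lemma \ref{nestedpoly} itself. As remarked in the excerpt, its proof requires the nested polynomial Wolff axioms of Zahl \cite{MR4205111} and Hickman, Rogers and Zhang \cite{HRZ}, applied along the lines of Lemma 3.7 of \cite{hickman2020note}; the key structural point that makes this applicable in our setting is that the tubes $T_{\theta,v}$ defined via the coreline \eqref{coreline} are \emph{straight} line-neighborhoods, and tubes with the same $\theta$ point in the same direction (Lemma \ref{210111lem3_1} shows the rescaled Gauss map is independent of $\bx$). Granting Lemma \ref{nestedpoly}, the remaining steps are routine applications of Plancherel/$L^2$-orthogonality (Lemma \ref{210103lem5_1}) and the trivial $L^\infty \to L^2$ bound on each $g_\theta$ coming from the $R^{-1/2}$-sidelength of the caps, so the proof is short modulo the cited axioms.
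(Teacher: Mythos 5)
Your argument follows the paper's proof exactly in structure: $L^2$-orthogonality of wave packets to reduce to $\sum_{\theta\in\Theta[\vec{S}_l]}\sum_v\|g_{T_{\theta,v}}\|_2^2$, a trivial $L^\infty$ bound on each cap, and then the direction count of Lemma \ref{nestedpoly} (nested polynomial Wolff axioms, applicable because the tubes are straight and tubes with the same $\theta$ are parallel). However, there is a bookkeeping slip in the cap-size step that leads you to a false conclusion. A cap $\theta\in\Theta_R$ has side length $\sim R^{-1/2}$ in $\R^{n-1}$, so $|\theta|\sim R^{-(n-1)/2}$ and hence $\|g_\theta\|_2^2\le\|g\|_\infty^2|\theta|\lesssim R^{-(n-1)/2}\|g\|_\infty^2$; you instead used $\|g_\theta\|_2\lesssim\|g\|_\infty R^{-(n-1)/2}$, i.e.\ effectively $|\theta|\sim R^{-(n-1)}$, which gains an extra spurious factor $R^{-(n-1)/2}$ after squaring. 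Your claimed "stronger" bound $\big(\prod_{i=l}^{n-1}r_i^{-1/2}\big)R^{-(n-1)/2+\epsilon_\circ}\|g\|_\infty^2$ is in fact false: take $l=n$ (empty product, $\Theta[\vec{S}_n]=\Theta_R$, so $g^{\#}_{\vec{S}_n}$ is essentially $g$) and $g\equiv 1$ on $[0,1]^{n-1}$; then $\|g\|_2^2=1$ while the right-hand side is $R^{-(n-1)/2+\epsilon_\circ}$.

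With the correct cap measure the computation reads
\begin{equation}
    \|g^{\#}_{\vec{S}_l}\|_2^2
    \lesssim \#\Theta[\vec{S}_l]\cdot R^{-(n-1)/2}\|g\|_\infty^2
    \lesssim \Big(\prod_{i=l}^{n-1}r_i^{-1/2}\Big)R^{(n-1)/2+\epsilon_\circ}\cdot R^{-(n-1)/2}\|g\|_\infty^2
    =\Big(\prod_{i=l}^{n-1}r_i^{-1/2}\Big)R^{\epsilon_\circ}\|g\|_\infty^2,
\end{equation}
which is exactly the stated inequality, with nothing left over to absorb into \eqref{1220.1430}. So the route is the paper's route, and the proof is complete once this factor is fixed; there is no genuinely stronger estimate hiding here.
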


\begin{proof}
By  the $L^2$-orthogonality, we know that
\begin{equation}
    \|g_{\vec{S}_{l}}^{\#}\|_2^2 \lesssim \sum_{\theta \in \Theta[{\vec{S}_{l}}] } \sum_{v:T_{\theta,v} \in \T[\vec{S}_{l}]} \|g_{T_{\theta,v}}\|_2^2.
\end{equation}
Since $g_{T_{\theta,v}}$ is supported on $\theta$, by taking the maximum over $\theta$, this is bounded by
\begin{equation}
    \# \Theta[\vec{S}_{l}]\max_{\theta \in \Theta_R } \Big( \sum_{v \in R^{1/2}\Z^{n-1}}\|g_{T_{\theta,v}}\|_{L^2(\theta)}^2 \Big).
\end{equation}
By the $L^2$-orthogonality and replacing the $L^2$-norm by the $L^{\infty}$-norm, it is further bounded by
\begin{equation}
    \#\Theta[\vec{S}_{l}]R^{-(n-1)/2} \|g\|_{\infty}^2.
\end{equation}
It suffices now to apply Lemma \ref{nestedpoly} and bound $\#\Theta[\vec{S}_l]$.
\end{proof}

After applying \eqref{1220.1418}, \eqref{1220.1419}, and this lemma  to Part II, we obtain
\begin{equation}\label{1220.1431}
    \max_{O \in \mathcal{O}}\|g_{O}\|_{L^2}^2 \lesssim_{\epsilon}
    \Big(\prod_{i=m-1}^{n-1}r_i^{-1/2}D_i^{\delta} \Big)
    \Big(\prod_{i=l}^{n-1}r_i^{-1/2} \Big)
    \Big(\prod_{i=m-1}^{l-1}D_i^{-i} \Big)R^{C\epsilon_{\circ}}
    \|g\|_{L^\infty}^2
\end{equation}
for every $m \leq l \leq n$.
This inequality is the counterpart of the inequality (4.2) of \cite{hickman2020note}. 
We now combine \eqref{1220.1430} and \eqref{1220.1431} as in Section 4.3 of \cite{hickman2020note} and obtain the obtain the desired estimate \eqref{1219.14.1}. Since the proof is identical to theirs, we leave out the details.

\section{Appendix: a transversality lemma}\label{f_section10}
The appendix is devoted to a transversality lemma . First, let us introduce one more definition.
\begin{defi}
Let $M,N$ be smooth manifolds. Assume that $f:M\to N$ and $A\subset N$ a submanifold. Then $f$ is said to be transverse to $A$, which is denoted by $f\pitchfork A$, if for any $x\in M$ with $f(x)=y\in A$, the tangent space $T_yN$ is spanned by $T_yA$ and the image $Df_x(T_xM)$. 
\end{defi}
Now recall the follow version of the transversality theorem.
\begin{thm}[\cite{Guillemin-Pollack}, Page 68]
\label{transversality-thm}
Let $X,S,Y$ be smooth manifolds without boundary and $A\subset N$ a smooth submanifold. Let $F:V\to C^\infty(M,N)$ satisfy the following condition:
\begin{enumerate}
    \item the evaluation map $F^{ev}:X\times S\to Y,(x,s)\to F_s(x)$ is $C^\infty$.
    \item $F^{ev}$ is transverse to A. 
\end{enumerate}
Then the complement of the set
\begin{equation}
    \pitchfork(F;A):=\{s\in S:F_s\pitchfork A\}
\end{equation}
in $V$ has measure zero.
\end{thm}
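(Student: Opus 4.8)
The statement quoted here is the classical parametric (Thom) transversality theorem, and the plan is to reproduce its standard proof, which reduces everything to Sard's theorem. For notational clarity we identify the parameter space with $S$: the map $F$ is defined on $V=S$ (or on an open subset of it, which changes nothing in the argument), each $F_s$ lies in $C^\infty(X,Y)$, and $F^{ev}(x,s)=F_s(x)$. First I would set $W := (F^{ev})^{-1}(A) \subset X \times S$. Since $F^{ev}\pitchfork A$, the preimage theorem shows that $W$ is a smooth submanifold of $X\times S$ of codimension $\mathrm{codim}_Y A$, and it supplies the description $T_{(x,s)}W = (DF^{ev}_{(x,s)})^{-1}\big(T_{F^{ev}(x,s)}A\big)$ at each $(x,s)\in W$. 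Let $\pi\colon W\to S$ denote the restriction of the coordinate projection $X\times S\to S$.

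The core of the argument is the claim: a point $s\in S$ is a regular value of $\pi$ if and only if $F_s\pitchfork A$. To verify it, fix $(x,s)\in W$, put $y=F_s(x)\in A$, and carry out the linear-algebra bookkeeping relating the three maps $DF^{ev}_{(x,s)}$, its restriction to $T_{(x,s)}W$, and $D\pi_{(x,s)}\colon T_{(x,s)}W\to T_s S$: one checks that $D\pi_{(x,s)}$ is surjective precisely when $\mathrm{Im}\big((DF_s)_x\big)+T_yA=T_yY$. Ranging over all $x$ in the fiber $(F_s)^{-1}(A)$, and noting that at points $x$ with $F_s(x)\notin A$ transversality is vacuous, this yields the claim.

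With the claim in hand the theorem is immediate: $W$ is a second-countable smooth manifold and $\pi$ is $C^\infty$ (since $F^{ev}$ is), so Sard's theorem shows that the set of critical values of $\pi$ has measure zero in $S$; by the claim this set is exactly $S\setminus\pitchfork(F;A)$, which is therefore null.

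The only genuinely non-citation step — hence the main obstacle — is the tangent-space computation inside the claim, i.e.\ showing that surjectivity of the restricted projection $D\pi_{(x,s)}$ unwinds to transversality of $F_s$ to $A$ at $x$. This is purely formal linear algebra once the description of $T_{(x,s)}W$ above is available, so the proof is short; everything outside this step is a direct appeal to the preimage theorem and to Sard's theorem.
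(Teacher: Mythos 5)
Your proposal is correct and is essentially the canonical argument: the paper does not prove this theorem at all but simply cites \cite{Guillemin-Pollack}, and your route --- forming $W=(F^{ev})^{-1}(A)$ via the preimage theorem, identifying the parameters $s$ for which $F_s\pitchfork A$ with the regular values of the projection $\pi\colon W\to S$, and then invoking Sard's theorem --- is precisely the standard proof given in that reference. The tangent-space bookkeeping you single out (surjectivity of $D\pi_{(x,s)}$ on $T_{(x,s)}W=(DF^{ev}_{(x,s)})^{-1}(T_yA)$ is equivalent to $\mathrm{Im}\,(DF_s)_x+T_yA=T_yY$) does go through in both directions, so there is no gap.
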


Let $\Phi:\ZR^n\to\ZR^n$ be a smooth map and let $Z\subset\ZR^n$ be a smooth submanifold. Fix $k$ vectors $ m_1,\ldots, m_k\in\ZR^n$, $1\leq k\leq n$. Consider a family of parallel affine subspaces $\{\Pi_c\}_{{c}\in\ZR^k}$ in $\ZR^n$ formed by $(m_1,\ldots, m_k)$:
\begin{equation}
    \Pi_c:\left\{ \begin{array}{c}
    m_1\cdot x-c_1=0,\\
    \ldots\\
    m_k\cdot x-c_k=0.
    \end{array}\right.
\end{equation}
We show that $\Phi^{-1}(\Pi_c)$ is transvserse to $Z$ for generic $\Pi_c$.
\begin{lem}
\label{transverse-lemma}
The complement of the
\begin{equation}
    \cC_\Phi:=\{c\in\ZR^k:\Phi^{-1}(\Pi_c){\rm{~is~transverse~to~}}Z\}
\end{equation}
in $\ZR^k$ has measure zero.
\end{lem}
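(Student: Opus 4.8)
\textbf{Proof plan for Lemma \ref{transverse-lemma}.}
The plan is to apply the parametric transversality theorem (Theorem \ref{transversality-thm}) with a carefully chosen family of maps. The role of the parameter space $S$ will be played by $\ZR^k$ (the space of the constants $c$), and the target manifold $N$ by $\ZR^n$, with $A = Z$ the given smooth submanifold. For the domain $X$, one should not use all of $\ZR^n$ but rather the fixed affine subspace $\Pi_0$ defined by $m_1\cdot x = \cdots = m_k\cdot x = 0$; more precisely, since $\Phi^{-1}(\Pi_c)$ is what we want to be transverse to $Z$, the natural move is to produce, for each $c$, a parametrization of $\Phi^{-1}(\Pi_c)$ — or, more cleanly, to realize $\Phi^{-1}(\Pi_c)$ as the zero set of $x\mapsto (m_1\cdot\Phi(x)-c_1,\dots,m_k\cdot\Phi(x)-c_k)$ and translate transversality of this zero set with $Z$ into a transversality statement for an evaluation map.

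First I would reduce to the following concrete setup. Define $G:\ZR^n\to\ZR^k$ by $G(x):=(m_1\cdot\Phi(x),\dots,m_k\cdot\Phi(x))$, so that $\Phi^{-1}(\Pi_c)=G^{-1}(c)$. The key elementary observation is: for a regular value $c$ of the restriction $G|_Z$, the level set $G^{-1}(c)$ is automatically transverse to $Z$ at every point of $G^{-1}(c)\cap Z$. Indeed, if $z\in G^{-1}(c)\cap Z$ and $c$ is a regular value of $G|_Z$, then $DG_z$ maps $T_zZ$ onto $\ZR^k$; since $T_z(G^{-1}(c))=\ker DG_z$ (assuming $c$ is also a regular value of $G$ itself — which I would arrange, or circumvent by working only where $DG_z$ is already surjective, as it will be in our application by Lemma \ref{essential-identity-lem} giving non-degeneracy of $J_\Phi$), a dimension count gives $T_zZ + \ker DG_z = T_z\ZR^n$, which is exactly transversality of $G^{-1}(c)$ and $Z$ at $z$. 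So the lemma reduces to: \emph{the set of $c\in\ZR^k$ that fail to be regular values of $G|_Z$ has measure zero}, which is precisely Sard's theorem applied to the smooth map $G|_Z : Z\to\ZR^k$.

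Thus the cleanest route is to invoke Sard's theorem directly rather than the full parametric transversality theorem, but since the paper has stated Theorem \ref{transversality-thm}, I would phrase it through that: take $X$ a point (or any manifold), $S=\ZR^k$, $Y=\ZR^n$, $A=Z$, and $F_c := $ the inclusion $\ZR^n\hookrightarrow\ZR^n$ precomposed appropriately, or more honestly take the evaluation map $F^{ev}:Z\times\ZR^k\to\ZR^n\times\ZR^k$-style construction whose transversality to $Z\times\{*\}$ unwinds to the regular-value condition for $G|_Z$; the hypotheses (1) smoothness and (2) transversality of $F^{ev}$ hold because $\Phi$ is smooth and $G|_Z$ composed with the identity is a submersion onto its image direction by direction. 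Then Theorem \ref{transversality-thm} yields that $\cC_\Phi^c$ has measure zero.

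\textbf{Main obstacle.} The genuine content is entirely in the reduction step: correctly matching the abstract hypotheses of Theorem \ref{transversality-thm} to the concrete family $\{\Phi^{-1}(\Pi_c)\}$, and in particular verifying that condition (2) — transversality of the evaluation map to $Z$ — holds. This requires nothing deep (it is essentially the chain rule plus the fact that translating $\Pi_c$ by varying $c$ moves it in all $k$ transverse directions, so the $c$-derivative alone already surjects onto the normal directions of $\Pi_0$), but it must be set up so that the surjectivity is manifest. The only place one must be slightly careful is ensuring $\Phi$ (or $G$) has enough regularity that $G^{-1}(c)$ is genuinely a smooth transverse complete intersection and not just a level set; in the application within the paper this is guaranteed by the non-degeneracy of $J_{\Phi^{-1}}$ established in Lemma \ref{essential-identity-lem} and in the proof of Claim 2 of Proposition \ref{trans-dist-main}, so here I would simply state the lemma for smooth $\Phi$ and note that the transversality conclusion combined with that non-degeneracy gives the transverse complete intersection structure needed earlier. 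I do not expect any computational difficulty beyond this bookkeeping.
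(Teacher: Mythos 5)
Your plan is correct and is essentially the paper's own proof: the paper applies Theorem \ref{transversality-thm} twice to the translated family $F_c(z)=(m_1\cdot\Phi(z)-c_1,\dots,m_k\cdot\Phi(z)-c_k)$, once with domain $Z$ and once with domain $\ZR^n$, which (as you observe) is exactly the statement that almost every $c$ is a regular value of both $G|_Z$ and $G$, and then finishes with the same dimension count $T_zZ+\ker D_zG=\ZR^n$. Your Sard-based phrasing, together with your remark that one must also ``arrange'' $c$ to be a regular value of $G$ so that $G^{-1}(c)$ is a genuine submanifold with tangent space $\ker D_zG$, coincides with the paper's intersection $\cC_F\cap\cC_G$, so there is no gap.
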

\begin{proof}
We first take $X=Z$, $S=\ZR^k$, $Y=\ZR^k$, $A=0$, and $F^{ev}=(m_1\cdot\Phi(z)-v_1,\ldots, m_k\cdot\Phi(z)-v_k)$ be the evaluation map in Theorem \ref{transversality-thm}. Then the complement of the set
\begin{equation}
    \cC_F:=\{c\in\ZR^k:F_c\pitchfork A\}
\end{equation}
in $\ZR^k$ has measure zero. Then we take $X=\ZR^n$, $S=\ZR^k$, $Y=\ZR^k$, $A=0$, and $G^{ev}=(m_1\cdot\Phi(z)-v_1,\ldots, m_k\cdot\Phi(z)-v_k)$. One can argue similarly to get that the complement of
\begin{equation}
    \cC_G:=\{c\in\ZR^k:G_c\pitchfork A\}
\end{equation}
in $\ZR^k$ has measure zero. Take $\cC=\cC_F\cap\cC_G$. We claim that $\cC\subset\cC_\Phi$, which proves the lemma.

To prove our claim above, first note that $\Phi^{-1}(\Pi_c)\subset\ZR^n$ is a smooth embedding of dimension $n-k$ when $c\in\cC$, since  $G_c\pitchfork A$. Take any $z\in Z\cap \Phi^{-1}(\Pi_c)$. We need to show $T_zZ+T_z\Phi^{-1}(\Pi_c)=\ZR^n$. Notice that
\begin{enumerate}
    \item $G|_Z=F$, so $D_zG|_{T_zZ}=D_zF$.
    \item $D_zF$ is surjective since $F_c\pitchfork A$.
    \item $G(\Phi^{-1}(\Pi_c))=0$, which implies $D_zG|_{T_z\Phi^{-1}(\Pi_c)}=0$.
\end{enumerate}
Thus, we have
\begin{enumerate}
    \item $D_z G$ is surjective on $T_zZ$.
    \item $\ker(D_zG)=T_z\Phi^{-1}(\Pi_c)$, since both $\Phi^{-1}(\Pi_c)$ and $\ker(D_zG)$ have dimension $n-k$. 
\end{enumerate}
Hence $T_zZ$ and $\ker(D_zG)=T_z\Phi^{-1}(\Pi_c)$ span the whole space $\ZR^n$ as desired.
\end{proof}

	\bibliography{reference}{}
	\bibliographystyle{alpha}

\end{document}